\documentclass[12pt, reqno]{amsart}
\usepackage{mathrsfs}
\usepackage{amssymb,amsthm,amsmath}
\usepackage[numbers,sort&compress]{natbib}
\usepackage{amssymb,amsmath}
\usepackage{amsfonts}
\usepackage{mathrsfs}
\usepackage{latexsym}
\usepackage{amssymb}
\usepackage{amsthm}
\usepackage{bm}
\usepackage{color}
\usepackage{pdfsync}
\usepackage{indentfirst}
\usepackage{appendix}
\date{today}

\usepackage{hyperref}
\hypersetup{hypertex=true,colorlinks=true,linkcolor=blue,anchorcolor=g,citecolor=red}

\usepackage{amsmath}
\usepackage{amsthm}
\usepackage{enumitem}
\allowdisplaybreaks
\newtheorem{remark}{Remark}[section]

\newtheorem{theorem}{Theorem}[section]
\newtheorem{proposition}{Proposition}[section]
\newtheorem{lemma}{Lemma}[section]
\newtheorem{corollary}{Corollary}[section]

\newcommand{\R}{\mathbb R}

\newcommand{\beq}{\begin{equation}}
	\newcommand{\eeq}{\end{equation}}
\newcommand{\ben}{\begin{eqnarray}}
	\newcommand{\een}{\end{eqnarray}}
\newcommand{\beno}{\begin{eqnarray*}}
	\newcommand{\eeno}{\end{eqnarray*}}

\numberwithin{equation}{section}

\def\n{\nabla}

\usepackage[
letterpaper,                 
textheight=8.35in,           
headsep=20pt,                
footskip=36pt,               
marginparwidth=0pt,          
marginparsep=0pt,            
left=0.75in,
right=0.75in
]{geometry}
\begin{document}
	\title[Boundary layer of the 2D chemotaxis-Navier-Stokes system]{Boundary layer analysis for the 2D chemotaxis-Navier-Stokes system with logarithmic sensitivity, Part I: Well-posedness}
	\author{Hui~Wang}
	\address[Hui~Wang]{School of Mathematical Sciences, Dalian University of Technology, Dalian, 116024,  China}
	\email{whd@mail.dlut.edu.cn}
	\author{Wendong~Wang}
	\address[Wendong~Wang]{School of Mathematical Sciences, Dalian University of Technology, Dalian, 116024,  China}
	\email{wendong@dlut.edu.cn}
	\author{Lingling~Zhao}
	\address[Lingling~Zhao]{College of Mathematics, Taiyuan University of Technology, Taiyuan, 030024, China}
	\email{zhaolingling@tyut.edu.cn}
	\date{\today}
	\maketitle

    \renewcommand{\theequation}{\arabic{section}.\arabic{equation}}
	\catcode`@=11 \@addtoreset{equation}{section} \catcode`@=12
	
	\begin{abstract}		
		This is the first part of a two-part work concerning the boundary layer convergence for chemotaxis-Navier-Stokes system in a two-dimensional half-space. In this paper, we investigate the chemotacxis-Navier-Stokes system with the logarithmic singularity under Navier-slip boundary conditions.
        More precisely, we perform an exact asymptotic expansion for the chemotaxis Navier-Stokes system with viscous coefficient $\varepsilon>0$, and obtain partial boundary layer profiles, establishing the well-posedness of the corresponding boundary layer profiles. Specially, we also establish the local well-posedness of solutions to the supercritical chemotaxis Euler equation (with $\varepsilon=0$) by overcoming the difficulty from the disappearance of  diffusion terms.
		\\\ \\
		{\bf Keywords:} Chemotaxis-Navier-Stokes system; Navier-slip boundary conditions; Well-posedness; Asymptotic expansion.
        \\
         \textbf{2020 Mathematics Subject Classification.} {Primary 35Q92; Secondary 76N05, 35B44, 35Q30, 35Q35}
	\end{abstract}
    \section{Introduction}
    In this paper, we consider the following chemotaxis-Navier-Stokes system with logarithmic sensitivity:
	\begin{eqnarray}\label{eq:C-N-S}
		\left\{
		\begin{split}{}
			&\partial_tn-D\Delta n+u\cdot\nabla n+\chi\nabla\cdot(\frac{n}{c}\nabla c)=0,\;\;&\Omega\times(0,T),\\
			&\partial_tc-\varepsilon\Delta c+u\cdot\nabla c+nc=0,\;\;& \Omega\times(0,T),\\
			&\partial_tu-\varepsilon\Delta u+(u\cdot\nabla)u+\nabla p=n\nabla\phi,\;\;& \Omega\times(0,T),\\
			&\nabla\cdot u=0,\;\;& \Omega\times(0,T),\\
		\end{split}
		\right.
	\end{eqnarray}	
	where $n$, $c$, $u$ and $p$ denote cell density, oxygen concentration, the fluid velocity and the associated pressure respectively, $\Omega=\R^2_+=\{(x,y)\in\R^2|y>0\}$. $D>0$ and $\varepsilon\geq0$ are cell and oxygen diffusion coefficients respectively, and $\chi>0$ is referred to as the chemotactic coefficient measuring the strength of the chemotactic sensitivity, $\phi(x,y)$  is a sufficiently smooth function and independent of t. The system (\ref{eq:C-N-S}) is a deduced model derived from the chemotaxis system proposed by Tuval et al.\cite{TC}, which describes the mathematical mechanism for the oxygen-tactic behavior of aerobic bacteria in fluids. As seen in the experiments from \cite{TC}, aerobic bacteria accumulate from the bulk aqueous region toward the water-air interface. These bacteria impose no directional perturbation on the fluid flow or the oxygen rates into water, which results in the stratification of the adjacent flow field and oxygen transport velocity. Accordingly, the system is equipped with rigorous biological and physical background. For relevant investigations on bacterial dynamics, one may refer to \cite{DCC,WE}. Moreover, the case when $c=0$ means full depletion of the chemical attractant or dissolved oxygen. In this case, the "relative concentration variation rate" cannot be well defined, and model (\ref{eq:C-N-S}) breaks down at $c=0$.
    Some previous work without the singular term can be refered to \cite{BT,CK,JIN,LUI,WIN1,WIN2,WWX,ZHA,ZHE,ZZ,ZZH} and their references, in which the diffusion coefficient $\varepsilon$ was supposed to be negligible (or small). \textbf{A natural question is whether the dynamics of the system (\ref{eq:C-N-S}) change significantly when we consider both the singularity and the case $\varepsilon\ge 0$ simultaneously.} This motivates us to investigate the boundary layer problem for the system (\ref{eq:C-N-S}) with logarithmic sensitivity.

    However, most existing studies on singular equations focus on the case $u$ takes constant. Then the system (\ref{eq:C-N-S}) reduces to the Keller-Segel system with logarithmic sensitivity proposed in \cite{KS1,KS2}, which is widely adopted to simulate chemotactic motion of bacteria or cells \cite{LS1,NO}. A challenge in the corresponding investigations is how to handle the singularity at \(c=0\). Fortunately, this singularity can be eliminated via a Cole-Hopf-type transformation (see \cite{LS,LW}), from which one deduce that
	\begin{eqnarray}\label{v-c transform}
		\begin{split}{}
			v^\varepsilon=-\nabla\ln c=-\frac{\nabla c}{c}.
		\end{split}
	\end{eqnarray}
    The system becomes the singular Keller-Segel system in the absence of fluid as follows:
    \begin{eqnarray}\label{eq:nv varepsilon}
		\left\{
		\begin{split}{}
			&\partial_tn^\varepsilon-\Delta n^\varepsilon-\nabla\cdot(n^\varepsilon v^\varepsilon)=0,\\
			&\partial_tv^\varepsilon-\varepsilon\Delta v^\varepsilon+\nabla(\varepsilon|v^\varepsilon|^2-n^\varepsilon)=0.\\
		\end{split}
		\right.
	\end{eqnarray}
    One may refer to \cite{LL,LP,RWW,WXY,ZZ1} for well-posedness results of this system. The existence of boundary layer solutions to the system (\ref{eq:nv varepsilon}) was established in \cite{HWZ,LZ}. Subsequently, the convergence of solutions with \(\varepsilon>0\) to solutions with \(\varepsilon=0\) for the system (\ref{eq:nv varepsilon}) on a one-dimensional bounded domain was established in \cite{HL,PW}. In 2019, Hou-Wang \cite{HW} carried out a detailed derivation of the boundary layer profile equations for the system (\ref{eq:nv varepsilon}) on the two-dimensional half-plane and established the convergence result from \(\varepsilon>0\) to \(\varepsilon=0\). Further research results can be found in \cite{CHW,CLW,CLW1,LWY,MXW,MXW1}. Up to now, investigations on boundary layers of Keller-Segel systems with logarithmic sensitivity have been well developed.

    Compared with the Keller-Segel system, the singular chemotaxis-Navier-Stokes System (\ref{eq:C-N-S}) has been few investigated over the past decades. Regarding the existence of solutions, Liu \cite{LI} considered the initial-boundary value problem for the full chemotaxis-Navier-Stokes system (\ref{eq:C-N-S}) with constant viscosity. Boundary layer analysis on the system (\ref{eq:C-N-S}) is extremely limited. The only comparable results are established for the general chemotactic model with the term$-\nabla\cdot(n\nabla c)$, whose boundary layer investigations were provided in \cite{HOU,HOU1,LSW}.

    Therefore, in the present work, we investigate the well-posedness and boundary layer behaviors of the system (\ref{eq:C-N-S}). To this end, we first perform the Cole-Hopf transformation given in (\ref{v-c transform}) and set $D=\chi=1$ and $\nabla\phi=e_2=(0,1)$. Then (\ref{eq:C-N-S}) is transferred into the following system
	\begin{eqnarray}\label{eq:nvu varepsilon}
		\left\{
		\begin{split}{}
			&\partial_tn^\varepsilon-\Delta n^\varepsilon+u^\varepsilon\cdot\nabla n^\varepsilon-\nabla\cdot(n^\varepsilon v^\varepsilon)=0,\\
			&\partial_tv^\varepsilon-\varepsilon\Delta v^\varepsilon+\nabla(u^\varepsilon\cdot v^\varepsilon)+\nabla(\varepsilon|v^\varepsilon|^2-n^\varepsilon)=0,\\
			&\partial_tu^\varepsilon-\varepsilon\Delta u^\varepsilon+(u^\varepsilon\cdot\nabla)u^\varepsilon+\nabla p^\varepsilon=n^\varepsilon{e_2},\\
			&\nabla\cdot u^\varepsilon=0,\\
			&n^\varepsilon|_{t=0}=n_{in}, v^\varepsilon|_{t=0}=v_{in}, u^\varepsilon|_{t=0}=u_{in},
		\end{split}
		\right.
	\end{eqnarray}
	where $\varepsilon\geq0$.
	With the Cole-Hopf transformation (\ref{v-c transform}), the curl for $v^\varepsilon$ must be intrinsically free:
	\begin{eqnarray}\label{curl transform}
		\begin{split}{}
			\nabla\times v^\varepsilon=\partial_xv^\varepsilon_2-\partial_yv^\varepsilon_1=0.
		\end{split}
	\end{eqnarray}
	By taking the curl on both sides of the second equation of (\ref{eq:nvu varepsilon}), we can obtain $\partial_t(\nabla\times v^\varepsilon)=\varepsilon\Delta(\nabla\times v^\varepsilon)$. Therefore one can impose that $\nabla\times v_{in}=0$ and $\nabla\times v_{in}|_{\partial\Omega}=0$ for $\varepsilon>0$ to preserve the curl-free condition. Hence the slip boundary condition for (\ref{eq:nvu varepsilon}) is defined as
	\begin{eqnarray}\label{boundary condition}
		\left\{
		\begin{split}{}
			&({\partial_{y}} n^\varepsilon+n^\varepsilon v^\varepsilon_2)|_{y=0}=0,\;\;(v^\varepsilon_2,u^\varepsilon_2)|_{y=0}=0,\;\;(\partial_yv^\varepsilon_1,\partial_yu^\varepsilon_1)|_{y=0}=0,\;\;&\varepsilon>0,\\
            &({\partial_{y}} n^0+n^0 v^0_2)|_{y=0}=0,\;\;u^0_2|_{y=0}=0,&\varepsilon=0.\\
		\end{split}
		\right.
	\end{eqnarray}
    There are few results with respect to the system (\ref{eq:nvu varepsilon}) regarding boundary layer analysis. While the well-posed and the boundary layer results of the system $(\ref{eq:nvu varepsilon})$ remains unresolved, we will  answer  this question in the second part of the two-part work.

    In this article, we employ the matched asymptotic expansion method. Firstly, we introduce the boundary-layer coordinate $z=\frac{y}{\sqrt{\varepsilon}}$ and decompose the solution into an outer-layer solution characterizing the flow far from the boundary and an inner-layer solution capturing the viscous stratification near the boundary. Performing asymptotic expansions on the original equations, we derive the respective equations that the outer and inner solutions satisfy. Then we study the well-posedness of these equations. Accordingly, this paper establishes the boundary-layer asymptotic structure for the chemotaxis-fluid system with small viscosity and the well-posedness of approximate solution. We further demonstrate that the viscous fluid dynamics coupled with bacterial oxygen-taxis both determine the near-wall distributions of fluid velocity, bacterial density and oxygen concentration.
	
	The rest of the paper is organized as follows. In Sect.2, we state some notations, the chemotaxis Euler equation and the main well-posedness results. In Sect.3, we  derive  the detailed process of asymptotic expansion of the boundary layers. In Sect.4, the uniformly higher-order regularity of the Chemotaxis Euler equation equation is estimated. In Sect.5, the desired well-posedness of the solutions to the inner layer equations is presented. 	
	\section{Preliminaries and main results}
    \subsection{Notations}

        $N$ represents the set of positive integers. In the following sections, we use $L^p_{xy}$ to denote the Lebegue spaces $L^p(\R\times\R_+)$ with respect to $(x,y)$, and the corresponding norms are denoted by $\|\cdot\|_{L^p_{xy}}$,   $1\leq p\leq\infty$. Moreover, we use $H^q_{xy}$ to represent the Sobolev spaces $W^{q,2}(\R\times\R_+)$ with respect to $(x,y)$ for $q\in N$, and the corresponding norms be denoted by $\|\cdot\|_{H^q_{xy}}$. For simplicity, denote $\|\cdot\|_{L^q(0,T);Z}$ by $\|\cdot\|_{L^q_TZ}$ for the Space $Z$.

     Let $(\mathcal{H}^{k,m,l},\|\cdot\|_{k,m,l})$ be the anisotropic Sobolev spaces defined as follows: for $k,m,l\in N$,
	\begin{align}\label{H space}
			&\mathcal{H}^{k,m,l}:=\{g(x,z)\in L^2(\mathbb{R}\times\mathbb{R}_+):(1+z^{2k})^\frac{1}{2}\partial^\alpha_x\partial^\gamma_zg(x,z)\in L^2(\mathbb{R}\times\mathbb{R}_+),|\alpha|\leq m,\gamma\in N,|\gamma|\leq l\}
	\end{align}
	with the norm
	\begin{align}\label{normal defined}
		\|g\|^2_{k,m,l}=\sum_{|\alpha|\leq m,|\gamma|\leq l}\int^{+\infty}_{0}\int^{+\infty}_{-\infty}(1+z^{2k})|\partial^\alpha_x\partial^\gamma_zg(x,z)|^2dxdz.
	\end{align}
	In the rest of paper, $C$ is a constant, which may change from line to line, but is independent on the viscosity.
    \subsection{Boundary layer equations}
     This subsection is dedicated to deriving the outer and inner profile equations via formal asymptotic analysis of the solutions $(n^\varepsilon,v^\varepsilon,u^\varepsilon)$ to the systems (\ref{eq:nvu varepsilon}) and (\ref{boundary condition}) for small positive $\varepsilon>0$. Accordingly, the solution $(n^\varepsilon,v^\varepsilon,u^\varepsilon)$ admits the following asymptotic expansion in powers of $\varepsilon$ over the domain $\Omega$, where $j\in N$:
	\begin{align}\label{asymptotic expansions}
		\left\{
	\begin{aligned}{}
		&n^\varepsilon(t,x,y)=\sum\limits_{j\geq0}\varepsilon^{\frac{j}{2}}(n^{I,j}(t,x,y)+n^{B,j}(t,x,\frac{y}{\sqrt{\varepsilon}}))\\
        &v^\varepsilon(t,x,y)=\sum\limits_{j\geq0}\varepsilon^{\frac{j}{2}}(v^{I,j}(t,x,y)+v^{B,j}(t,x,\frac{y}{\sqrt{\varepsilon}}))\\
        &u^\varepsilon(t,x,y)=\sum\limits_{j\geq0}\varepsilon^{\frac{j}{2}}(u^{I,j}(t,x,y)+u^{B,j}(t,x,\frac{y}{\sqrt{\varepsilon}}))\\
		&p^\varepsilon(t,x,y)=\sum\limits_{j\geq0}\varepsilon^{\frac{j}{2}}(p^{I,j}(t,x,y)+p^{B,j}(t,x,\frac{y}{\sqrt{\varepsilon}}))\\
	\end{aligned}
	\right.
	\end{align}
	where $z=\frac{y}{\sqrt{\varepsilon}}$, $(n^{B,j},v^{B,j},u^{B,j},p^{B,j})$ is the boundary layer that is smooth and rapidly decreasing in the last variable.

    To derive the equations for the outer and inner profiles defined in (\ref{asymptotic expansions}), we divide into three main steps.
   Firstly, substituting expansion (\ref{asymptotic expansions}) into $(\ref{eq:nvu varepsilon})_5$ and $(\ref{boundary condition})$ yields the corresponding initial and boundary conditions.
   Secondly, inserting (\ref{asymptotic expansions}) into $(\ref{eq:nvu varepsilon})_4$ and (\ref{curl transform}) gives the expressions for the divergence and curl operators.
   Finally, successively substitution of (\ref{asymptotic expansions}) into the first three equations of $(\ref{eq:nvu varepsilon})$ allows us to derive explicit formulas for each layer profile.
   Combining the above arguments with the asymptotic matching method(see Section \ref{3} for details), we derive the leading-order equations that the outer profiles $(n^{I,0},v^{I,0},u^{I,0},p^{I,0})(t,x,y)$ satisfy
   \begin{eqnarray}\label{eq:n0v0u0}
        \left\{
	\begin{split}{}
		&\partial_tn^{I,0}-\Delta n^{I,0}+u^{I,0}\cdot\nabla n^{I,0}-\nabla\cdot(n^{I,0}v^{I,0})=0,\\
        &\partial_tv^{I,0}+\nabla(u^{I,0}\cdot v^{I,0})-\nabla n^{I,0}=0,\\
        &\nabla\times v^{I,0}=0,\\
        &\partial_tu^{I,0}+u^{I,0}\cdot\nabla u^{I,0}+\nabla p^{I,0}=n^{I,0}{e_2},\\
        &\nabla\cdot u^{I,0}=0,\\
        &\partial_yn^{I,0}+n^{I,0}v^{I,0}_2=0,\;u^{I,0}_2=0,\;\;\; on \;\; y=0,\\
		&(n^{I,0},v^{I,0},u^{I,0})|_{t=0}=(n_0,v_0,u_0).
	\end{split}
        \right.
\end{eqnarray}
   Note that $(\ref{eq:n0v0u0})$ is the system $(\ref{eq:nvu varepsilon})$-$(\ref{boundary condition})$ with $\varepsilon=0$, whose solution is represented by $(n^0,v^0,u^0,p^0)(t,x,y)$. Therefore, it can be inferred that
   \begin{eqnarray*}
	\begin{split}{}
		(n^0,v^0,u^0,p^0)(t,x,y)=(n^{I,0},u^{I,0},v^{I,0},p^{I,0})(t,x,y).
	\end{split}
\end{eqnarray*}
 We infer that the boundary layer $(n^{B,0},v^{B,0}_1,u^{B,0},u^{B,1}_2,p^{B,0},p^{B,1})(t,x,z)$ with $(t,x,z)\in(0,T)\times\R\times\R_+$ and $(n^{I,1},v^{I,1},u^{I,1})(t,x,y)$ with $(t,x,y)\in(0,T)\times\R\times\R_+$ satisfies
\begin{eqnarray*}
	\begin{split}{}
		n^{B,0}(t,x,z)&=0,\;\;v^{B,0}_1(t,x,z)=0,\;\;u^{B,0}(t,x,z)=0,\\
        u^{B,1}_2(t,x,z)&=0,\;\;p^{B,0}(t,x,z)=0,\;\;p^{B,1}(t,x,z)=0,\\
        n^{I,1}(t,x,y)&=0,\;\;v^{I,1}(t,x,y)=0,\;\;u^{I,1}(t,x,y)=0.
	\end{split}
\end{eqnarray*}
   For $v^{B,0}(t,x,z)$ with $(t,x,z)\in[0,T]\times\R\times\R_+$, $v^{B,0}_1=0$ and its second component satisfies
   \begin{eqnarray}\label{eq:vB02}
		\left\{
		\begin{split}{}			&\partial_tv^{B,0}_2+\overline{\partial_yu^{I,0}_2}v^{B,0}_2+\partial_zu^{B,1}_1\overline{v^{I,0}_1}+\partial_zv^{B,1}_1\overline{u^{I,0}_1}+\partial_zv^{B,0}_2z\overline{\partial_yu^{I,0}_2}-\partial_zn^{B,1}=\partial^2_zv^{B,0}_2\\
            &\partial_{x}v^{B,0}_2-\partial_zv^{B,1}_1=0,\\
        &{v^{B,0}_2|_{z=0}=-\overline{v^{I,0}_2}},\\
        &v^{B,0}_2(0,x,z)=0.
		\end{split}
		\right.
	\end{eqnarray}
    $n^{B,1}(t,x,z)$ is determined by $v^{B,0}_2(t,x,z)$ via
   \begin{eqnarray}\label{eq:nB1}
	\begin{split}{}
        &n^{B,1}(t,x,z)=\int^\infty_z\overline{n^{I,0}}v^{B,0}_2(t,x,\eta)d\eta,\\
	\end{split}
    \end{eqnarray}
   where we use $\overline{f}$ to represent $f(t,x,0)$, which applies to the entire article.

    For the first-order inner layer $v^{B,1}(t,x,z)$, its first component $v^{B,1}_1(t,x,z)$ with $(t,x,z)\in[0,T]\times\R\times\R_+$ satisfies
    \begin{align}\label{eq:vB11}
	\begin{cases}	     \partial_tv^{B,1}_1+\partial_x(z\overline{\partial_yu^{I,0}_2})v^{B,0}_2+\overline{\partial_xu^{I,0}_1}v^{B,1}_1  +\partial_xu^{B,1}_1\overline{v^{I,0}_1}\\    +\overline{\partial_xv^{I,0}_1}u^{B,1}_1+\partial_xv^{B,0}_2z\overline{\partial_yu^{I,0}_2}+\partial_xv^{B,1}_1\overline{u^{I,0}_1}-\partial_xn^{B,1}=\partial^2_zv^{B,1}_1,\\
    \partial_{x}v^{B,0}_2-\partial_zv^{B,1}_1=0\\
    \partial_zv^{B,1}_1|_{z=0}=-\overline{\partial_yv^{I,0}_1},\\
    v^{B,1}_1(0,x,z)=0,
	\end{cases}
\end{align}
   and its second component $v^{B,1}_2(t,x,z)$ with $(t,x,z)\in[0,T]\times\R\times\R_+$ satisfies
    \begin{align}\label{eq:vB12}
	\begin{cases}
\partial_tv^{B,1}_2+z\overline{\partial^2_yu^{I,0}_2}v^{B,0}_2+\overline{\partial_yu^{I,0}_1}v^{B,1}_1+\overline{\partial_yu^{I,0}_2}v^{B,1}_2+\partial_zu^{B,1}_1(z\overline{\partial_yv^{I,0}_1}+v^{B,1}_1)+\partial_zu^{B,2}_1\overline{v^{I,0}_1}\\
+\partial_zu^{B,2}_2(\overline{v^{I,0}_2}+v^{B,0}_2)+\overline{\partial_yv^{I,0}_1}u^{B,1}_1
+\partial_zv^{B,0}_2(\frac{1}{2}z^2\overline{\partial^2_yu^{I,0}_2}+\overline{u^{I,2}_2}+u^{B,2}_2)+\partial_zv^{B,1}_1(z\overline{\partial_yu^{I,0}_1}+u^{B,1}_1)\\
+\partial_zv^{B,1}_2z\overline{\partial_yu^{I,0}_2}+\partial_zv^{B,2}_1\overline{u^{I,0}_1}+2\partial_zv^{B,0}_2(\overline{v^{I,0}_2}+v^{B,0}_2)-\partial_zn^{B,2}=\partial^2_zv^{B,1}_2,\\
\partial_{x}v^{B,1}_2-\partial_zv^{B,2}_1=0,\\
v^{B,1}_2|_{z=0}=-\overline{v^{I,1}_2},\\
v^{B,1}_2(0,x,z)=0.
	\end{cases}
\end{align}
For the first-order inner layer $u^{B,1}(t,x,z)$ with $(t,x,z)\in[0,T]\times\R\times\R_+$, $u^{B,1}_1(t,x,z)$ satisfies
	\begin{align}\label{eq:uB11}
		\begin{cases}	\partial_tu^{B,1}_1+u^{B,1}_1\overline{\partial_xu^{I,0}_1}+\overline{u^{I,0}_1}\partial_xu^{B,1}_1+z\overline{\partial_yu^{I,0}_2}\partial_zu^{B,1}_1=\partial^2_zu^{B,1}_1\\
        \partial_xu^{B,1}_1+\partial_zu^{B,2}_2=0,\\
        \partial_zu^{B,1}_1|_{z=0}=-\overline{\partial_yu^{I,0}_1},\\
        u^{B,1}_1(0,x,z)=0.
		\end{cases}
\end{align}
  The second-order inner layer $n^{B,2}(t,x,z)$ satisfies
   \begin{align}\label{eq:nB2}
		n^{B,2}(t,x,z)&=\overline{n^{I,0}}\int^\infty_zv^{B,1}_2(t,x,\eta)d\eta+\overline{v^{I,0}_2}\int^\infty_zn^{B,1}(t,x,\eta)d\eta+\int^\infty_zv^{B,0}_2(t,x,\eta)n^{B,1}(t,x,\eta)d\eta\notag\\
        &\quad-\overline{\partial_yn^{I,0}}\int^\infty_z\int^\infty_\eta\big {(}\xi\partial_\xi v^{B,0}_2(t,x,\xi)+v^{B,0}_2(t,x,\xi)\big{)}d\xi d\eta.
\end{align}
Taking the first component of $v^{B,2}(t,x,z)$ with $(t,x,z)\in[0,T]\times\R\times\R_+$, $v_1^{B,2}(t,x,z)$ satisfies
\begin{align}\label{eq:vB21}
\left\{
\begin{aligned}
&\partial_tv^{B,2}_1+\overline{\partial_xu^{I,0}_1}v^{B,2}_1+z\overline{\partial_x\partial_yu^{I,0}_1}v^{B,1}_1+z\overline{\partial_x\partial_{y}u^{I,0}_2}v^{B,1}_2+\partial_x(\frac{1}{2}z^2\overline{\partial^2_yu^{I,0}_2}+\overline{u^{I,2}_2})v^{B,0}_2\\
    &+\partial_xu^{B,1}_1(z\overline{\partial_yv^{I,0}_1}+v^{B,1}_1)+\partial_xu^{B,2}_1\overline{v^{I,0}_1}+\partial_xu^{B,2}_2(\overline{v^{I,0}_2}+v^{B,0}_2)+\overline{\partial_xv^{I,0}_1}u^{B,2}_1+\overline{\partial_xv^{I,0}_2}u^{B,2}_2\\
    &+z\overline{\partial_x\partial_yv^{I,0}_1}u^{B,1}_1+\partial_xv^{B,0}_2(\frac{1}{2}z^2\overline{\partial^2_yu^{I,0}_2}+\overline{u^{I,2}_2}+u^{B,2}_2)+\partial_xv^{B,1}_1(z\overline{\partial_yu^{I,0}_1}+u^{B,1}_1)\\
    &+\partial_xv^{B,1}_2z\overline{\partial_{y}u^{I,0}_2}+\partial_xv^{B,2}_1\overline{u^{I,0}_1}+\overline{\partial_xv^{I,0}_2}v^{B,0}_2+2\partial_xv^{B,0}_2(\overline{v^{I,0}_2}+v^{B,0}_2)-\partial_xn^{B,2}=\partial^2_zv^{B,2}_1\\
    &\partial_zv^{B,2}_1|_{z=0}=0,\\
        &v^{B,2}_1(0,x,z)=0.
	\end{aligned}
    \right.
\end{align}
     The first component of $u^{B,2}(t,x,z)$ with $(t,x,z)\in[0,T]\times\R\times\R_+$ satisfies
     \begin{align}\label{eq:uB21}
     \left\{
	\begin{aligned}{}  &\partial_tu^{B,2}_1+u^{B,1}_1z\overline{\partial_x\partial_yu^{I,0}_1}+u^{B,2}_1\overline{\partial_xu^{I,0}_1}+u^{B,2}_2\overline{\partial_yu^{I,0}_1}+\overline{u^{I,0}_1}\partial_xu^{B,2}_1+z\overline{\partial_yu^{I,0}_1}\partial_xu^{B,1}_1\\
    &+u^{B,1}_1\partial_xu^{B,1}_1+z\overline{\partial_yu^{I,0}_2}\partial_zu^{B,2}_1+(\frac{1}{2}z^2\overline{\partial^2_yu^{I,0}_2}+\overline{u^{I,2}_2})\partial_zu^{B,1}_1+u^{B,2}_2\partial_zu^{B,1}_1+\partial_xp^{B,2}=\partial^2_zu^{B,2}_1,\\
    &\partial_xu^{B,2}_1+\partial_zu^{B,3}_2=0,\\
    &\partial_zu^{B,2}_1|_{z=0}=0,\\
        &u^{B,2}_1(0,x,z)=0.
	\end{aligned}
    \right.
\end{align}
From the curl and divergence conditions, we can deduce
    \begin{eqnarray}\label{eq:uB22}
		\begin{split}{}
			u^{B,2}_2(t,x,z)=\int^\infty_z\partial_xu^{B,1}_1(t,x,\eta)d\eta,
		\end{split}
	\end{eqnarray}
    and
    \begin{eqnarray}\label{eq:uB32}
		\begin{split}{}
        u^{B,3}_2(t,x,z)=\int^\infty_z\partial_xu^{B,2}_1(t,x,\eta)d\eta.
		\end{split}
	\end{eqnarray}
        The pressure boundary layer $p^{B,2}$ can be defined as
         \begin{eqnarray}\label{eq:pB2}
		\begin{split}{}
			&p^{B,2}(t,x,z)=-\int^\infty_zn^{B,1}(t,x,\eta) d\eta.
		\end{split}
	\end{eqnarray}
    The detailed derivation of $(\ref{eq:n0v0u0})$-$(\ref{eq:pB2})$ will be presented  in Section \ref{3}. Similarly, we can further derive the initial boundary value problems for high-order profiles $(n^{I,j},v^{I,j},u^{I,j},p^{I,j})$ and $(n^{B,j+1},v^{B,j+1}_1,v^{B,j}_2,u^{B,j+1}_1,u^{B,j+2}_2,p^{B,j+1})$ with $j\geq2$.

     \subsection{Main results}Our main results are stated in the following Theorems. Theorem \ref{th2.1} give the local well-posedness of the classical solution of $(\ref{eq:n0v0u0})$.

    To analyze the regularity of solutions to the system (\ref{eq:n0v0u0}), we impose the following compatibility conditions on the initial data for all $\theta\in N$
	\begin{eqnarray*}
	{(M_1)}\left\{
		\begin{split}{}
			&\partial_yn_{in}(x,0)=0,\\
                &\partial_y\partial^\theta_tn_{in}(x,0)=[\Delta\partial_y\partial^{\theta-1}_t n_{in}-\partial_y\partial^{\theta-1}_t(u_{in}\cdot\nabla n_{in})+\partial_y\partial^{\theta-1}_t(\nabla\cdot(n_{in}v_{in}))](x,0),\\
		\end{split}
		\right.
	\end{eqnarray*}
    \begin{eqnarray*}
	{(M_2)}\left\{
		\begin{split}{}
			&v_{in2}(x,0)=0,\;\;\partial_yv_{in1}(x,0)=0,\\
                &\partial_y\partial^{\theta}_tv_{in1}(x,0)=[-\partial_x\partial_y\partial^{\theta-1}_t(u_{in}\cdot v_{in})+\partial_x\partial_y\partial^{\theta-1}_tn_{in}](x,0),\\
                &\partial^{\theta}_tv_{in2}(x,0)=[-\partial_y\partial^{\theta-1}_t(u_{in}\cdot v_{in})+\partial_y\partial^{\theta-1}_tn_{in}](x,0),\\
		\end{split}
		\right.
	\end{eqnarray*}
     and
    \begin{eqnarray*}
	{(M_3)}\left\{
		\begin{split}{}
                &u_{in2}(x,0)=0,\;\;\partial_yu_{in1}(x,0)=0,\\
                &\partial^\theta_tu_{in2}(x,0)=[-\partial^{\theta-1}_t(u_{in}\cdot\nabla u_{02})-\partial_y\partial^{\theta-1}_tp_{in}+\partial^{\theta-1}_tn_{in}](x,0),\\
                &\partial_y\partial^\theta_tu_{in1}(x,0)=[-\partial_y\partial^{\theta-1}_t(u_{in}\cdot\nabla u_{in1})-\partial_{x}\partial_y\partial^{\theta-1}_tp_{in}](x,0).\\
		\end{split}
		\right.
	\end{eqnarray*}
    Where $p_{in}$ be imposed as follows
    \begin{eqnarray*}
	{(M_4)}\left\{
		\begin{split}{}
                &\Delta p_{in}(x,y)=-\nabla\cdot(u_{in}\cdot\nabla u_{in}-n_{in}e_2)(x,y),\\
                &\partial_yp_{in}(x,0)=n_{in}(x,0),\\
                &\Delta\partial^\theta_t p_{in}(x,y)=-\nabla\cdot(\partial^\theta_t(u_{in}\cdot\nabla u_{in})-\partial^\theta_tn_{in}e_2)(x,y),\\
                &\partial_y\partial^\theta_tp_{in}(x,0)=\partial^\theta_tn_{in}(x,0).\\
		\end{split}
		\right.
	\end{eqnarray*}

	\begin{theorem}\label{th2.1}
		{Assume that the initial data $(n_{in},v_{in},u_{in})$ satisfy
        $$(n_{in},v_{in},u_{in})\in H^{m}_{xy}\times H^{m}_{xy}\times H^{m+1}_{xy},\;\;n_{in}\geq0,\;\;\nabla\times v_{in}=0,\;\;m\geq2,$$
        with the compatibility conditions $(M_1)$-$(M_4)$, then there exists the time $T>0$ such that the system \eqref{eq:nvu varepsilon}-\eqref{boundary condition} with $\varepsilon=0$ \rm{(i.e. \eqref{eq:n0v0u0})} has a unique solution $(n^0,v^0,u^0)$ on $[0,T]$ satisfying
        \begin{align*}\label{n^0v^0u^0 Hm}
           \|n^0\|^2_{L^\infty(0,T;H^{m}_{xy})} +\|v^0\|^2_{L^\infty(0,T;H^{m}_{xy})}+\|u^0\|^2_{L^\infty(0,T;H^{m+1}_{xy})}+ \|n^0\|^2_{L^2(0,T;H^{m+1}_{xy})}\leq C,
        \end{align*}
        and
        \begin{align*}
          \|\partial^j_tn^0\|^2_{L^\infty(0,T;H^{m-2j}_{xy})}+\|\partial^j_tv^0\|^2_{L^\infty(0,T;H^{m-2j}_{xy})}&\leq C,\;0<2j\leq m,\notag\\
           \|\partial^j_tu^0\|^2_{L^\infty(0,T;H^{m+1-2j}_{xy})}&\leq C,\;0<2j\leq m+1.
        \end{align*}

          }
	\end{theorem}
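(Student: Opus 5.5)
We prove local well-posedness of \eqref{eq:n0v0u0} by constructing approximate solutions through a linearized iteration, proving uniform-in-iteration a priori bounds in the energy space of Theorem \ref{th2.1} on a short time interval, and passing to the limit by a contraction in a lower norm. Set
\[
E(t)=\|n\|^2_{H^m_{xy}}+\|v\|^2_{H^m_{xy}}+\|u\|^2_{H^{m+1}_{xy}},
\]
with the parabolic dissipation $\int_0^t\|\nabla n\|^2_{H^m_{xy}}$. The system is a hybrid: the $n$-equation is parabolic with an oblique (Robin-type) boundary condition, $u$ solves 2D incompressible Euler with a forcing $n e_2$ and slip condition $u_2|_{y=0}=0$, and $v$ solves a first-order transport-type equation $\partial_t v+\nabla(u\cdot v)=\nabla n$ with no boundary condition, since $u_2=0$ on $y=0$ means no characteristics enter.

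Given $(\tilde n,\tilde v,\tilde u)$ in the ball $\{\sup_t E\le 2C_0\}$, we solve three linear problems in turn.
\begin{itemize}
\item For $n$: $\partial_t n-\Delta n+\tilde u\cdot\nabla n-\nabla\cdot(n\tilde v)=0$ with $\partial_y n+n\tilde v_2=0$ on $y=0$. This is a linear parabolic Neumann-type problem, solvable by Galerkin or standard theory.
\item For $u$: $u$ solves the linearized Euler equations transported by $\tilde u$, with forcing $\tilde n e_2$.
\item For $v$: we do not treat $v$ as a general vector field. Since $\nabla\times v=0$ is preserved ($\partial_t(\nabla\times v)=0$ when $\varepsilon=0$), we write $v=-\nabla\psi$ with $\partial_t\psi=\tilde u\cdot\nabla\psi\ldots$. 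More simply, the $v$-equation is a pure time integral of the gradient $\nabla(n-\tilde u\cdot v)$.
\end{itemize}

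The $v$-equation therefore loses one derivative, and this is why $u$ is taken in $H^{m+1}$ and $n$ gains $L^2_tH^{m+1}$ from the heat part. Concretely, $\|\nabla n\|_{H^m}$ is controlled only in $L^2_t$, which is enough because $\|v(t)\|_{H^m}\le \|v_{in}\|_{H^m}+\int_0^t\|\nabla(n-u\cdot v)\|_{H^m}$. The term $\nabla(u\cdot v)=(\nabla u)^Tv+(\nabla v)^Tu$ contains $\nabla v$. Using curl-freeness, $(\nabla v)^Tu=(u\cdot\nabla)v$, so it is a genuine transport term. Energy estimates then close: after applying $\partial^\alpha$, the top-order term $\int u\cdot\nabla\partial^\alpha v\cdot\partial^\alpha v$ integrates by parts to zero by $\nabla\cdot u=0$ and $u_2|_{y=0}=0$.

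Each component then satisfies its own estimate.
\begin{itemize}
\item The Euler part follows the standard $H^{m+1}$ estimate. We use the vorticity $\omega=\nabla\times u$, which satisfies $\partial_t\omega+u\cdot\nabla\omega=\partial_x n$, together with the div-curl elliptic estimate in the half-plane under slip conditions, $\|u\|_{H^{m+1}}\lesssim\|u\|_{L^2}+\|\omega\|_{H^m}$. This costs $\|\partial_x n\|_{H^m}$, which lies in $L^2_t$.
\item For $n$, tangential derivatives commute with the boundary condition. Normal derivatives are recovered from the equation, $\partial_y^2 n=\partial_t n-\partial_x^2 n+\ldots$, so we estimate time derivatives $\partial_t^j$. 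This is where the compatibility conditions $(M_1)$–$(M_4)$ enter: they make $\partial_t^j$ of the data consistent at $t=0$ and make the boundary terms vanish. The boundary term $\int_{y=0}\partial^\alpha n\,\partial^\alpha(n v_2)$ is handled by trace inequalities, $\|f\|_{L^2(y=0)}^2\le\|f\|\|\nabla f\|$, absorbed into the dissipation.
\end{itemize}

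The resulting estimate is
\[
E(t)+\int_0^t\|\nabla n\|^2_{H^m}\le E(0)+C\int_0^t P(E)\,ds,
\]
which yields uniform bounds on a short time $T$. The $\partial_t^j$ bounds follow by reading the equations, each time derivative trading two spatial derivatives for $n$ (and one for $v,u$, which is consistent with the stated $m-2j$).

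The main obstacle is the $v$-equation with no diffusion and no boundary condition. The estimates close only if
\begin{itemize}
\item the curl-free structure is used to convert $\nabla(u\cdot v)$ into transport plus the lower-order term $(\nabla u)^Tv$, and
\item the loss from $\nabla n$ is compensated by the parabolic gain for $n$.
\end{itemize}
The coupling $\nabla\cdot(nv)$ in the $n$-equation needs $v\in H^m$ only, which is consistent, whereas $\nabla n$ in the $v$-equation needs $n\in L^2_tH^{m+1}$. We must check that the cross terms pair correctly: $\int\partial^\alpha\nabla n\cdot\partial^\alpha v$ against $\int\partial^\alpha n\,\nabla\cdot\partial^\alpha(nv)$. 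The first is bounded by $\|n\|_{H^{m+1}}\|v\|_{H^m}\le\delta\|\nabla n\|_{H^m}^2+C_\delta\|v\|^2_{H^m}$.

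Convergence of the iteration follows from a contraction in $L^\infty_tL^2$ (with $L^2_tH^1$ for $n$) on a possibly smaller $T$. Uniqueness uses the same difference estimate, and nonnegativity of $n$ follows from the maximum principle for the linear parabolic problem with the Robin-type condition.
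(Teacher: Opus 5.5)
Your strategy is essentially the paper's. You estimate $n$ through tangential and time derivatives and recover its normal derivatives from $\partial_y^2n=\partial_tn-\partial_x^2n+u\cdot\nabla n-\nabla\cdot(nv)$. The curl-free identity $(\nabla v)^{T}u=(u\cdot\nabla)v$ turns $\nabla(u\cdot v)$ into transport, so the top-order term $\int u\cdot\nabla|\partial^\alpha v|^2$ vanishes. You handle $u$ through $\partial_t\omega+u\cdot\nabla\omega=\partial_xn$ and the div--curl estimate of Lemma~\ref{lem nabla omega}, pay for the derivative loss in the $v$-equation with $n\in L^2_tH^{m+1}$, and finish with a contraction, which the paper only invokes.

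\textbf{The step that fails as stated is your contraction norm.} Take two solutions $(n,v,u)$, $(n',v',u')$ and set $\delta n=n-n'$, etc. The difference of the $v$-equations contains $\nabla(\delta u\cdot v')=(\nabla\delta u)^{T}v'+(\nabla v')^{T}\delta u$. The pairing $\int(\nabla\delta u)^{T}v'\cdot\delta v$ cannot be bounded in terms of $\|\delta u\|_{L^2}$, and integrating by parts instead produces $\nabla\cdot\delta v$, which is not controlled either. So a contraction in $L^\infty_tL^2$ for $(\delta v,\delta u)$ does not close. You need the same one-derivative offset as in the energy space: control $\delta u$ in $L^\infty_tH^1$ via an $L^2$ estimate of $\delta\omega$. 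This does close. The equation $\partial_t\delta\omega+u\cdot\nabla\delta\omega+\delta u\cdot\nabla\omega'=\partial_x\delta n$ costs only $\|\delta u\|_{L^4}\|\nabla\omega'\|_{L^4}\lesssim\|\delta u\|_{H^1}\|u'\|_{H^3}$ and $\|\partial_x\delta n\|_{L^2}$, and the latter is absorbed by the dissipation of $\delta n$. In flux form, with $\partial_y\delta n+\delta n\,v_2+n'\delta v_2=0$ on $y=0$, the $\delta n$-estimate needs only $\|n'\|_{L^\infty}\|\delta v\|_{L^2}$ and $\|\delta u\|_{L^4}\|\nabla n'\|_{L^4}$.

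Three smaller inaccuracies:
\begin{itemize}
\item At top tangential order, the boundary integral $\int_{y=0}\partial_x^\alpha n\,\partial_x^\alpha(nv_2)\,dx$ cannot be handled by applying $\|f\|^2_{L^2(y=0)}\le\|f\|\|\nabla f\|$ to $f=\partial_x^\alpha(nv_2)$. That would need $\nabla\partial_x^\alpha v$, which is never gained because $v$ is not smoothed. The right mechanism, used in Corollary~\ref{cor5.1}, is to integrate $\Delta n$ and $\nabla\cdot(nv)$ by parts together. The boundary terms then cancel exactly by $\partial_x^\alpha\partial_t^j(\partial_yn+nv_2)|_{y=0}=0$.
\item It is this differentiated no-flux condition, not $(M_1)$--$(M_4)$, that removes boundary terms. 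The compatibility conditions matter only at $t=0$.
\item Time derivatives of $v$ do not cost one derivative each. Since $\partial_t^jv$ contains $\nabla\partial_t^{j-1}n\in H^{m-2j+1}$, $v$ loses two derivatives per time derivative after the first, which is still enough for the stated $H^{m-2j}$ bound.
\end{itemize}
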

       We will provide a detailed proof process for this conclusion in Section {\ref{4}}.

       \begin{remark}
       Compared to the results in \rm{\cite{HW,LSW}}, we study the system \eqref{eq:C-N-S}, which is a coupled Chemotaxis-Navier-Stokes system with logarithmic singularity. By the Cole-Hopf transformation $v^0=-\nabla\ln c^0=-\frac{\nabla c^0}{c^0}$, the system becomes \eqref{eq:nvu varepsilon}-\eqref{boundary condition} with $\varepsilon=0$, whose well-posedness results are established in Theorem \ref{th2.1}.
       During the proof, the mainly difficulty comes from
        the supercritical nonlinear terms. To overcome the difficult terms $\int^\infty_0\int^\infty_{-\infty}\partial^j_tv^0\cdot (\nabla\partial^j_tv^0)\cdot u^0dxdy(j=1,2)$, we apply the curl-free structure of \(v^0\). By the elliptic regularity theory and the mathematical induction method, each order regularity of the solutions is established, which will be adapted to the subsequent boundary-layer regularity estimates in Theorem \ref{th inner layer}.
        \end{remark}

        \begin{theorem}\label{th inner layer}
		{Let $(n_{in},v_{in},u_{in})$ satisfies the assumptions in Theorem \ref{th2.1} and $m\geq 11$. Then there exists the time $T>0$ and for every integer $m_1$ satisfying $7\leq m_1\leq m-4$ such that for any $k>1$\\
        \rm{(\rm{i})} (\ref{eq:vB02}) admits a unique solution $v^{B,0}_2$ satisfying
        \begin{align*}            &\partial^j_tv^{B,0}_2\in{L^\infty(0,T;\mathcal{H}^{k,m_1-j,0})\cap L^2(0,T;\mathcal{H}^{k,m_1-j,1})},\\
        &\partial_tv^{B,0}_2\in{L^\infty(0,T; \mathcal{H}^{k,m_1-l,l-1})},\;v^{B,0}_2\in{L^\infty(0,T;\mathcal{H}^{k,m_1-i,i})},
        \end{align*}
        then for (\ref{eq:nB1}) we obtain that
        \begin{align*}            &\partial^j_tn^{B,1}\in{L^\infty(0,T;\mathcal{H}^{k,m_1-j,1})\cap L^2(0,T;\mathcal{H}^{k,m_1-j,2})},\\
        &\partial_tn^{B,1}\in{L^\infty(0,T; \mathcal{H}^{k,m_1-l,l})},\;n^{B,1}\in{L^\infty(0,T;\mathcal{H}^{k,m_1-i,i+1})},
        \end{align*}
        further, for (\ref{eq:pB2})
        \begin{align*}            &\partial^j_tp^{B,2}\in{L^\infty(0,T;\mathcal{H}^{k,m_1-j,2})\cap L^2(0,T;\mathcal{H}^{k,m_1-j,3})},\\
        &\partial_tp^{B,2}\in{L^\infty(0,T; \mathcal{H}^{k,m_1-l,l+1})},\;p^{B,2}\in{L^\infty(0,T;\mathcal{H}^{k,m_1-i,i+2})},
        \end{align*}
        (ii) (\ref{eq:vB11}) admits a unique solution $v^{B,1}_1$ satisfying
        \begin{align*}            &\partial^j_tv^{B,1}_1\in{L^\infty(0,T;\mathcal{H}^{k,m_1-1-j,0})\cap L^2(0,T;\mathcal{H}^{k,m_1-1-j,1})},\\
        &\partial_tv^{B,1}_1\in{L^\infty(0,T; \mathcal{H}^{k,m_1-1-l,l-1})},\;v^{B,1}_1\in{L^\infty(0,T;\mathcal{H}^{k,m_1-1-i,i})},
        \end{align*}
        (iii) (\ref{eq:vB12}) admits a unique solution $v^{B,1}_2$ satisfying
        \begin{align*}            &\partial^j_tv^{B,1}_2\in{L^\infty(0,T;\mathcal{H}^{k,m_1-2-j,0})\cap L^2(0,T;\mathcal{H}^{k,m_1-2-j,1})},\\
        &\partial_tv^{B,1}_2\in{L^\infty(0,T; \mathcal{H}^{k,m_1-2-l,l-1})},\;v^{B,1}_2\in{L^\infty(0,T;\mathcal{H}^{k,m_1-2-i,i})},
        \end{align*}
        then for (\ref{eq:nB2}) we have
        \begin{align*}            &\partial^j_tn^{B,2}\in{L^\infty(0,T;\mathcal{H}^{k,m_1-2-j,1})\cap L^2(0,T;\mathcal{H}^{k,m_1-2-j,2})},\\
        &\partial_tn^{B,2}\in{L^\infty(0,T; \mathcal{H}^{k,m_1-2-l,l})},\;n^{B,2}\in{L^\infty(0,T;\mathcal{H}^{k,m_1-2-i,i+1})},
        \end{align*}
        (iv) (\ref{eq:uB11}) admits a unique solution $u^{B,1}_1$ satisfying
        \begin{align*}            &\partial^j_tu^{B,1}_1\in{L^\infty(0,T;\mathcal{H}^{k,m_1-j,0})\cap L^2(0,T;\mathcal{H}^{k,m_1-j,1})},\\
            &\partial_tu^{B,1}_1\in{L^\infty(0,T; \mathcal{H}^{k,m_1-l,l-1})},\;u^{B,1}_1\in{L^\infty(0,T;\mathcal{H}^{k,m_1-i,i})},
        \end{align*}
        then for (\ref{eq:uB22}) we obtain that
        \begin{align*}            &\partial^j_tu^{B,2}_2\in{L^\infty(0,T;\mathcal{H}^{k,m_1-1-j,1})\cap L^2(0,T;\mathcal{H}^{k,m_1-1-j,2})},\\
        &\partial_tu^{B,2}_2\in{L^\infty(0,T; \mathcal{H}^{k,m_1-1-l,l})},\;u^{B,2}_2\in{L^\infty(0,T;\mathcal{H}^{k,m_1-1-i,i+1})},
        \end{align*}
        (v) (\ref{eq:vB21}) admits a unique solution $v^{B,2}_1$ satisfying
        \begin{align*}            &\partial^j_tv^{B,2}_1\in{L^\infty(0,T;\mathcal{H}^{k,m_1-3-j,0})\cap L^2(0,T;\mathcal{H}^{k,m_1-3-j,1})},\\
        &\partial_tv^{B,2}_1\in{L^\infty(0,T; \mathcal{H}^{k,m_1-3-l,l-1})},\;v^{B,2}_1\in{L^\infty(0,T;\mathcal{H}^{k,m_1-3-i,i})},
        \end{align*}
        (vi) (\ref{eq:uB21}) admits a unique solution $u^{B,2}_1$ satisfying
        \begin{align*}            &\partial^j_tu^{B,2}_1\in{L^\infty(0,T;\mathcal{H}^{k,m_1-2-j,0})\cap L^2(0,T;\mathcal{H}^{k,m_1-2-j,1})},\\
        &\partial_tu^{B,2}_1\in{L^\infty(0,T; \mathcal{H}^{k,m_1-2-l,l-1})},\;u^{B,2}_1\in{L^\infty(0,T;\mathcal{H}^{k,m_1-2-i,i})},
        \end{align*}
        then for (\ref{eq:uB32}) we obtain that
        \begin{align*}            &\partial^j_tu^{B,3}_2\in{L^\infty(0,T;\mathcal{H}^{k,m_1-3-j,1})\cap L^2(0,T;\mathcal{H}^{k,m_1-3-j,2})},\\
        &\partial_tu^{B,3}_2\in{L^\infty(0,T; \mathcal{H}^{k,m_1-3-l,l})},\;u^{B,3}_2\in{L^\infty(0,T;\mathcal{H}^{k,m_1-3-i,i+1})},
        \end{align*}
      where $j=0,1,2$, $l=2,3$ and $i=1,2,3,4$.
		}
	\end{theorem}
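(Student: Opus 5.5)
The plan is to treat the boundary layer problems one at a time, in the order in which their coefficients become available, and to regard each one as a \emph{linear} parabolic problem in the half-line variable $z$ with $x$ as a parameter direction. The order is (iv), (i), (ii), (iii), (v), (vi). By Theorem \ref{th2.1} with $m\ge 11$, the traces $\overline{u^{I,0}}$, $\overline{\partial_yu^{I,0}}$, $\overline{\partial^2_yu^{I,0}}$, $\overline{v^{I,0}}$, $\overline{n^{I,0}}$ and their $x$-derivatives up to order about $m-2$, together with their $\partial_t$-derivatives, lie in $L^\infty_t L^\infty_x\cap L^\infty_tH^{s}_x$. This follows from the trace theorem and Sobolev embedding, and it is where the loss $m_1\le m-4$ comes from. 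The structural point is this: once the curl relation $\partial_xv^{B,0}_2=\partial_zv^{B,1}_1$ is imposed, we can write $\partial_zv^{B,1}_1$ and the nonlocal quantities $n^{B,1}=\overline{n^{I,0}}\int_z^\infty v^{B,0}_2$ and $u^{B,2}_2=\int_z^\infty\partial_xu^{B,1}_1$ in terms of the unknown. This closes each system. The one exception is the coupling between (\ref{eq:vB02}) and (\ref{eq:vB11}), where I would solve the pair $(v^{B,0}_2,v^{B,1}_1)$ simultaneously.

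The first step is to homogenize the boundary data. For the Dirichlet problem (\ref{eq:vB02}) I would subtract $-\overline{v^{I,0}_2}e^{-z}$. For the Neumann problems (\ref{eq:vB11}) and (\ref{eq:uB11}) I would subtract $\overline{\partial_yv^{I,0}_1}e^{-z}$ and $\overline{\partial_yu^{I,0}_1}e^{-z}$ respectively. Compatibility at $t=0$ then follows from $(M_1)$--$(M_3)$, for example $v_{in2}(x,0)=0$ and $\partial_yu_{in1}(x,0)=0$.

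The core a priori estimate comes from applying $\partial_x^\alpha$ with $|\alpha|\le m_1-j$ and $\partial_t^j$, then testing against $(1+z^{2k})\partial_t^j\partial_x^\alpha w$. The dissipation $\|\partial_z\cdot\|$ is produced by $\partial_z^2$. The weight derivative $2kz^{2k-1}$ is absorbed by Young's inequality with a lower weight, interpolated between $\|z^k\cdot\|$ and $\|\cdot\|$. The transport term $z\overline{\partial_yu^{I,0}_2}\partial_zw$ integrates by parts to $\overline{\partial_yu^{I,0}_2}$ times weighted $L^2$ norms, because $\partial_z(z(1+z^{2k}))\lesssim 1+z^{2k}$. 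The terms $\overline{u^{I,0}_1}\partial_x w$ are antisymmetric up to $\overline{\partial_xu^{I,0}_1}$. Nonlocal terms such as $\int_z^\infty w\,d\eta$ are controlled by the weighted Hardy inequality $\|(1+z^{k})\int_z^\infty w\|\lesssim\|(1+z^{k+1})w\|$. This is the reason for the requirement $k>1$: the weighted norm of $w$ dominates the $L^1_z$ norm. Gronwall's inequality then gives the $L^\infty\mathcal H^{k,m_1-j,0}\cap L^2\mathcal H^{k,m_1-j,1}$ bounds. Existence would come from a Galerkin scheme or from the linear theory applied to this estimate, and uniqueness from the same energy estimate applied to the difference of two solutions.

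Higher $z$-regularity, i.e.\ the spaces $\mathcal H^{k,m_1-i,i}$, would be obtained by induction on $i$. I would use the equation itself, $\partial_z^2w=\partial_tw+\cdots$, trading two $z$-derivatives for one $t$-derivative and losing one $x$-derivative per $z$-derivative in the process. The statements for $n^{B,1},n^{B,2},p^{B,2},u^{B,2}_2,u^{B,3}_2$ follow directly from the explicit integral formulas, the Hardy inequality, and $\partial_z\int_z^\infty f=-f$, which is why each of them gains one $z$-index.

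Problems (iii), (v) and (vi) are linear in their unknowns, with forcing built from the previously solved profiles. Their forcing terms include factors such as $z^2\overline{\partial^2_yu^{I,0}_2}$, and these are handled by the weight in the estimates. The regularity levels drop by $1$, $2$ and $3$ because the forcing contains $\partial_x$ of the earlier profiles. In (iii), I would close the estimate by replacing $\partial_zv^{B,2}_1$ with $\partial_xv^{B,1}_2$.

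The main obstacle is the coupled pair (i)--(ii). In (\ref{eq:vB02}) the term $\partial_zv^{B,1}_1\overline{u^{I,0}_1}=\partial_xv^{B,0}_2\overline{u^{I,0}_1}$ is harmless, but $\partial_zn^{B,1}=-\overline{n^{I,0}}v^{B,0}_2$ is zero order. Equation (\ref{eq:vB11}) contains $\partial_xv^{B,0}_2$ and $\partial_xn^{B,1}$, so $v^{B,1}_1$ is one $x$-derivative less regular than $v^{B,0}_2$. The equation for $v^{B,0}_2$ must therefore be closed without $v^{B,1}_1$, using the curl identity, and only afterwards is (\ref{eq:vB11}) solved with the loss of one $x$-derivative, giving $m_1-1$. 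I would check carefully that no term in (\ref{eq:vB02}) requires $v^{B,1}_1$ itself rather than its $z$-derivative. If such a term appears, I would instead define $v^{B,1}_1=-\int_z^\infty\partial_xv^{B,0}_2$ and verify consistency with (\ref{eq:vB11}) through the curl equation.
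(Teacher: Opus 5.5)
Your overall architecture matches the paper: lifting the boundary data, closing the $v^{B,0}_2$ and $v^{B,1}_2$ equations through the curl relation and $\partial_zn^{B,1}=-\overline{n^{0}}v^{B,0}_2$, weighted $\partial_x^\alpha\partial_t^j$ energy estimates, recovering normal regularity from the equation, and reading off $n^{B,1},p^{B,2},u^{B,2}_2,u^{B,3}_2,n^{B,2}$ from their integral formulas. However, the core weighted estimate does not close as you describe it, because you only treat the principal part of the normal transport term. The coefficient $\overline{\partial_yu^{I,0}_2}$ depends on $(t,x)$, so applying $\partial_x^\alpha\partial_t^j$ produces the commutators
\[
z\,\overline{\partial_x^\beta\partial_t^{i}\partial_yu^{I,0}_2}\,\partial_z\partial_x^{\alpha-\beta}\partial_t^{j-i}w,\qquad \beta+i\geq 1 .
\]
These retain the unbounded factor $z$, and integration by parts does not remove it. Tested against $(1+z^{2k})\partial_x^\alpha\partial_t^jw$, they require $\|z(1+z^{2k})^{\frac12}\partial_z\partial_x^{\alpha-\beta}\partial_t^{j-i}w\|_{L^2_{xz}}$, or after moving $\partial_z$, $\|z(1+z^{2k})^{\frac12}\partial_x^{\alpha-\beta}\partial_t^{j-i}w\|_{L^2_{xz}}$. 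That is one power of $z$ more than an energy with the fixed weight $1+z^{2k}$ controls, so the Gronwall argument is not closed. The same weight shift reappears when you recover $\partial_z^2w$ from the equation.

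This is exactly the difficulty the paper singles out, and the missing idea is to let the weight depend on the derivative level. The paper estimates $\partial_x^\alpha\partial_t^jw$ with weight $1+z^{2\kappa_{\alpha,j}}$, where $\kappa_{\alpha,j}=k+m_0-j-\alpha$, so that $z^2(1+z^{2\kappa_{\alpha,j}})\lesssim 1+z^{2\kappa_{\alpha-\beta,j}}$ and $\kappa_{\alpha,j}+1=\kappa_{\alpha,j-1}$. Equivalently, you can induct on $\alpha+j$, first controlling all lower levels in every weight. This in turn forces you to verify the source terms in the heavier spaces $\mathcal{H}^{k+m_0-j,m_0-j,0}$ rather than $\mathcal{H}^{k,m_0-j,0}$. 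That is possible only because each previously constructed profile is controlled for every $k$.

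Your ordering is also wrong. Equations (\ref{eq:vB12}) and (\ref{eq:vB21}) contain $u^{B,2}_1$, through $\partial_zu^{B,2}_1\overline{v^{I,0}_1}$ and through $\partial_xu^{B,2}_1\overline{v^{I,0}_1}$, $\overline{\partial_xv^{I,0}_1}u^{B,2}_1$ respectively. So in the order (iv), (i), (ii), (iii), (v), (vi) their forcing is not yet available. Problem (vi) must be solved before (iii), which is possible since (\ref{eq:uB21}) only involves $u^{B,1}_1$, $u^{B,2}_2$ and $p^{B,2}$, hence only (iv) and (i). The paper uses the order (iv), (i), (ii), (vi), (iii), (v).

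Finally, (iii), (v) and (vi) contain the trace $\overline{u^{I,2}_2}$ of a second-order outer profile, which Theorem \ref{th2.1} does not provide. You must identify it from the boundary expansion as $\overline{u^{I,2}_2}=-u^{B,2}_2(t,x,0)$ and bound it by trace inequalities from the estimates on $u^{B,2}_2$. These last two points are easy to repair; the weight grading is the genuinely missing ingredient.
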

    The detailed derivation is presented in Section \ref{5}.
    \begin{remark}
    	Theorem \ref{th inner layer} establishes the regularity for some inner profiles required in the asymptotic expansion up to the order considered in this paper.
        \begin{itemize}
        \item {\bf Difficulty. }
        The principal difficulty in the proof is that the boundary-layer equations contain unbounded normal transport term just like $z\,a(t,x)\partial_z f$ and nonhomogeneous boundary data. To overcome these difficulties, we apply the weighted polynomial anisotropic Sobolev estimates and the boundary homogenization by constructing an auxiliary function.
        \item {\bf Layered structure.} Since these profiles are interrelated and provide the source terms needed at the next level, for example the equation of \(v_1^{B,1}\) involves \(u_1^{B,1}\) and \(v_2^{B,0}\), the estimates of these profiles should be constructed in a certain order, beginning with the estimates of \(u_1^{B,1}\) and \(v_2^{B,0}\), which indicate ones of \(u_2^{B,2}\), \(n^{B,1}\) and \(p^{B,2}\), and then following them successively are \(v_1^{B,1}\), \(u_1^{B,2}\), \(v_2^{B,1}\) and \(v_1^{B,2}\), which indicate the regularity estimates for \(u_2^{B,3}\) and \(n^{B,2}\).
        \item {\bf Dependency on the leading outer solutions. }The restriction $7\leq m_1\leq m-4$ quantifies the finite regularity transferring from the leading outer solutions to the inner profiles. In particular, the theorem shows that sufficiently high tangential regularity supplies the normal regularity needed for the boundary-layer expansion. These weighted estimates provide the analytical foundation for the rigorous control of the approximate solution and the subsequent boundary-layer convergence analysis.
        \end{itemize}
    \end{remark}

     In the subsequent section, we perform an asymptotic expansion for system $(\ref{eq:nvu varepsilon})$ and derive the leading-order outer-layer system (\ref{eq:n0v0u0}), which provides the basis for the ensuing regularity estimates.
     \section{Asymptotic Expansion of Equations}\label{3}
    In this part, we employ the method of the matched asymptotic expansions (\ref{asymptotic expansions}) to derive the partial boundary layer profiles in detail with $j\leq2$ for the System $(\ref{eq:nvu varepsilon})$.

   \textbf{Step 1. Initial and boundary conditions.} Substituting $(\ref{asymptotic expansions})$ into the initial conditions in $(\ref{eq:nvu varepsilon})$, we can obtain
   \begin{equation}\label{initial asymptotic expansions}
   \left\{
       \begin{split}
        &n^\varepsilon(0,x,y)=\sum\limits_{j\geq0}\varepsilon^{\frac{j}{2}}(n^{I,j}(0,x,y)+n^{B,j}(0,x,\frac{y}{\sqrt{\varepsilon}})),\\       &v^\varepsilon(0,x,y)=\sum\limits_{j\geq0}\varepsilon^{\frac{j}{2}}(v^{I,j}(0,x,y)+v^{B,j}(0,x,\frac{y}{\sqrt{\varepsilon}})),\\        &u^\varepsilon(0,x,y)=\sum\limits_{j\geq0}\varepsilon^{\frac{j}{2}}(u^{I,j}(0,x,y)+u^{B,j}(0,x,\frac{y}{\sqrt{\varepsilon}})).
       \end{split}
       \right.
   \end{equation}

    Since $n^{B,j}$, $v^{B,j}$ and $u^{B,j}$ decay to zero exponentially with $z=\frac{y}{\sqrt{\varepsilon}}\rightarrow+\infty$, we can get
  \begin{equation}\label{initial zero layer}
	\begin{split}{}
		n^{I,0}(0,x,y)&=n_{in}(x,y),\;\;n^{B,0}(0,x,z)=0,\\
        v^{I,0}(0,x,y)&=v_{in}(x,y),\;\;v^{B,0}(0,x,z)=0,\\
        u^{I,0}(0,x,y)&=u_{in}(x,y),\;\;u^{B,0}(0,x,z)=0,\\
	\end{split}
\end{equation}
and for $j\geq1$
\begin{equation}\label{initial other layer}
	\begin{split}{}
		n^{I,j}(0,x,y)&=n^{B,j}(0,x,z)=0,\\
        v^{I,j}(0,x,y)&=v^{B,j}(0,x,z)=0,\\
        u^{I,j}(0,x,y)&=u^{B,j}(0,x,z)=0.\\
	\end{split}
\end{equation}
In order to obtain the boundary conditions of each order for $j\geq0$, we substituted $(\ref{asymptotic expansions})$ into $(\ref{boundary condition})$ to obtain
\begin{align*}
	\left\{
	\begin{aligned}{}        &\partial_yn^\varepsilon(t,x,0)=\sum\limits_{j\geq0}\varepsilon^{\frac{j}{2}}(\partial_yn^{I,j}(t,x,0)+\varepsilon^{-\frac{1}{2}}\partial_zn^{B,j}(t,x,0)=0,\\        &v^\varepsilon_2(t,x,0)=\sum\limits_{j\geq0}\varepsilon^{\frac{j}{2}}(v^{I,j}_2(t,x,0)+v^{B,j}_2(t,x,0)=0,\\		&u^\varepsilon_2(t,x,0)=\sum\limits_{j\geq0}\varepsilon^{\frac{j}{2}}(u^{I,j}_2(t,x,0)+u^{B,j}_2(t,x,0)=0,\\		&\partial_yv^\varepsilon_1(t,x,0)=\sum\limits_{j\geq0}\varepsilon^{\frac{j}{2}}(\partial_yv^{I,j}_1(t,x,0)+\varepsilon^{-\frac{1}{2}}\partial_zv^{B,j}_1(t,x,0)=0,\\        &\partial_yu^\varepsilon_1(t,x,0)=\sum\limits_{j\geq0}\varepsilon^{\frac{j}{2}}(\partial_yu^{I,j}_1(t,x,0)+\varepsilon^{-\frac{1}{2}}\partial_zu^{B,j}_1(t,x,0)=0,
	\end{aligned}
	\right.
\end{align*}
 and note that
\begin{align}\label{layer boundary}
&\partial_zn^{B,0}(t,x,0)=0,\;\;\sum\limits_{j\geq0}\varepsilon^{\frac{j}{2}}\partial_yn^{I,j}(t,x,0)+\sum\limits_{j\geq0}\varepsilon^{\frac{j}{2}}\partial_zn^{B,j+1}(t,x,0)=0,\notag\\
        &\;\sum\limits_{j\geq0}\varepsilon^{\frac{j}{2}}v^{I,j}_2(t,x,0)+\sum\limits_{j\geq0}\varepsilon^{\frac{j}{2}}v^{B,j}_2(t,x,0)=0,\notag\\
        &\sum\limits_{j\geq0}\varepsilon^{\frac{j}{2}}u^{I,j}_2(t,x,0)+\sum\limits_{j\geq0}\varepsilon^{\frac{j}{2}}u^{B,j}_2(t,x,0)=0,\\
        &\partial_zv^{B,0}_1(t,x,0)=0,\;\sum\limits_{j\geq0}\varepsilon^{\frac{j}{2}}\partial_yv^{I,j}_1(t,x,0)+\sum\limits_{j\geq0}\varepsilon^{\frac{{j}}{2}}\partial_zv^{B,j+1}_1(t,x,0)=0,\notag\\
        &\partial_zu^{B,0}_1(t,x,0)=0,\;\sum\limits_{j\geq0}\varepsilon^{\frac{j}{2}}\partial_yu^{I,j}_1(t,x,0)+\sum\limits_{j\geq0}\varepsilon^{\frac{{j}}{2}}\partial_zu^{B,j+1}_1(t,x,0)=0.\notag\notag
\end{align}
\textbf{Step 2. Equations for $(n^{I,j},v^{I,j},u^{I,j})$ and $(n^{B,j},v^{B,j},u^{B,j})$.} Substituting $(\ref{asymptotic expansions})$ into  $(\ref{boundary condition})$ and $(\ref{eq:nvu varepsilon})_4$, due to $v^{B,j}$ and $u^{B,j}$ are fast decay when $z\rightarrow+\infty$, then {the outflow $v^{I,j}$ and $u^{I,j}$ satisfy}
\begin{equation}\label{outer layer curl and div}
	\begin{split}{}
		\nabla\times v^{I,j}=0,\;\;\nabla\cdot u^{I,j}=0,\\
	\end{split}
\end{equation}
thus the internal flow $v^{B,j}$ and $u^{B,j}$ {satisfy}
\begin{equation}\label{inner layer curl and div}
	\begin{split}{}
        &\partial_z v^{B,0}_1=0,\;\;\;\sum\limits_{j\geq0}\varepsilon^{\frac{j}{2}}\partial_x v^{B,j}_2-\sum\limits_{j\geq0}\varepsilon^{\frac{j}{2}}\partial_z v^{B,j+1}_1=0,\\
		&\partial_z u^{B,0}_2=0,\;\;\;\sum\limits_{j\geq0}\varepsilon^{\frac{j}{2}}\partial_x u^{B,j}_1+\sum\limits_{j\geq0}\varepsilon^{\frac{j}{2}}\partial_z u^{B,j+1}_2=0.\\
	\end{split}
\end{equation}
Due to $v^{B,0}_1$ and $u^{B,0}_2$ is decay exponentially fast as $z\rightarrow+\infty$, {it then follows that}
\begin{equation}\label{vB01=0,uB02=0}
	\begin{split}{}
		 v^{B,0}_1\equiv0,\; u^{B,0}_2\equiv0.
	\end{split}
\end{equation}
Then we can deduce the boundary condition $u^{I,0}_2(t,x,0)=0$.\\

Substituting $(\ref{asymptotic expansions})$ into equation $(\ref{eq:nvu varepsilon})_1$, we can obtain the expansion equation of $n^\varepsilon$
\begin{align}\label{n varepsilon expansion}
		&\sum\limits_{j\geq0}\varepsilon^{\frac{j}{2}}\partial_t(n^{I,j}+n^{B,j})+\sum\limits_{j\geq0}\varepsilon^{\frac{j}{2}}\sum\limits^j_{k=0}(u^{I,k}+u^{B,k})\cdot\nabla n^{I,j-k}\notag\\
		&+\sum\limits_{j\geq0}\varepsilon^{\frac{j}{2}}\sum\limits^j_{k=0}(u^{I,k}_1+u^{B,k}_1)\partial_x n^{B,j-k}+\sum\limits_{j\geq0}\varepsilon^{\frac{j-1}{2}}\sum\limits^j_{k=0}(u^{I,k}_2+u^{B,k}_2)\partial_z n^{B,j-k}\notag\\
        &-\sum\limits_{j\geq0}\varepsilon^{\frac{j}{2}}\sum\limits^j_{k=0}(n^{I,k}+n^{B,k})\nabla\cdot v^{I,j-k}-\sum\limits_{j\geq0}\varepsilon^{\frac{j}{2}}\sum\limits^j_{k=0}(n^{I,k}+n^{B,k})\partial_x v^{B,j-k}_1\\
        &-\sum\limits_{j\geq0}\varepsilon^{\frac{j-1}{2}}\sum\limits^j_{k=0}(n^{I,k}+n^{B,k})\partial_z v^{B,j-k}_2-\sum\limits_{j\geq0}\varepsilon^{\frac{j}{2}}\sum\limits^j_{k=0}(v^{I,k}+v^{B,k})\cdot\nabla n^{I,j-k}\notag\\
        &-\sum\limits_{j\geq0}\varepsilon^{\frac{j}{2}}\sum\limits^j_{k=0}(v^{I,k}_1+v^{B,k}_1)\partial_x n^{B,j-k}-\sum\limits_{j\geq0}\varepsilon^{\frac{j-1}{2}}\sum\limits^j_{k=0}(v^{I,k}_2+v^{B,k}_2)\partial_z n^{B,j-k}\notag\\
        &=\sum\limits_{j\geq0}\varepsilon^{\frac{j}{2}}\Delta n^{I,j}+\sum\limits_{j\geq0}\varepsilon^{\frac{j}{2}}\partial^2_x n^{B,j}+\sum\limits_{j\geq0}\varepsilon^{\frac{j-2}{2}}\partial^2_z n^{B,j}.\notag\notag
\end{align}
Substituting $(\ref{asymptotic expansions})$ into equation $(\ref{eq:nvu varepsilon})_2$, it follows
	\begin{align}\label{v varepsilon expansion}
		&\sum\limits_{j\geq0}\varepsilon^{\frac{j}{2}}\partial_t(v^{I,j}+v^{B,j})  +   \sum\limits_{j\geq0}\varepsilon^{\frac{j}{2}}\sum\limits^j_{k=0}\nabla u^{I,k}\cdot(v^{I,j-k}+v^{B,j-k})\notag\\
        &+
        \begin{pmatrix}
	\sum\limits_{j\geq0}\varepsilon^{\frac{j}{2}}\sum\limits^j_{k=0}\partial_x u^{B,k}\cdot(v^{I,j-k}+v^{B,j-k})\notag\\
	\sum\limits_{j\geq0}\varepsilon^{\frac{j-1}{2}}\sum\limits^j_{k=0}\partial_z u^{B,k}\cdot(v^{I,j-k}+v^{B,j-k})\notag\\
        \end{pmatrix}
        +\sum\limits_{j\geq0}\varepsilon^{\frac{j}{2}}\sum\limits^j_{k=0}\nabla v^{I,k}\cdot(u^{I,j-k}+u^{B,j-k})\notag\\
        &+\begin{pmatrix}
	\sum\limits_{j\geq0}\varepsilon^{\frac{j}{2}}\sum\limits^j_{k=0}\partial_x v^{B,k}\cdot(u^{I,j-k}+u^{B,j-k})\notag\\
	\sum\limits_{j\geq0}\varepsilon^{\frac{j-1}{2}}\sum\limits^j_{k=0}\partial_z v^{B,k}\cdot(u^{I,j-k}+u^{B,j-k}) \notag\\
        \end{pmatrix}
         +2\sum\limits_{j\geq0}\varepsilon^{\frac{j+2}{2}}\sum\limits^j_{k=0}\nabla v^{I,k}\cdot(v^{I,j-k}+v^{B,j-k})\notag \\
        &+2\begin{pmatrix}
	\sum\limits_{j\geq0}\varepsilon^{\frac{j+2}{2}}\sum\limits^j_{k=0}\partial_x v^{B,k}\cdot(v^{I,j-k}+v^{B,j-k})\notag\\
	\sum\limits_{j\geq0}\varepsilon^{\frac{j+1}{2}}\sum\limits^j_{k=0}\partial_z v^{B,k}\cdot(v^{I,j-k}+v^{B,j-k})\notag \\
        \end{pmatrix}
        -\sum\limits_{j\geq0}\varepsilon^{\frac{j}{2}}\nabla n^{I,j}-\begin{pmatrix}\sum\limits_{j\geq0}\varepsilon^{\frac{j}{2}}\partial_x n^{B,j}\\\sum\limits_{j\geq0}\varepsilon^{\frac{j-1}{2}}\partial_z n^{B,j}\end{pmatrix}\notag\\
        &=\sum\limits_{j\geq0}\varepsilon^{\frac{j+2}{2}}\Delta v^{I,j}+\sum\limits_{j\geq0}\varepsilon^{\frac{j+2}{2}}\partial^2_x v^{B,j}+\sum\limits_{j\geq0}\varepsilon^{\frac{j}{2}}\partial^2_z v^{B,j}.
	\end{align}
Similarly, plugging $(\ref{asymptotic expansions})$ into the equations given in $(\ref{eq:nvu varepsilon})_3$, it follows
\begin{eqnarray}\label{u varepsilon expansion}
	\begin{split}{}
		&\sum\limits_{j\geq0}\varepsilon^{\frac{j}{2}}\partial_t(u^{I,j}+u^{B,j})+\sum\limits_{j\geq0}\varepsilon^{\frac{j}{2}}\sum\limits^j_{k=0}(u^{I,k}+u^{B,k})\cdot\nabla u^{I,j-k}\\
		&+\sum\limits_{j\geq0}\varepsilon^{\frac{j}{2}}\sum\limits^j_{k=0}(u^{I,k}_1+u^{B,k}_1)\partial_x u^{B,j-k}+\sum\limits_{j\geq0}\varepsilon^{\frac{j-1}{2}}\sum\limits^j_{k=0}(u^{I,k}_2+u^{B,k}_2)\partial_z u^{B,j-k}\\
        &+\sum\limits_{j\geq0}\varepsilon^{\frac{j}{2}}\nabla p^{I,j}+\sum\limits_{j\geq0}(\varepsilon^{\frac{j}{2}}\partial_x p^{B,j},\varepsilon^{\frac{j-1}{2}}\partial_z p^{B,j})\\
		&=\sum\limits_{j\geq0}\varepsilon^{\frac{j+2}{2}}\Delta u^{I,j}+\sum\limits_{j\geq0}\varepsilon^{\frac{j+2}{2}}\partial^2_x u^{B,j}+\sum\limits_{j\geq0}\varepsilon^{\frac{j}{2}}\partial^2_z u^{B,j}+\sum\limits_{j\geq0}\varepsilon^{\frac{j}{2}}(n^{I,j}+n^{B,j}){e_2}.
	\end{split}
\end{eqnarray}
For $(\ref{n varepsilon expansion})$-$(\ref{u varepsilon expansion})$, let $z$ to $+\infty$, then $(n^{B,j},v^{B,j},u^{B,j},p^{B,j})\rightarrow0$.  Furthermore for all $j\geq0$, this implies that
\begin{align}\label{nIj vIj uIj}
		&\partial_tn^{I,j}+\sum\limits^j_{k=0}u^{I,k}\cdot\nabla n^{I,j-k}-\sum\limits^j_{k=0}n^{I,k}\nabla\cdot v^{I,j-k}-\sum\limits^j_{k=0}v^{I,k}\cdot\nabla n^{I,j-k}=\Delta n^{I,j},\notag\\
        &\partial_tv^{I,j}+\sum\limits^j_{k=0}\nabla v^{I,k}\cdot u^{I,j-k}+\sum\limits^j_{k=0}\nabla u^{I,k}\cdot v^{I,j-k}+2\sum\limits^{j-2}_{k=0}\nabla v^{I,k}\cdot v^{I,j-2-k}\\
        &\quad\quad-\nabla n^{I,j}=\Delta v^{I,j-2},\notag\\
        &\partial_tu^{I,j}+\sum\limits^j_{k=0}u^{I,k}\cdot\nabla u^{I,j-k}+\nabla p^{I,j}=\Delta u^{I,j-2}+n^{I,j}{e_2},\notag\notag
\end{align}
 {where we denote by $v^{I,-1}=0$, $v^{I,-2}=0$ and $u^{I,-1}=0$, $u^{I,-2}=0$.}\\

 Next, we {estimate} the internal profile. Before that, we introduce the Taylor expansion
 \begin{equation*}
	\begin{split}{}
		f^{I,j}(t,x,y)=\sum\limits^j_{k=0}\frac{z^k}{k!}\overline{\partial^k_yf^{I,j-k}}(t,x),\\
	\end{split}
\end{equation*}
which expands at $y=0$ for the outer layer of $j\geq0$, and denoted by $\overline{f}(t,x)=f(t,x,0)$.\\
Applying the Taylor expansion, we begin to summarize the equations containing $\varepsilon^{\gamma}$ for $\gamma\geq-1$. First, combining the terms containing $O(\varepsilon^{-1})$, and using $\partial_zn^{B,0}(t,x,0)=0$ in $(\ref{layer boundary})_1$, we  obtain
\begin{equation}\label{nB0=0}
	\begin{split}{}
		\partial^2_zn^{B,0}=0,\;\;\Rightarrow n^{B,0}\equiv0.\\
	\end{split}
\end{equation}
Combining terms containing $O(\varepsilon^{-\frac{1}{2}})$, we can obtain
\begin{equation*}
	\begin{split}{}
		&(\overline{u^{I,0}_2}+u^{B,0}_2)\partial_zn^{B,0}-(\overline{n^{I,0}}+n^{B,0})\partial_zv^{B,0}_2-(\overline{v^{I,0}_2}+v^{B,0}_2)\partial_zn^{B,0}=\partial^2_zn^{B,1},\\
        &\begin{pmatrix}
	0\\
	\partial_zu^{B,0}\cdot(\overline{v^{I,0}}+v^{B,0})
        \end{pmatrix}+\begin{pmatrix}
	0\\
	\partial_zv^{B,0}\cdot(\overline{u^{I,0}}+u^{B,0}) \\
        \end{pmatrix} +
        \begin{pmatrix}
	0\\
	\partial_zn^{B,0} \\
        \end{pmatrix}=0\\
        &(\overline{u^{I,0}_2}+u^{B,0}_2)\partial_zu^{B,0}+\dbinom{0}{\partial_zp^{B,0}}=0.\\
	\end{split}
\end{equation*}
Using $(\ref{vB01=0,uB02=0})$, $(\ref{nB0=0})$ and $\overline{u^{I,0}_2}=0$, we  derive
\begin{eqnarray}\label{nB1 equation and pB0=0}
	\begin{split}{}
        &-\overline{n^{I,0}}\partial_zv^{B,0}_2=\partial^2_zn^{B,1},\\
        &\partial_zu^{B,0}_1\overline{v^{I,0}_1}=0,\\
        &p^{B,0}=0.
	\end{split}
\end{eqnarray}
{For} the term containing $O(\varepsilon^0)$ in $(\ref{n varepsilon expansion})$-$(\ref{u varepsilon expansion})$, we  obtain the following equations
	\begin{align}
        &\partial_t(\overline{n^{I,0}}+n^{B,0})+(\overline{u^{I,0}}+u^{B,0})\cdot\overline{\nabla n^{I,0}}+(\overline{u^{I,0}_1}+u^{B,0}_1)\partial_xn^{B,0}+(\overline{u^{I,0}_2}+u^{B,0}_2)\partial_zn^{B,1}\notag\\
		&+(z\overline{\partial_yu^{I,0}_2}+\overline{u^{I,1}_2}+u^{B,1}_2)\partial_zn^{B,0}-(\overline{n^{I,0}}+n^{B,0})\overline{\nabla\cdot v^{I,0}}-(\overline{n^{I,0}}+n^{B,0})\partial_xv^{B,0}_1\notag\\
        &-(\overline{n^{I,0}}+n^{B,0})\partial_zv^{B,1}_2-(z\overline{\partial_yn^{I,0}}+\overline{n^{I,1}}+n^{B,1})\partial_zv^{B,0}_2-(\overline{v^{I,0}}+v^{B,0})\cdot\overline{\nabla n^{I,0}}\notag\\
        &-(\overline{v^{I,0}_1}+v^{B,0}_1)\partial_xn^{B,0}-(\overline{v^{I,0}_2}+v^{B,0}_2)\partial_zn^{B,1}-(z\overline{\partial_yv^{I,0}_2}+\overline{v^{I,1}_2}+v^{B,1}_2)\partial_zn^{B,0}\notag\\
        &=\overline{\Delta n^{I,0}}+\partial^2_xn^{B,0}+\partial^2_zn^{B,2},\notag\\
        \notag\\
        &\partial_t(\overline{v^{I,0}}+v^{B,0})+\overline{\nabla u^{I,0}}\cdot(\overline{v^{I,0}}+v^{B,0})\notag\\
        &+\begin{pmatrix}
	\partial_xu^{B,0}\cdot(\overline{v^{I,0}}+v^{B,0})\notag\\
	\partial_zu^{B,0}\cdot(z\overline{\partial_yv^{I,0}}+\overline{v^{I,1}}+v^{B,1})+\partial_zu^{B,1}\cdot(\overline{v^{I,0}}+v^{B,0})
        \end{pmatrix}
        +\overline{\nabla v^{I,0}}\cdot(\overline{u^{I,0}}+u^{B,0})\notag\\
        &+\begin{pmatrix}
	\partial_xv^{B,0}\cdot(\overline{u^{I,0}}+u^{B,0})\notag\\
	\partial_zv^{B,0}\cdot(z\overline{\partial_yu^{I,0}}+\overline{u^{I,1}}+u^{B,1})+\partial_zv^{B,1}\cdot(\overline{u^{I,0}}+u^{B,0})
        \end{pmatrix}
        -\overline{\nabla n^{I,0}}-\begin{pmatrix}\partial_xn^{B,0}\\0\end{pmatrix}\notag\\
        &-\begin{pmatrix}0\\\partial_zn^{B,1}\end{pmatrix}=\partial^2_zv^{B,0},\label{varepsilon 0 expansion}\\
        \notag\\
		&\partial_t(\overline{u^{I,0}}+u^{B,0})+(\overline{u^{I,0}}+u^{B,0})\cdot\overline{\nabla u^{I,0}}+(\overline{u^{I,0}_1}+u^{B,0}_1)\partial_xu^{B,0}+z\overline{\partial_yu^{I,0}_2}\partial_zu^{B,0}\notag\\
		&+(\overline{u^{I,1}_2}+u^{B,1}_2)\partial_zu^{B,0}+(\overline{u^{I,0}_2}+u^{B,0}_2)\partial_zu^{B,1}+\overline{\nabla p^{I,0}}+\dbinom{\partial_xp^{B,0}}{0}+\dbinom{0}{\partial_zp^{B,1}}\notag\\
        &=\partial^2_zu^{B,0}+(\overline{n^{I,0}}+n^{B,0}){e_2}.\notag\\\notag
	\end{align}
    By substituting the outflow layer equation $(\ref{eq:n0v0u0})_1$ into $(\ref{varepsilon 0 expansion})_1$, using $(\ref{vB01=0,uB02=0})$, $(\ref{nB0=0})$ and $\overline{u^{I,0}_2}=0$, we can obtain
\begin{equation}\label{nB2 system}
	\begin{split}{}
		&u^{B,0}_1\overline{\partial_x n^{I,0}}-\overline{n^{I,0}}\partial_zv^{B,1}_2-(z\overline{\partial_yn^{I,0}}+\overline{n^{I,1}}+n^{B,1})\partial_zv^{B,0}_2-\overline{\partial_yn^{I,0}}v^{B,0}_2-(\overline{v^{I,0}_2}+v^{B,0}_2)\partial_zn^{B,1}=\partial^2_zn^{B,2}.
	\end{split}
\end{equation}
Since $v^{B,0}_1\equiv0$ is obtained in $(\ref{vB01=0,uB02=0})$, we only need to take the second component of the equation $(\ref{varepsilon 0 expansion})_2$, and at the same time substitute the second component $\overline{\partial_tv^{I,0}_2}+\overline{\partial_y(u^{I,0}\cdot v^{I,0})}-\overline{\partial_yn^{I,0}}=0$ of the outflow layer equation $(\ref{nIj vIj uIj})_2$ for $j=0$ into $(\ref{varepsilon 0 expansion})_2$ to obtain
\begin{equation}\label{vB02 outflow layer}
	\begin{split}{}		
&\partial_tv^{B,0}_2+\overline{\partial_yu^{I,0}_2}v^{B,0}_2+\partial_zu^{B,1}_1\overline{v^{I,0}_1}+\partial_zu^{B,1}_2(\overline{v^{I,0}_2}+v^{B,0}_2)+\partial_zu^{B,0}_1(z\overline{\partial_yv^{I,0}_1}+\overline{v^{I,1}_1}+v^{B,1}_1)\\        &+\overline{\partial_yv^{I,0}_1}u^{B,0}_1+\partial_zv^{B,1}_1(\overline{u^{I,0}_1}+u^{B,0}_1)+\partial_zv^{B,0}_2(z\overline{\partial_yu^{I,0}_2}+\overline{u^{I,1}_2}+u^{B,1}_2)-\partial_zn^{B,1}\\
&=\partial^2_zv^{B,0}_2.
	\end{split}
\end{equation}
We take the first component of equation $(\ref{varepsilon 0 expansion})_3$, {by the outflow layer equation $(\ref{eq:n0v0u0})_4$, $(\ref{vB01=0,uB02=0})$, $\overline{u^{I,0}_2}=0$ and $(\ref{nB1 equation and pB0=0})_3$, then we obtain
\begin{equation}\label{uB01 equation}
	\begin{split}{}
		&\partial_tu^{B,0}_1+\overline{u^{I,0}_1}\partial_xu^{B,0}_1+u^{B,0}_1\overline{\partial_x u^{I,0}_1}+u^{B,0}_1\partial_xu^{B,0}_1+z\overline{\partial_yu^{I,0}_2}\partial_zu^{B,0}_1\\
		&+(\overline{u^{I,1}_2}+u^{B,1}_2)\partial_zu^{B,0}_1=\partial^2_zu^{B,0}_1.
	\end{split}
\end{equation}
For the second component of $(\ref{varepsilon 0 expansion})_3$, this implies that
\begin{equation*}
	\begin{split}{}
		&\overline{\partial_y p^{I,0}}+\partial_zp^{B,1}=\overline{n^{I,0}}.\\
	\end{split}
\end{equation*}
Since the outflow layer equation $(\ref{nIj vIj uIj})_3$ can deduce $\overline{\partial_y p^{I,0}}=\overline{n^{I,0}}$, then
\begin{equation}\label{pB1=0}
	\begin{split}{}
		p^{B,1}\equiv0.\\
	\end{split}
\end{equation}
Combining $(\ref{layer boundary})_5$ and $u^{B,0}_1(0,x,z)=0$ in $(\ref{initial zero layer})$, we can obtain that $u^{B,0}_1$ satisfies the following system
\begin{equation}\label{uB01 system}
       \left\{
	\begin{split}{}		&\partial_tu^{B,0}_1+\overline{u^{I,0}_1}\partial_xu^{B,0}_1+u^{B,0}_1\overline{\partial_xu^{I,0}_1}+u^{B,0}_1\partial_xu^{B,0}_1+(\overline{u^{I,1}_2}+u^{B,1}_2+z\overline{\partial_yu^{I,0}_2})\partial_zu^{B,0}_1=\partial^2_zu^{B,0}_1,\\
    &\partial_zu^{B,0}_1|_{z=0}=0,\\
    &u^{B,0}_1|_{t=0}=0.
	\end{split}
    \right.
\end{equation}
Next, we will prove that this system has the unique solution $u^{B,0}_1=0$.
Therefore, we define energy functional
\begin{equation}\label{uB01 energy functional}
	\begin{split}{}
		E(t)=:\frac{1}{2}\int^\infty_{-\infty}\int^\infty_0(u^{B,0}_1)^2dzdx.
	\end{split}
\end{equation}
It has a time derivative
\begin{equation}\label{uB01 energy time derivative}
	\begin{split}{}
		\frac{d}{dt}E(t)&=\int^\infty_{-\infty}\int^\infty_0u^{B,0}_1\partial_tu^{B,0}_1dzdx\\
        &=\int^\infty_{-\infty}\int^\infty_0u^{B,0}_1[-\overline{u^{I,0}_1}\partial_xu^{B,0}_1-u^{B,0}_1\overline{\partial_xu^{I,0}_1}-u^{B,0}_1\partial_xu^{B,0}_1\\
        &\quad-(\overline{u^{I,1}_2}+u^{B,1}_2+z\overline{\partial_yu^{I,0}_2})\partial_zu^{B,0}_1+\partial^2_zu^{B,0}_1]dzdx.\\
	\end{split}
\end{equation}
{Using integration by parts, $(\ref{outer layer curl and div})$ and $(\ref{inner layer curl and div})_2$,}
we derive
\begin{equation*}
	\begin{split}{}
		\frac{d}{dt}E(t)&=-\int^\infty_{-\infty}\int^\infty_0\overline{\partial_xu^{I,0}_1}(u^{B,0}_1)^2dzdx+\int^\infty_{-\infty}\int^\infty_0u^{B,0}_1\partial^2_zu^{B,0}_1dzdx\\
        &=-\int^\infty_{-\infty}\int^\infty_0\overline{\partial_xu^{I,0}_1}(u^{B,0}_1)^2dzdx-\int^\infty_{-\infty}\int^\infty_0(\partial_zu^{B,0}_1)^2dzdx,\\
	\end{split}
\end{equation*}
{by the regularity of $\overline{u^{I,0}}$ and the Gronwall's inequality,
then we have $E(t)=0$.} Therefore, $u^{B,0}_1=0$ is the unique solution of equation $(\ref{uB01 system})$, and we can get
\begin{equation}\label{uB01=0 uB12=0}
	\begin{split}{}
		u^{B,0}_1\equiv0\Rightarrow u^{B,1}_2\equiv0.
	\end{split}
\end{equation}
Substituting $(\ref{uB01=0 uB12=0})$ into $(\ref{layer boundary})_3$, we can obtain $\overline{u^{I,1}_2}=0$. Thus, the equations $(\ref{nB2 system})$ and $(\ref{vB02 outflow layer})$ can be reduced to
\begin{equation}\label{nB2 equation}
	\begin{split}{}
		&-\overline{n^{I,0}}\partial_zv^{B,1}_2-(z\overline{\partial_yn^{I,0}}+\overline{n^{I,1}}+n^{B,1})\partial_zv^{B,0}_2-\overline{\partial_yn^{I,0}}v^{B,0}_2-(\overline{v^{I,0}_2}+v^{B,0}_2)\partial_zn^{B,1}=\partial^2_zn^{B,2},\\
	\end{split}
\end{equation}
and
\begin{equation}\label{vB02 equation}
	\begin{split}{}		
&\partial_tv^{B,0}_2+\overline{\partial_yu^{I,0}_2}v^{B,0}_2+\partial_zu^{B,1}_1\overline{v^{I,0}_1}+\partial_zv^{B,1}_1\overline{u^{I,0}_1}+\partial_zv^{B,0}_2z\overline{\partial_yu^{I,0}_2}-\partial_zn^{B,1}=\partial^2_zv^{B,0}_2.\\
	\end{split}
\end{equation}
Moreover, we can obtain the first-order outer layer $(n^{I,1},v^{I,1},u^{I,1})(t,x,y)$ is the solution of
         \begin{align}\label{eq:nI1vI1uI1}
		\begin{cases}
		\partial_tn^{I,1}+u^{I,0}\cdot\nabla n^{I,1}+u^{I,1}\cdot\nabla n^{I,0}-\nabla\cdot(n^{I,0} v^{I,1})-\nabla\cdot(n^{I,1} v^{I,0})=\Delta n^{I,1}\\
        \partial_tv^{I,1}+\nabla (v^{I,0}\cdot u^{I,1})+\nabla (v^{I,1}\cdot u^{I,0})-\nabla n^{I,1}=0\\
        \nabla\times v^{I,1}=0,\\
        \partial_tu^{I,1}+u^{I,0}\cdot\nabla u^{I,1}+u^{I,1}\cdot\nabla u^{I,0}+\nabla p^{I,1}=n^{I,1}{e_2}\\
        \nabla\cdot u^{I,1}=0,\\
        \partial_yn^{I,1}|_{y=0}=\overline{n^{I,0}}\overline{v^{I,1}_2}+\overline{n^{I,1}}\overline{v^{I,0}_2}+\overline{\partial_yn^{I,0}}\int_0^{+\infty}\left(z\partial_zv_2^{B,0}+v_2^{B,0}\right)\,dz,\;u^{I,1}_2|_{y=0}=0,\\
        (n^{I,1},v^{I,1},u^{I,1})(0,x,y)=0.
		\end{cases}
	\end{align}

    Expanding the no-flux boundary condition $(\ref{layer boundary})$ at order \(\varepsilon^{1/2}\), we have
\begin{align*}
	\left.\partial_y n^{I,1}\right|_{y=0}=-\left.\partial_z n^{B,2}\right|_{z=0}.
\end{align*}
We integrate the equation $(\ref{nB2 equation})$ with respect to \(z\) and use \(v_2^{I,j}+v_2^{B,j}=0\) on \(z=0\) for \(j=0,1\) to get
\begin{align*}
	v_2^{B,0}|_{z=0}=-v_2^{I,0},\qquad v_2^{B,1}|_{z=0}=-v_2^{I,1}.
\end{align*}
Then we obtain the boundary condition for \(n^{I,1}\) in $(\ref{eq:nI1vI1uI1})$.
Furthermore, owing to the rapid decay of \(v_2^{B,0}\) as \(z\to+\infty\), we have
\begin{align*}	\int_0^{+\infty}\left(z\partial_zv_2^{B,0}+v_2^{B,0}\right)\,dz=\left(zv_2^{B,0}\right)\Big{|}_{0}^{+\infty}=0.
\end{align*}
Then, the boundary condition reduces to
\begin{align*}
	\partial_y n^{I,1}+n^{I,0}v_2^{I,1}+n^{I,1}v_2^{I,0}=0\qquad\text{on }y=0.
\end{align*}
Therefore, by the well-posedness theory for solutions of linear systems, we conclude that the trivial solution
\begin{align}\label{nvu I1=0}
  n^{I,1}=v^{I,1}=u^{I,1}=0,
\end{align}
is the unique solution to the system (\ref{eq:nI1vI1uI1}).
Then equation (\ref{nB2 equation}) reduces to
\begin{equation}\label{eq:nB2 equation}
	\begin{split}{}
		&-\overline{n^{I,0}}\partial_zv^{B,1}_2-(z\overline{\partial_yn^{I,0}}+n^{B,1})\partial_zv^{B,0}_2-\overline{\partial_yn^{I,0}}v^{B,0}_2-(\overline{v^{I,0}_2}+v^{B,0}_2)\partial_zn^{B,1}=\partial^2_zn^{B,2}.\\
	\end{split}
\end{equation}
For the inner profile $n^{B,j}$, we have obtained $(\ref{nB1 equation and pB0=0})_1$ and $(\ref{nB2 equation})$, so we will not do any its higher-order decomposition later.\\
Similar to $(\ref{varepsilon 0 expansion})_{2,3}$, {for the term containing $O(\varepsilon^\frac{1}{2})$ in $(\ref{v varepsilon expansion})$-$(\ref{u varepsilon expansion})$, we can obtain the following equations}
\begin{align}
&\partial_t(z\overline{\partial_yv^{I,0}}+\overline{v^{I,1}}+v^{B,1})\notag\\
&+
\begin{pmatrix}
	\overline{\partial_xu^{I,0}}\cdot(z\overline{\partial_yv^{I,0}}+\overline{v^{I,1}}+v^{B,1})+\partial_x(z\overline{\partial_yu^{I,0}}+\overline{u^{I,1}})\cdot(\overline{v^{I,0}}+v^{B,0})\\
	\overline{\partial_yu^{I,0}}\cdot(z\overline{\partial_yv^{I,0}}+\overline{v^{I,1}}+v^{B,1})+(z\overline{\partial^2_yu^{I,0}}+\overline{\partial_yu^{I,1}})\cdot(\overline{v^{I,0}}+v^{B,0})\\
        \end{pmatrix}\notag\\
        &+\begin{pmatrix}
	\partial_xu^{B,0}\cdot(z\overline{\partial_yv^{I,0}}+\overline{v^{I,1}}+v^{B,1})+\partial_xu^{B,1}\cdot(\overline{v^{I,0}}+v^{B,0})\\
	\partial_zu^{B,0}\cdot(\frac{1}{2}z^2\overline{\partial^2_yv^{I,0}}+z\overline{\partial_yv^{I,1}}+\overline{v^{I,2}}+v^{B,2})
        \end{pmatrix}\notag\\
      &+\begin{pmatrix}
	0\\
	\partial_zu^{B,1}\cdot(z\overline{\partial_yv^{I,0}}+\overline{v^{I,1}}+v^{B,1})+\partial_zu^{B,2}\cdot(\overline{v^{I,0}}+v^{B,0})
        \end{pmatrix}\label{vB1 expansion}\\
        &+\begin{pmatrix}
	\overline{\partial_xv^{I,0}}\cdot(z\overline{\partial_yu^{I,0}}+\overline{u^{I,1}}+u^{B,1})+\partial_x(z\overline{\partial_yv^{I,0}}+\overline{v^{I,1}})\cdot(\overline{u^{I,0}}+u^{B,0})\\
	\overline{\partial_yv^{I,0}}\cdot(z\overline{\partial_yu^{I,0}}+\overline{u^{I,1}}+u^{B,1})+(z\overline{\partial^2_yv^{I,0}}+\overline{\partial_yv^{I,1}})\cdot(\overline{u^{I,0}}+u^{B,0})\\
        \end{pmatrix}\notag\\
       &+\begin{pmatrix}
	\partial_xv^{B,0}\cdot(z\overline{\partial_yu^{I,0}}+\overline{u^{I,1}}+u^{B,1})+\partial_xv^{B,1}\cdot(\overline{u^{I,0}}+u^{B,0})\\
	\partial_zv^{B,0}\cdot(\frac{1}{2}z^2\overline{\partial^2_yu^{I,0}}+z\overline{\partial_yu^{I,1}}+\overline{u^{I,2}}+u^{B,2})
        \end{pmatrix}\notag\\
        &+\begin{pmatrix}
	0\\
	\partial_zv^{B,1}\cdot(z\overline{\partial_yu^{I,0}}+\overline{u^{I,1}}+u^{B,1})+\partial_zv^{B,2}\cdot(\overline{u^{I,0}}+u^{B,0})
        \end{pmatrix}\notag\\
        &+2\begin{pmatrix}
	0\\
	\partial_zv^{B,0}\cdot(\overline{v^{I,0}}+v^{B,0})
        \end{pmatrix}
        -\overline{\nabla n^{I,1}}-z\overline{\nabla\partial_y n^{I,0}}-\begin{pmatrix}
	\partial_xn^{B,1}\\
	\partial_zn^{B,2}
        \end{pmatrix}
        =\partial^2_zv^{B,1}, \notag\\\notag
        \end{align}
and
\begin{align}\label{uB1 expansion}	&\partial_t(z\overline{\partial_yu^{I,0}}+\overline{u^{I,1}}+u^{B,1})+\overline{u^{I,0}_1}\partial_x(z\overline{\partial_yu^{I,0}}+\overline{u^{I,1}})+(z\overline{\partial_yu^{I,0}_1}+\overline{u^{I,1}_1}+u^{B,1}_1)\overline{\partial_xu^{I,0}}\notag\\
        &+z\overline{\partial_yu^{I,0}_2}\overline{\partial_yu^{I,0}}+\overline{u^{I,0}_1}\partial_xu^{B,1}+z\overline{\partial_yu^{I,0}_2}\partial_zu^{B,1}+\overline{\nabla p^{I,1}}+z\overline{\nabla \partial_yp^{I,0}}+\dbinom{0}{\partial_zp^{B,2}}\\
        &=\partial^2_zu^{B,1}+(z\overline{\partial_yn^{I,0}}+\overline{n^{I,1}}+n^{B,1}){e_2}.\notag\notag
\end{align}
Taking the first component of $(\ref{vB1 expansion})$, by substituting the outflow equation $(\ref{nIj vIj uIj})_2$ with $j=1$ into $(\ref{vB1 expansion})$, and using $(\ref{vB01=0,uB02=0})$, $\overline{u^{I,0}_2}=0$, it can be concluded that
\begin{align}\label{vB11 equation}
&\partial_tv^{B,1}_1+\overline{\partial_xu^{I,0}_1}v^{B,1}_1+\partial_x(z\overline{\partial_yu^{I,0}_2})v^{B,0}_2+\partial_xu^{B,1}_1\overline{v^{I,0}_1}\notag\\
&+\overline{\partial_xv^{I,0}_1}u^{B,1}_1+\partial_xv^{B,0}_2z\overline{\partial_yu^{I,0}_2}+\partial_xv^{B,1}_1\overline{u^{I,0}_1}-\partial_xn^{B,1}=\partial^2_zv^{B,1}_1.
\end{align}
Similar to $(\ref{vB11 equation})$, taking the second component of $(\ref{vB1 expansion})$ and using (\ref{nvu I1=0}), we get
\begin{align}\label{vB12 equation}
&\partial_tv^{B,1}_2+\overline{\partial_yu^{I,0}_1}v^{B,1}_1+\overline{\partial_yu^{I,0}_2}v^{B,1}_2+z\overline{\partial^2_yu^{I,0}_2}v^{B,0}_2+\partial_zu^{B,1}_1(z\overline{\partial_yv^{I,0}_1}+v^{B,1}_1)\notag\\
&+\partial_zu^{B,2}_1\overline{v^{I,0}_1}+\partial_zu^{B,2}_2(\overline{v^{I,0}_2}+v^{B,0}_2)+\overline{\partial_yv^{I,0}_1}u^{B,1}_1+\partial_zv^{B,0}_2(\frac{1}{2}z^2\overline{\partial^2_yu^{I,0}_2}+\overline{u^{I,2}_2}+u^{B,2}_2)\\
&+\partial_zv^{B,1}_1(z\overline{\partial_yu^{I,0}_1}+u^{B,1}_1)+\partial_zv^{B,1}_2z\overline{\partial_yu^{I,0}_2}+\partial_zv^{B,2}_1\overline{u^{I,0}_1}+2\partial_zv^{B,0}_2(\overline{v^{I,0}_2}+v^{B,0}_2)-\partial_zn^{B,2}=\partial^2_zv^{B,1}_2.\notag\notag
\end{align}
By substituting the first component of $(\ref{nIj vIj uIj})_3$ into $(\ref{uB1 expansion})$, and using $(\ref{pB1=0})$, then the first component of $(\ref{uB1 expansion})$ is reduced to
\begin{equation}\label{uB11 equation}
	\begin{split}{}  &\partial_tu^{B,1}_1+u^{B,1}_1\overline{\partial_xu^{I,0}_1}+\overline{u^{I,0}_1}\partial_xu^{B,1}_1+z\overline{\partial_yu^{I,0}_2}\partial_zu^{B,1}_1=\partial^2_zu^{B,1}_1.\\
	\end{split}
\end{equation}
Substituting the second component of $(\ref{nIj vIj uIj})_3$ into $(\ref{uB1 expansion})$, and owing to $u^{B,1}_2=0$ in $(\ref{uB01=0 uB12=0})$, we obtain
\begin{equation}\label{pB2 equation}
	\begin{split}{}
		&\partial_zp^{B,2}=n^{B,1}.\\
	\end{split}
\end{equation}
Combining the term in $(\ref{v varepsilon expansion})$ containing $O(\varepsilon)$, and taking the first component, by substituting the outflow equation $(\ref{nIj vIj uIj})_2$ with $j=2$, using $(\ref{vB01=0,uB02=0})$ $(\ref{uB01=0 uB12=0})$ and (\ref{nvu I1=0}), we can get
\begin{align}\label{vB21 equation}
&\partial_tv^{B,2}_1+\overline{\partial_xu^{I,0}_1}v^{B,2}_1+z\overline{\partial_x\partial_yu^{I,0}_1}v^{B,1}_1+z\overline{\partial_x\partial_{y}u^{I,0}_2}v^{B,1}_2+\partial_x(\frac{1}{2}z^2\overline{\partial^2_yu^{I,0}_2}+\overline{u^{I,2}_2})v^{B,0}_2\notag\\
    &+\partial_xu^{B,1}_1(z\overline{\partial_yv^{I,0}_1}+v^{B,1}_1)+\partial_xu^{B,2}_1\overline{v^{I,0}_1}+\partial_xu^{B,2}_2(\overline{v^{I,0}_2}+v^{B,0}_2)+\overline{\partial_xv^{I,0}_1}u^{B,2}_1\notag\\
    &+\overline{\partial_xv^{I,0}_2}u^{B,2}_2+z\overline{\partial_x\partial_yv^{I,0}_1}u^{B,1}_1+\partial_xv^{B,0}_2(\frac{1}{2}z^2\overline{\partial^2_yu^{I,0}_2}+\overline{u^{I,2}_2}+u^{B,2}_2)\\
    &+\partial_xv^{B,1}_1(z\overline{\partial_yu^{I,0}_1}+u^{B,1}_1)+\partial_xv^{B,1}_2z\overline{\partial_{y}u^{I,0}_2}+\partial_xv^{B,2}_1\overline{u^{I,0}_1}+\overline{\partial_xv^{I,0}_2}v^{B,0}_2\notag\\
    &+2\partial_xv^{B,0}_2(\overline{v^{I,0}_2}+v^{B,0}_2)-\partial_xn^{B,2}=\partial^2_zv^{B,2}_1.\notag\notag
\end{align}
Similarly, combining the term in $(\ref{u varepsilon expansion})$ containing $O(\varepsilon)$, and taking the first component, by substituting the outflow equation $(\ref{nIj vIj uIj})_3$ with $j=2$, { $(\ref{vB01=0,uB02=0})$, $(\ref{uB01=0 uB12=0})$ and (\ref{nvu I1=0}), we obtain
\begin{equation}\label{uB21 equation}
	\begin{split}{}  &\partial_tu^{B,2}_1+u^{B,1}_1z\overline{\partial_x\partial_yu^{I,0}_1}+u^{B,2}_1\overline{\partial_xu^{I,0}_1}+u^{B,2}_2\overline{\partial_yu^{I,0}_1}+\overline{u^{I,0}_1}\partial_xu^{B,2}_1+z\overline{\partial_yu^{I,0}_1}\partial_xu^{B,1}_1\\
    &+u^{B,1}_1\partial_xu^{B,1}_1+z\overline{\partial_yu^{I,0}_2}\partial_zu^{B,2}_1+(\frac{1}{2}z^2\overline{\partial^2_yu^{I,0}_2}+\overline{u^{I,2}_2})\partial_zu^{B,1}_1+u^{B,2}_2\partial_zu^{B,1}_1+\partial_xp^{B,2}=\partial^2_zu^{B,2}_1.\\
	\end{split}
\end{equation}
{By} the divergence conditions, we can obtain the expressions for $u^{B,2}_2$ and $u^{B,3}_2$ as follows
\begin{eqnarray}\label{uB22 and uB32 equation}
		\begin{split}{}
			u^{B,2}_2(t,x,z)=\int^\infty_z\partial_xu^{B,1}_1(t,x,\eta)d\eta,\;\;
            u^{B,3}_2(t,x,z)=\int^\infty_z\partial_xu^{B,2}_1(t,x,\eta)d\eta.
		\end{split}
	\end{eqnarray}
Similar to the previous derivation process, we can further derive the initial boundary value problems for high-order profiles $(n^{I,j},v^{I,j},u^{I,j},p^{I,j})$ and $(n^{B,j+1},v^{B,j+1}_1,v^{B,j}_2,u^{B,j+1}_1,u^{B,j+2}_2,p^{B,j+1})$ with $j\geq2$, but they are not related to our results, hence these derivations are omitted.

\section{leading-order outer layer estimate}\label{4}
        In this section, we prove that the chemotaxis Euler system (\ref{eq:n0v0u0}) subject to the boundary condition $(\ref{boundary condition})_2$ possesses the regularity result stated in Theorem \ref{th2.1} in the general Sobolev spaces.

        First, we introduce  the following useful Lemmas, which are frequently used in the proof of Theorem \ref{th2.1}.

        \begin{lemma}[See \cite{GT,TX}]\label{lem nabla omega}
        {Let $u = (u_1, u_2)$ be a vector field in the half plane $\mathbb{R}_+^2 = \{ (x,y) \in \mathbb{R}^2 : y>0 \}$, satisfying
        $\mathrm{div}\,u = 0$
        with the boundary condition $u_2|_{y=0}=0$. Denote $\omega =\nabla\times\,u = \partial_x u_2 - \partial_y u_1$, then for the integer $k\geq 0$, there exists a constant $C$ depending only on $k$, such that
        \begin{equation*}
				\begin{split}{}
        \left\| \nabla^{k+1} u \right\|_{L^2(\mathbb{R}_+^2)} \leq C \left\| \nabla^k \omega \right\|_{L^2(\mathbb{R}_+^2)}.
        \end{split}
			\end{equation*}	
            }
        \end{lemma}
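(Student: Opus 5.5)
We would first settle the case $k=0$ by an exact energy identity, and then handle higher orders by combining tangential differentiation, which preserves the hypotheses, with the algebraic relations $\mathrm{div}\,u=0$, $\nabla\times u=\omega$, which trade normal derivatives for tangential ones. Throughout we work with $u$ smooth and decaying at infinity (for instance $u\in H^{k+1}(\mathbb{R}^2_+)$ approximated by smooth compactly supported fields preserving $u_2|_{y=0}=0$), and conclude by density.

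\textbf{Step 1 ($k=0$).} We expand
\begin{align*}
|\nabla u|^2-\omega^2-(\mathrm{div}\,u)^2
=2\,\partial_x u_2\,\partial_y u_1-2\,\partial_x u_1\,\partial_y u_2 .
\end{align*}
The right-hand side is a null Lagrangian (a Jacobian determinant), so it is a pure divergence:
\begin{align*}
\partial_x u_2\,\partial_y u_1-\partial_x u_1\,\partial_y u_2=\partial_y(u_2\,\partial_x u_1)-\partial_x(u_2\,\partial_y u_1).
\end{align*}
Integrating over $\mathbb{R}^2_+$, the $\partial_x$-term vanishes by decay in $x$. The $\partial_y$-term leaves only $-\int_{\mathbb{R}} u_2\,\partial_x u_1\,dx$ on $y=0$, which vanishes because $u_2|_{y=0}=0$. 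Since $\mathrm{div}\,u=0$, we obtain the identity $\|\nabla u\|_{L^2}=\|\omega\|_{L^2}$.

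\textbf{Step 2 (tangential derivatives).} The field $\partial_x^k u$ is still divergence free, satisfies $\partial_x^k u_2|_{y=0}=0$, and has curl $\partial_x^k\omega$. Step 1 therefore gives
\begin{align*}
\|\nabla\partial_x^k u\|_{L^2}=\|\partial_x^k\omega\|_{L^2}\le\|\nabla^k\omega\|_{L^2}.
\end{align*}
This controls every derivative $\partial_x^{\alpha}\partial_y^{\beta}u$ of total order $k+1$ with $\beta\le 1$.

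\textbf{Step 3 (normal derivatives by induction).} The two relations
\begin{align*}
\partial_y u_2=-\partial_x u_1,\qquad \partial_y u_1=\partial_x u_2-\omega
\end{align*}
convert one $\partial_y$ into one $\partial_x$, at the cost of a derivative of $\omega$. Hence, for $\beta\ge1$,
\begin{align*}
\partial_x^{\alpha}\partial_y^{\beta}u=\partial_x^{\alpha+1}\partial_y^{\beta-1}(\text{component of }u)+\partial_x^{\alpha}\partial_y^{\beta-1}(\text{$0$ or $\omega$}).
\end{align*}
We induct on $\beta$ with $\alpha+\beta=k+1$ fixed. The first term has strictly fewer normal derivatives and is controlled by the induction hypothesis, with Step 2 as the base case. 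The second term is a derivative of $\omega$ of order $k$, bounded by $\|\nabla^k\omega\|_{L^2}$. Summing over all $\alpha+\beta=k+1$ gives $\|\nabla^{k+1}u\|_{L^2}\le C_k\|\nabla^k\omega\|_{L^2}$, with $C_k$ depending only on $k$.

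\textbf{Main obstacle.} The delicate point is the boundary term in Step 1. One must check that it involves only $u_2$, so that the single condition $u_2|_{y=0}=0$ kills it, and that this condition is preserved under $\partial_x^k$. It is, but it would fail for $\partial_y$-derivatives, which is exactly why the normal derivatives are recovered algebraically in Step 3 rather than by repeating the energy identity. A secondary technical issue is the density argument: the approximating fields must keep both the divergence-free constraint and the zero normal trace, for instance by approximating a stream function $\psi$ with $\psi|_{y=0}=0$ and setting $u=\nabla^\perp\psi$.
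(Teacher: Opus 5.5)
Your proof is correct. The paper itself gives no proof of this lemma and only cites \cite{GT,TX}, so the comparison is with the standard argument behind such a citation, which is elliptic: since $\mathrm{div}\,u=0$ on the simply connected half-plane and $u_2|_{y=0}=0$, one writes $u=\nabla^\perp\psi$ with $\psi|_{y=0}=0$. Then $\Delta\psi=\omega$ is a Dirichlet problem, and $\|\nabla^{k+2}\psi\|_{L^2}\le C\|\nabla^k\omega\|_{L^2}$ is quoted from half-space elliptic regularity.

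Your route is self-contained. The case $k=0$ is the exact identity $\|\nabla u\|_{L^2}^2=\|\omega\|_{L^2}^2+\|\mathrm{div}\,u\|_{L^2}^2$. Your Steps 2--3 are, in effect, the proof of boundary elliptic regularity specialized to the first-order div--curl system: tangential derivatives come from the energy identity, and normal derivatives are read off algebraically from $\partial_yu_2=-\partial_xu_1$ and $\partial_yu_1=\partial_xu_2-\omega$. This gives an explicit constant ($C=1$ when $k=0$) and needs no theory of the Dirichlet problem. The elliptic route is shorter only if that theory is taken for granted. Your diagnosis of why $\partial_y$-derivatives cannot be fed back into Step 1 is exactly right, since $\partial_yu_2|_{y=0}=-\partial_xu_1|_{y=0}\neq0$ in general.

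Three small remarks.

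\textbf{Sign of the divergence identity.} The identity in Step 1 has the wrong sign. It should read $\partial_xu_2\,\partial_yu_1-\partial_xu_1\,\partial_yu_2=\partial_x(u_2\,\partial_yu_1)-\partial_y(u_2\,\partial_xu_1)$. This is harmless, since both terms integrate to zero.

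\textbf{The density step.} This is easier than you suggest. The identity $\|\nabla u\|^2=\|\omega\|^2+\|\mathrm{div}\,u\|^2$ holds for every $u\in H^1(\mathbb{R}^2_+)\times H^1_0(\mathbb{R}^2_+)$, with no divergence constraint. This follows from density of $C_c^\infty(\overline{\mathbb{R}^2_+})\times C_c^\infty(\mathbb{R}^2_+)$ and continuity of both sides in $H^1$. You impose $\mathrm{div}\,u=0$ only afterwards. So you never need to approximate a stream function, which in two dimensions need not even lie in $L^2$.

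\textbf{The standing assumption.} Your assumption $u\in H^{k+1}(\mathbb{R}^2_+)$ is genuinely needed, since the lemma as stated specifies no function class. For instance, $u=(2x,-2y)$ satisfies all the stated hypotheses with $\omega=0$ but $\nabla u\neq0$. It is also the setting in which the paper applies the lemma, namely $u^0\in H^{m+1}$.
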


\begin{lemma}[See \cite{AF}]\label{lem trace theorem}
		{{In the half plane $\R^2_+$}, the following inequality holds true:
			\begin{equation*}
				\begin{split}{}
					\|f(x,y)\|_{L^2_{x}L^\infty_{y}}\leq C\|f\|^{\frac{1}{2}}_{L^2_{xy}}\|\partial_yf\|^{\frac{1}{2}}_{L^2_{xy}},\;\; \forall f\in W^{1,2}_{xy}.
				\end{split}
			\end{equation*}	
		}
	\end{lemma}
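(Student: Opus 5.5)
The plan is to prove the inequality first for smooth, compactly supported functions on $\overline{\R^2_+}$ and then extend it by density. The key is a one-dimensional Agmon-type estimate in $y$, carried out for each fixed $x$, followed by integration in $x$ and the Cauchy--Schwarz inequality.

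\emph{Step 1 (one-dimensional estimate).} Let $f\in C_c^\infty(\overline{\R^2_+})$ and fix $x\in\R$. Since $f(x,\cdot)$ vanishes for large $y$, the fundamental theorem of calculus gives, for every $y\ge0$,
\begin{equation*}
|f(x,y)|^2=-\int_y^\infty\partial_\eta\big(|f(x,\eta)|^2\big)\,d\eta=-2\int_y^\infty f(x,\eta)\,\partial_\eta f(x,\eta)\,d\eta .
\end{equation*}
Applying Cauchy--Schwarz in $\eta$, we get
\begin{equation*}
|f(x,y)|^2\le 2\,\|f(x,\cdot)\|_{L^2_y}\,\|\partial_yf(x,\cdot)\|_{L^2_y}.
\end{equation*}
The right-hand side does not depend on $y$, so the same bound holds for $\sup_{y\ge0}|f(x,y)|^2$.

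\emph{Step 2 (integration in $x$).} Integrate the bound from Step 1 over $x\in\R$ and apply Cauchy--Schwarz in $x$:
\begin{equation*}
\|f\|^2_{L^2_xL^\infty_y}\le 2\int_{\R}\|f(x,\cdot)\|_{L^2_y}\|\partial_yf(x,\cdot)\|_{L^2_y}\,dx\le 2\,\|f\|_{L^2_{xy}}\|\partial_yf\|_{L^2_{xy}} .
\end{equation*}
Taking square roots yields the claim with $C=\sqrt2$.

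\emph{Step 3 (density).} $C_c^\infty(\overline{\R^2_+})$ is dense in $W^{1,2}(\R^2_+)$; this follows from the standard extension-by-reflection argument followed by mollification and cutoff. Take $f_k\to f$ in $W^{1,2}_{xy}$. By Step 2 applied to $f_k-f_l$, the sequence $(f_k)$ is Cauchy in $L^2_xL^\infty_y$. Its limit there agrees a.e. with $f$, since $f_k\to f$ in $L^2_{xy}$ and both modes of convergence give an a.e. convergent subsequence. Passing to the limit in the inequality for $f_k$, using continuity of both sides, gives the result for $f$.

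This step is the only point needing care: one must make sure the $L^2_xL^\infty_y$ norm of the limit is the norm of $f$ itself, which the a.e. identification settles. Alternatively, one can argue directly by Fubini: for a.e. $x$, $f(x,\cdot)\in H^1(\R_+)$, so it is absolutely continuous and $|f(x,y)|^2\to0$ as $y\to\infty$. Step 1 then applies verbatim without any density argument.
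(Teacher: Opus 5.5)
Your proof is correct. The paper gives no proof of this lemma and only cites Adams--Fournier \cite{AF}, so the comparison is between that citation and your self-contained argument. You use the identity $|f(x,y)|^2=-2\int_y^\infty f\,\partial_\eta f\,d\eta$ on each vertical line, then Cauchy--Schwarz in $\eta$, then Cauchy--Schwarz in $x$. This is the standard Agmon-type proof, and it gives two things the citation leaves implicit. First, an explicit constant $C=\sqrt{2}$. Second, it shows that only the normal derivative $\partial_y f$ enters, with no $\partial_x f$. That anisotropic form is exactly what the paper needs, for instance for the boundary term $I_1$ in the proof of Lemma~\ref{n0 v0 u0 L2} (via $\|g(\cdot,0)\|_{L^2_x}\le\|g\|_{L^2_xL^\infty_y}$) and for the analogous boundary integrals in Section~\ref{5}, where the right-hand side involves only $\partial_z$.

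Both of your closing arguments are sound:
\begin{itemize}
\item In the density version, the a.e.\ identification of the limit holds because convergence in $L^2_xL^\infty_y$ implies convergence in $L^2_{\mathrm{loc}}$, since $\int_{|x|<R}\int_0^R|h|^2\,dy\,dx\le R\,\|h\|^2_{L^2_xL^\infty_y}$.
\item You could skip the Cauchy-sequence step entirely: extract an a.e.\ convergent subsequence of $f_k$ and apply Fatou's lemma in $x$ to $\operatorname{ess\,sup}_y|f_k(x,\cdot)|^2$.
\item In the Fubini version, the decay $|f(x,y)|^2\to0$ as $y\to\infty$ holds because $|f(x,\cdot)|^2$ has integrable derivative $2f\,\partial_yf$, so the limit exists, and $f(x,\cdot)\in L^2(\R_+)$, so the limit must be zero.
\end{itemize}
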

    \begin{lemma}\label{lem higher regurality}
		{For any $f,g\in H^{s}(\R^2_+)\cap L^\infty(\R^2_+)$ with $s>0$, it holds  that
			\begin{equation*}
				\begin{split}{}					
                    \|fg\|_{H^s_{xy}}\leq C(\|f\|_{L^\infty_{xy}}\|g\|_{H^s_{xy}}+\|f\|_{H^s_{xy}}\|g\|_{L^\infty_{xy}}).
				\end{split}
			\end{equation*}	
		}
	\end{lemma}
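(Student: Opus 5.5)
The plan is to reduce to the whole plane $\R^2$ by an extension operator, prove the Moser-type estimate there, and restrict back. First I will fix a Stein (universal) extension operator $\mathcal{E}$ for the half plane. It maps $H^s(\R^2_+)\to H^s(\R^2)$ boundedly for every $s\ge 0$ and $L^\infty(\R^2_+)\to L^\infty(\R^2)$ boundedly. For integer $s$, a higher-order reflection $\mathcal{E}f(x,y)=\sum_{i=1}^{N}\lambda_i f(x,-iy)$ for $y<0$ (with $\sum_i\lambda_i(-i)^j=1$ for $j\le N-1$) already suffices. Set $F=\mathcal{E}f$ and $G=\mathcal{E}g$. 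Since $FG$ restricted to $\R^2_+$ equals $fg$, we get $\|fg\|_{H^s_{xy}}\le\|FG\|_{H^s(\R^2)}$. So it suffices to prove $\|FG\|_{H^s(\R^2)}\le C(\|F\|_{L^\infty}\|G\|_{H^s}+\|F\|_{H^s}\|G\|_{L^\infty})$ and then use the boundedness of $\mathcal{E}$ on $H^s$ and on $L^\infty$.

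For the whole-plane estimate I will use the Bony paraproduct decomposition $FG=T_FG+T_GF+R(F,G)$ with Littlewood--Paley blocks $\Delta_j$. For the term $T_FG=\sum_j S_{j-1}F\,\Delta_jG$, each summand has Fourier support in an annulus of size $2^j$, and $\|S_{j-1}F\|_{L^\infty}\le C\|F\|_{L^\infty}$. Hence $\|T_FG\|_{H^s}^2\lesssim\sum_j2^{2js}\|F\|_{L^\infty}^2\|\Delta_jG\|_{L^2}^2\lesssim\|F\|_{L^\infty}^2\|G\|_{H^s}^2$, and $T_GF$ is symmetric. For the remainder $R(F,G)=\sum_{|j-j'|\le1}\Delta_jF\,\Delta_{j'}G$, the pieces have Fourier support in balls of radius $\sim2^j$. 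Bounding each piece by $\|\Delta_jF\|_{L^\infty}\|\Delta_{j'}G\|_{L^2}$ and summing with the weight $2^{qs}$ over $j\ge q-3$ converges precisely because $s>0$. This gives $\|R\|_{H^s}\lesssim\|F\|_{L^\infty}\|G\|_{H^s}$. The low-frequency part is controlled by $\|FG\|_{L^2}\le\|F\|_{L^\infty}\|G\|_{L^2}$.

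Alternatively, since only integer $s$ is used in Section \ref{4}, I would give the elementary route as a check. For $|\alpha|\le s$, expand $\partial^\alpha(FG)$ by Leibniz. Each term $\partial^\beta F\,\partial^{\alpha-\beta}G$ is bounded by Hölder with exponents $p=2|\alpha|/|\beta|$ and $q=2|\alpha|/|\alpha-\beta|$. Then apply the Gagliardo--Nirenberg inequality $\|\partial^\beta F\|_{L^{p}}\le C\|F\|_{L^\infty}^{1-|\beta|/|\alpha|}\|\nabla^{|\alpha|}F\|_{L^2}^{|\beta|/|\alpha|}$, and likewise for $G$, followed by Young's inequality $a^\theta b^{1-\theta}\le a+b$. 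This can even be done directly on $\R^2_+$, since Gagliardo--Nirenberg holds on Lipschitz domains.

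The main obstacle is the justification on the half plane rather than the algebra. One must check that a single extension operator is simultaneously bounded on $L^\infty$ and on $H^s$ for non-integer $s$. Stein's extension provides this; for non-integer $s$ one can also interpolate between integer levels. The remainder term's summability at $s>0$ is the only place where positivity of $s$ enters.
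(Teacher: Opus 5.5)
Your proposal is correct. There is nothing in the paper to compare it with: the lemma is stated without proof or citation, as the standard Moser/Kato--Ponce product estimate. It is also only ever used with integer $s$. The paper's spaces $H^q_{xy}$ are defined only for $q\in N$, and the explicit invocation in (\ref{nabla v0}) is the case $s=2$, combined with $H^2\subset L^\infty$ in two dimensions.

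Of your two routes, the Leibniz--H\"older--Gagliardo--Nirenberg argument is the one that matches what the paper actually needs, and it is fully elementary. The Young's-inequality step checks out, since the exponents pair as $(\|F\|_{L^\infty}\|\nabla^{|\alpha|}G\|_{L^2})^{1-\theta}(\|\nabla^{|\alpha|}F\|_{L^2}\|G\|_{L^\infty})^{\theta}$ with $\theta=|\beta|/|\alpha|$. One point deserves a careful statement. On $\R^2_+$ you should either apply Gagliardo--Nirenberg to the extensions $F,G$, or accept lower-order terms on general Lipschitz domains. Both are harmless because the right-hand side uses the full $H^s$ norm.

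The paraproduct route buys the statement for all real $s>0$. Its cost is needing one extension operator that is simultaneously bounded on $L^\infty$ and on fractional $H^s$. Stein's operator is an average of reflections against an integrable weight, so it is bounded on $L^\infty$; it is bounded on every $W^{k,2}$, hence on $H^s$ by interpolation. For the finite reflection $\sum_{i\le N}\lambda_i f(x,-iy)$, remember to take $N\ge s$. That route also correctly isolates the remainder $R(F,G)$ as the only place where $s>0$ enters.
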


       To prove the local well-posedness of the system (\ref{eq:n0v0u0}), it suffices to obtain some necessary prior estimates. We assume that the initial data $(n_{in},v_{in},u_{in})$ in $\R^2_+$ and the solution $(n^0,v^0,u^0)$ in $\R^2_+\times [0,T^*)$ are suitably smooth without loss of generality.

        \begin{lemma}\label{n0 v0 u0 L2}
         {Assume that the initial data $(n_{in},v_{in},u_{in})$ satisfy the assumptions of Theorem \ref{th2.1} and $(n^0,v^0,u^0)$ is smooth in $\R^2_+\times [0,T^*)$. Then there exists $0<T<T^*$ such that the uniform energy estimates hold:
		\begin{equation}\label{n0 v0 u0 H2}
        \begin{split}{}
        &\|(n^0, v^0, u^0, \nabla u^0)\|^2_{L^\infty_TH^2_{xy}}+\|(\partial_tn^0, \partial_tv^0, \partial_tu^0, \partial_t\nabla u^0) \|^2_{L^\infty_TL^2_{xy}}+\|(\nabla\partial_tn^0, \n\partial^2_x n^0)\|^2_{L^2_TL^2_{xy}}\\
        &\leq C(\|n_{in}\|_{H^2_{xy}},\|v_{in}\|_{H^2_{xy}},\|u_{in}\|_{H^3_{xy}},\|\partial_tn_{in}\|_{L^2_{xy}},\|\partial_tv_{in}\|_{L^2_{xy}},\|\partial_tu_{in}\|_{H^1_{xy}}),\\
        \end{split}
		\end{equation}
        and
        \begin{equation}\label{nabla3 n0 L2T}
        \begin{split}{}
        \|(\nabla\partial_tv^0, \nabla^2\partial_t u^0) \|^2_{L^\infty_TL^2_{xy}}+\|\n^3 n^0\|_{L^2_TL^2_{xy}}&\leq C(\|n_{in}\|_{H^2_{xy}},\|v_{in}\|_{H^2_{xy}},\|u_{in}\|_{H^3_{xy}}).\\
        \end{split}
		\end{equation}

		}
	\end{lemma}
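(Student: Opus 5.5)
We will obtain the bounds in Lemma \ref{n0 v0 u0 L2} by closing an a priori energy estimate for the quantity
\begin{align*}
\mathcal{E}(t)&=\|(n^0,v^0,u^0,\nabla u^0)(t)\|^2_{H^2_{xy}}+\|(\partial_tn^0,\partial_tv^0,\partial_tu^0,\partial_t\nabla u^0)(t)\|^2_{L^2_{xy}}\\
&\quad+\|(\nabla\partial_tv^0,\nabla^2\partial_tu^0)(t)\|^2_{L^2_{xy}}.
\end{align*}
The goal is a Riccati-type inequality $\mathcal{E}'(t)+\mathcal{D}(t)\le C(1+\mathcal{E})^{p}$. Here $\mathcal{D}$ collects the parabolic dissipation of $n^0$, namely $\|\nabla\partial_tn^0\|^2_{L^2_{xy}}$, $\|\nabla\partial_x^2n^0\|^2_{L^2_{xy}}$ and, after elliptic recovery, $\|\nabla^3n^0\|^2_{L^2_{xy}}$. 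Integrating this ODE inequality gives a time $T>0$, depending only on $\mathcal{E}(0)$, on which $\mathcal{E}$ stays bounded. The quantity $\mathcal{E}(0)$ is controlled by $\|n_{in}\|_{H^2}$, $\|v_{in}\|_{H^2}$ and $\|u_{in}\|_{H^3}$, using the equations at $t=0$ and the compatibility conditions $(M_1)$--$(M_4)$. This shows that the right-hand side of (\ref{nabla3 n0 L2T}) depends only on those norms.

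The estimates are organised by derivative type, since only tangential derivatives $\partial_x$ and $\partial_t$ preserve the boundary conditions. For $n^0$ we test $\partial_x^k n^0$ and $\partial_t n^0$ against the $n$-equation for $k\le2$. The Robin condition $\partial_yn^0+n^0v^0_2=0$ with $v^0_2|_{y=0}$ yields boundary terms, which we control by Lemma \ref{lem trace theorem} and the dissipation. We then recover normal derivatives from the equation itself: $\partial_y^2n^0=\partial_tn^0-\partial_x^2n^0+u^0\cdot\nabla n^0-\nabla\cdot(n^0v^0)$. This gives $\nabla^3n^0\in L^2_TL^2$ from $\nabla\partial_tn^0$, $\nabla\partial_x^2n^0$ and the lower-order terms.

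For $u^0$ we work with the vorticity $\omega=\nabla\times u^0$, which satisfies the transport equation $\partial_t\omega+u^0\cdot\nabla\omega=\partial_xn^0$ with no boundary term because $u^0_2|_{y=0}=0$. Estimating $\nabla^k\omega$ for $k\le2$ and applying Lemma \ref{lem nabla omega} controls $\nabla^3u^0$. The forcing $\nabla^2\partial_xn^0$ lies in $L^2_TL^2$ by the dissipation of $n^0$, so it is admissible in time-integrated form. The time derivatives follow from the equation, with $\nabla p^0$ treated by the Neumann problem $(M_4)$.

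For $v^0$ there is no dissipation, and we write $\partial_tv^0+\nabla(u^0\cdot v^0)=\nabla n^0$. The curl-free condition gives $\nabla(u^0\cdot v^0)=(u^0\cdot\nabla)v^0+(\nabla u^0)^Tv^0$. The top-order term $\int\partial^\alpha v^0\cdot(u^0\cdot\nabla)\partial^\alpha v^0$ then integrates by parts to zero, using $\nabla\cdot u^0=0$ and $u^0_2|_{y=0}=0$. The same structure applies to $\partial_tv^0$ and $\nabla\partial_tv^0$, as indicated in the remark after Theorem \ref{th2.1}. Note that $\partial^\alpha$ commutes with curl, so $\partial^\alpha v^0$ remains curl-free. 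The forcing $\nabla^3 n^0$ is only $L^2_T$, which suffices through the Cauchy--Schwarz inequality and Young's inequality against $\mathcal{D}$. Nonlinear products are bounded by Lemma \ref{lem higher regurality} together with the embedding $H^2\subset L^\infty$.

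I expect the main obstacle to be the coupling between $n^0$ and $v^0$. The $n$-equation contains $n^0\nabla\cdot v^0$, which at top order involves $\nabla^3v^0$ when estimating $\nabla^2 n^0$ in $H^1$-dissipation form. Conversely, the $v$-equation needs $\nabla^3n^0$. My first approach would be to pair the top-order $n$-estimate with the $v$-estimate so that these cross terms cancel. Concretely, I would use $\int\nabla\cdot(n^0\partial^\alpha v^0)\,\partial^\alpha\Delta n^0$ against $\int\partial^\alpha\nabla n^0\cdot\partial^\alpha v^0$ with a weight. If exact cancellation fails, I would instead place only one extra derivative on $n^0$ and absorb the remainder using the dissipation $\|\nabla^3n^0\|_{L^2}$ and Young's inequality. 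The boundary traces from integrating $\partial_y$ by parts, controlled via Lemma \ref{lem trace theorem}, are the secondary technical difficulty.
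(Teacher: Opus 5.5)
Your plan follows essentially the same route as the paper, but one step as you wrote it would not close. The shared ingredients are:
\begin{itemize}
\item energy estimates for $n^0$ using only $\partial_t$ and $\partial_x$, so the Robin condition is preserved;
\item recovery of normal derivatives from $\partial_y^2n^0=\partial_tn^0-\partial_x^2n^0+u^0\cdot\nabla n^0-\nabla\cdot(n^0v^0)$;
\item the vorticity transport equation together with Lemma \ref{lem nabla omega} for $u^0$;
\item the curl-free cancellation $\int u^0\cdot\nabla|\partial^\alpha v^0|^2=0$ for $v^0$;
\item a Riccati-type inequality on a short time interval.
\end{itemize}
The paper gets $\nabla\partial_yn^0$ slightly differently: it tests the $\partial_y$-differentiated equation with $\partial_y\partial_tn^0$. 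The resulting boundary term $\int\partial_y^2n^0\,\partial_t(n^0v^0_2)|_{y=0}\,dx$ is handled with the Robin condition and Lemma \ref{lem trace theorem}. Your tangential-plus-elliptic recovery is an equally viable way to do this.

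\textbf{The failing step.} You put $\|\nabla\partial_tv^0\|^2_{L^2_{xy}}$ and $\|\nabla^2\partial_tu^0\|^2_{L^2_{xy}}$ into the differentiated energy, and say the same curl-free structure propagates $\nabla\partial_tv^0$.
\begin{itemize}
\item Applying $\nabla\partial_t$ to $\partial_tv^0+\nabla(u^0\cdot v^0)=\nabla n^0$ produces the forcing $\nabla^2\partial_tn^0$.
\item The corresponding vorticity estimate for $\nabla\partial_t\omega^0$ produces $\nabla\partial_x\partial_tn^0$.
\item Both are at the level of $\nabla^4n^0$. Your $\mathcal{D}$ contains only $\nabla\partial_tn^0$, $\nabla\partial_x^2n^0$ and $\nabla^3n^0$, and this level is first reached in Lemma \ref{nabla3 n0}.
\item Integrating by parts only moves the derivative onto $\nabla^2\partial_tv^0$, which involves $\nabla^3v^0$ and is also uncontrolled.
\end{itemize}
So $\mathcal{E}'\le C(1+\mathcal{E})^p+\mathcal{D}$ cannot be proved for your $\mathcal{E}$.

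\textbf{The fix} is what the paper does. Keep these two quantities out of the energy and read them off pointwise in time from the equations:
\begin{itemize}
\item $\|\nabla\partial_tv^0\|_{L^2}\le C\|u^0\|_{H^2}\|v^0\|_{H^2}+\|\nabla^2n^0\|_{L^2}$;
\item the analogous bound for $\nabla\partial_t\omega^0$, obtained from $\partial_t\omega^0=-u^0\cdot\nabla\omega^0+\partial_xn^0$.
\end{itemize}
See (\ref{nabla v0t nabla omega0t}). The same pointwise bound on $\nabla\partial_tv^0$ is how the paper controls $\partial_t\partial_yv^0_2$ inside its boundary term.

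\textbf{The anticipated obstacle does not arise.} The $n^0$--$v^0$ coupling you flag as the main difficulty never involves $\nabla^3v^0$:
\begin{itemize}
\item In weak form, the $\partial_x^2$-estimate involves $-\int\partial_x^2(n^0v^0)\cdot\nabla\partial_x^2n^0$, which needs only $\nabla^2v^0$.
\item Recovering $\nabla^3n^0$ from $\partial_y$ of the equation also needs only $\nabla^2v^0$.
\end{itemize}
The only real coupling is the $\nabla^3n^0$ forcing in the $\nabla^2v^0$ and $\nabla^2\omega^0$ estimates. The paper absorbs this by weighting the $n^0$-estimates with a large constant (the factors $18$ in (\ref{tn}) and (\ref{nvu})). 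That is exactly your fallback, and no cancellation is needed.
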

       \noindent{\bf{Proof.}} Consider the evolution of the following energy functional:
       \begin{align}\label{energy functional}
           E(t)&:=\| (n^0, v^0,  \omega^0)\|^2_{H^2_{xy}}+\|(u^0, \partial_t\omega^0)\|^2_{L^2_{xy}}+\|(\partial_tn^0, \partial_tv^0, \partial_tu^0)\|^2_{L^2_{xy}},
       \end{align}
       where $\omega^0=\partial_xu^0_2-\partial_yu^0_1$, which is divided  into four main steps.

       \textbf{Step 1. The $L^2_{xy}$ energy estimate.}
       Taking the $L^2$ inner product of $(\ref{eq:n0v0u0})_{1}$ with $n^0$ and applying integration by parts, the divergence condition
$(\ref{eq:n0v0u0})_{5}$ and
 the boundary condition $(\ref{eq:n0v0u0})_{6}$, we deduce that
       \begin{align*}
		\begin{split}{}
        \frac{1}{2}\frac{d}{dt}\int^\infty_0\int^\infty_{-\infty}|n^0|^2dxdy&=\int^\infty_0\int^\infty_{-\infty}\nabla\cdot(\nabla n^0+n^0v^0)n^0dxdy-\int^\infty_0\int^\infty_{-\infty}u^0\cdot\nabla n^0n^0dxdy\\
        &=-\int^\infty_0\int^\infty_{-\infty}|\nabla n^0|^2dxdy-\int^\infty_0\int^\infty_{-\infty}\nabla n^0\cdot (n^0v^0)dxdy,
		\end{split}
	\end{align*}
   which  implies
    \begin{equation}\label{n0 L2}
		\begin{split}{}
        \frac{1}{2}\frac{d}{dt}\|n^0\|^2_{L^2_{xy}}+\|\nabla n^0\|^2_{L^2_{xy}}&\leq\|\nabla n^0\|_{L^2_{xy}}\|n^0\|_{L^4_{xy}}\|v^0\|_{L^4_{xy}}\\
        &\leq C\|\nabla n^0\|^2_{L^2_{xy}}\|n^0\|^2_{H^1_{xy}}+C\|v^0\|^2_{H^1_{xy}},
		\end{split}
	\end{equation}
    due to the Gagliardo¨CNirenberg interpolation inequality.

    Similarly, multiplying $(\ref{eq:n0v0u0})_{2}$ by $v^0$, we obtain
    \begin{equation}\label{v0 L2}
		\begin{split}{}
        \frac{1}{2}\frac{d}{dt}\|v^0\|^2_{L^2_{xy}}&=-\int^\infty_0\int^\infty_{-\infty}\nabla(u^0\cdot v^0)\cdot v^0dxdy+\int^\infty_0\int^\infty_{-\infty}\nabla n^0\cdot v^0dxdy\\
        &\leq C\left(\|\nabla u^0\|_{L^2_{xy}}\|v^0\|^2_{H^1_{xy}}+\|\nabla v^0\|_{L^2_{xy}}\|v^0\|_{H^1_{xy}}\|u^0\|_{H^1_{xy}}+\|\nabla n^0\|_{L^2_{xy}}\|v^0\|_{L^2_{xy}}\right).
		\end{split}
	\end{equation}
    Furthermore, we take the $L^2$ inner product of $(\ref{eq:n0v0u0})_{4}$ with $u^0$. Using the boundary condition $u^0_2|_{y=0}=0$ and the divergence-free $\nabla\cdot u^0=0$, we get
    \begin{align}\label{u0 L2}
        \frac{1}{2}\frac{d}{dt}\|u^0\|^2_{L^2_{xy}}&=-\int^\infty_0\int^\infty_{-\infty}u^0\cdot\nabla u^0\cdot u^0dxdy-\int^\infty_0\int^\infty_{-\infty}\nabla p^0\cdot u^0dxdy+\int^\infty_0\int^\infty_{-\infty}n^0e_2\cdot u^0dxdy\notag\\
       &=\int^\infty_0\int^\infty_{-\infty}n^0 u^0_2dxdy\leq \|n^0\|_{L^2_{xy}}\|u^0\|_{L^2_{xy}}.
	\end{align}
     Combining the estimates (\ref{n0 L2})-(\ref{u0 L2}), (\ref{energy functional}) and Lemma \ref{lem nabla omega}, we conclude
    \begin{equation}\label{n0 v0 u0 L2 estimate}
		\begin{split}{}
        &\frac{d}{dt}(\|n^0\|^2_{L^2_{xy}}+\|v^0\|^2_{L^2_{xy}}+\|u^0\|^2_{L^2_{xy}})\leq C(E(t)+1)^2.
		\end{split}
	\end{equation}
    \textbf{Step 2. The gradient estimates.} In this step, we  estimate $\n n^0$, $\n v^0$, $\n u^0$ and $\nabla \omega^0$.
    Multiplying $(\ref{eq:n0v0u0})_{1}$ by $\partial_t n^0$, by $(\ref{eq:n0v0u0})_{6}$ it can be shown that
    \begin{align*}
		\begin{split}{}
        \int^\infty_0\int^\infty_{-\infty}|\partial_tn^0|^2dxdy&=\int^\infty_0\int^\infty_{-\infty}\nabla\cdot(\nabla n^0+n^0v^0)\partial_tn^0dxdy-\int^\infty_0\int^\infty_{-\infty}u^0\cdot\nabla n^0\partial_tn^0dxdy\\
        &\leq-\int^\infty_0\int^\infty_{-\infty}\nabla \partial_tn^0\cdot\nabla n^0 dxdy-\int^\infty_0\int^\infty_{-\infty}\nabla\partial_t n^0\cdot (n^0v^0)dxdy\\
        &\quad+\int^\infty_0\int^\infty_{-\infty}|u^0||\nabla n^0||\partial_tn^0|dxdy,
		\end{split}
	\end{align*}
    which implies that
    \begin{equation}\label{nabla n0}
		\begin{split}{}
        &\frac{1}{2}\frac{d}{dt}\|\n n^0\|^2_{L^2_{xy}}+\|\partial_t n^0\|^2_{L^2_{xy}}\\
        &\leq\|\nabla\partial_t n^0\|_{L^2_{xy}}\|n^0\|_{L^4_{xy}}\|v^0\|_{L^4_{xy}}+\|u^0\|_{L^\infty_{xy}}\|\nabla n^0\|_{L^2_{xy}}\|\partial_t n^0\|_{L^2_{xy}}\\
        &\leq \frac{1}{16}\|\nabla\partial_t n^0\|^2_{L^2_{xy}}+\frac{1}{8}\|\partial_t n^0\|^2_{L^2_{xy}}+C(\| n^0\|^2_{H^1_{xy}}\|v^0\|^2_{H^1_{xy}}+\|u^0\|^2_{H^2_{xy}}\|\nabla n^0\|^2_{L^2_{xy}}).
		\end{split}
	\end{equation}
    Taking $\nabla$ on $(\ref{eq:n0v0u0})_{2}$, multiplying by $\nabla v^0$, and applying Lemma \ref{lem higher regurality}, we obtain
    \begin{align}\label{nabla v0}
        \frac{1}{2}\frac{d}{dt}\|\n v^0\|^2_{L^2_{xy}}&=-\int^\infty_0\int^\infty_{-\infty}\nabla^2(u^0\cdot v^0):\nabla v^0+\int^\infty_0\int^\infty_{-\infty}\nabla^2n^0:\nabla v^0\notag\\
        &\leq C\left(\|u^0\|_{H^2_{xy}}\|v^0\|_{H^2_{xy}}\|\nabla v^0\|_{L^2_{xy}}+\|\nabla^2 n^0\|_{L^2_{xy}}\|\nabla v^0\|_{L^2_{xy}}\right).
	\end{align}
    In order to estimate $\nabla u^0$,  taking $\nabla\times$ on the equation $(\ref{eq:n0v0u0})_{4}$, we get
    \begin{equation}\label{eq:omega}
		\begin{split}{}
        \partial_t\omega^0+u^0\cdot\nabla\omega^0=\partial_xn^0,
		\end{split}
	\end{equation}
    and it follows that
    \begin{equation}\label{omega L2}
		\begin{split}{}
        \frac{1}{2}\frac{d}{dt}\|\omega^0\|^2_{L^2_{xy}}&=-\int^\infty_0\int^\infty_{-\infty}u^0\cdot\nabla\omega^0\omega^0dxdy+\int^\infty_0\int^\infty_{-\infty}\partial_xn^0\omega^0dxdy\\
        &\leq \|\partial_xn^0\|_{L^2_{xy}}\|\omega^0\|_{L^2_{xy}}.
		\end{split}
	\end{equation}
    Applying $\nabla$ to (\ref{eq:omega}), we conclude that
    \begin{equation}\label{nabla omega}
		\begin{split}{}
        \frac{1}{2}\frac{d}{dt}\|\nabla\omega^0\|^2_{L^2_{xy}}&\leq C\|\nabla u^0\|_{H^2_{xy}}\|\nabla\omega^0\|^2_{L^2_{xy}}+C\|\nabla\partial_xn^0\|_{L^2_{xy}}\|\nabla\omega^0\|_{L^2_{xy}}.\\
		\end{split}
	\end{equation}
     Combining $(\ref{nabla n0})$-$(\ref{nabla v0})$ and $(\ref{omega L2})$-$(\ref{nabla omega})$,  by Lemma \ref{lem nabla omega} we have
    \begin{equation}\label{nabla n0 v0 omega}
		\begin{split}{}
        \frac{d}{dt}\left(\|\n n^0\|^2_{L^2_{xy}}+\|\n v^0\|^2_{L^2_{xy}}+\|\omega^0\|^2_{L^2_{xy}}+\|\n \omega^0\|^2_{L^2_{xy}}\right)\leq C(E(t)+1)^2+\frac{1}{8}\|\nabla\partial_t n^0\|^2_{L^2_{xy}}.
		\end{split}
	\end{equation}
    \textbf{Step 3. The derivative estimates with respect to $t$.} Recall that  we also need to estimate $\|\nabla\partial_t n^0\|^2_{L^2_{xy}}$ in $Step\; 2$. Taking $\partial_t$ on $(\ref{eq:n0v0u0})_{1}$, we get
    \begin{equation}\label{eq:n0tt}
		\begin{split}{}
        \partial_{tt}n^0+\partial_tu^0\cdot\nabla n^0+u^0\cdot\nabla\partial_t n^0=\nabla\cdot\left(\partial_t\nabla n^0+\partial_t(n^0v^0)\right).
		\end{split}
	\end{equation}
    Then, multiplying $(\ref{eq:n0tt})$ by $\partial_tn^0$ and using the conditions $(\ref{eq:n0v0u0})_{6}$, we  show that
    \begin{align}\label{Equ(nt)}
        &\frac{1}{2}\frac{d}{dt}\int^\infty_0\int^\infty_{-\infty}|\partial_t n^0|^2dxdy\notag\\
        &=\int^\infty_0\int^\infty_{-\infty}\nabla\cdot(\nabla\partial_t n^0+\partial_t(n^0v^0))\partial_tn^0dxdy-\int^\infty_0\int^\infty_{-\infty}(\partial_tu^0\cdot\nabla n^0+u^0\cdot\nabla\partial_t n^0)\partial_tn^0dxdy\notag\\
        &\leq-\int^\infty_0\int^\infty_{-\infty}|\nabla\partial_t n^0|^2dxdy+C\left(\| \partial_tn^0\|^2_{L^2_{xy}}\|v^0\|^2_{H^2_{xy}}+\| n^0\|^2_{H^2_{xy}}\|\partial_tv^0\|^2_{L^2_{xy}}\right)\\
        &\quad+C\left(\|\partial_tu^0\|^2_{H^1_{xy}}\|\nabla n^0\|^2_{H^1_{xy }}+\|\partial_t n^0\|^2_{L^2_{xy}}\right)+\frac{1}{2}\|\nabla\partial_t n^0\|^2_{L^2_{xy}}.\notag\notag
	\end{align}
    Further, differentiating $(\ref{eq:n0v0u0})_{2}$ with respect to $t$ and multiplying by $\partial_tv^0$, we find that
    \begin{align}\label{Equ(vt)}
        &\frac{1}{2}\frac{d}{dt}\|\partial_tv^0\|^2_{L^2_{xy}}\notag\\
        &=-\int^\infty_0\int^\infty_{-\infty}\nabla\partial_t(u^0\cdot v^0)\cdot\partial_tv^0dxdy+\int^\infty_0\int^\infty_{-\infty}\nabla\partial_tn^0\cdot\partial_tv^0dxdy\notag\\
        &=-\int^\infty_0\int^\infty_{-\infty}\left[\left(\nabla\partial_tu^0\cdot v^0+\nabla u^0\cdot\partial_t v^0\right)\cdot \partial_tv^0+\partial_tv^0\cdot (\nabla v^0)\cdot\partial_tu^0+\partial_tv^0\cdot (\nabla\partial_tv^0)\cdot u^0\right]dxdy\notag\\
        &\quad+\int^\infty_0\int^\infty_{-\infty}\nabla\partial_tn^0\cdot\partial_tv^0dxdy\\
        &\leq C\|\nabla \partial_tu^0\|^2_{L^2_{xy}}\|v^0\|^2_{H^2_{xy}}+C\|\nabla u^0\|_{H^2_{xy}}\|\partial_tv^0\|^2_{L^2_{xy}}+C\| \partial_tu^0\|^2_{H^1_{xy}}\|\nabla v^0\|^2_{H^1_{xy}}\notag\\
        &\quad+C\|\partial_t v^0\|^2_{L^2_{xy}}+\frac{1}{2}\|\nabla\partial_t n^0\|^2_{L^2_{xy}},\notag\notag
	\end{align}
   where we used that
   \begin{align}\label{curl transformation term}
      &\int^\infty_0\int^\infty_{-\infty}\partial_tv^0\cdot (\nabla\partial_tv^0)\cdot u^0dxdy\notag\\
      &=\int^\infty_0\int^\infty_{-\infty}\left(\partial_tv^0_1\partial_x\partial_tv^0_1u^0_1+\partial_tv^0_1\partial_x\partial_tv^0_2u^0_2+\partial_tv^0_2\partial_y\partial_tv^0_1u^0_1+\partial_tv^0_2\partial_y\partial_tv^0_2u^0_2\right)dxdy\notag\\
      &=\frac{1}{2}\int^\infty_0\int^\infty_{-\infty}\left(u^0_1\partial_x|\partial_tv^0_1|^2+u^0_2\partial_y|\partial_tv^0_1|^2+u^0_1\partial_x|\partial_tv^0_2|^2+u^0_2\partial_y|\partial_tv^0_2|^2\right)dxdy\\
   &=\frac{1}{2}\int^\infty_0\int^\infty_{-\infty}u^0 \cdot \nabla|\partial_tv^0|^2 dxdy\notag\\
   &=0,\notag\notag
   \end{align}
   since $\nabla\cdot u^0=0$ and $\nabla\times v^0=\partial_xv^0_2-\partial_yv^0_1=0$ in $(\ref{eq:n0v0u0})$. Next we estimate $\|\partial_tu^0\|_{L^2_{xy}}$ and $\|\nabla\partial_tu^0\|_{L^2_{xy}}$ as follows
    \begin{equation}\label{Equ(ut)}
		\begin{split}{}
        \frac{1}{2}\frac{d}{dt}\|\partial_tu^0\|^2_{L^2_{xy}}&\leq
        \| \partial_tu^0\|^2_{L^4_{xy}}\| \nabla u^0\|_{L^2_{xy}}+\|\partial_tn^0\|_{L^2_{xy}}\| \partial_tu^0\|_{L^2_{xy}}\\
        &\leq C\left(\| \partial_tu^0\|^2_{H^1_{xy}}\| \nabla u^0\|_{L^2_{xy}}+\|\partial_tu^0\|^2_{L^2_{xy}}+\|\partial_tn^0\|^2_{L^2_{xy}}\right),\\
        \frac{1}{2}\frac{d}{dt}\|\partial_t\omega^0\|^2_{L^2_{xy}}&\leq C\| \partial_tu^0\|^2_{H^1_{xy}}\|\nabla \omega^0\|^2_{H^1_{xy}}+\frac{1}{2}\|\partial_x\partial_tn^0\|^2_{L^2_{xy}}+C\|\partial_t\omega^0\|^2_{L^2_{xy}}.
		\end{split}
	\end{equation}
    Combining $(\ref{Equ(nt)})$-$(\ref{Equ(ut)})$, we obtain
    \begin{equation}\label{tn}
		\begin{split}{}
        \frac{d}{dt}(18\|\partial_tn^0\|^2_{L^2_{xy}}+\|\partial_tv^0\|^2_{L^2_{xy}}+\|\partial_tu^0\|^2_{L^2_{xy}}+\|\partial_t\omega^0\|^2_{L^2_{xy}})+16\|\nabla\partial_t n^0\|^2_{L^2_{xy}}\leq C(E(t)+1)^2.
		\end{split}
	\end{equation}

    \textbf{Step 4. The higher order derivative estimates of $\|\nabla^2 n^0\|^2_{L^2_{xy}}$, $\|\nabla^2 v^0\|^2_{L^2_{xy}}$ and $\|\nabla^2 \omega^0\|^2_{L^2_{xy}}$.}

    First, taking $\partial_y$ on $(\ref{eq:n0v0u0})_{1}$, multiplying by $\partial_y\partial_tn^0$ and applying integration by parts, we find that
    \begin{align*}
        &\frac{1}{2}\frac{d}{dt}\|\n\partial_y n^0\|^2_{L^2_{xy}}+\|\partial_t\partial_y n^0\|^2_{L^2_{xy}}\notag\\
        &=-\int^\infty_{-\infty}\partial^2_y n^0(t,x,0) \partial_t\partial_y n^0(t,x,0)dx-\int^\infty_0\int^\infty_{-\infty}(\partial_yu^0\cdot\nabla n^0+u^0\cdot\nabla\partial_y n^0)\partial_t\partial_y n^0dxdy\notag\\
        &\quad+\int^\infty_0\int^\infty_{-\infty}\nabla\cdot(\partial_yn^0v^0+n^0\partial_y v^0)\partial_t\partial_y n^0dxdy\\
        &=:I_1+I_2+I_3.
	\end{align*}
   Substituting the boundary conditions $(\partial_yn^{0}+n^{0}v^{0}_2)|_{y=0}=0$ into $I_1$, and applying Lemma \ref{lem trace theorem} together with Young¡¯s inequality, we have
   \begin{align*}
        I_1&=\int^\infty_{-\infty}\partial^2_y n^0(t,x,0) \partial_t(n^{0}(t,x,0)v^{0}_2(t,x,0))dx\\
		&\leq C\|\partial^2_y n^0\|^\frac{1}{2}_{L^2_{xy}}\|\partial^3_y n^0\|^\frac{1}{2}_{L^2_{xy}}\|\partial_t(n^0v^0_2)\|^\frac{1}{2}_{L^2_{xy}}\|\partial_t\partial_y(n^0v^0_2)\|^\frac{1}{2}_{L^2_{xy}}\notag\\
        &\leq C\|\partial^2_y n^0\|^2_{L^2_{xy}}+\frac{1}{16}\|\partial^3_y n^0\|^2_{L^2_{xy}}+(\|\partial_tn^0\|_{L^2_{xy}}\|v^0_2\|_{L^\infty_{xy}}+\|n^0\|_{L^\infty_{xy}}\|\partial_tv^0_2\|_{L^2_{xy}})(\|\partial_t\partial_yn^0\|_{L^2_{xy}}\|v^0_2\|_{L^\infty_{xy}}\\
        &\quad+\|\partial_yn^0\|_{L^4_{xy}}\|\partial_tv^0_2\|_{L^4_{xy}}+\|\partial_tn^0\|_{L^4_{xy}}\|\partial_yv^0_2\|_{L^4_{xy}}+\|n^0\|_{L^\infty_{xy}}\|\partial_t\partial_yv^0_2\|_{L^2_{xy}})\\
        &\leq C\|\nabla^2 n^0\|^2_{L^2_{xy}}+\frac{1}{16}\|\nabla^3n^0\|^2_{L^2_{xy}}+\frac{1}{16}\|\nabla\partial_tn^0\|^2_{L^2_{xy}}+C(E(t)+1)^3\notag\\
        &\quad+  C(E(t)+1)^{\frac32}+\|\nabla\partial_tv^0\|_{L^2_{xy}})
        \\
        &\leq C\|\nabla^2 n^0\|^2_{L^2_{xy}}+\frac{1}{16}\|\nabla^3n^0\|^2_{L^2_{xy}}+\frac{1}{16}\|\nabla\partial_tn^0\|^2_{L^2_{xy}}+C(E(t)+1)^3,
	\end{align*}
    where we  estimate $\|\nabla\partial_tv^0\|_{L^2_{T}L^2_{xy}}$ using $(\ref{eq:n0v0u0})_{2}$ and
    \begin{align}\label{(yv)}
       \|\nabla\partial_t v^0\|_{L^2_{xy}}&\leq C\|u^0\|_{H^2_{xy}}\| v^0\|_{H^2_{xy}}+\|\nabla^2n^0\|_{L^2_{xy}}.\notag\notag
    \end{align}
    By Young's inequality and the Sobolev embedding inequality, we obtain
    \begin{align*}
        I_2&\leq\left(\|\partial_yu^0\|_{L^\infty_{xy}}\|\nabla n^0\|_{L^2_{xy}}+\|u^0\|_{L^\infty_{xy}}\|\nabla\partial_y n^0\|_{L^2_{xy}}\right)\|\partial_t\partial_y n^0\|_{L^2_{xy}}\\
        &\leq C(E(t)+1)^2+\frac{1}{2}\|\partial_t\partial_y n^0\|^2_{L^2_{xy}}.
	\end{align*}
    Similarly, we estimate $I_3$ as follows
    \begin{align*}
        I_3&\leq \left(\|\nabla\partial_yn^0\|_{L^2_{xy}}\|v^0\|_{L^\infty_{xy}}+\|\partial_yn^0\|_{L^4_{xy}}\|\nabla v^0\|_{L^4_{xy}}+\|\nabla n^0\|_{L^4_{xy}}\|\partial_y v^0\|_{L^4_{xy}}+\|n^0\|_{L^\infty_{xy}}\|\nabla\partial_y v^0\|_{L^2_{xy}}\right)\\
        &\quad\times\|\partial_t\partial_y n^0\|_{L^2_{xy}},\notag\\
        &\leq C(E(t)+1)^2+\frac{1}{2}\|\partial_t\partial_y n^0\|^2_{L^2_{xy}}.
	\end{align*}
    Combining the terms $I_1$-$I_3$ gives
    \begin{equation}\label{tyn}
\begin{split}{}
        \frac{1}{2}\frac{d}{dt}\|\n\partial_y n^0\|^2_{L^2_{xy}}\leq \frac{1}{16}\|\nabla^3n^0\|^2_{L^2_{xy}}+\frac{1}{16}\|\nabla\partial_tn^0\|^2_{L^2_{xy}}+C(E(t)+1)^3.
        \end{split}{}
	\end{equation}
    For $\nabla^2n^0$, we also need to estimate $\partial^2_xn^0$. Applying $\partial^2_x$ to $(\ref{eq:n0v0u0})_{1}$ and multiplying by $\partial^2_xn^0$, we deduce that
    \begin{equation}\label{xxn}
		\begin{split}{}
        &\frac{1}{2}\frac{d}{dt}\|\partial^2_x n^0\|^2_{L^2_{xy}}+\|\n\partial^2_x n^0\|^2_{L^2_{xy}}\\
        &=-\int^\infty_0\int^\infty_{-\infty}\partial^2_x(n^0v^0)\nabla\partial^2_x n^0dxdy-\int^\infty_0\int^\infty_{-\infty}\partial^2_x(u^0\cdot\nabla n^0)\partial^2_x n^0dxdy\\
        &\leq \frac{1}{8}\|\nabla\partial^2_xn^0\|^2_{L^2_{xy}}+C\|n^0\|^2_{H^2_{xy}}\|v^0\|^2_{H^2_{xy}}+C\|u^0\|^2_{H^3_{xy}}\| n^0\|^2_{H^2_{xy}}+C\|\partial^2_x n^0\|^2_{L^2_{xy}}\\
        &\leq C(E(t)+1)^2+\frac{1}{8}\|\nabla\partial^2_xn^0\|^2_{L^2_{xy}}.
		\end{split}
	\end{equation}
    Combining the estimates for $(\ref{tyn})$-$(\ref{xxn})$, we arrive at
    \begin{align}\label{n2}
        &\frac{1}{2}\frac{d}{dt}\|\nabla^2 n^0\|^2_{L^2_{xy}}+\|\n\partial^2_x n^0\|^2_{L^2_{xy}}\notag\\
        &\leq \frac{1}{4}\|\n\partial^2_x n^0\|^2_{L^2_{xy}}+\frac{1}{16}\|\nabla^3 n^0\|^2_{L^2_{xy}}+\frac{1}{16}\|\nabla\partial_tn^0\|^2_{L^2_{xy}}+C(E(t)+1)^3,
	\end{align}
    where we need estimate $\|\nabla^3 n^0\|^2_{L^2_TL^2_{xy}}$. It follows  from $(\ref{eq:n0v0u0})_{1}$ that $$\partial^2_yn^0=\partial_tn^{0}+u^{0}\cdot\nabla n^{0}-\nabla\cdot(n^{0}v^{0})-\partial^2_xn^0,$$ and hence
    \begin{align}\label{Equ(n3)}
        \|\nabla^3 n^0\|_{L^2_{xy}}
        &\leq \|\partial^3_xn^0\|_{L^2_{xy}}+\|\partial^2_x\partial_yn^0\|_{L^2_{xy}}+\|\partial_x\partial^2_yn^0\|_{L^2_{xy}}+\|\partial^3_yn^0\|_{L^2_{xy}}\notag\\
        &\leq \|\partial^3_xn^0\|_{L^2_{xy}}+\|\partial^2_x\partial_yn^0\|_{L^2_{xy}}+\|\partial_x(\partial_tn^{0}+u^{0}\cdot\nabla n^{0}-\nabla\cdot(n^{0}v^{0})-\partial^2_xn^0)\|_{L^2_{xy}}\notag\\
        &\quad+\|\partial_y(\partial_tn^{0}+u^{0}\cdot\nabla n^{0}-\nabla\cdot(n^{0}v^{0})-\partial^2_xn^0)\|_{L^2_{xy}}\notag\\
        &\leq2\|\partial^3_xn^0\|_{L^2_{xy}}+2\|\partial^2_x\partial_yn^0\|_{L^2_{xy}}+\|\partial_x\partial_t n^0\|_{L^2_{xy}}+\|\partial_y\partial_t n^0\|_{L^2_{xy}}+C\|u^0\|_{H^3_{xy}}\| n^0\|_{H^2_{xy}}\notag\\
     &\quad+C\|n^0\|_{H^2_{xy}}\|v^0\|_{H^2_{xy}}\\
        &\leq 4\|\nabla\partial^2_xn^0\|_{L^2_{xy}}+2\|\nabla\partial_t n^0\|_{L^2_{xy}}+C\|u^0\|_{H^3_{xy}}\| n^0\|_{H^2_{xy}}+C\|n^0\|_{H^2_{xy}}\|v^0\|_{H^2_{xy}}.\notag\notag
	\end{align}
    Next, we estimate $\|\nabla^2 v^0\|^2_{L^2_{xy}}$ and $\|\nabla^2 \omega^0\|^2_{L^2_{xy}}$. Applying $\nabla^2$ to $(\ref{eq:n0v0u0})_{2}$ and multiplying by $\nabla^2 v^0$, we deduce that
    \begin{equation}\label{v2}
		\begin{split}{}
        \frac{1}{2}\frac{d}{dt}\|\nabla^2v^0\|^2_{L^2_{xy}}
        &\leq \|\nabla^3u^0\|^2_{L^2_{xy}}\|v^0\|^2_{L^\infty_{xy}}+\|\nabla^2 u^0\|^2_{L^4_{xy}}\|\nabla v^0\|^2_{L^4_{xy}}+\|\nabla u^0\|^2_{L^\infty_{xy}}\|\nabla^2v^0\|^2_{L^2_{xy}}\\
        &\quad+\|\nabla^2 v^0\|^2_{L^2_{xy}}+\|\nabla^3 n^0\|^2_{L^2_{xy}}\\
        &\leq 4\|\nabla\partial^2_xn^0\|^2_{L^2_{xy}}+2\|\nabla\partial_t n^0\|^2_{L^2_{xy}}+C(E(t)+1)^2.\\
		\end{split}
	\end{equation}
    Similarly, applying $\nabla^2$ to $(\ref{eq:omega})$ and multiplying by $\nabla^2\omega^0$, one can show that
    \begin{equation}\label{u3}
		\begin{split}{}
        \frac{1}{2}\frac{d}{dt}\|\nabla^2\omega^0\|^2_{L^2_{xy}}
        &\leq \|\nabla^2u^0\|^2_{L^4_{xy}}\|\nabla \omega^0\|^2_{L^4_{xy}}+\|\nabla u^0\|^2_{L^\infty_{xy}}\|\nabla^2 \omega^0\|^2_{L^2_{xy}}+\|\nabla^2\omega^0\|^2_{L^2_{xy}}+\|\nabla^3 n^0\|^2_{L^2_{xy}}\\
        &\leq 4\|\nabla\partial^2_xn^0\|^2_{L^2_{xy}}+2\|\nabla\partial_t n^0\|^2_{L^2_{xy}}+C(E(t)+1)^2.
		\end{split}
	\end{equation}
    Combining $(\ref{n2})$, $(\ref{v2})$ and  $(\ref{u3})$, we have
    \begin{equation}\label{nvu}
		\begin{split}{}
        \frac{d}{dt}(18\|\nabla^2n^0\|^2_{L^2_{xy}}+\|\nabla^2v^0\|^2_{L^2_{xy}}+\|\nabla^2\omega^0\|^2_{L^2_{xy}})+2\|\nabla\partial^2_xn^0\|^2_{L^2_{xy}}
        &\leq C(E(t)+1)^3+15\|\nabla\partial_t n^0\|^2_{L^2_{xy}}.
		\end{split}
	\end{equation}
     Therefore, combining $(\ref{n0 v0 u0 L2 estimate})$, $(\ref{nabla n0 v0 omega})$, $(\ref{tn})$, $(\ref{nvu})$ and (\ref{energy functional}), we have
     \begin{equation*}
		\begin{split}{}
        \frac{d}{dt}E(t)+\|\nabla\partial_tn^0\|^2_{L^2_{xy}}+\|\n\partial^2_x n^0\|^2_{L^2_{xy}}\leq C(E(t)+1)^3,
		\end{split}
	\end{equation*}
    for any $t\in[0,T]$, where
    $$
    T<\left[{2C\left(\| (n_{in}, v_{in})\|^2_{H^2_{xy}}+\|u_{in}\|^2_{H^3_{xy}}+\|(\partial_tn_{in}, \partial_tv_{in})\|^2_{L^2_{xy}}+\|\partial_tu_{in}\|^2_{H^1_{xy}}+1\right)^2}\right]^{-1}.
    $$
     Using Gronwall¡¯s inequality together with Lemma \ref{lem nabla omega}, we show that $(\ref{n0 v0 u0 H2})$ holds.
     Using (\ref{eq:n0v0u0}), (\ref{eq:omega}) and $(\ref{n0 v0 u0 H2})$, we obtain
    \begin{equation}\label{nabla v0t nabla omega0t}
		\begin{split}{}
        \|\nabla\partial_t v^0\|_{L^\infty_TL^2_{xy}}&\leq\|u^0\|_{L^\infty_TH^2_{xy}}\| v^0\|_{L^\infty_TH^2_{xy}}+\|\nabla^2n^0\|_{L^\infty_TL^2_{xy}}\leq C,
        \\
        \|\nabla\partial_t \omega^0\|_{L^\infty_TL^2_{xy}}&\leq\|u^0\|_{L^\infty_TH^3_{xy}}\| \nabla\omega^0\|_{L^\infty_TL^2_{xy}}+\|u^0\|_{L^\infty_TH^2_{xy}}\| \nabla^2\omega^0\|_{L^\infty_TL^2_{xy}}+\|\nabla^2n^0\|_{L^\infty_TL^2_{xy}}\leq C.\\
    \end{split}
	\end{equation}
     Furthermore, integrating both sides of (\ref{Equ(n3)}) over the time, then it yields
     $$
     \|\n^3 n^0\|_{L^2_TL^2_{xy}}\leq C.
     $$
    Clearly, this yields (\ref{nabla3 n0 L2T}). The proof is complete.

    Next we aim to obtain estimates for higher-order derivatives in Lemma \ref{n0 v0 u0 L2}.
    \begin{lemma}\label{nabla3 n0}
         {Suppose the initial data $(n_{in},v_{in},u_{in})$ and the solution $(n^0,v^0,u^0)$ satisfy the assumptions of  Lemma \ref{n0 v0 u0 L2}. Then  the following properties holds:
		\begin{align}\label{nabla3 tt n0 v0 omega}
        &\|\nabla\partial_tn^0\|^2_{L^\infty_TL^2_{xy}}+\|(\partial^2_tn^0,\partial^2_t v^0,\partial^2_t u^0,\nabla \partial^2_t u^0)\|^2_{L^\infty_TL^2_{xy}}\notag\\
        &+\|(\nabla\partial^2_x n^0,\partial^2_x \partial_tn^0,\nabla^3 v^0,\nabla^4 u^0)\|^2_{L^\infty_TL^2_{xy}}+\|\nabla\partial^3_xn^0\|^2_{L^2_TL^2_{xy}}\leq C,
		\end{align}
        and
        \begin{align}\label{nabla3 n0 L infty nabla4 n0 L2}
        \|(\nabla^2\partial_t v^0,\nabla^3\partial_t u^0,\nabla^3 n^0,\nabla^2\partial_t n^0)\|_{L^\infty_TL^2_{xy}}+\|\nabla^4 n^0\|_{L^2_TL^2_{xy}}\leq C,
    \end{align}
    where $C>0$ is a constant depending only on the initial data
    $$    (\|n_{in}\|_{H^3_{xy}},\|v_{in}\|_{H^3_{xy}},\|u_{in}\|_{H^4_{xy}},\|\partial_tn_{in}\|_{H^2_{xy}},\|\partial^2_x\partial_tn_{in}\|_{L^2_{xy}},\|(\partial^2_tn_{in},\partial^2_tv_{in},\partial^2_tu_{in})\|_{L^2_{xy}},\|\partial^2_tu_{in}\|_{H^1_{xy}}).
    $$
		}
	\end{lemma}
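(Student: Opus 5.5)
We bootstrap from Lemma \ref{n0 v0 u0 L2}, raising every index in the energy functional by one. Concretely, we consider
\begin{align*}
E_1(t):=\|\nabla\partial_t n^0\|^2_{L^2_{xy}}+\|(\partial^2_t n^0,\partial^2_t v^0,\partial^2_t u^0,\partial^2_t\omega^0)\|^2_{L^2_{xy}}+\|\partial^2_x n^0\|^2_{H^1_{xy}}+\|\nabla^3 v^0\|^2_{L^2_{xy}}+\|\nabla^3\omega^0\|^2_{L^2_{xy}}.
\end{align*}
The plan is to derive $\frac{d}{dt}E_1+\|\nabla\partial^2_t n^0\|^2_{L^2_{xy}}+\|\nabla\partial^3_x n^0\|^2_{L^2_{xy}}\le C(E_1+1)^3$ on the interval $[0,T]$ already fixed by Lemma \ref{n0 v0 u0 L2}. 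In this inequality all lower-order quantities are replaced by constants via (\ref{n0 v0 u0 H2})--(\ref{nabla3 n0 L2T}). Wherever the Gronwall argument needs a factor to be integrable in time rather than bounded, we use the $L^2_T$ bounds $\|\nabla\partial_t n^0\|_{L^2_TL^2_{xy}}$ and $\|\nabla^3 n^0\|_{L^2_TL^2_{xy}}$. The remaining bounds then follow from the equations, by the same route as (\ref{nabla v0t nabla omega0t}) and (\ref{Equ(n3)}).

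\textbf{Time-derivative pieces.} These mirror Step 2 and Step 3 one level up.
\begin{itemize}
\item Multiply (\ref{eq:n0tt}) by $\partial_t^2 n^0$. This controls $\frac{d}{dt}\|\nabla\partial_t n^0\|^2_{L^2_{xy}}$ together with the dissipation $\|\partial_t^2 n^0\|^2_{L^2_{xy}}$.
\item Apply $\partial_t^2$ to $(\ref{eq:n0v0u0})_1$, $(\ref{eq:n0v0u0})_2$, $(\ref{eq:n0v0u0})_4$ and (\ref{eq:omega}), and test with $\partial_t^2 n^0$, $\partial^2_t v^0$, $\partial^2_t u^0$ and $\partial^2_t\omega^0$ respectively.
\end{itemize}
In the $v$-equation the worst term is $\int\partial_t^2v^0\cdot(\nabla\partial_t^2v^0)\cdot u^0$. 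It vanishes exactly as in (\ref{curl transformation term}), using $\nabla\times\partial_t^2v^0=0$, $\nabla\cdot u^0=0$ and $u^0_2|_{y=0}=0$. The forcing $\int\nabla\partial_t^2n^0\cdot\partial_t^2v^0$ is absorbed by the parabolic dissipation $\|\nabla\partial_t^2n^0\|^2_{L^2_{xy}}$.

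The boundary terms coming from the no-flux condition are handled as in Step 1: we write the full flux $\nabla\partial_t^2n^0+\partial_t^2(n^0v^0)$, whose normal component vanishes at $y=0$. Lower-order cross terms such as $\partial_tu^0\cdot\nabla\partial_t n^0$ are bounded by Lemma \ref{lem higher regurality} and Sobolev embedding. The terms $\nabla\partial_t^2 u^0$ are recovered from $\partial^2_t\omega^0$ by Lemma \ref{lem nabla omega}, since $\partial_t^2u^0$ is divergence free with vanishing normal trace.

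\textbf{Spatial pieces.} These mirror Step 4.
\begin{itemize}
\item For $n^0$, we apply $\partial_x^2$ to $(\ref{eq:n0v0u0})_1$ and test with $\partial_x^2\partial_t n^0$. Tangential derivatives preserve the boundary condition $\partial_yn^0+n^0v_2^0=0$, so the boundary integral becomes $\int\partial_x^2(n^0v_2^0)\,\partial_t\partial_x^2 n^0\,dx$ at $y=0$. We bound it via Lemma \ref{lem trace theorem}, paying with $\|\nabla\partial_x^3n^0\|_{L^2_{xy}}$ and $\|\partial_x^2\partial_tn^0\|_{L^2_{xy}}$. We also apply $\partial_x^3$ and test with $\partial_x^3n^0$, which yields the dissipation $\|\nabla\partial^3_xn^0\|^2_{L^2_{xy}}$.
\item Normal derivatives of $n^0$ are never estimated directly. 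Instead we read $\partial_y^2n^0$ off the equation, as in (\ref{Equ(n3)}), to get $\|\nabla^3n^0\|_{L^2_{xy}}\lesssim\|\nabla\partial_x^2n^0\|_{L^2_{xy}}+\|\nabla\partial_tn^0\|_{L^2_{xy}}+C$ in $L^\infty_T$. Repeating this once more gives $\|\nabla^4n^0\|_{L^2_TL^2_{xy}}\lesssim\|\nabla\partial_x^3n^0\|_{L^2_TL^2_{xy}}+\|\nabla^2\partial_tn^0\|_{L^2_TL^2_{xy}}+C$.
\item For $v^0$ and $\omega^0$ we apply $\nabla^3$ and test. Since $u^0\cdot\nabla$ is tangential at the boundary, the top-order transport term again integrates to zero. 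The forcings are $\nabla^4 n^0$ for $v^0$ and $\nabla^3\partial_xn^0$ for $\omega^0$, both of which are only $L^2_T$. This is acceptable because $\frac{d}{dt}\|\nabla^3v^0\|^2_{L^2_{xy}}\le\|\nabla^4n^0\|^2_{L^2_{xy}}+\|\nabla^3 v^0\|^2_{L^2_{xy}}+\dots$ is integrated in time.
\end{itemize}
Lemma \ref{lem nabla omega} then converts $\|\nabla^3\omega^0\|_{L^2_{xy}}$ into $\|\nabla^4u^0\|_{L^2_{xy}}$. The remaining quantities $\nabla^2\partial_tv^0$, $\nabla^3\partial_tu^0$ and $\nabla^2\partial_tn^0$ in (\ref{nabla3 n0 L infty nabla4 n0 L2}) are read off the equations, as in (\ref{nabla v0t nabla omega0t}).

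\textbf{Main obstacle.} The circularity among $\nabla^4n^0$, $\nabla\partial_x^3 n^0$ and $\nabla^2\partial_tn^0$ is the delicate step. The $v$-estimate needs $\nabla^4 n^0$, which needs $\nabla^2\partial_t n^0$, which is controlled only through the time-differentiated $n$-equation. Closing requires small coefficients in front of the two dissipations, ordered as in (\ref{tn}) and (\ref{nvu}), with weights of the type $18\times$ chosen so the absorptions are consistent. The trace terms at order $\partial_x^2$ and $\partial_t^2$ involve $\partial_t^2(n^0v^0_2)$ on $y=0$. We intend to bound these by $\|\partial_t^2 v^0\|^{1/2}_{L^2_{xy}}\|\nabla\partial_t^2v^0\|^{1/2}_{L^2_{xy}}$, but $\nabla\partial_t^2v^0$ is not in $E_1$. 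If this fails, we first try rewriting $\partial_t v_2^0|_{y=0}$ through the equation as $\partial_y(n^0-u^0\cdot v^0)|_{y=0}$, using the compatibility conditions $(M_1)$--$(M_4)$, to trade time derivatives for spatial ones.
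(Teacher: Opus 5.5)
Your overall architecture matches the paper's proof: raise every index of Lemma~\ref{n0 v0 u0 L2} by one, use the curl-free cancellation for $\partial_t^2v^0$, run $\nabla^3$-estimates for $v^0$ and $\omega^0$ forced by $\nabla^4n^0\in L^2_TL^2_{xy}$, and recover normal derivatives of $n^0$ from the equation. However, your functional $E_1$ omits exactly the pieces that make the argument close, and the step you call the main obstacle is a genuine gap.

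Nothing in $E_1$ or in your dissipation controls the mixed derivative $\partial_x\partial_y\partial_tn^0$. Moreover, $\partial_x^2\partial_tn^0$ appears only as the $L^2_T$-dissipation of your $\partial_x^2$-estimate. Neither can be read off the equation. Differentiating $(\ref{eq:n0v0u0})_1$ in $t$ expresses $\partial_y^2\partial_tn^0$ through $\partial_x^2\partial_tn^0$, not conversely. Recovering $\nabla^2\partial_tn^0$ from $\partial_tn^0=\Delta n^0+\nabla\cdot(n^0v^0)-u^0\cdot\nabla n^0$ would need $\nabla^4n^0\in L^\infty_TL^2_{xy}$. This has three consequences.

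(i) The bounds $\partial_x^2\partial_tn^0,\ \nabla^2\partial_tn^0\in L^\infty_TL^2_{xy}$ asserted in the lemma are not obtained. Your claim that $\nabla^2\partial_tn^0$ is read off the equations as in (\ref{nabla v0t nabla omega0t}) is incorrect.

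(ii) The right-hand side of $\|\nabla^4n^0\|_{L^2_TL^2_{xy}}\lesssim\|\nabla\partial_x^3n^0\|_{L^2_TL^2_{xy}}+\|\nabla^2\partial_tn^0\|_{L^2_TL^2_{xy}}+C$ is not controlled, yet the $\nabla^3v^0$ and $\nabla^3\omega^0$ estimates rest on it. Testing (\ref{eq:n0tt}) with $\partial_t^2n^0$ yields only $\|\nabla\partial_tn^0\|_{L^2_{xy}}$ and $\|\partial_t^2n^0\|_{L^2_{xy}}$. No choice of weights can absorb a quantity that appears in none of your estimates.

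(iii) Splitting the flux produces the boundary term $\int_{y=0}\partial_x^2(n^0v_2^0)\,\partial_t\partial_x^2n^0\,dx$. This cannot be paid for with $\|\nabla\partial_x^3n^0\|_{L^2_{xy}}$ and $\|\partial_x^2\partial_tn^0\|_{L^2_{xy}}$ via Lemma~\ref{lem trace theorem}. The $L^2_x$-trace of $\partial_x^2\partial_tn^0$ costs $\|\partial_y\partial_x^2\partial_tn^0\|_{L^2_{xy}}$. Moving an $x$-derivative onto the other factor instead costs the trace of $\partial_x^3v_2^0$, i.e.\ $\nabla^4v^0$ (recall $v_2^0|_{y=0}\neq0$ when $\varepsilon=0$), and the trace of $\partial_x\partial_tn^0$, i.e.\ the mixed derivative again.

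The paper closes this by enlarging the energy, which is why the constant depends on $\|\partial_tn_{in}\|_{H^2_{xy}}$ and $\|\partial_x^2\partial_tn_{in}\|_{L^2_{xy}}$.

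In (\ref{dx2n}) it keeps the full flux $\nabla\partial_x^2n^0+\partial_x^2(n^0v^0)$, whose normal component vanishes on $y=0$, so no boundary term arises. It pays $\frac{1}{2}\|\nabla\partial_x^2\partial_tn^0\|^2_{L^2_{xy}}$ in the bulk instead. This is supplied by applying $\partial_x^2$ to (\ref{eqt2n}) and testing with $\partial_x^2\partial_tn^0$, see (\ref{n0t x2}), which also puts $\|\partial_x^2\partial_tn^0\|^2_{L^2_{xy}}$ into the energy.

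The mixed derivative comes from two further estimates: $\partial_x$ of (\ref{eqt2n}) and $\partial_x$ of (\ref{eqt3n}), each tested with $\partial_x\partial_t^2n^0$, see (\ref{nabla xt n})--(\ref{xtt n}). These give $\nabla\partial_x\partial_tn^0,\ \partial_x\partial_t^2n^0\in L^\infty_TL^2_{xy}$ with dissipation $\|\nabla\partial_x\partial_t^2n^0\|^2_{L^2_{xy}}$. They feed (\ref{d4n}), and $\nabla^2\partial_tn^0\in L^\infty_TL^2_{xy}$ is assembled only afterwards, in Step 3.

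An alternative source of $\|\nabla^2\partial_tn^0\|_{L^2_{xy}}\lesssim\|\partial_t^2n^0\|_{L^2_{xy}}+\|\nabla\partial_tn^0\|_{L^2_{xy}}+C$ would be a Neumann elliptic estimate for $\partial_tn^0$, using $\partial_y\partial_tn^0=-\partial_t(n^0v_2^0)$ on $y=0$. You would have to state and prove it, and you would still have to avoid the boundary term in (iii).

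Three smaller points.
\begin{itemize}
\item Your worry about traces of $\partial_t^2(n^0v_2^0)$ disappears once the full flux is kept, as you already do for the $\partial_t^2$-equation.
\item The compatibility conditions $(M_1)$--$(M_4)$ only constrain the data at $t=0$, so they play no role in bounding traces for $t>0$.
\item All lower-order factors are already bounded by Lemma~\ref{n0 v0 u0 L2}, so the differential inequality should be linear in $E_1$, as in the paper. A cubic one would only give the bounds on a possibly shorter interval than $[0,T]$.
\end{itemize}
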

        \noindent{\bf{Proof.}}  Deriving the estimates in (\ref{nabla3 tt n0 v0 omega})-(\ref{nabla3 n0 L infty nabla4 n0 L2}) requires us to decompose the proof into three steps.
Next, we will eatimate each term step by step.

       \textbf{Step 1. I. The estimates of $\|\nabla\partial_t n^0\|_{L^\infty_TL^2_{xy}}$.} We write the two equations for $n^0$ as follows:
       \begin{equation}\label{eqt2n}
		\begin{split}{}
        \partial^2_{t}n^0+\partial_t(u^0\cdot\nabla n^0)=\nabla\cdot(\partial_t\nabla n^0+\partial_t(n^0v^0)).
		\end{split}
	\end{equation}
    \begin{equation}\label{eqt3n}
		\begin{split}{}
        \partial^3_{t}n^0+\partial^2_{t}(u^0\cdot\nabla n^0)=\nabla\cdot(\partial^2_{t}\nabla n^0+\partial^2_{t}(n^0v^0)).
		\end{split}
	\end{equation}
    Taking the inner product of $(\ref{eqt2n})$ with $\partial^2_t n^0$,
    \begin{equation*}
		\begin{split}{}
        \|\partial^2_t n^0\|^2_{L^2_{xy}}=\int^\infty_0\int^\infty_{-\infty}\nabla\cdot\partial_t(\nabla n^0+n^0v^0)\partial^2_t n^0dxdy-\int^\infty_0\int^\infty_{-\infty}\partial_t(u^0\cdot\nabla n^0)\partial^2_t n^0dxdy,
		\end{split}
	\end{equation*}
    integrating by parts and by $(\ref{n0 v0 u0 H2})$, we obtain
    \begin{align}\label{nabla n0t}
        &\frac{1}{2}\frac{d}{dt}\|\n\partial_t n^0\|^2_{L^2_{xy}}+\|\partial^2_t n^0\|^2_{L^2_{xy}}\notag\\
    &\leq(\|\partial_tn^0\|_{L^2_{xy}}\|v^0\|_{L^\infty_{xy}}+\|n^0\|_{L^\infty_{xy}}\|\partial_tv^0\|_{L^2_{xy}})\|\nabla\partial^2_tn^0\|_{L^2_{xy}}+\|\partial_t u^0\|_{L^4_{xy}}\|\nabla n^0\|_{L^4_{xy}}\|\partial^2_tn^0\|_{L^2_{xy}}\\
    &\quad+\|u^0\|_{L^\infty_{xy}}\|\nabla\partial_t n^0\|_{L^2_{xy}}\|\partial^2_tn^0\|_{L^2_{xy}}\notag\\
    &\leq\frac{1}{8}\|\nabla\partial^2_t n^0\|^2_{L^2_{xy}}+\frac{1}{2}\|\partial^2_t n^0\|^2_{L^2_{xy}}+C\|\nabla\partial_t n^0\|^2_{L^2_{xy}}+C,\notag\notag
	\end{align}
    where we need to estimate $\|\nabla\partial^2_tn^0\|_{L^2_{T}L^2_{xy}}$.\\
     \textbf{II. The second temporal derivatives of $n^0$, $v^0$ and $u^0$.} We take the inner product of $(\ref{eqt3n})$ with $\partial^2_t n^0$ and apply the Sobolev embedding inequality to derive
    \begin{equation*}
		\begin{split}{}
        &\frac{1}{2}\frac{d}{dt}\|\partial^2_t n^0\|^2_{L^2_{xy}}+\|\n\partial^2_t n^0\|^2_{L^2_{xy}}\\
    &\leq\left(\|\partial^2_tn^0\|_{L^2_{xy}}\|v^0\|_{L^\infty_{xy}}+2\|\partial_tn^0\|_{L^4_{xy}}\|\partial_tv^0\|_{L^4_{xy}}+\|n^0\|_{L^\infty_{xy}}\|\partial^2_tv^0\|_{L^2_{xy}}\right)\|\nabla\partial^2_tn^0\|_{L^2_{xy}}\\
    &\quad+\|\partial^2_t u^0\|_{L^4_{xy}}
    \|\nabla n^0\|_{L^4_{xy}}\|\partial^2_tn^0\|_{L^2_{xy}}+2\|\partial_t u^0\|_{L^\infty_{xy}}\|\nabla\partial_t n^0\|_{L^2_{xy}}\|\partial^2_tn^0\|_{L^2_{xy}}\\
    &\leq\left(\|\partial^2_tn^0\|_{L^2_{xy}}\|v^0\|_{H^2_{xy}}+2\|\partial_tn^0\|^\frac{1}{2}_{L^2_{xy}}\|\nabla\partial_tn^0\|^\frac{1}{2}_{L^2_{xy}}\|\partial_tv^0\|_{H^1_{xy}}+\|n^0\|_{H^2_{xy}}\|\partial^2_tv^0\|_{L^2_{xy}}\right)\|\nabla\partial^2_tn^0\|_{L^2_{xy}}\\
    &\quad+\|\partial^2_t u^0\|_{H^1_{xy}}
    \| n^0\|_{H^2_{xy}}\|\partial^2_tn^0\|_{L^2_{xy}}+2\|\partial_t u^0\|_{H^2_{xy}}\|\nabla\partial_t n^0\|_{L^2_{xy}}\|\partial^2_tn^0\|_{L^2_{xy}}\\
    &\leq\frac{1}{8}\|\nabla\partial^2_t n^0\|^2_{L^2_{xy}}+C(\|\partial^2_t n^0\|^2_{L^2_{xy}}+\|\nabla\partial_t n^0\|^2_{L^2_{xy}}+\|\partial^2_t v^0\|^2_{L^2_{xy}}+\|\partial^2_t u^0\|^2_{H^1_{xy}}).
		\end{split}
	\end{equation*}
    which implies
    \begin{align}\label{t2n}
        &\frac{1}{2}\frac{d}{dt}\|\partial^2_t n^0\|^2_{L^2_{xy}}+\frac{3}{8}\|\n\partial^2_t n^0\|^2_{L^2_{xy}}\notag\\
        &\leq C(\|\nabla\partial_tn^0\|^2_{L^2_{xy}}+\|\partial^2_tn^0\|^2_{L^2_{xy}}+\|\partial^2_t v^0\|^2_{L^2_{xy}}+\|\partial^2_t u^0\|^2_{H^1_{xy}}+1).
	\end{align}
    Moreover, for the term $\partial^2_t v^0$, applying $\partial^2_t$ to $(\ref{eq:n0v0u0})_2$, we get
    \begin{equation}\label{Equ(vt3)}
		\begin{split}{}
        \partial^3_tv^0+\partial^2_t\nabla(u^0\cdot v^0)=\partial^2_t\nabla n^0,
		\end{split}
	\end{equation}
     and multiplying $(\ref{Equ(vt3)})$ by $\partial^2_t v^0$, we find that
    \begin{align}\label{t2v}
        \frac{1}{2}\frac{d}{dt}\|\partial^2_t v^0\|^2_{L^2_{xy}}&\leq\|\nabla\partial^2_tu^0\|^2_{L^2_{xy}}\| v^0\|^2_{L^\infty_{xy}}+\|\partial^2_tu^0\|^2_{L^4_{xy}}\|\nabla v^0\|^2_{L^4_{xy}}+2\|\nabla\partial_tu^0\|^2_{L^4_{xy}}\|\partial_tv^0\|^2_{L^4_{xy}}\notag\\
        &\quad+2\|\partial_tu^0\|^2_{L^\infty_{xy}}\|\nabla\partial_tv^0\|^2_{L^2_{xy}}+\|\nabla u^0\|^2_{L^\infty_{xy}}\|\partial^2_tv^0\|^2_{L^2_{xy}}+\frac{1}{8}\|\nabla\partial^2_tn^0\|^2_{L^2_{xy}}+C\|\partial^2_tv^0\|^2_{L^2_{xy}}\\
        &\leq \frac{1}{8}\|\nabla\partial^2_t n^0\|^2_{L^2_{xy}}+C\left(\|\partial^2_tv^0\|^2_{L^2_{xy}}+\|\partial^2_t u^0\|^2_{H^1_{xy}}+1\right),\notag\notag
	\end{align}
    where we perform a derivation similar to (\ref{curl transformation term}), which gives
    \beno
   \int^\infty_0\int^\infty_{-\infty}\partial^2_tv^0\cdot (\nabla\partial^2_tv^0)\cdot u^0dxdy=\frac{1}{2}\int^\infty_0\int^\infty_{-\infty}u^0 \cdot \nabla|\partial^2_tv^0|^2 dxdy=0.
   \eeno
     Similarly, the estimate for $\partial^2_t u^0$ yields
     \begin{equation}\label{t2u}
		\begin{split}{}
        &\frac{1}{2}\frac{d}{dt}\|\partial^2_t u^0\|^2_{L^2_{xy}}\\
        &\leq\|\partial^2_tu^0\|^2_{L^2_{xy}}\| \nabla u^0\|_{L^\infty_{xy}}+2\|\partial_tu^0\|_{L^4_{xy}}\|\nabla \partial_tu^0\|_{L^2_{xy}}\|\partial^2_tu^0\|_{L^4_{xy}}+\|\partial^2_tn^0\|_{L^2_{xy}}\|\partial^2_tu^0\|_{L^2_{xy}}\\
        &\leq C\left(\|\partial^2_tn^0\|^2_{L^2_{xy}}+\|\partial^2_tu^0\|^2_{H^1_{xy}}\right).
		\end{split}
	\end{equation}
     For the estimates of $\nabla\partial^2_t u^0$, we begin with the equation of $\omega^0$
    \begin{align}\label{eq:omega t3}    \partial^3_t\omega^0+\partial^2_t(u^0\cdot\nabla\omega^0)=\partial^2_t\partial_xn^0.
	\end{align}
    Multiplying $(\ref{eq:omega t3})$ by $\partial^2_t \omega^0$, we obtain
    \begin{equation}\label{t2w}
		\begin{split}{}
        &\frac{1}{2}\frac{d}{dt}\|\partial^2_t \omega^0\|^2_{L^2_{xy}}\\
        &\leq\|\partial^2_tu^0\|^2_{L^4_{xy}}\|\nabla \omega^0\|^2_{L^4_{xy}}+2\|\partial_tu^0\|^2_{L^\infty_{xy}}\|\nabla\partial_t\omega^0\|^2_{L^2_{xy}}+C\|\partial^2_t\omega^0\|^2_{L^2_{xy}}+\frac{1}{8}\|\nabla\partial^2_t n^0\|^2_{L^2_{xy}}\\
        &\leq \frac{1}{8}\|\nabla\partial^2_t n^0\|^2_{L^2_{xy}}+C\left(\|\partial^2_t u^0\|^2_{H^1_{xy}}+\|\partial^2_t\omega^0\|^2_{L^2_{xy}}+1\right).
		\end{split}
	\end{equation}
    Combining $(\ref{nabla n0t})$-$(\ref{t2n})$, $(\ref{t2v})$-$(\ref{t2u})$ and $(\ref{t2w})$, we have
    \begin{equation*}
		\begin{split}{}
        &\frac{d}{dt}(\|\nabla\partial_tn^0\|^2_{L^2_{xy}}+\|\partial^2_tn^0\|^2_{L^2_{xy}}+\|\partial^2_t v^0\|^2_{L^2_{xy}}+\|\partial^2_t u^0\|^2_{L^2_{xy}}+\|\partial^2_t\omega^0\|^2_{L^2_{xy}})+\|\nabla\partial^2_t n^0\|^2_{L^2_{xy}}\\
        &\leq C(\|\nabla\partial_tn^0\|^2_{L^2_{xy}}+\|\partial^2_tn^0\|^2_{L^2_{xy}}+\|\partial^2_t v^0\|^2_{L^2_{xy}}+\|\partial^2_t u^0\|^2_{L^2_{xy}}+\|\partial^2_t\omega^0\|^2_{L^2_{xy}})+C.
		\end{split}
	\end{equation*}
Thus, by Gronwall's inequality and Lemma \ref{lem nabla omega}, we obtain the estimate for $t\in[0,T]$.
    \begin{equation*}
		\begin{split}{}
        &\|\nabla\partial_tn^0\|^2_{L^2_{xy}}+\|\partial^2_tn^0\|^2_{L^2_{xy}}+\|\partial^2_t v^0\|^2_{L^2_{xy}}+\|\partial^2_t u^0\|^2_{L^\infty_TL^2_{xy}}+\|\partial^2_t\nabla u^0\|^2_{L^\infty_TL^2_{xy}}+\int^t_0\|\nabla\partial^2_t n^0\|^2_{L^2_{xy}}\\
        &\leq C(\|n_{in}\|_{H^2_{xy}},\|v_{in}\|_{H^2_{xy}},\|u_{in}\|_{H^2_{xy}},\|\partial_tn_{in}\|^2_{H^1_{xy}},\|\partial_tu_{in}\|^2_{H^1_{xy}},\|\partial^2_tn_{in}\|^2_{L^2_{xy}},\|\partial^2_tv_{in}\|^2_{L^2_{xy}},\|\partial^2_tu_{in}\|^2_{H^1_{xy}}).
		\end{split}
	\end{equation*}
    This completes the estimates of Step 1.\\
    \textbf{Step 2. I. The estimates of  $\|\nabla\partial^2_xn^0\|_{L^\infty_TL^2_{xy}}$ and $\|\partial^2_x\partial_t n^0\|_{L^\infty_TL^2_{xy}}$.}
    Applying $\partial^2_x$ to $(\ref{eq:n0v0u0})_{1}$ and multiplying by $\partial^2_x\partial_tn^0$, we deduce that
    \begin{align}\label{dx2n}
        &\frac{1}{2}\frac{d}{dt}\|\n\partial^2_x n^0\|^2_{L^2_{xy}}+\|\partial_t\partial^2_x n^0\|^2_{L^2_{xy}}\notag\\
        &=-\int^\infty_0\int^\infty_{-\infty}\partial^2_x(n^0v^0)\cdot\nabla\partial^2_x\partial_t n^0dxdy-\int^\infty_0\int^\infty_{-\infty}\partial^2_x(u^0\cdot\nabla n^0)\partial_t\partial^2_x n^0dxdy\notag\\
        &\leq\frac{1}{2}\|\nabla\partial^2_x\partial_t n^0\|^2_{L^2_{xy}}+C\|n^0\|^2_{H^2_{xy}}\|v^0\|^2_{H^2_{xy}}+\frac{1}{2}\|\partial_t\partial^2_x n^0\|^2_{L^2_{xy}}+C\|u^0\|^2_{H^3_{xy}}\|n^0\|^2_{H^2_{xy}}+C\|u^0\|^2_{H^2_{xy}}\|\nabla\partial^2_xn^0\|^2_{L^2_{xy}}\notag\\
        &\leq\frac{1}{2}\|\nabla\partial^2_x\partial_t n^0\|^2_{L^2_{xy}}+\frac{1}{2}\|\partial_t\partial^2_x n^0\|^2_{L^2_{xy}}+C\|\nabla\partial^2_xn^0\|^2_{L^2_{xy}}+C.
	\end{align}
    where the key estimate is $\|\nabla\partial^2_x\partial_t n^0\|_{L^2_TL^2_{xy}}$.
    Differentiating $(\ref{eqt2n})$ twice in $x$ and multiplying the resulting equation by $\partial^2_x\partial_tn^0$, we obtain
    \begin{align}\label{n0t x2}
        &\frac{1}{2}\frac{d}{dt}\|\partial^2_x\partial_t n^0\|^2_{L^2_{xy}}+\|\n\partial^2_x\partial_t n^0\|^2_{L^2_{xy}}\notag\\
        &=-\int^\infty_0\int^\infty_{-\infty}\partial^2_x\partial_t(n^0v^0)\cdot\nabla\partial^2_x\partial_t n^0dxdy-\int^\infty_0\int^\infty_{-\infty}\partial^2_x\partial_t(u^0\cdot\nabla n^0)\partial^2_x\partial_t n^0dxdy\\
        &\leq \frac{1}{2}\|\nabla\partial^2_x\partial_tn^0\|^2_{L^2_{xy}}+C\|\partial^2_x\partial_t n^0\|^2_{L^2_{xy}}+C\|\nabla\partial^2_x n^0\|^2_{L^2_{xy}}+C\|\nabla^2\partial_t v^0\|^2_{L^2_{xy}}+C\|\nabla^3 v^0\|^2_{L^2_{xy}}+C\notag\\
        &\leq \frac{1}{2}\|\nabla\partial^2_x\partial_tn^0\|^2_{L^2_{xy}}+C\|\partial^2_x\partial_t n^0\|^2_{L^2_{xy}}+C\|\nabla\partial^2_x n^0\|^2_{L^2_{xy}}+C\|\nabla^3 v^0\|^2_{L^2_{xy}}+C,\notag\notag
	\end{align}
    due to
    \begin{align}\label{nabla2 vt}
        \|\nabla^2\partial_t v^0\|_{L^2_{xy}}&\leq\|u^0\|_{H^3_{xy}}\| v^0\|_{H^3_{xy}}+\|\nabla^3n^0\|_{L^2_{xy}}\leq C(\| \nabla^3v^0\|_{L^2_{xy}}+\|\nabla\partial^2_xn^0\|_{L^2_{xy}}+1).
	\end{align}
    \textbf{II. The estimates of  $\|\nabla^3v^0\|_{L^\infty_TL^2_{xy}}$ and $\|\nabla^3\omega^0\|_{L^\infty_TL^2_{xy}}$.} We now proceed to bound  $\| \nabla^3v^0\|_{L^2_{xy}}$.
    For the $v^0$ equation $(\ref{eq:n0v0u0})_2$, we apply $\nabla^3$ and take the inner product with $\nabla^3 v$.
    \begin{align}\label{nabla3 v0}
        \frac{1}{2}\frac{d}{dt}\|\nabla^3 v^0\|^2_{L^2_{xy}}&\leq\|u^0\|^2_{H^4_{xy}}\|v^0\|^2_{H^2_{xy}}+C\|\nabla^3 u^0\|^2_{H^1_{xy}}\|\nabla v^0\|^2_{H^1_{xy}}+C\|\nabla^2 u^0\|^2_{H^2_{xy}}\|\nabla^2 v^0\|^2_{L^2_{xy}}\notag\\
        &\quad+C\|\nabla u^0\|^2_{H^2_{xy}}\|\nabla^3 v^0\|^2_{L^2_{xy}}+C\|\nabla^3 v^0\|^2_{L^2_{xy}}+\|\nabla^4 n^0\|^2_{L^2_{xy}}\\
        &\leq C(\|\nabla^4u^0\|^2_{L^2_{xy}}+\|\nabla^3 v^0\|^2_{L^2_{xy}}+1)+\|\nabla^4 n^0\|^2_{L^2_{xy}},\notag\notag
	\end{align}
    where we need estimate $\|\nabla^4u^0\|^2_{L^2_{xy}}$ and $\|\nabla^4 n^0\|^2_{L^2_{xy}}$.
    To estimate $\|\nabla^4u^0\|^2_{L^2_{xy}}$, we apply $\nabla^3$ to $(\ref{eq:omega})$ and multiply by $\nabla^3\omega^0$, which yields
    \begin{align}\label{nabla3 omega}
        &\frac{1}{2}\frac{d}{dt}\|\nabla^3\omega^0\|^2_{L^2_{xy}}\notag\\
        &\leq \|\nabla^3u^0\|^2_{L^2_{xy}}\|\nabla \omega^0\|^2_{L^\infty_{xy}}+C\|\nabla^2 u^0\|^2_{L^4_{xy}}\|\nabla^2 \omega^0\|^2_{L^4_{xy}}+C\|\nabla u^0\|^2_{L^\infty_{xy}}\|\nabla^3 \omega^0\|^2_{L^2_{xy}}\\
        &\quad+C\|\nabla^3\omega^0\|^2_{L^2_{xy}}+\|\nabla^4n^0\|^2_{L^2_{xy}}\notag\\
        &\leq C\|\nabla^3\omega^0\|^2_{L^2_{xy}}+\|\nabla^4n^0\|^2_{L^2_{xy}}.\notag\notag
	\end{align}
    We now proceed to estimate $\|\nabla^4 n^0\|^2_{L^2_{xy}}$.
    \begin{align}\label{d4n}
        \|\nabla^4 n^0\|_{L^2_{xy}}
        &\leq \|\partial^4_xn^0\|_{L^2_{xy}}+\|\partial^3_x\partial_yn^0\|_{L^2_{xy}}+\|\partial^2_x\partial^2_yn^0\|_{L^2_{xy}}+\|\partial_x\partial^3_yn^0\|_{L^2_{xy}}+\|\partial^4_yn^0\|_{L^2_{xy}}\notag\\
        &\leq \|\nabla\partial^3_xn^0\|_{L^2_{xy}}+\|\partial^2_x(\partial_tn^{0}+u^{0}\cdot\nabla n^{0}-\nabla\cdot(n^{0}v^{0})-\partial^2_xn^0)\|_{L^2_{xy}}\notag\\
        &\quad+\|\partial_x\partial_y(\partial_tn^{0}+u^{0}\cdot\nabla n^{0}-\nabla\cdot(n^{0}v^{0})-\partial^2_xn^0)\|_{L^2_{xy}}\notag\\
        &\quad+\|\partial^2_y(\partial_tn^{0}+u^{0}\cdot\nabla n^{0}-\nabla\cdot(n^{0}v^{0})-\partial^2_xn^0)\|_{L^2_{xy}}\notag\\
        &\leq \|\nabla\partial^3_xn^0\|_{L^2_{xy}}+\|\partial^2_x\partial_tn^{0}+\partial^2_x(u^{0}\cdot\nabla n^{0})-\nabla\cdot\partial^2_x(n^{0}v^{0})-\partial^4_xn^0\|_{L^2_{xy}}\\
        &\quad+\|\partial_x\partial_y\partial_tn^{0}+\partial_x\partial_y(u^{0}\cdot\nabla n^{0})-\nabla\cdot\partial_x\partial_y(n^{0}v^{0})-\partial^3_x\partial_yn^0)\|_{L^2_{xy}}\notag\\
        &\quad+\|\partial_t(\partial_tn^{0}+u^{0}\cdot\nabla n^{0}-\nabla\cdot(n^{0}v^{0})-\partial^2_xn^0)+\partial^2_y(u^{0}\cdot\nabla n^{0})-\nabla\cdot\partial^2_y(n^{0}v^{0})\notag\\
        &\quad-\partial^2_x(\partial_tn^{0}+u^{0}\cdot\nabla n^{0}-\nabla\cdot(n^{0}v^{0})-\partial^2_xn^0)\|_{L^2_{xy}}\notag\\
        &\leq 4\|\nabla\partial^3_xn^0\|_{L^2_{xy}}+C\|\nabla\partial^2_xn^0\|_{L^2_{xy}}+C\|\partial^2_x\partial_tn^0\|_{L^2_{xy}}+C\|\partial_x\partial_y\partial_tn^0\|_{L^2_{xy}}+C\|\nabla^3v^0\|_{L^2_{xy}}+C).\notag\notag
	\end{align}
    In order to bound $\|\nabla^4 n^0\|^2_{L^2_{xy}}$, we shall estimate $\|\partial_x\partial_y\partial_tn^0\|_{L^2_TL^2_{xy}}$ and $\|\nabla\partial^3_xn^0\|_{L^2_TL^2_{xy}}$. By applying $\partial_x$ to (\ref{eqt2n}) and multiplying by $\partial_x\partial^2_tn^0$, we deduce the estimate for $\|\partial_x\partial_y\partial_tn^0\|_{L^2_TL^2_{xy}}$ is given as follows
    \begin{align}\label{nabla xt n}
        &\frac{1}{2}\frac{d}{dt}\|\n\partial_x\partial_t n^0\|^2_{L^2_{xy}}+\|\partial_x\partial^2_t n^0\|^2_{L^2_{xy}}\notag\\
        &=-\int^\infty_0\int^\infty_{-\infty}\partial_x\partial_t(n^0v^0)\cdot\nabla\partial_x\partial^2_t n^0dxdy-\int^\infty_0\int^\infty_{-\infty}\partial_x\partial_t(u^0\cdot\nabla n^0)\partial_x\partial^2_t n^0dxdy\\
        &\leq \frac{1}{2}\|\nabla\partial_x\partial^2_tn^0\|^2_{L^2_{xy}}+\frac{1}{2}\|\partial_x\partial^2_t n^0\|^2_{L^2_{xy}}+C(\|\nabla\partial_x\partial_tn^0\|^2_{L^2_{xy}}+1).\notag\notag
	\end{align}
    Next, we employ (\ref{eqt3n}) to bound $\|\nabla\partial_x\partial^2_tn^0\|^2_{L^2_{xy}}$, which is just as the following
    \begin{align}\label{xtt n}
        &\frac{1}{2}\frac{d}{dt}\|\partial_x\partial^2_t n^0\|^2_{L^2_{xy}}+\|\n\partial_x\partial^2_t n^0\|^2_{L^2_{xy}}\notag\\
        &=-\int^\infty_0\int^\infty_{-\infty}\partial_x\partial^2_t(n^0v^0)\cdot\nabla\partial_x\partial^2_t n^0dxdy-\int^\infty_0\int^\infty_{-\infty}\partial_x\partial^2_t(u^0\cdot\nabla n^0)\partial_x\partial^2_t n^0dxdy\notag\\
        &\leq \left(\|\partial_x\partial^2_t n^0\|_{L^2_{xy}}\| v^0\|_{L^\infty_{xy}}+\|\partial^2_t n^0\|_{L^2_{xy}}\|\partial_x v^0\|_{L^\infty_{xy}}\right)\|\nabla\partial_x\partial^2_t n^0\|_{L^2_{xy}}\notag\\
        &\quad+2\left(\|\partial_x\partial_t n^0\|_{L^4_{xy}}\|\partial_t v^0\|_{L^4_{xy}}+\|\partial_t n^0\|_{L^4_{xy}}\|\partial_x\partial_tv^0\|_{L^4_{xy}}\right)\|\nabla\partial_x\partial^2_t n^0\|_{L^2_{xy}}\notag\\
        &\quad+\left(\|\partial_x n^0\|_{L^\infty_{xy}}\|\partial^2_t v^0\|_{L^2_{xy}}+\|n^0\|_{L^\infty_{xy}}\|\partial_x\partial^2_t v^0\|_{L^2_{xy}}\right)\|\nabla\partial_x\partial^2_t n^0\|_{L^2_{xy}}\\
        &\quad+\left(\|\partial_x\partial^2_t u^0\|_{L^2_{xy}}\|\nabla n^0\|_{L^\infty_{xy}}+\|\partial^2_t u^0\|_{L^4_{xy}}\|\nabla\partial_x n^0\|_{L^4_{xy}}\right)\|\partial_x\partial^2_t n^0\|_{L^2_{xy}}\notag\\
        &\quad+2\|\partial_x\partial_t u^0\|_{L^4_{xy}}\|\nabla\partial_t n^0\|_{L^2_{xy}}\|\partial_x\partial^2_t n^0\|_{L^4_{xy}}\notag\\
        &\quad+\left(2\|\partial_t u^0\|_{L^\infty_{xy}}\|\nabla \partial_x\partial_tn^0\|_{L^2_{xy}}+\|\partial_x u^0\|_{L^\infty_{xy}}\|\nabla\partial^2_t n^0\|_{L^2_{xy}}\right)\|\partial_x\partial^2_t n^0\|_{L^2_{xy}}\notag\\
        &\leq \frac{1}{2}\|\nabla\partial_x\partial^2_tn^0\|^2_{L^2_{xy}}+C\left(\|\nabla^3 n^0\|^2_{L^2_{xy}}+\|\nabla^3 v^0\|^2_{L^2_{xy}}+\|\nabla\partial^2_xn^0\|_{L^2_{xy}}\right)\notag\\
        &\quad+C\left(\|\nabla\partial^2_tn^0\|^2_{L^2_{xy}}+\|\partial_x\partial^2_t n^0\|^2_{L^2_{xy}}+\|\nabla\partial_x\partial_tn^0\|^2_{L^2_{xy}}+1\right),\notag\notag
	\end{align}
    due to
    \begin{align*}
        \|\partial_x\partial^2_t v^0\|_{L^2_{xy}}&\leq\|\partial_tu^0\|_{H^2_{xy}}\| v^0\|_{H^2_{xy}}+\|u^0\|_{H^3_{xy}}\|\partial_tv^0\|_{H^1_{xy}}+\|u^0\|_{H^2_{xy}}\|\nabla\partial_x\partial_tv^0\|_{L^2_{xy}}\\
        &\leq C(\|\nabla^2\partial_tv^0\|_{L^2_{xy}}+1)\\
        &\leq C(\| \nabla^3v^0\|_{L^2_{xy}}+\|\nabla\partial^2_xn^0\|_{L^2_{xy}}+1),
	\end{align*}
    and (\ref{nabla2 vt}).\\
    Next, we estimate $\|\n\partial^3_x n^0\|^2_{L^2_TL^2_{xy}}$.
    Applying $\partial^3_x$ to $(\ref{eq:n0v0u0})_{1}$ and multiplying by $\partial^3_xn^0$, we then deduce
    \begin{align}\label{x3n}
        &\frac{1}{2}\frac{d}{dt}\|\partial^3_x n^0\|^2_{L^2_{xy}}+\|\n\partial^3_x n^0\|^2_{L^2_{xy}}\notag\\
        &=-\int^\infty_0\int^\infty_{-\infty}\partial^3_x(n^0v^0)\nabla\partial^3_x n^0dxdy-\int^\infty_0\int^\infty_{-\infty}\partial^3_x(u^0\cdot\nabla n^0)\partial^3_x n^0dxdy\\
        &\leq \frac{1}{2}\|\nabla\partial^3_xn^0\|^2_{L^2_{xy}}+C\|\partial^3_x n^0\|^2_{L^2_{xy}}+C\|\nabla^3v^0\|^2_{L^2_{xy}}+C\|\nabla\partial^2_xn^0\|^2_{L^2_{xy}}+C\|\nabla^4u^0\|^2_{L^2_{xy}}.\notag\notag
	\end{align}
  Combining $(\ref{dx2n})$-$(\ref{n0t x2})$, $(\ref{nabla3 v0})$-$(\ref{nabla3 omega})$, $(\ref{nabla xt n})$-$(\ref{xtt n})$ and substituting $(\ref{d4n})$, 
    by Gronwall's inequality and Lemma \ref{lem nabla omega}, we arrive at
    \begin{equation}\label{nabla3 n0 v0 omega}
		\begin{split}{}
        &\|(\n\partial^2_x n^0,\partial^2_x\partial_t n^0,\nabla^3 v^0,\nabla^4 u^0,\n\partial_x\partial_t n^0,\partial_x\partial^2_t n^0)\|^2_{L^2_{xy}}+\int^t_0\|\nabla\partial^3_xn^0\|^2_{L^2_{xy}}d\tau\\
        &\leq C(\|n_{in}\|_{H^3_{xy}},\|v_{in}\|_{H^3_{xy}},\|u_{in}\|_{H^4_{xy}},\|\partial^2_x\partial_tn_{in}\|_{L^2_{xy}},\|\nabla\partial_x\partial_tn_{in}\|_{L^2_{xy}},\|\partial_x\partial^2_tn_{in}\|_{L^2_{xy}}).
	\end{split}
	\end{equation}
    With these two steps, we verify that estimate (\ref{nabla3 tt n0 v0 omega}) holds.\\
    \textbf{Step 3. The estimates of  $\|\nabla^3 n^0\|_{L^\infty_TL^2_{xy}}$, $\|\nabla^2\partial_t v^0\|_{L^\infty_TL^2_{xy}}$ and $\|\nabla^2\partial_t \omega^0\|_{L^\infty_TL^2_{xy}}$.} Using the result in $(\ref{nabla3 tt n0 v0 omega})$, $(\ref{Equ(n3)})$ and the results of Lemma \ref{n0 v0 u0 L2}, we derive the estimate for $\|\nabla^3 n^0\|_{L^\infty_TL^2_{xy}}$ as follows,
    \begin{align}\label{d3n}
        &\|\nabla^3 n^0\|_{L^\infty_TL^2_{xy}}\leq C(\|\nabla\partial_t n^0\|_{L^\infty_TL^2_{xy}}+\|\nabla\partial^2_xn^0\|_{L^\infty_TL^2_{xy}}+1)\leq C.
	\end{align}
    By a similar argument (\ref{nabla v0t nabla omega0t}), we deduce that
    \begin{equation}\label{nabla2 v0t nabla2 omega0t}
		\begin{split}{}
        \|\nabla^2\partial_t v^0\|_{L^\infty_TL^2_{xy}}&\leq C\|u^0\|_{L^\infty_TH^3_{xy}}\| v^0\|_{L^\infty_TH^3_{xy}}+\|\nabla^3n^0\|_{L^\infty_TL^2_{xy}}\leq C,
        \\
        \|\nabla^2\partial_t \omega^0\|_{L^\infty_TL^2_{xy}}&\leq C\|u^0\|_{L^\infty_TH^3_{xy}}\| \nabla\omega^0\|_{L^\infty_TH^1_{xy}}+C\|u^0\|_{L^\infty_TH^2_{xy}}\| \nabla^3\omega^0\|_{L^\infty_TL^2_{xy}}+\|\nabla^3n^0\|_{L^\infty_TL^2_{xy}}\\
        &\leq C.
    \end{split}
	\end{equation}
    Further, using Lemma \ref{n0 v0 u0 L2}, (\ref{nabla3 n0 v0 omega}), (\ref{nabla2 v0t nabla2 omega0t}) and $\partial^2_yn^0=\partial_tn^{0}+u^{0}\cdot\nabla n^{0}-\nabla\cdot(n^{0}v^{0})-\partial^2_xn^0$, we derive the estimate for $\|\nabla^2\partial_t n^0\|_{L^\infty_TL^2_{xy}}$ as follows:
    \begin{align}
        \|\nabla^2\partial_t n^0\|_{L^\infty_TL^2_{xy}}&\leq \|\partial^2_x\partial_tn^0\|_{L^\infty_TL^2_{xy}}+\|\partial_x\partial_y\partial_tn^0\|_{L^\infty_TL^2_{xy}}+\|\partial^2_y\partial_tn^0\|_{L^\infty_TL^2_{xy}}\notag\\
        &\leq C+\|\partial_t(\partial_tn^{0}+u^{0}\cdot\nabla n^{0}-\nabla\cdot(n^{0}v^{0})-\partial^2_xn^0)\|_{L^\infty_TL^2_{xy}}\notag\\
        &\leq C+\|\partial^2_tn^{0}\|_{L^\infty_TL^2_{xy}}+\|\partial_tu^{0}\|_{L^\infty_TH^1_{xy}}\|n^{0}\|_{L^\infty_TH^2_{xy}}+\|u^{0}\|_{L^\infty_TH^2_{xy}}\|\nabla\partial_tn^{0}\|_{L^\infty_TL^2_{xy}}\\
        &\quad+\|\partial_tn^{0}\|_{L^\infty_TH^1_{xy}}\|v^{0}\|_{L^\infty_TH^2_{xy}}+\|n^{0}\|_{L^\infty_TH^2_{xy}}\|\partial_tv^{0}\|_{L^\infty_TH^1_{xy}}+\|\partial^2_x\partial_tn^0\|_{L^\infty_TL^2_{xy}}\notag\\
        &\leq C.\notag\notag
    \end{align}

    Moreover, integrating both sides of $(\ref{d4n})$ over the time, we have
    \begin{align*}
        \int^t_0\|\nabla^4 n^0\|^2_{L^2_{xy}}d\tau\leq C(\|n_{in}\|_{H^3_{xy}},\|v_{in}\|_{H^3_{xy}},\|u_{in}\|_{H^4_{xy}},\|\partial^2_x\partial_tn_{in}\|_{L^2_{xy}},\|\partial_x\partial_tn_{in}\|_{L^2_{xy}}).
    \end{align*}
    The proof is complete.

       Since the argument of Lemma $\ref{nabla3 n0}$ can be extended by induction to arbitrary higher-order derivatives, we arrive at the following regularity results.

        \begin{corollary}\label{cor5.1}
        Suppose the initial data $(n_{in},v_{in},u_{in})$ satisfies the hypotheses of Theorem \ref{th2.1} and $(n^0,v^0,u^0)$ satisfies the properties from Lemma \ref{n0 v0 u0 L2} and Lemma \ref{nabla3 n0}. This yields the following estimates:
        \begin{align}\label{nvu Hm}
           n^0\in {L^\infty(0,T;H^{m}_{xy})}\cap {L^2(0,T;H^{m+1}_{xy})},\;v^0\in {L^\infty(0,T;H^{m}_{xy})},\;u^0\in {L^\infty(0,T;H^{m+1}_{xy})},\; m\geq4,
        \end{align}
        and
        \begin{align}\label{nvu tj}
           \partial^j_tn^0,\partial^j_tv^0\in L^\infty(0,T;H^{m-2j}_{xy}),\;0<2j\leq m,\\
           \partial^j_tu^0\in L^\infty(0,T;H^{m+1-2j}_{xy}),\;0<2j\leq m+1,\notag\notag
        \end{align}
        where $C>0$ is a constant depending only on $m$ and the initial data $(n_{in},v_{in},u_{in})$, and $m, j$ be integer.
        \end{corollary}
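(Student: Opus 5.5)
We argue by induction on the regularity index $m$, taking Lemma \ref{n0 v0 u0 L2} and Lemma \ref{nabla3 n0} as the base cases $m=2,3$. The inductive claim at level $m$ combines the spatial bound $(n^0,v^0,\nabla u^0)\in L^\infty_TH^m_{xy}$, $n^0\in L^2_TH^{m+1}_{xy}$ with the time-derivative bounds in (\ref{nvu tj}). Assuming the claim at levels $m-1$ and $m-2$, we prove it at level $m$. The energy functional reproduces the structure of the two lemmas:
\begin{align*}
E_m(t):=\sum_{2j+|\alpha|\le m,\ \alpha=(\alpha_1,0)}\|\nabla\partial^\alpha_x\partial^j_t n^0\|^2_{L^2_{xy}}+\sum_{2j+k\le m}\big(\|\nabla^k\partial^j_t v^0\|^2_{L^2_{xy}}+\|\nabla^k\partial^j_t\omega^0\|^2_{L^2_{xy}}\big).
\end{align*}
Only tangential and temporal derivatives fall on $n^0$ inside the energy. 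The no-flux condition $\partial_yn^0+n^0v^0_2=0$ is preserved under $\partial_x$ and $\partial_t$, so integrating by parts against $\nabla\partial^\alpha_x\partial^j_tn^0$ produces only the harmless term $-\int\partial^\alpha_x\partial^j_t(n^0v^0)\cdot\nabla\partial^\alpha_x\partial^j_tn^0$, plus boundary terms. The latter are handled exactly as $I_1$ in Lemma \ref{n0 v0 u0 L2}, using Lemma \ref{lem trace theorem}.

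\textbf{Step 1 (parabolic part).} For each admissible $(\alpha,j)$, apply $\partial^\alpha_x\partial^j_t$ to $(\ref{eq:n0v0u0})_1$ and test with $\partial^\alpha_x\partial^j_tn^0$ and with $\partial_t\partial^\alpha_x\partial^j_tn^0$. This gives control of $\sup_t\|\nabla\partial^\alpha_x\partial^j_tn^0\|^2$ together with the dissipation $\int_0^T\|\nabla\partial^{\alpha}_x\partial^{j+1}_tn^0\|^2+\|\nabla\partial^{\alpha+1}_x\partial^j_tn^0\|^2$. Commutators are estimated with Lemma \ref{lem higher regurality} and Sobolev embedding. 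Lower-order factors are bounded by the induction hypothesis; top-order factors are absorbed into the dissipation or into $E_m$.

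\textbf{Step 2 (recovering normal derivatives).} Normal derivatives of $n^0$ are recovered from the equation itself,
\[
\partial_y^2n^0=\partial_tn^0+u^0\cdot\nabla n^0-\nabla\cdot(n^0v^0)-\partial_x^2n^0.
\]
Iterating this identity as in (\ref{Equ(n3)}) and (\ref{d4n}) converts each pair of $y$-derivatives into one $t$-derivative plus tangential and lower-order terms. This yields
\[
\|\nabla^{m+1}n^0\|_{L^2_{xy}}\le C\Big(\sum\|\nabla\partial^\alpha_x\partial^j_tn^0\|_{L^2_{xy}}\Big)+C(E_m+1)^{p},
\]
where the sum runs over $2j+|\alpha|=m$. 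The same identity gives the $L^\infty_T$ bound on $\nabla^m n^0$ and the bounds on $\partial^j_t n^0$ in (\ref{nvu tj}).

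\textbf{Step 3 (hyperbolic parts).} Apply $\nabla^k\partial^j_t$ to $(\ref{eq:n0v0u0})_2$ and to the vorticity equation (\ref{eq:omega}), then test with $\nabla^k\partial^j_tv^0$ and $\nabla^k\partial^j_t\omega^0$. The top-order transport term in the $v$-equation, $\int\nabla^k\partial^j_tv^0\cdot\nabla(\nabla^k\partial^j_tv^0)\cdot u^0$, would lose a derivative. It vanishes by the argument of (\ref{curl transformation term}): $\nabla^k\partial^j_tv^0$ is still curl-free, so the term equals $\tfrac12\int u^0\cdot\nabla|\nabla^k\partial^j_tv^0|^2=0$ by $\nabla\cdot u^0=0$ and $u^0_2|_{y=0}=0$. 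The forcing terms $\nabla^{k+1}\partial^j_tn^0$ and $\nabla^k\partial^j_t\partial_xn^0$ are one derivative above $E_m$. They are controlled only in $L^2_T$, through Step 2 and the Step 1 dissipation; this is why $n^0\in L^2_TH^{m+1}$ is needed. Lemma \ref{lem nabla omega} then converts the vorticity bounds into $u^0\in H^{m+1}$.

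\textbf{Step 4 (closing).} Summing the estimates with suitable weights gives $\frac{d}{dt}E_m\le C(E_m+1)^{p}+\text{absorbed terms}$ on the time interval of Lemma \ref{n0 v0 u0 L2}. The equation is linear in the top-order unknowns given the lower-order bounds, so Gronwall applies. The initial values $\partial_t^jn^0|_{t=0}$ etc.\ are computed from the equations and are finite by the compatibility conditions $(M_1)$--$(M_4)$.

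\textbf{Main obstacle.} I expect the main obstacle to be the normal-boundary terms at high order when $y$-derivatives act on $n^0$. Step 1 avoids them by keeping only tangential and temporal derivatives in the energy, but boundary integrals of the type $\int\partial_y^2\partial^\alpha_x\partial^j_tn^0\,\partial^\alpha_x\partial^{j+1}_t(n^0v^0_2)\,dx$ still arise. These require trace bounds on $\partial^j_t v^0_2$ at order $m$, available only through $L^2_T$-integrated norms. I would treat them with Lemma \ref{lem trace theorem}, splitting half a derivative onto each factor and absorbing the result into the dissipation as in $I_1$.
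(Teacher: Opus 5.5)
Your plan follows the paper's proof of Corollary \ref{cor5.1}. Both use induction on $m$ starting from Lemmas \ref{n0 v0 u0 L2} and \ref{nabla3 n0}, apply only $\partial_x$ and $\partial_t$ to the $n^0$-equation, and recover normal derivatives from $\partial_y^2n^0=\partial_tn^0+u^0\cdot\nabla n^0-\nabla\cdot(n^0v^0)-\partial_x^2n^0$. Both also use the curl-free cancellation of the top-order transport term in the $v^0$-equation, and vorticity estimates combined with Lemma \ref{lem nabla omega}.

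The one structural difference concerns time derivatives. When passing to level $m+1$, the paper's energy is only $\|\nabla\partial_x^mn^0\|^2+\|\partial_x^m\partial_tn^0\|^2+\|v^0\|^2_{H^{m+1}_{xy}}+\|\omega^0\|^2_{H^{m+1}_{xy}}$. It then reads the bounds on $\partial_t^jn^0$, $\partial_t^jv^0$, $\partial_t^j\omega^0$ directly off the equations (Step 4), with only an extra $L^2$ energy estimate for $\partial_t^{j}u^0$. Your mixed energies $\|\nabla^k\partial_t^jv^0\|^2$ and $\|\nabla^k\partial_t^j\omega^0\|^2$ work, but they are redundant because these equations are first order in time.

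There is, however, an indexing error in $E_m$ that makes Step 1 fail as written. With $2j+|\alpha|\le m$, the energy contains $\|\nabla\partial_x^mn^0\|^2$, which is of order $m+1$, while $v^0$ is controlled only at order $m$.

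\begin{itemize}
\item Closing that term needs, as in the paper, the dissipation $\int_0^T\|\nabla\partial_x^m\partial_tn^0\|^2_{L^2_{xy}}$.
\item The $\partial_x^m\partial_t$-equation then produces $\int\partial_x^m\partial_t(n^0v^0)\cdot\nabla\partial_x^m\partial_tn^0$.
\item That term requires $\partial_x^m\partial_tv^0=\partial_x^m\nabla(n^0-u^0\cdot v^0)\in L^2_TL^2_{xy}$, i.e.\ $v^0\in H^{m+1}_{xy}$.
\item Neither $E_m$ nor the induction hypothesis provides this, and the hyperbolic $v^0$-equation has no dissipation to supply it.
\end{itemize}

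The same slip contradicts your own Step 3, where $\nabla^{m+1}n^0$ is said to lie one derivative above $E_m$; with your definition, Step 2 would put it inside $E_m$.

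The fix is to let the $n^0$-sum run over $2j+|\alpha|\le m-1$. This matches Lemma \ref{nabla3 n0} at $m=3$, which gives $\nabla\partial_x^2n^0,\nabla\partial_tn^0\in L^\infty_TL^2_{xy}$ and $\nabla\partial_x^3n^0\in L^2_TL^2_{xy}$. The quantities with $2j+|\alpha|=m$ then enter only through the dissipation. That is exactly what Step 2 needs to place $\nabla^{m+1}n^0$ in $L^2_TL^2_{xy}$ and so feed the forcing in Step 3. With this indexing, the $\partial_x^{m-1}\partial_t$-estimate needs only $v^0\in H^m_{xy}$.

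The ``main obstacle'' you anticipate does not arise in your scheme. Only $\partial_x$ and $\partial_t$ act on the divergence-form equation, so for either test function $\phi$ the boundary integral is $\int_{\mathbb{R}}\partial_x^\alpha\partial_t^j(\partial_yn^0+n^0v^0_2)|_{y=0}\,\phi\,dx=0$. This is exactly how the paper disposes of it right after its first estimate in the induction step. Boundary terms like $I_1$ in Lemma \ref{n0 v0 u0 L2}, which contain $\partial_y^2n^0$ on $\{y=0\}$, appear only when $\partial_y$ is applied to the $n^0$-equation, and your plan never does that.

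Keep the observation from your Step 4 that, given the induction hypothesis, the top-order quantities enter linearly. That is what lets Gronwall run on the fixed interval of Lemma \ref{n0 v0 u0 L2} rather than on an $m$-dependent one.
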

        \noindent{\bf{Proof.}} For any integer $m\geq4$, we may suppose the inductive statement $\mathcal{P}_m$ be defined as:
        \begin{align}\label{Pm}
        \begin{cases}
         \displaystyle\sup\limits_{0\leq t\leq T}\left(\|n^0\|^2_{{H}^{m}}+\|v^0\|^2_{H^m_{xy}}+\|u^0\|^2_{H^{m+1}_{xy}}\right)+\int^T_0\|n^0\|^2_{H^{m+1}_{xy}}dt\leq C_m,\\
          \displaystyle\sup\limits_{0\leq t\leq T}\sum\limits_{0<2j\leq m}\left(\|\partial^j_tn^0\|^2_{H^{m-2j}_{xy}}+\|\partial^j_tv^0\|^2_{H^{m-2j}_{xy}}\right)\leq C_m,\\
           \displaystyle\sup\limits_{0\leq t\leq T}\sum\limits_{0<2j\leq m+1}\|\partial^j_tu^0\|^2_{H^{m+1-2j}_{xy}}\leq C_m.
        \end{cases}
        \end{align}
        By Lemma \ref{n0 v0 u0 L2} and Lemma \ref{nabla3 n0}, the case $m\leq3$ satisfies that
        \begin{align*}
            \sup_{0\leq t\leq T}\left(\|n^0\|^2_{{H}^{m}_{xy}}+\|v^0\|^2_{H^m_{xy}}+\|u^0\|^2_{H^{m+1}_{xy}}\right)+\int^T_0\|n^0\|^2_{H^{m+1}_{xy}}dt\leq C.
        \end{align*}
        We proceed to prove
        \begin{align*}
            \mathcal{P}_m\Longrightarrow\mathcal{P}_{m+1}.
        \end{align*}
        For this purpose, we define the energy functional
        \begin{align}
            \mathcal{E}(t):=\|\nabla\partial^m_x n^0\|^2_{L^2_{xy}}+\|\partial^m_x\partial_tn^0\|^2_{L^2_{xy}}+\|v^0\|^2_{H^{m+1}_{xy}}+\|\omega^0\|^2_{H^{m+1}_{xy}},
        \end{align}
        and we divide the argument into several steps as below.\\
        \textbf{Step 1. Estimate $n^0$.}
         Applying the operator $\partial^\alpha_x$ to the equation for $n^0$ in $(\ref{eq:n0v0u0})_1$, we rewrite the resulting equation in two equivalent forms:
        \begin{align}\label{eq n0 alpha}
          \partial^\alpha_x\partial_{t}n^0+\partial^\alpha_x(u^0\cdot\nabla n^0)=\nabla\cdot(\partial^\alpha_x\nabla n^0+\partial^\alpha_x(n^0v^0)),
        \end{align}
        \begin{align}\label{eq n0 t alpha}
           \partial^\alpha_x\partial^2_{t}n^0+\partial^\alpha_x\partial_t(u^0\cdot\nabla n^0)=\nabla\cdot(\partial^\alpha_x\partial_t\nabla n^0+\partial^\alpha_x\partial_t(n^0v^0)).
        \end{align}
        We take $\alpha=m$ for the equation (\ref{eq n0 alpha}) and multiply by  $\partial^m_x\partial_tn^0$, then from (\ref{Pm}), which yields
        \begin{align}\label{nabla n^0 Hmx}
            &\frac{1}{2}\frac{d}{dt}\|\nabla\partial^m_x n^0\|^2_{L^2_{xy}}+\|\partial^m_x\partial_tn^0\|^2_{L^2_{xy}}\notag\\
            &=-\left(\int^\infty_0\int^\infty_{-\infty}\partial^m_x(u^0\cdot\nabla n^0)\partial^m_x\partial_tn^0dxdy+\int^\infty_0\int^\infty_{-\infty}\partial^m_x(n^0 v^0)\cdot\nabla\partial^m_x\partial_tn^0dxdy\right)\\
            &\leq C\left(\|u^0\|_{H^{m+1}_{xy}}\|n^0\|_{{H}^{m}_{xy}}+\|u^0\|_{H^{m+1}_{xy}}\|\nabla\partial^m_x n^0\|_{L^2_{xy}}\right)\|\partial^m_x\partial_tn^0\|_{L^2_{xy}}+\|n^0\|_{H^{m}_{xy}}\|v^0\|_{{H}^{m}_{xy}}\|\nabla\partial^m_x\partial_tn^0\|_{L^2_{xy}}\notag\\
            &\leq \frac{1}{2}\|\partial^m_x\partial_tn^0\|^2_{L^2_{xy}}+\frac{1}{4}\|\nabla\partial^m_x\partial_tn^0\|^2_{L^2_{xy}}+C_m\left(\|\nabla\partial^m_x n^0\|^2_{L^2_{xy}}+1\right),\notag\notag
        \end{align}
        where we have used the boundary condition $(\partial_yn^{0}+n^{0}v^{0}_2)|_{y=0}=0$ such that
        $$\int^\infty_{-\infty}\left(\partial^m_x\partial_y n^0(t,x,0)+\partial^m_x(n^0(t,x,0)v^0_2(t,x,0))\right)\partial^m_x\partial_tn^0(t,x,0)dx=0.$$
        Furthermore, multiplying the equation (\ref{eq n0 t alpha}) by $\partial^m_x\partial_tn^0$  when $\alpha=m$ , substituting the equation of $\partial_tv^0=-\nabla(u^0\cdot v^0)+\nabla n^0$, using the Sobolev embedding inequality and $\nabla\cdot u^0=0$, we deduce that
        \begin{align}\label{n^0 t Hmx}
            &\frac{1}{2}\frac{d}{dt}\|\partial^m_x\partial_tn^0\|^2_{L^2_{xy}}+\|\nabla\partial^m_x\partial_tn^0\|^2_{L^2_{xy}}\notag\\
            &=-\left(\int^\infty_0\int^\infty_{-\infty}\left(\partial^m_x\left(\partial_tu^0\cdot\nabla n^0)+\partial^m_x(u^0\cdot\nabla \partial_tn^0\right)\right)\partial^m_x\partial_tn^0dxdy\right)\notag\\
            &\quad-\left(\int^\infty_0\int^\infty_{-\infty}\left(\partial^m_x\left(\partial_tn^0 v^0)+\partial^m_x(n^0 (-\nabla(u^0\cdot v^0)+\nabla n^0\right)\right)\cdot\nabla\partial^m_x\partial_tn^0dxdy\right)\\
            &\leq C\sum_{\alpha\leq m}\left(\|\partial_tu^0\|_{H^{m}_{xy}}\|n^0\|_{{H}^{m+1}_{xy}}+\|\partial_tu^0\|_{H^{m}_{xy}}\|\nabla\partial^m_x n^0\|_{L^2_{xy}}+\|u^0\|_{H^{m+1}_{xy}}\|\nabla\partial^{\alpha-1}_x\partial_tn^0\|_{L^2_{xy}}\right)\|\partial^m_x\partial_tn^0\|_{L^2_{xy}}\notag\\
            &\quad+C\sum_{\alpha\leq m}\left(\|\partial^m_x\partial_tn^0\|_{L^2_{xy}}\|v^0\|_{H^{2}_{xy}}+\|\nabla\partial^{\alpha-1}_x\partial_tn^0\|_{L^2_{xy}}\|v^0\|_{H^{m+1}_{xy}}+\|\partial_tn^0\|_{H^2_{xy}}\|v^0\|_{H^{m}_{xy}}\right)\|\nabla\partial^{m}_x\partial_tn^0\|_{L^2_{xy}}\notag\\
            &\quad+C\left(\|n^0\|_{{H}^{m}_{xy}}\|u^0\|_{H^{m+1}_{xy}}\|v^0\|_{H^{m+1}_{xy}}+\|n^0\|_{{H}^{m}_{xy}}\|n^0\|_{H^{m+1}_{xy}}\right)\|\nabla\partial^{m}_x\partial_tn^0\|_{L^2_{xy}}\notag\\
            &\leq \frac{1}{4}\|\nabla\partial^{m}_x\partial_tn^0\|^2_{L^2_{xy}}+C_m\left(\|\nabla\partial^m_x n^0\|^2_{L^2_{xy}}+\|\partial^m_x\partial_tn^0\|^2_{L^2_{xy}}+\|n^0\|^2_{{H}^{m+1}_{xy}}+\|v^0\|^2_{H^{m+1}_{xy}}+1\right),\notag\notag
        \end{align}
        for the norm $\|\partial_tu^0\|_{H^{m}_{xy}}$, we obtain the following bound via identity (\ref{eq:omega}):
        \begin{align*}           \|\partial_tu^0\|_{H^{m}_{xy}}&=\|\partial_tu^0\|_{L^2_{xy}}+\|\partial_t\omega^0\|_{H^{m-1}_{xy}}\\
            &\leq \|\partial_tu^0\|_{L^2_{xy}}+\|u^0\|_{H^{m+1}_{xy}}\|\omega^0\|_{H^{m}_{xy}}+\|n^0\|_{H^{m}_{xy}}\\
            &\leq C_m.
        \end{align*}
        Combining (\ref{nabla n^0 Hmx}) and (\ref{n^0 t Hmx}), we deduce that
        \begin{align}\label{n^0 Hmx and Hmxt}
            &\frac{d}{dt}\left(\|\nabla\partial^m_x n^0\|^2_{L^2_{xy}}+\|\partial^m_x\partial_tn^0\|^2_{L^2_{xy}}\right)+\|\nabla\partial^{m}_x\partial_tn^0\|^2_{L^2_{xy}}\leq C_m\left(1+\|n^0\|^2_{{H}^{m+1}_{xy}}+\mathcal{E}(t)\right).
        \end{align}
        To derive the full norm of $\|n^0\|_{{H}^{m+2}_{xy}}$, we simplify the governing equation into an elliptic equation in the $y$-direction for analysis:
        \begin{align}\label{eq:y2n0}
            \partial^2_yn^0=\partial_tn^{0}+u^{0}\cdot\nabla n^{0}-\nabla\cdot(n^{0}v^{0})-\partial^2_xn^0.
        \end{align}
        To estimate $\partial^l_yn^0$, it suffices to bound the terms on the right-hand side. We observe the following facts:
        \begin{align}\label{n0 Hm+2}
            \|n^0\|_{H^{m+2}_{xy}}&\leq\sum_{a+l\leq m+2}\|\partial^a_x\partial^l_yn^0\|_{L^2_{xy}}.
        \end{align}
        Applying operator $\partial^a_x\partial^{l-2}_y$ to the equation $(\ref{eq:y2n0})$, for all integers $l\geq2$ and $a+l\leq m+2$, we have
        \begin{align}\label{n0 Hm+1}
            \partial^a_x\partial^l_yn^0=\partial^a_x\partial^{l-2}_y\partial_tn^{0}+\partial^a_x\partial^{l-2}_y(u^{0}\cdot\nabla n^{0})-\partial^a_x\partial^{l-2}_y\nabla\cdot(n^{0}v^{0})-\partial^{a+2}_x\partial^{l-2}_yn^0,
        \end{align}
        we notice that $a+l\leq m+2\Rightarrow a+l-2\leq m$ and $a+2+l-2=a+l\leq m+2$, then we obtain
        {\begin{align}\label{yn0 Hmx}
            \|\partial^a_x\partial^l_yn^0\|_{L^2_{xy}}&\leq \|\partial^a_x\partial^{l-2}_y\partial_tn^{0}\|_{L^2_{xy}}+\|\partial^a_x\partial^{l-2}_y(u^{0}\cdot\nabla n^{0})\|_{L^2_{xy}}\notag\\
            &\quad+\|\partial^a_x\partial^{l-2}_y\nabla\cdot(n^{0}v^{0})\|_{L^2_{xy}}+\|\partial^{a+2}_x\partial^{l-2}_yn^0\|_{L^2_{xy}}\\
            &\leq \|\partial_tn^{0}\|_{{H}^{m}_{xy}}+\|u^{0}\cdot\nabla n^{0}\|_{H^m_{xy}}+\|n^{0}v^{0}\|_{H^{m+1}_{xy}}+C\left(\|n^0\|_{H^{m+1}}+\|\nabla\partial_x^{m+1}n^0\|_{L^2_{xy}}\right)\notag\\
            &\leq C\left(\|\partial_tn^{0}\|_{{H}^{m}_{xy}}+\|\nabla\partial^{m+1}_x n^0\|_{L^2_{xy}}\right)+C\left(\|u^{0}\|_{{H}^{m}_{xy}}+\|v^0\|_{H^{m+1}_{xy}}+1\right)\|n^{0}\|_{{H}^{m+1}_{xy}}.\notag\notag
        \end{align}}

        \textbf{Step 2. Estimate $v^0$ and $u^0$.} Applying $\nabla^{m+1}$ to $(\ref{eq:n0v0u0})_2$ and multiplying $\nabla^{m+1} v^0$, we deduce from the curl condition $\nabla\times v^0=0$ that
        \begin{align}\label{v0 Hm+1}
            &\frac{1}{2}\frac{d}{dt}\|\nabla^{m+1}v^0\|^2_{L^2_{xy}}\notag\\
            &=-\int^\infty_0\int^\infty_{-\infty}\left(\nabla^{m+2} u^0\cdot v^0+\sum_{1\leq|\alpha|\leq m}\nabla^{m+2-\alpha} u^0\cdot\nabla^{\alpha}v^0+\nabla u^0\cdot\nabla^{m+1} v^0\right)\cdot\nabla^{m+1} v^0 dxdy\notag\\
            &\quad-\int^\infty_0\int^\infty_{-\infty}\left(\nabla^{m+1} u^0\cdot (\nabla v^0)+\sum_{1\leq|\alpha|\leq m}\nabla^{m+1-\alpha} u^0\cdot(\nabla^{\alpha+1}v^0)\right)\cdot\nabla^{m+1} v^0dxdy\\
            &\quad-\int^\infty_0\int^\infty_{-\infty}\nabla^{m+1} v^0 \cdot(\nabla\nabla^{m+1} v^0)\cdot u^0 dxdy+\int^\infty_0\int^\infty_{-\infty}\nabla^{m+1}\nabla n^0\cdot\nabla^{m+1} v^0dxdy\notag\\
            &\leq C\left(\|u^0\|_{H^{m+2}_{xy}}\|v^0\|_{H^{2}_{xy}}+\|u^0\|_{H^{m+2}_{xy}}\|v^0\|_{H^{m+1}_{xy}}+\|u^0\|_{H^{3}_{xy}}\|v^0\|_{H^{m+1}_{xy}}+\|n^0\|_{H^{m+2}_{xy}}\right)\|v^0\|_{H^{m+1}_{xy}}\notag\\
            &\leq \epsilon\|n^0\|^2_{H^{m+2}_{xy}}+C_\epsilon(1+\|u^0\|^2_{H^{m+2}_{xy}})\|v^0\|^2_{H^{m+1}_{xy}}\notag\\
            &\leq \epsilon\|n^0\|^2_{H^{m+2}_{xy}}+C_\epsilon(1+\mathcal E(t))^2,\notag\notag
        \end{align}
        where we use curl $\nabla\times v^0=0$ and $\nabla\cdot u^0=0$ to obtain
        $$\int^\infty_0\int^\infty_{-\infty}\nabla^{m+1} v^0 \cdot(\nabla\nabla^{m+1} v^0)\cdot u^0 dxdy=\int^\infty_0\int^\infty_{-\infty}u^0\cdot\nabla|\nabla^{m+1} v^0|^2 dxdy=0.$$
         Similarly, applying $\nabla^{m+1}$ to (\ref{eq:omega}) and multiplying $\nabla^{m+1} \omega^0$, we arrive at
        \begin{align}\label{omega Hm+1}
            &\frac{1}{2}\frac{d}{dt}\|\nabla^{m+1}\omega^0\|^2_{L^2_{xy}}\notag\\
            &=-\int^\infty_0\int^\infty_{-\infty}\left(\nabla^{m+1} u^0\cdot\nabla\omega^0+\sum_{1\leq|\alpha|\leq m}\nabla^{m+1-\alpha} u^0\cdot\nabla^{\alpha+1}\omega^0+u^0\cdot\nabla\nabla^{m+1}\omega^0\right)\cdot\nabla^{m+1}\omega^0 dxdy\notag\\
            &\quad+\int^\infty_0\int^\infty_{-\infty}\nabla^{m+1}\partial_x n^0\cdot\nabla^{m+1}\omega^0dxdy\\
            &\leq C\|u^0\|_{H^{m+2}_{xy}}\|\omega^0\|^2_{H^{m+1}_{xy}}+\epsilon\|n^0\|^2_{H^{m+2}_{xy}}+C\|\omega^0\|^2_{H^{m+1}_{xy}},\notag\\
            &\leq \epsilon\|n^0\|^2_{H^{m+2}_{xy}}+C_\epsilon(1+\mathcal E(t))^2.\notag\notag
        \end{align}
         \textbf{Step 3. Combining the above estimates yields energy closure for $\mathcal{E}(t)$}.
        Summing up $(\ref{n^0 Hmx and Hmxt})$, $(\ref{n0 Hm+2})$, $(\ref{yn0 Hmx})$, $(\ref{v0 Hm+1})$ and $(\ref{omega Hm+1})$ yields
        \begin{align}\label{energy closure}
            &\frac{d}{dt}\mathcal{E}(t)+C(1-4\epsilon)\|n^0\|^2_{{H}^{m+2}_{xy}}\leq \left(\mathcal{E}(t)+1\right)^2+C\|n^{0}\|^2_{{H}^{m+1}_{xy}}+C\left(\|\partial_tn^{0}\|_{{H}^{m}_{xy}}+\|\nabla\partial^{m+1}_x n^0\|_{L^2_{xy}}\right).
        \end{align}
        We now obtain the corresponding bounds for $\|\partial_tn^{0}\|_{{H}^{m}_{xy}}$ and $\|\nabla\partial^{m+1}_x n^0\|_{L^2_{xy}}$.
        It remains to recover the normal derivatives of \(\partial_tn^{0}\). From the equation,
        \begin{align*}
          \partial_y^2 \partial_tn^{0}
        =
        \partial_t^2n^0
        +
        \partial_t(u^0\cdot\nabla n^0)
        -
        \nabla\cdot\partial_t(n^{0}v^0)-\partial_x^2\partial_tn^{0},
        \end{align*}
        for \(l\ge2\), \(a+l\le m\) and $a+l-2\leq m-2$, we have
        \begin{align}\label{n0t Hm}
           \|\partial_x^a\partial_y^l \partial_tn^{0}\|_{L^2_{xy}}
        &=
        \|\partial_x^a\partial_y^{l-2}\partial_t^2n^0\|_{L^2_{xy}}
        +
        \|\partial_x^a\partial_y^{l-2}(\partial_tu^0\cdot\nabla n^0)\|_{L^2_{xy}}
        +
        \|\partial_x^a\partial_y^{l-2}(u^0\cdot\nabla \partial_tn^{0})\|_{L^2_{xy}}\notag\\
        &\quad+
        \|\nabla\cdot\partial_x^a\partial_y^{l-2}(\partial_tn^{0}v^0+n^0\partial_t v^0)\|_{L^2_{xy}}
        +
        \|\partial_x^{a+2}\partial_y^{l-2}\partial_tn^{0}\|_{L^2_{xy}}\notag\\
        &\leq \|\partial^2_tn^{0}\|_{{H}^{m-2}_{xy}}+\|\partial_tu^{0}\|_{{H}^{m-2}_{xy}}\|n^{0}\|_{{H}^{m-1}_{xy}}+\|u^{0}\|_{{H}^{m-2}_{xy}}\|\partial_tn^{0}\|_{{H}^{m-1}_{xy}}\\
        &\quad+\|\partial_tn^{0}\|_{{H}^{m-1}_{xy}}\|v^{0}\|_{{H}^{m-1}_{xy}}+\|n^{0}\|_{{H}^{m-1}_{xy}}\|\partial_tv^{0}\|_{{H}^{m-1}_{xy}}+\|\partial_tn^{0}\|_{{H}^{m-1}_{xy}}+\|\nabla\partial^{m-1}_x\partial_t n^0\|_{L^2_{xy}}\notag\\
        &\leq C_m(1+\mathcal E(t))^2+C\|n^{0}\|^2_{{H}^{m+1}_{xy}}.\notag\notag
        \end{align}
        Applying $\partial_x^{m+1}$ to the $n^0$ equation and taking the $L^2_{xy}$
        inner product with $\partial_x^{m+1}n^0$
        \begin{align*}
        &\frac{1}{2}\frac{d}{dt}\|\partial_x^{m+1}n^0\|_{L^2_{xy}}^2
        +\|\nabla\partial_x^{m+1}n^0\|_{L^2_{xy}}^2 \notag\\
        &
        = -\int^\infty_0\int^\infty_{-\infty}
        \partial_x^{m+1}(u^0\cdot\nabla n^0)
        \partial_x^{m+1}n^0dxdy
        -\int^\infty_0\int^\infty_{-\infty}
        \partial_x^{m+1}(n^0v^0)\cdot
        \nabla\partial_x^{m+1}n^0dxdy\\
        &\leq C\|u^0\|_{H^{m+1}_{xy}}\|n^0\|_{H^{m+1}_{xy}}\|\partial_x^{m+1}n^0\|_{L^2_{xy}}+\|n^0\|_{H^{m+1}_{xy}}\|v^0\|_{H^{m+1}_{xy}}\|\nabla\partial_x^{m+1}n^0\|_{L^2_{xy}}\\
        &\leq \frac{1}{2}\|\nabla\partial_x^{m+1}n^0\|_{L^2_{xy}}^2+C\|v^0\|^2_{H^{m+1}_{xy}}\|n^{0}\|^2_{{H}^{m+1}_{xy}}+C_m\|n^0\|_{H^{m+1}_{xy}}\|\partial_x^{m+1}n^0\|_{L^2_{xy}}
        \end{align*}
        which implies that
        \begin{align}\label{n0 Hm+1x}
        &\frac{d}{dt}\|\partial_x^{m+1}n^0\|_{L^2_{xy}}^2+\|\nabla\partial_x^{m+1}n^0\|_{L^2_{xy}}^2\leq C_m(1+\mathcal E(t))^2\|n^0\|_{H^{m+1}_{xy}}^2
        \end{align}
        Substituting (\ref{n0t Hm}) and (\ref{n0 Hm+1x}) into (\ref{energy closure}), by Gronwall's inequality, for sufficiently small $\epsilon$, we have
        \begin{align}\label{energy result}
            &\sup_{0\leq t\leq T}\mathcal{E}(t)+\int^T_0\|n^0\|^2_{{H}^{m+2}_{xy}}\leq C_{m+1}.
        \end{align}
        By arguments analogous to those for $(\ref{yn0 Hmx})$, we derive the following estimate for $\|n^0\|_{L^\infty_T{H}^{m+1}_{xy}}$:
        \begin{align*}
            \|n^0\|_{{H}^{m+1}_{xy}}\leq\|\partial_tn^{0}\|_{{H}^{m-1}_{xy}}+\|\nabla\partial_x^{m+1}n^0\|_{L^2_{xy}}+C_m\leq C_{m+1}.
        \end{align*}
        Collecting all previous estimates, we obtain the following result:
        \begin{align}\label{nvu space Hm+1}
            n^0\in {L^\infty_TH^{m+1}_{xy}}\cap {L^2_TH^{m+2}_{xy}},\notag\\
            v^0\in {L^\infty_TH^{m+1}_{xy}},\\
            u^0\in {L^\infty_TH^{m+2}_{xy}}.\notag\notag
        \end{align}
        \textbf{Step 4. Induction on time derivatives.} At this step, we arrive at the following conclusion:
        \begin{align}\label{nvu Hm+1tj}
            \partial^{j}_tn^0\in {L^\infty_TH^{m+1-2j}_{xy}},\;
            \partial^{j}_tv^0\in {L^\infty_TH^{m+1-2j}_{xy}},\;
            \partial^{j}_tu^0\in {L^\infty_TH^{m+2-2j}_{xy}}.
        \end{align}
        From equation $(\ref{eq:n0v0u0})_1$, for all $0\leq 2j\leq m+1$, we have
        \begin{align*}
            &\|\partial^{j+1}_tn^0\|_{H^{m+1-2(j+1)}_{xy}}\\
            &\leq\|\partial^{j}_t(u^0\cdot\nabla n^0)\|_{H^{m+1-2(j+1)}_{xy}}+\|\partial^{j}_t\nabla\cdot(n^0v^0)\|_{H^{m+1-2(j+1)}_{xy}}+\|\partial^{j}_t\Delta n^0\|_{H^{m+1-2(j+1)}_{xy}}\notag\\
            &\leq \sum_{0\leq s\leq j}\|\partial^{s}_tu^0\|_{H^{m+1-2s}_{xy}}\|\partial^{j-s}_tn^0\|_{H^{m-2(j-s)}_{xy}}+\sum_{0\leq s\leq j}\|\partial^{s}_tn^0\|_{H^{m-2s}_{xy}}\|\partial^{j-s}_tv^0\|_{H^{m-2(j-s)}_{xy}}+\|\partial^{j}_tn^0\|_{H^{m+1-2j}_{xy}}.\notag\notag
        \end{align*}
        Via mathematical induction, we arrive at
         \begin{align}\label{n Hmtj}
            \|\partial^{j+1}_tn^0\|_{H^{m-2j-1}_{xy}}\leq C_{m+1}.
        \end{align}
        Similarly, from equation $(\ref{eq:n0v0u0})_2$, the estimate of $\partial^j_tv$ is derived as follows:
        \begin{align}\label{v Hmtj}
            \|\partial^{j+1}_tv^0\|_{H^{m+1-2(j+1)}_{xy}}
            &\leq\|\partial^{j}_t\nabla(u^0\cdot v^0)\|_{H^{m+1-2(j+1)}_{xy}}+\|\partial^{j}_t\nabla n^0\|_{H^{m+1-2(j+1)}_{xy}}\notag\\
            &\leq \sum_{0\leq s\leq j}\|\partial^{s}_tu^0\|_{H^{m+1-2s}_{xy}}\|\partial^{j-s}_tv^0\|_{H^{m-2(j-s)}_{xy}}+\|\partial^{j}_tn^0\|_{H^{m-2j}_{xy}}\\
            &\leq C_{m+1}.\notag\notag
        \end{align}
        Moreover, the div?curl estimate yields that
        \begin{align*}
           \|\partial^{j+1}_tu^0\|_{H^{m+2-2(j+1)}_{xy}}= \|\partial^{j+1}_tu^0\|_{H^{m-2j}_{xy}}=\|\partial^{j+1}_tu^0\|_{L^2_{xy}}+\|\partial^{j+1}_t\omega^0\|_{H^{m-2j-1}_{xy}}.
        \end{align*}
       We estimate the two terms on the?right?hand?side separately as follows. Taking the time derivative of equation $(\ref{eq:n0v0u0})_4$, we assert that $\|\partial^{j+1}_tu^0\|_{L^2_{xy}}$ holds as the following
         \begin{align}\label{u0 Ht}
             &\frac{1}{2}\frac{d}{dt}\|\partial^{j+1}_tu^0\|^2_{L^2_{xy}}\notag\\
             &=\int^\infty_0\int^\infty_{-\infty}\left(-\partial^{j+1}_t(u^0\cdot\nabla u^0)+\partial^{j+1}_tn^0e_2\right)\cdot\partial^{j+1}_tu^0dxdy\\
            &\leq C\sum_{0\leq s\leq j+1}\|\partial^s_tu^0\|^2_{H^{m+2-2s}_{xy}}\|\partial^{j+1-s}_t\omega^0\|^2_{H^{m+1-2(j+1-s)}_{xy}}+\|\partial^{j+1}_tn^0\|^2_{L^2_{xy}}+\|\partial^{j+1}_tu^0\|^2_{L^2_{xy}}.\notag\notag
         \end{align}
         The estimate for the vorticity term is given as follows:
         \begin{align}\label{omega Hmtj}
            &\|\partial^{j+1}_t\omega^0\|_{H^{m+1-2(j+1)}_{xy}}\notag\\
            &\leq\|\partial^{j}_t(u^0\cdot\nabla \omega^0)\|_{H^{m+1-2(j+1)}_{xy}}+\|\partial^{j}_t\partial_x n^0\|_{H^{m+1-2(j+1)}_{xy}}\notag\\
            &\leq \sum_{0\leq s\leq j}\|\partial^{s}_tu^0\|_{H^{m+1-2s}_{xy}}\|\partial^{j-s}_t\omega^0\|_{H^{m-2(j-s)}_{xy}}+\|\partial^{j}_tn^0\|_{H^{m-2j}_{xy}}\\
            &\leq C_{m+1}.\notag\notag
        \end{align}
         Therefore, for all integers $j$ satisfying $0\leq2j\leq m+2$, combine (\ref{u0 Ht}) and (\ref{omega Hmtj}), we shows that
         \begin{align}
            \|\partial^{j+1}_tu^0\|^2_{H^{m+2-2(j+1)}_{xy}}\leq C_{m+1}.
         \end{align}
         Then the conclusion for (\ref{nvu Hm+1tj}) is verified.\\
         \textbf{Step 5. Inductive conclusion.} Combining Step 3 and Step 4, we deduce that if $\mathcal{P}_m$ holds, then
         \begin{align*}
            \sup_{0\leq t\leq T}\left(\|n^0\|^2_{{H}^{m+1}}+\|v^0\|^2_{H^{m+1}_{xy}}+\|u^0\|^2_{H^{m+2}_{xy}}\right)+\int^T_0\|n^0\|^2_{H^{m+2}_{xy}}dt&\leq C_{m+1},\\
           \sup_{0\leq t\leq T}\sum_{0<2j\leq m+1}\|\partial^j_tn^0\|^2_{H^{m+1-2j}_{xy}}+\|\partial^j_tv^0\|^2_{H^{m+1-2j}_{xy}}&\leq C_{m+1},\notag\\
           \sup_{0\leq t\leq T}\sum_{0<2j\leq m+2}\|\partial^j_tu^0\|^2_{H^{m+2-2j}_{xy}}&\leq C_{m+1}.
        \end{align*}
        Consequently, $\mathcal{P}_{m+1}$ holds true. As Lemma \ref{nabla3 n0} confirms the case $\mathcal{P}_{3}$, it follows from the induction method that (\ref{nvu Hm}) and (\ref{nvu tj}) are valid on $[0,T]$ for every integer $m\geq4$.\\

        Finally, we give the proof of Theorem \ref{th2.1}.\\

        \noindent \textbf{Proof of Theorem \ref{th2.1}.}
         Using the foregoing conclusions from Lemma $\ref{n0 v0 u0 L2}$, Lemma $\ref{nabla3 n0}$ and Corollary \ref{cor5.1}, together with the contraction mapping argument and difference energy estimates, we establish Theorem \ref{th2.1}.\\

       \section{inner layer estimation}\label{5}
       In this part, we mainly explain the well-posedness of the solution of $(\ref{eq:vB02})$-$(\ref{eq:pB2})$. For the regularity of $(\ref{eq:vB02})$ and $(\ref{eq:vB12})$, we introduce the following auxiliary functions related to $f^B(t,x,z)$
        \begin{align}\label{f}
        \begin{cases}        \partial_tf^B+\overline{\partial_yu^{0}_2}f^B+\overline{u^{0}_1}\partial_xf^B+z\overline{\partial_yu^{0}_2}\partial_zf^B+\overline{n^0}f^B=\partial^2_zf^B+\rho\\
        f^B|_{z=0}=0,\\
        f^B|_{t=0}=0.
        \end{cases}
	    \end{align}
        Then the regularity result of the solution to the equations (\ref{f}) below holds. $(\mathcal{H}^{k,m,l},\|\cdot\|_{k,m,l})$ be the anisotropic Sobolev spaces defined as  (\ref{H space}).
       \begin{proposition}\label{regularity of f^B}
		{ Suppose $(n^0,v^0,u^0)$ be the solutions that satisfy the conditions in Theorem \ref{th2.1} on $[0,T]$, such that for $m_0\geq4$
        \begin{align}\label{rho}
        \partial^j_t{u^{0}_1},\partial^j_t{u^{0}_2}\in L^\infty(0,T;H^{m_0-j+3}_{xy}),\;\partial^j_t{n^{0}}\in L^\infty(0,T;H^{m_0-j+2}_{xy}),
        \end{align}
        and
        \begin{align}\label{rhotj}
        \partial^j_t\rho&\in L^2(0,T;\mathcal{H}^{k+m_0-j,m_0-j,0}),\;\partial_t\rho\in L^\infty(0,T;\mathcal{H}^{k,m_0-3,0}),\;\rho\in L^\infty(0,T;\mathcal{H}^{k,m_0-l,l-2}),\;l=2,3,4
        \end{align}
            for any $k\in N$ and $j=0,1,2$. Then the solution $f^B$ to the equations $(\ref{f})$ satisfies the following properties:
			\begin{equation}\label{fBt}
				\begin{split}{}
                  \partial^j_tf^B&\in L^\infty(0,T;\mathcal{H}^{k,m_0-j,0})\cap L^2(0,T; \mathcal{H}^{k,m_0-j,1}),\;j=0,1,2,\\
                  \partial_tf^B&\in L^\infty(0,T;\mathcal{H}^{k,m_0-2,1}),\partial_tf^B\in L^\infty(0,T;\mathcal{H}^{k,m_0-3,2}),
				\end{split}
			\end{equation}
            and
            \begin{align}\label{fBz}
                f^B&\in L^\infty(0,T;\mathcal{H}^{k,m_0-i,i}),\;i=1,2,3,4.
            \end{align}
		}
	\end{proposition}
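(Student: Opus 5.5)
We prove Proposition \ref{regularity of f^B} by weighted energy estimates on the linear problem (\ref{f}). The coefficients $a(t,x):=\overline{\partial_yu^0_2}$, $b(t,x):=\overline{u^0_1}$ and $c(t,x):=\overline{n^0}$ are traces at $y=0$. By (\ref{rho}) and the trace theorem (Lemma \ref{lem trace theorem}), $a,b,c$ and their $t$-derivatives lie in $L^\infty(0,T;H^{m_0+1-j}_x)\subset L^\infty_tW^{m_0-j,\infty}_x$. Existence then follows in the standard way: Galerkin approximation in $x$ (or a cutoff of the unbounded coefficient $z a$ to $\min(z,R)a$) gives approximate solutions, the uniform bounds below pass to the limit, and uniqueness follows from the $L^2$ estimate applied to the difference of two solutions. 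The homogeneous Dirichlet condition $f^B|_{z=0}=0$ makes every boundary term from $\partial_z^2$ vanish at the tangential/weighted level.

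\textbf{Step 1 (tangential weighted estimates, $j=0$).} Apply $\partial_x^\alpha$ with $|\alpha|\le m_0$ and test with $(1+z^{2k})\partial_x^\alpha f^B$.

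The diffusion term gives
\[
\|(1+z^{2k})^{1/2}\partial_z\partial_x^\alpha f^B\|^2
\]
plus a commutator $\int \partial_z(z^{2k})\,\partial_z\partial_x^\alpha f\,\partial_x^\alpha f$. By Young's inequality this is bounded by $\frac14$ of the dissipation plus $C\int z^{2k-2}|\partial_x^\alpha f|^2$, which is controlled by the weighted $L^2$ norm.

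The transport term $b\partial_x$ is handled by integration by parts in $x$, which produces only $\|\partial_x b\|_\infty$ times the norm. The key term is $z a\partial_z$. Integrating by parts in $z$ gives
\[
-\tfrac12\int a\,\partial_z\!\big(z(1+z^{2k})\big)|\partial_x^\alpha f|^2,
\]
and since $\partial_z(z+z^{2k+1})=1+(2k+1)z^{2k}\le (2k+1)(1+z^{2k})$, this is bounded by $C\|a\|_\infty$ times the same weighted norm. The boundary term vanishes because of the factor $z$ and the Dirichlet condition. This weight bookkeeping is exactly why the polynomial weight survives the unbounded normal transport.

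The commutators $\partial_x^\beta a\, z\partial_z\partial_x^{\alpha-\beta}f$ are the delicate part. Each carries an extra factor $z$, so I would bound it by
\[
\|\partial_x^\beta a\|_\infty\,\|(1+z^{2k+2})^{1/2}\partial_z\partial_x^{\alpha-\beta}f\|\,\|(1+z^{2k})^{1/2}\partial_x^\alpha f\|.
\]
This costs one weight power in $z$ at lower tangential order. The hypothesis $\partial_t^j\rho\in L^2\mathcal{H}^{k+m_0-j,m_0-j,0}$, with weight index growing with the tangential order, is designed to absorb this. I would therefore run the estimate with weight $k+m_0-|\alpha|$ at tangential level $|\alpha|$ and sum downward, closing by Gronwall. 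This loss of $z$-weight in the commutators is the main obstacle, and the step I would check most carefully. If it does not close directly, I would use the Hardy-type inequality $\|z^{s}g\|\le C\|z^{s+1}\partial_z g\|$ (valid since $g|_{z=0}=0$) to trade weight against $\partial_z$.

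The remaining terms $a f$, $c f$ and $\rho$ are immediate. The result is $f^B\in L^\infty\mathcal{H}^{k,m_0,0}\cap L^2\mathcal{H}^{k,m_0,1}$.

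\textbf{Step 2 (time derivatives, $j=1,2$).} Differentiate (\ref{f}) in $t$. The function $\partial_t^jf^B$ satisfies the same equation with source
\[
\partial_t^j\rho-\sum_{i\ge1}\binom{j}{i}\big(\partial_t^i a\,(1+z\partial_z)+\partial_t^i b\,\partial_x+\partial_t^i c\big)\partial_t^{j-i}f^B .
\]
This source is controlled by Step 1 at one lower tangential order; the $z\partial_z$ pieces again use the extra weight. The boundary and initial data are zero. The latter needs $\partial_tf^B|_{t=0}=\rho|_{t=0}$ to be compatible, which I take as implied by the stated data. Repeating Step 1 with $m_0-j$ tangential derivatives gives the first line of (\ref{fBt}).

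\textbf{Step 3 (normal regularity).} Normal derivatives come from the equation rather than from energy estimates:
\[
\partial_z^2 f^B=\partial_tf^B+af^B+b\partial_xf^B+za\partial_zf^B+cf^B-\rho .
\]
This gives $f^B\in L^\infty\mathcal{H}^{k,m_0-2,2}$ from $\partial_tf^B\in L^\infty\mathcal{H}^{k,m_0-2,0}$ and $\rho\in L^\infty\mathcal{H}^{k,m_0-2,0}$. The term $za\partial_zf$ needs $\partial_zf$ in weight $k+1$, which comes from interpolating between $f$ and $\partial_z^2f$ with the higher-weight bounds of Step 1.

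The index $i=1$ needs a separate argument because $\partial_zf$ is only obtained in $L^2_t$ so far. I would get it by testing the $\partial_x^\alpha$-equation with $(1+z^{2k})\partial_t\partial_x^\alpha f$, for $|\alpha|\le m_0-1$. This gives $\sup_t$ control of the weighted $\|\partial_z\partial_x^\alpha f\|$, with $z a\partial_z$ again handled by the weight bookkeeping of Step 1.

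Applying the same identity to the $t$-differentiated equation yields $\partial_tf^B\in L^\infty\mathcal{H}^{k,m_0-2,1}\cap L^\infty\mathcal{H}^{k,m_0-3,2}$. Differentiating the identity in $z$ and iterating gives $f^B\in L^\infty\mathcal{H}^{k,m_0-i,i}$ for $i=3,4$, using $\rho\in L^\infty\mathcal{H}^{k,m_0-l,l-2}$ for $l=3,4$ and the time-derivative bounds. Each $z$-derivative costs one tangential derivative, matching the indices in (\ref{fBt}) and (\ref{fBz}).
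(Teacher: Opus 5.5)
Your plan is essentially the paper's proof. The paper uses exactly the graded weights $\kappa_{\alpha,j}=k+m_0-j-\alpha$, giving lower tangential derivatives a larger polynomial weight, to absorb the extra $z$ in the commutators of $z\overline{\partial_yu^0_2}\partial_z$. Like you, it integrates the top-order transport term by parts in $z$, runs the $t$-differentiated energy estimates, and tests with the next time derivative $\partial_x^\alpha\partial_t^{j+1}f^B$ to get $\sup_t$ control of the weighted $\partial_z\partial_t^jf^B$. It then reads off the higher normal derivatives from the equation.

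One caveat, which the paper shares, concerns the initial data. You write that the initial data of $\partial_t^jf^B$ are zero, but in fact $\partial_tf^B(0)=\rho(0)$ while $\partial_tf^B|_{z=0}=0$. Consequently the $L^\infty_t$ bounds on $\partial_z\partial_tf^B$ and $\partial_t^2f^B$ require the compatibility condition $\rho(0,x,0)=0$, together with extra normal regularity of $\rho(0)$. Without it, already the pure heat equation gives $\|\partial_z\partial_tf^B(t)\|_{L^2}\sim t^{-1/4}$. These conditions are not implied by the stated hypotheses, so they should be assumed explicitly rather than ``taken as implied''.
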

        \noindent{\bf{Proof.}} Before proceeding with the weighted estimates, we clarify how the factor \(z\) in the normal transport term is treated. A direct rough estimate would apparently increase the polynomial weight and lead to an unclosed estimate involving a higher weighted norm of the same unknown. To avoid this difficulty, we adopt appropriate weight by retaining the distribution of the tangential derivatives in the Leibniz expansion.
        More precisely, for \(j=0,1,2\), we set
        \begin{align}\label{weighted-index-f}
            \kappa_{\alpha,j}
            :=
            k+m_0-j-\alpha,
            \qquad
            0\leq\alpha\leq m_0-j,
        \end{align}
        and use the stronger weighted quantity
        \begin{align}\label{graded-weighted-energy-f}
            \mathcal{E}_{k,j}(f^B):=
            \sum_{\alpha=0}^{m_0-j}
            \int_0^{+\infty}\int^\infty_{-\infty}
            \left(1+z^{2\kappa_{\alpha,j}}\right)
            \left|
            \partial_x^\alpha\partial_t^j f^B
            \right|^2
            \,dx\,dz.
        \end{align}
        Since \(\kappa_{\alpha,j}\geq k\), an estimate of
        \(\mathcal{E}_{k,j}(f^B)\) immediately implies the corresponding
        \(\mathcal{H}^{k,m_0-j,0}\)-estimate stated in the Proposition. Note  that
        \begin{align}\label{weighted-index-relation-f}
            \kappa_{\alpha-\beta,j}
            =
            \kappa_{\alpha,j}+\beta,
            \qquad
            1\leq\beta\leq\alpha,
        \end{align}
        and hence
        \begin{align}\label{weighted-shift-f}
            z^2\left(1+z^{2\kappa_{\alpha,j}}\right)
            \leq
            C\left(1+z^{2\kappa_{\alpha-\beta,j}}\right).
        \end{align}
        Thus, the factor \(z\) is controlled by the larger polynomial weight
        assigned to the lower-order tangential derivative, since  this only produces
        finite classes of weights.\\
        \textbf{Step 1. For $0\leq\alpha\leq m_0$.}\\
        Applying $\partial_x^\alpha$ to the equations of $(\ref{f})_1$, multiplying $(\ref{f})_1$ by $\left(1+z^{2\kappa_{\alpha,0}}\right)\partial_x^\alpha f^B$, we obtain
        \begin{align*}
			&\frac{1}{2}\frac{d}{dt}\mathcal{E}_{k,0}(f^B)+\sum_{\alpha=0}^{m_0}\|\left(1+z^{2\kappa_{\alpha,0}}\right)^\frac{1}{2}\partial_x^\alpha\partial_zf^B\|^2_{L^2_{xz}}\\
			&=-\sum_{\alpha=0}^{m_0}\int^{+\infty}_{0}\int^{+\infty}_{-\infty}2\kappa_{\alpha,0}z^{2\kappa_{\alpha,0}-1}\partial_x^\alpha\partial_z f^B\partial_x^\alpha f^Bdxdz\\
            &\quad-\sum_{\alpha=0}^{m_0}\int^{+\infty}_{0}\int^{+\infty}_{-\infty}\left(1+z^{2\kappa_{\alpha,0}}\right)\partial_x^\alpha(\overline{\partial_yu^0_2}f^B)\partial_x^\alpha f^Bdxdz\\
            &\quad-\sum_{\alpha=0}^{m_0}\int^{+\infty}_{0}\int^{+\infty}_{-\infty}\left(1+z^{2\kappa_{\alpha,0}}\right)\partial_x^\alpha(\overline{u^0_1}\partial_xf^B)\partial_x^\alpha f^Bdxdz\\
            &\quad-\sum_{\alpha=0}^{m_0}\int^{+\infty}_{0}\int^{+\infty}_{-\infty}\left(1+z^{2\kappa_{\alpha,0}}\right)\partial_x^\alpha(z\overline{\partial_yu^0_2}\partial_zf^B)\partial_x^\alpha f^Bdxdz\\
            &\quad-\sum_{\alpha=0}^{m_0}\int^{+\infty}_{0}\int^{+\infty}_{-\infty}\left(1+z^{2\kappa_{\alpha,0}}\right)\partial_x^\alpha(\overline{n^0}f^B)\partial_x^\alpha f^Bdxdz+\sum_{\alpha=0}^{m_0}\int^{+\infty}_{0}\int^{+\infty}_{-\infty}\left(1+z^{2\kappa_{\alpha,0}}\right)\partial_x^\alpha\rho\partial_x^\alpha f^Bdxdz\\
            &=:J_1+...+J_5+\sum_{\alpha=0}^{m_0}\int^{+\infty}_{0}\int^{+\infty}_{-\infty}\left(1+z^{2\kappa_{\alpha,0}}\right)\partial_x^\alpha\rho\partial_x^\alpha f^Bdxdz.
            \end{align*}
            Using (\ref{rho}), we assemble all estimates of \(J_1\) through \(J_5\) in the following form:
            \begin{align*}
                J_1&\leq C\sum_{\alpha=0}^{m_0}\|\left(1+z^{2\kappa_{\alpha,0}}\right)^\frac{1}{2}\partial_x^\alpha\partial_z f^B\|_{L^2_{xz}}\|\left(1+z^{2\kappa_{\alpha,0}}\right)^\frac{1}{2}\partial_x^\alpha f^B\|_{L^2_{xz}}\\
                &\leq\delta_1\sum_{\alpha=0}^{m_0}\|\left(1+z^{2\kappa_{\alpha,0}}\right)^\frac{1}{2}\partial_x^\alpha\partial_z f^B\|_{L^2_{xz}}^2+C\mathcal{E}_{k,0}(f^B).\\
                J_2+J_5&\leq \sum_{\alpha=0}^{m_0}\sum_{0\leq\beta\leq \alpha}C_{\alpha}^{\beta}\|\overline{\partial_x^\beta\partial_yu^0_2}\|_{L^\infty_{x}}\|\left(1+z^{2\kappa_{\alpha,0}}\right)^\frac{1}{2}\partial_x^{\alpha-\beta} f^B\|_{L^2_{xz}}\|\left(1+z^{2\kappa_{\alpha,0}}\right)^\frac{1}{2}\partial_x^{\alpha} f^B\|_{L^2_{xz}}\\
                &\quad+\sum_{\alpha=0}^{m_0}\sum_{0\leq\beta\leq\alpha}C_{\alpha}^{\beta}\|\overline{\partial_x^\beta n^0}\|_{L^\infty_{x}}\|\left(1+z^{2\kappa_{\alpha,0}}\right)^\frac{1}{2}\partial_x^{\alpha-\beta} f^B\|_{L^2_{xz}}\|\left(1+z^{2\kappa_{\alpha,0}}\right)^\frac{1}{2}\partial_x^\alpha f^B\|_{L^2_{xz}}\\
                &\leq C\mathcal{E}_{k,0}(f^B).\\                J_3&\leq\sum_{\alpha=0}^{m_0}\|\overline{\partial_{x}u^0_1}\|_{L^\infty_{x}}\|\left(1+z^{2\kappa_{\alpha,0}}\right)^\frac{1}{2}\partial_x^{\alpha}f^B\|^2_{L^2_{xz}}\\            &\quad+\sum_{\alpha=0}^{m_0}\sum_{0<\beta\leq\alpha}C_{\alpha}^{\beta}\|\overline{\partial_x^{\beta}u^0_1}\|_{L^\infty_{x}}\|\left(1+z^{2\kappa_{\alpha,0}}\right)^\frac{1}{2}\partial_x^{\alpha-\beta+1}f^B\|_{L^2_{xz}}\|\left(1+z^{2\kappa_{\alpha,0}}\right)^\frac{1}{2}\partial_x^{\alpha}f^B\|_{L^2_{xz}}\\
                &\leq C\mathcal{E}_{k,0}(f^B).\\
                J_4&\leq\frac{1}{2}\sum_{\alpha=0}^{m_0}\int_0^{+\infty}\int^\infty_{-\infty}\overline{\partial_yu_2^0}\,
                \partial_z\left[z\left(1+z^{2\kappa_{\alpha,0}}\right)\right]\left|\partial_x^\alpha f^B\right|^2\,dx\,dz\\
                &\quad+\sum_{\alpha=0}^{m_0}\sum_{0<\beta\leq\alpha}C_{\alpha}^{\beta}\left|\int_0^{+\infty}\int^\infty_{-\infty}\left(1+z^{2\kappa_{\alpha,0}}\right)
                z\overline{\partial_x^\beta\partial_yu_2^0}\,\partial_z\partial_x^{\alpha-\beta}f^B\,\partial_x^\alpha f^B\,dx\,dz
                \right|\\
                &\leq C\sum_{\alpha=0}^{m_0}\|\overline{\partial_{y}u^0_2}\|_{L^\infty_{x}}\|\left(1+z^{2\kappa_{\alpha,0}}\right)\partial_x^{\alpha}f^B\|^2_{L^2_{xz}}\\            &\quad+\sum_{\alpha=0}^{m_0}\sum_{0<\beta\leq\alpha}C_{\alpha}^{\beta}\|\overline{\partial_x^\beta\partial_yu^0_2}\|_{L^\infty_{x}}\|((1+z^{2\kappa_{\alpha-\beta,0}}))^\frac{1}{2}\partial_x^{\alpha-\beta}\partial_z f^B\|_{L^2_{xz}}\|\left(1+z^{2\kappa_{\alpha,0}}\right)^\frac{1}{2}\partial_x^\alpha f^B\|_{L^2_{xz}}\\
                &\leq\delta_1\sum_{\alpha=0}^{m_0}\sum_{0<\beta\leq\alpha}\|\left(1+z^{2\kappa_{\alpha-\beta,0}})\right)^\frac{1}{2}\partial_x^{\alpha-\beta}\partial_z f^B\|_{L^2_{xz}}^2+C\mathcal{E}_{k,0}(f^B).
            \end{align*}
        It then follows immediately that
            \begin{align*}
    			&\frac{d}{dt}\mathcal{E}_{k,0}(f^B)+2(1-C\delta_1)\sum_{\alpha=0}^{m_0}\|\left(1+z^{2\kappa_{\alpha,0}}\right)^\frac{1}{2}\partial_x^\alpha\partial_zf^B\|^2_{L^2_{xz}}\leq C\mathcal{E}_{k,0}(f^B)+C\sum_{\alpha=0}^{m_0}\|\left(1+z^{2\kappa_{\alpha,0}}\right)^\frac{1}{2}\partial_x^\alpha\rho\|^2_{L^2_{xz}}.
                \end{align*}
        Since the assumptions on \(\rho\) hold for every polynomial weight, for sufficiently small $\delta$, Gronwall's inequality yields
    	\begin{align}\label{mathcal{E}_{k,0}}
           \sup_{0\leq t\leq T}\mathcal{E}_{k,0}(f^B)+\sum_{\alpha=0}^{m_0}\int^t_0\|\left(1+z^{2\kappa_{\alpha,0}}\right)^\frac{1}{2}\partial^\alpha_x\partial_zf^B\|^2_{L^2_{xz}}\leq C.
        \end{align}
         Next we will also consider the derivative estimate of $f$ with respect to $z$.\\
        \textbf{Step 2. For $0\leq\alpha\leq m_0-1$.}\\
         Similar to the above procedure in deriving $J_1-J_3,J_5$, we apply $\partial_x^\alpha$ to {the equation $(\ref{f})_1$ and multiply by} $\left(1+z^{2\kappa_{\alpha,1}}\right)\partial_x^\alpha\partial_t f^B$ to have
		\begin{align*}
			&\frac{1}{2}\frac{d}{dt}\sum_{\alpha=0}^{m_0-1}\|\left(1+z^{2\kappa_{\alpha,1}}\right)^\frac{1}{2}\partial^\alpha_x\partial_zf^B\|^2_{L^2_{xz}}+\mathcal{E}_{k,1}(f^B)\\
            &=-\sum_{\alpha=0}^{m_0-1}\int^{+\infty}_{0}\int^{+\infty}_{-\infty}2\kappa_{\alpha,1}z^{2\kappa_{\alpha,1}-1}\partial_x^\alpha\partial_z f^B\partial_x^\alpha\partial_t f^Bdxdz\\
            &\quad-\sum_{\alpha=0}^{m_0-1}\int^{+\infty}_{0}\int^{+\infty}_{-\infty}\left(1+z^{2\kappa_{\alpha,1}}\right)\partial_x^\alpha(\overline{\partial_yu^0_2}f^B)\partial_x^\alpha\partial_t f^Bdxdz\\
            &\quad-\sum_{\alpha=0}^{m_0-1}\int^{+\infty}_{0}\int^{+\infty}_{-\infty}\left(1+z^{2\kappa_{\alpha,1}}\right)\partial_x^\alpha(\overline{u^0_1}\partial_xf^B)\partial_x^\alpha\partial_t f^Bdxdz\\
            &\quad-\sum_{\alpha=0}^{m_0-1}\int^{+\infty}_{0}\int^{+\infty}_{-\infty}\left(1+z^{2\kappa_{\alpha,1}}\right)\partial_x^\alpha(z\overline{\partial_yu^0_2}\partial_zf^B)\partial_x^\alpha\partial_t f^Bdxdz\\
            &\quad-\sum_{\alpha=0}^{m_0-1}\int^{+\infty}_{0}\int^{+\infty}_{-\infty}\left(1+z^{2\kappa_{\alpha,1}}\right)\partial_x^\alpha(\overline{n^0}f^B)\partial_x^\alpha\partial_t f^Bdxdz+\sum_{\alpha=0}^{m_0-1}\int^{+\infty}_{0}\int^{+\infty}_{-\infty}\left(1+z^{2\kappa_{\alpha,1}}\right)\partial_x^\alpha\rho\partial_x^\alpha\partial_t f^Bdxdz\\
            &\leq \frac{3}{4}\mathcal{E}_{k,1}(f^B)+C\sum_{\alpha=0}^{m_0-1}\|\left(1+z^{2\kappa_{\alpha,1}}\right)^\frac{1}{2}\partial_x^\alpha\partial_zf^B\|^2_{L^2_{xz}}+C\mathcal{E}_{k,0}(f^B)+C\sum_{\alpha=0}^{m_0-1}\|\left(1+z^{2\kappa_{\alpha,1}}\right)^\frac{1}{2}\partial_x^\alpha\rho\|^2_{L^2_{xz}}\\
            &\quad+\left|\sum_{\alpha=0}^{m_0-1}\int^{+\infty}_{0}\int^{+\infty}_{-\infty}\left(1+z^{2\kappa_{\alpha,1}}\right)\partial_x^\alpha(z\overline{\partial_yu^0_2}\partial_zf^B)\partial_x^\alpha\partial_t f^Bdxdz\right|.
	\end{align*}
    Indeed, since \(\kappa_{\alpha,1}+1=\kappa_{\alpha,0}\), we have
    \begin{align*}
        &\left|\sum_{\alpha=0}^{m_0-1}
        \int_0^{+\infty}\int^{+\infty}_{-\infty}
        \left(1+z^{2\kappa_{\alpha,1}}\right)
        z\partial_x^\alpha(\overline{\partial_yu_2^0}\partial_zf^B)\partial_x^\alpha\partial_t f^B
        \,dx\,dz
        \right|\leq
        \frac{1}{4}\mathcal{E}_{k,1}(f^B)+
        C
        \sum_{\alpha=0}^{m_0-1}\left\|
        \left(1+z^{2\kappa_{\alpha,0}}\right)^{\frac12}
        \partial_z\partial_x^\alpha f^B
        \right\|_{L_{xz}^2}^2.
    \end{align*}
    Combining the above estimates and using (\ref{mathcal{E}_{k,0}}), an application of Gronwall¡¯s inequality yields that
    \begin{align}\label{fz1k}
			&\sup_{0\leq t\leq T}\sum_{\alpha=0}^{m_0-1}\|\left(1+z^{2\kappa_{\alpha,1}}\right)^\frac{1}{2}\partial^\alpha_x\partial_zf^B\|^2_{L^2_{xz}}\leq C,
	\end{align}
        For the estimates of $\|\partial_t f^B\|_{k,m_0-1,0}$, applying $\partial_x^\alpha\partial_t$ to the equation $(\ref{f})_1$
        \begin{align}\label{eq:fBtt}           &\partial^\alpha_x\partial^2_tf^B+\partial^\alpha_x\partial_t(\overline{\partial_yu^{0}_2}f^B)+\partial^\alpha_x\partial_t(\overline{u^{0}_1}\partial_xf^B)+\partial^\alpha_x\partial_t(z\overline{\partial_yu^{0}_2}\partial_zf^B)+\partial^\alpha_x\partial_t(\overline{n^{0}}f^B)\notag\\
        &=\partial^\alpha_x\partial_t\partial^2_zf^B+\partial^\alpha_x\partial_t\rho,
        \end{align} and multiplying by $\left(1+z^{2\kappa_{\alpha,1}}\right)\partial_x^\alpha\partial_tf^B$, by substituting (\ref{weighted-shift-f}), H{\"o}lder¡¯s inequality and Young¡¯s inequality, we deduce that
        \begin{align*}
			&\frac{1}{2}\frac{d}{dt}\mathcal{E}_{k,1}(f^B)+\sum_{\alpha=0}^{m_0-1}\|\left(1+z^{2\kappa_{\alpha,1}}\right)^\frac{1}{2}\partial^\alpha_x\partial_z\partial_tf^B\|^2_{L^2_{xz}}\\
			&\leq \delta_2\sum_{\alpha=0}^{m_0-1}\|\left(1+z^{2\kappa_{\alpha,1}}\right)^\frac{1}{2}\partial^\alpha_x\partial_z\partial_tf^B\|^2_{L^2_{xz}}+C\left(\mathcal{E}_{k,1}(f^B)+\mathcal{E}_{k,0}(f^B)\right)\\
            &\quad+\delta_2\sum_{\alpha=0}^{m_0-1}\sum_{0<\beta\leq\alpha}\|\left(1+z^{2\kappa_{\alpha-\beta,1}}\right)^\frac{1}{2}\partial^{\alpha-\beta}_x\partial_z\partial_tf^B\|^2_{L^2_{xz}}\\
            &\quad+C\left(\sum_{\alpha=0}^{m_0-1}\|\left(1+z^{2\kappa_{\alpha,0}}\right)^\frac{1}{2}\partial^\alpha_x\partial_zf^B\|^2_{L^2_{xz}}+\sum_{\alpha=0}^{m_0-1}\|\left(1+z^{2\kappa_{\alpha,1}}\right)^\frac{1}{2}\partial_x^\alpha\partial_t\rho\|^2_{L^2_{xz}}\right).
	\end{align*}
        Substituting (\ref{rho})-(\ref{rhotj}) and (\ref{mathcal{E}_{k,0}}), for sufficiently small $\delta_2$, we apply Gronwall¡¯s inequality to obtain
        \begin{align}\label{mathcal{E}_{k,1}}
			&\sup_{0\leq t\leq T}\mathcal{E}_{k,1}(f^B)+\int^T_0\sum_{\alpha=0}^{m_0-1}\|\left(1+z^{2\kappa_{\alpha,1}}\right)^\frac{1}{2}\partial^\alpha_x\partial_z\partial_tf^B\|^2_{L^2_{xz}}\leq C.
	\end{align}
    \textbf{Step 3. For $0\leq\alpha\leq m_0-2$.}\\
    We skip the Leibniz expansion and embedding procedures at this step, as they are essentially the same as in Step 1 and Step 2, and present the estimates directly.
    Taking the inner product of equation (\ref{eq:fBtt}) with $\left(1+z^{2\kappa_{\alpha,2}}\right)\partial_x^\alpha\partial^2_t f^B$ and integrating by parts, we obtain
         \begin{align*}
			&\frac{1}{2}\frac{d}{dt}\sum_{\alpha=0}^{m_0-2}\|\left(1+z^{2\kappa_{\alpha,2}}\right)^\frac{1}{2}\partial^\alpha_x\partial_z\partial_tf^B\|^2_{L^2_{xz}}+\mathcal{E}_{k,2}(f^B)\\
			&\leq \frac{1}{2}\mathcal{E}_{k,2}(f^B)+C\left(\sum_{\alpha=0}^{m_0-2}\|\left(1+z^{2\kappa_{\alpha,2}}\right)^\frac{1}{2}\partial^\alpha_x\partial_z\partial_tf^B\|^2_{L^2_{xz}}+\sum_{\alpha=0}^{m_0-2}\|\left(1+z^{2\kappa_{\alpha,1}}\right)^\frac{1}{2}\partial^\alpha_x\partial_z\partial_tf^B\|^2_{L^2_{xz}}\right)\\
            &\quad+C\left(\sum_{\alpha=0}^{m_0-1}\|\left(1+z^{2\kappa_{\alpha,2}}\right)^\frac{1}{2}\partial^\alpha_xf^B\|^2_{L^2_{xz}}+\sum_{\alpha=0}^{m_0-2}\|\left(1+z^{2\kappa_{\alpha,1}}\right)^\frac{1}{2}\partial^\alpha_x\partial_zf^B\|^2_{L^2_{xz}}\right)\\
            &\quad+C\left(\mathcal{E}_{k,1}(f^B)+\sum_{\alpha=0}^{m_0-2}\|\left(1+z^{2\kappa_{\alpha,2}}\right)^\frac{1}{2}\partial_x^\alpha\partial_t\rho\|^2_{L^2_{xz}}\right),
	      \end{align*}
        due to \(\kappa_{\alpha,2}+1=\kappa_{\alpha,1}\). Combining (\ref{mathcal{E}_{k,0}}), (\ref{fz1k}) and (\ref{mathcal{E}_{k,1}}), an application of Gronwall¡¯s inequality yields that
        \begin{align}\label{fzt1}
			&\sup_{0\leq t\leq T}\sum_{\alpha=0}^{m_0-2}\|\left(1+z^{2\kappa_{\alpha,2}}\right)^\frac{1}{2}\partial^\alpha_x\partial_z\partial_tf^B\|^2_{L^2_{xz}}+\int^T_0\mathcal{E}_{k,2}(f^B)\leq C,
	      \end{align}
    To estimate $\|\partial^2_t f^B\|_{k,m_0-2,0}$, making further derivation by $\partial^\alpha_x\partial^2_t$ for the equation $(\ref{f})_1$, which then can be expressed as follows
    \begin{align}\label{ft3}     &\partial^\alpha_x\partial^3_tf^B+\partial^\alpha_x\partial^2_t(\overline{\partial_yu^{0}_2}f^B)+\partial^\alpha_x\partial^2_t(\overline{u^{0}_1}\partial_xf^B)+\partial^\alpha_x\partial^2_t(z\overline{\partial_yu^{0}_2}\partial_zf^B)+\partial^\alpha_x\partial^2_t(\overline{n^{0}}f^B)\\
    &=\partial^\alpha_x\partial^2_t\partial^2_zf^B+\partial^\alpha_x\partial^2_t\rho, \notag\notag
    \end{align}
    multiply (\ref{ft3}) by $\left(1+z^{2\kappa_{\alpha,2}}\right)\partial_x^\alpha\partial^2_t f^B$, it then yields
         \begin{align*}
			&\frac{1}{2}\frac{d}{dt}\mathcal{E}_{k,2}(f^B)+\sum_{\alpha=0}^{m_0-2}\|\left(1+z^{2\kappa_{\alpha,2}}\right)^\frac{1}{2}\partial^\alpha_x\partial_z\partial^2_tf^B\|^2_{L^2_{xz}}\\
			&\leq \delta_3\sum_{\alpha=0}^{m_0-2}\|\left(1+z^{2\kappa_{\alpha,2}}\right)^\frac{1}{2}\partial^\alpha_x\partial_z\partial^2_tf^B\|^2_{L^2_{xz}}+C\left(\mathcal{E}_{k,2}(f^B)+\mathcal{E}_{k,1}(f^B)+\sum_{\alpha=0}^{m_0-1}\|\left(1+z^{2\kappa_{\alpha,2}}\right)^\frac{1}{2}\partial^\alpha_xf^B\|^2_{L^2_{xz}}\right)\\
            &\quad+\delta_3\sum_{\alpha=0}^{m_0-2}\sum_{0<\beta\leq\alpha}\|\left(1+z^{2\kappa_{\alpha-\beta,2}}\right)^\frac{1}{2}\partial^{\alpha-\beta}_x\partial_z\partial^2_tf^B\|^2_{L^2_{xz}}\\
            &\quad+C\sum_{\alpha=0}^{m_0-2}\Big{(}\|\left(1+z^{2\kappa_{\alpha,1}}\right)^\frac{1}{2}\partial^\alpha_x\partial_z\partial_tf^B\|^2_{L^2_{xz}}+\|\left(1+z^{2\kappa_{\alpha,1}}\right)^\frac{1}{2}\partial^\alpha_x\partial_zf^B\|^2_{L^2_{xz}}\Big{)}\\
            &\quad+C\sum_{\alpha=0}^{m_0-2}\|\left(1+z^{2\kappa_{\alpha,2}}\right)^\frac{1}{2}\partial^\alpha_x\partial^2_t\rho\|^2_{L^2_{xz}}.
	      \end{align*}
        Substituting $(\ref{mathcal{E}_{k,0}})$, $(\ref{fz1k})$, $(\ref{mathcal{E}_{k,1}})$ and $(\ref{fzt1})$, for sufficiently small $\delta_3$, we apply Gronwall¡¯s inequality to deduce that
        \begin{align}\label{mathcal{E}_{k,2}}
			&\sup_{0\leq t\leq T}\mathcal{E}_{k,2}(f^B)+\int^T_0\sum_{\alpha=0}^{m_0-2}\|\left(1+z^{2\kappa_{\alpha,2}}\right)^\frac{1}{2}\partial^\alpha_x\partial_z\partial^2_tf^B\|^2_{L^2_{xz}}\leq C.
	      \end{align}
          Combining the previous high-order weighted estimates $(\ref{mathcal{E}_{k,0}})$, $(\ref{fz1k})$, $(\ref{mathcal{E}_{k,1}})$, $(\ref{fzt1})$ and $(\ref{mathcal{E}_{k,2}})$, we obtain the following results for $\kappa_{\alpha,j}\ge k$, $j=0,1,2$.
        \begin{align}\label{fz0}            \|f^B\|_{L^\infty(0,T;\mathcal{H}^{k,m_0,0})}^2+\|\partial_zf^B\|_{L^2(0,T;\mathcal{H}^{k,m_0,0})}^2&\leq C,\notag\\
       \|\partial_zf^B\|^2_{L^\infty(0,T;\mathcal{H}^{k,m_0-1,0})}&\leq C,\notag\\
			\|\partial_t f^B\|^2_{L^\infty(0,T;\mathcal{H}^{k,m_0-1,0})}+\|\partial_t \partial_zf^B\|^2_{L^2(0,T;\mathcal{H}^{k,m_0-1,0})}&\leq C,\\
			\|\partial_t \partial_zf^B\|^2_{L^\infty(0,T;\mathcal{H}^{k,m_0-2,0})}+\|\partial^2_t f^B\|^2_{L^2(0,T;\mathcal{H}^{k,m_0-2,0})}&\leq C,\notag\\
			\|\partial^2_tf^B\|^2_{L^\infty(0,T;\mathcal{H}^{k,m_0-2,0})}+\|\partial_z\partial^2_t f^B\|^2_{L^2(0,T;\mathcal{H}^{k,m_0-2,0})}&\leq C.\notag\notag
	      \end{align}
        By $(\ref{fz1k})$, $(\ref{fzt1})$ and (\ref{fz0}), further application for the equation $(\ref{f})$ and $(\ref{normal defined})$ shows that
    \begin{align}\label{fz2t}
        \|\partial_t f^B\|_{k,m_0-3,2}&\leq C\left(\|\partial^2_tf^B\|_{k,m_0-3,0}+\|\partial_tf^B\|_{k,m_0-2,0}+\|\partial_t f^B\|_{k+1,m_0-3,1}\right)\notag\\
        &\quad+C\left(\|f^B\|_{k,m_0-2,0}+\|f^B\|_{k+1,m_0-3,1}+\|\partial_t \rho\|_{k,m_0-3,0}\right)\\
        &\leq C.
        \notag\notag
    \end{align}
    From the equation $(\ref{f})$, together with $(\ref{mathcal{E}_{k,0}})$, $(\ref{fz1k})$, $(\ref{mathcal{E}_{k,1}})$, $(\ref{fzt1})$, $(\ref{mathcal{E}_{k,2}})$ and the assumption on $\rho$ in $(\ref{rhotj})$, we further derive for $k\in N$ that
    \begin{equation}\label{fBz234}
		\begin{split}{}
			\|f^B\|_{k,m_0-i,i}&\leq C\left(\|\partial_t f^B\|_{k,m_0-i,i-2}+\|f^B\|_{k,m_0-i+1,i-2}+\|f^B\|_{k+1,m_0-i,i-1}+\|\rho\|_{k,m_0-i,i-2}\right)\leq C,
		\end{split}
	\end{equation}
    where $i=2,3,4$.
       Combining $(\ref{fz0})$-$(\ref{fz2t})$, we get the desired estimates and complete the proof.\\

       For the regularity of $(\ref{eq:vB11})$, $(\ref{eq:uB11})$, $(\ref{eq:vB21})$ and $(\ref{eq:uB21})$, we introduce the following system related to $g^B(t,x,z)$
        \begin{align}\label{g}
        \begin{cases}        \partial_tg^B+\partial_x(\overline{u^{0}_1}g^B)+z\overline{\partial_yu^{0}_2}\partial_zg^B=\partial^2_zg^B+\varrho\\
        \partial_zg^B|_{z=0}=\mu(t,x),\\
        g^B|_{t=0}=0.
        \end{cases}
	    \end{align}
        Then the regularity result of the solution to the equations (\ref{g}) below holds.
       \begin{proposition}\label{regularity of g}
		{ Suppose $(n^0,v^0,u^0)$ be the solutions that satisfy the conditions in Theorem \ref{th2.1} on $[0,T]$ such that for $m_0\geq4$
        \begin{align}\label{varrho}
        \partial^j_t{u^{0}_1}, \partial^j_t{u^{0}_2}\in L^\infty(0,T;H^{m_0-j+3}_{xy}),\partial^j_t\mu\in L^2(0,T;H^{m_0-j}_x),
        \end{align}
       for any $k\in N$ and $\varrho$ satisfies
        \begin{align}\label{varrho t}
            \partial^j_t\varrho&\in L^2(0,T;\mathcal{H}^{k+m_0-j,m_0-j,0}),\;\partial_t\varrho\in L^\infty(0,T;\mathcal{H}^{k,m_0-3,0}),\;\varrho\in L^\infty(0,T;\mathcal{H}^{k,m_0-l,l-2}),\;l=2,3,4
        \end{align}
           for $j=0,1,2$. Then the solution $g^B$ to the equations $(\ref{g})$ satisfies the following properties:
			\begin{equation}\label{gBt}
				\begin{split}{}
                  \partial^j_tg^B&\in L^\infty(0,T;\mathcal{H}^{k,m_0-j,0})\cap L^2(0,T;\mathcal{H}^{k,m_0-j,1}),\;j=0,1,2,\\
                  \partial_tg^B&\in L^\infty(0,T; \mathcal{H}^{k,m_0-2,1}),\;\partial_tg^B\in L^\infty(0,T; \mathcal{H}^{k,m_0-3,2}),
				\end{split}
			\end{equation}
            and
            \begin{align}\label{gBz}
               g^B\in L^\infty(0,T;\mathcal{H}^{k,m_0-i,i}),\;i=1,2,3,4.
            \end{align}

		}
	\end{proposition}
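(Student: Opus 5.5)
The plan is to reduce Proposition \ref{regularity of g} to the same graded weighted energy scheme used for Proposition \ref{regularity of f^B}. The one new difficulty is the inhomogeneous Neumann datum $\partial_zg^B|_{z=0}=\mu$, which I will remove first.

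\textbf{Step 0: homogenization.} Fix a cutoff $\chi\in C_c^\infty([0,\infty))$ with $\chi(0)=0$, $\chi'(0)=1$, for instance $\chi(z)=z e^{-z}$ or $z\phi(z)$ with $\phi$ compactly supported and $\phi\equiv 1$ near $0$. Set $G:=g^B-\chi(z)\mu(t,x)$. Then $\partial_zG|_{z=0}=0$ and $G|_{t=0}=-\chi\mu(0)$; this vanishes provided $\mu(0,\cdot)=\overline{\partial_yv_1^{I,0}}(0)=0$ or its analogue, which is exactly the compatibility $(M_2)$--$(M_3)$. The function $G$ solves
\[
\partial_tG+\partial_x(\overline{u^0_1}G)+z\overline{\partial_yu^0_2}\partial_zG=\partial_z^2G+\tilde\varrho,
\]
with
\[
\tilde\varrho=\varrho-\chi\partial_t\mu-\chi\partial_x(\overline{u^0_1}\mu)-z\chi'\overline{\partial_yu^0_2}\mu+\chi''\mu .
\]
Since $\chi$ is compactly supported (or exponentially decaying), every polynomial weight is harmless. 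Hence $\partial_t^j\tilde\varrho$ inherits the regularity \eqref{varrho t} from \eqref{varrho}, using the trace bounds $\|\overline{\partial_x^\beta u^0}\|_{L^\infty_x}\le C\|u^0\|_{H^{m_0+3}_{xy}}$.

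There is a derivative-count issue here: $\partial_t^{j}\tilde\varrho$ contains $\partial_t^{j+1}\mu$. I would handle this by requiring $\partial_t^j\mu\in L^2_TH^{m_0-j}_x$ up to $j=3$. Alternatively, one can avoid the lifting in the top-order time estimate and treat the boundary term directly, as described below. I expect this bookkeeping, and not the interior estimates, to be the main obstacle.

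\textbf{Step 1: weighted tangential estimates.} I use the same indices $\kappa_{\alpha,j}=k+m_0-j-\alpha$ and the energies $\mathcal E_{k,j}$ from \eqref{graded-weighted-energy-f}. Apply $\partial_x^\alpha\partial_t^j$ and test with $(1+z^{2\kappa_{\alpha,j}})\partial_x^\alpha\partial_t^jG$.

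The term $\partial_x(\overline{u^0_1}G)$ is handled as $J_2$ and $J_3$ were. The top-order piece $\overline{u^0_1}\partial_x^{\alpha+1}G$ integrates by parts in $x$ and yields $\tfrac12\overline{\partial_xu^0_1}|\partial_x^\alpha G|^2$. The normal transport term is handled as $J_4$: the highest-order part is integrated by parts in $z$, producing $\partial_z[z(1+z^{2\kappa})]$, which is bounded by $C(1+z^{2\kappa})$. The commutator terms with $\beta\ge1$ carry a factor $z$, which is absorbed through \eqref{weighted-shift-f} into the larger weight of $\partial_x^{\alpha-\beta}\partial_zG$ already present in the dissipation. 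Because $\partial_zG|_{z=0}=0$, no boundary term appears from $\partial_z^2$. The derivative of the weight gives a term $2\kappa z^{2\kappa-1}\partial_zG\,G$, which Young's inequality absorbs. Gronwall's inequality then bounds $\mathcal E_{k,0}$, $\mathcal E_{k,1}$ and $\mathcal E_{k,2}$ together with their $L^2_T$ dissipation, in the cascade of Steps 1--3 of Proposition \ref{regularity of f^B}. The intermediate quantities $\sup_t\|\partial_x^\alpha\partial_zG\|$ and $\sup_t\|\partial_x^\alpha\partial_z\partial_tG\|$ come from testing with $\partial_t G$ and $\partial_t^2G$, using $\kappa_{\alpha,j+1}+1=\kappa_{\alpha,j}$ exactly as before.

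If a lifting of $\partial_t^2$ is too costly, then at $j=2$ I work with $g^B$ directly. The boundary term becomes $-\int\partial_x^\alpha\partial_t^2\mu\,\partial_x^\alpha\partial_t^2g^B|_{z=0}\,dx$. I bound it by the trace inequality of Lemma \ref{lem trace theorem} (in $z$) as $\|\partial_t^2\mu\|_{H^{m_0-2}_x}\|\partial_t^2g\|^{1/2}\|\partial_z\partial_t^2g\|^{1/2}$, absorb it into the dissipation, and use $\partial_t^2\mu\in L^2_TH^{m_0-2}_x$.

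\textbf{Step 2: normal regularity and conclusion.} Write the equation as $\partial_z^2G=\partial_tG+\partial_x(\overline{u_1^0}G)+z\overline{\partial_yu_2^0}\partial_zG-\tilde\varrho$ and iterate $\partial_z^{i-2}$, as in \eqref{fz2t}--\eqref{fBz234}. The factor $z$ costs one weight and one $z$-derivative at a lower $x$-order, which the $\mathcal H^{k+1,\cdot,\cdot}$ bounds available for every $k$ absorb. This gives $G\in L^\infty_T\mathcal H^{k,m_0-i,i}$ for $i\le4$ and $\partial_tG\in L^\infty_T\mathcal H^{k,m_0-3,2}$. Finally return to $g^B=G+\chi\mu$; the lift lies in every weighted space at the stated $x$-regularity, which yields \eqref{gBt}--\eqref{gBz}.

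Uniqueness follows from the $j=0$ energy estimate applied to the difference of two solutions, with $\mu=\varrho=0$.
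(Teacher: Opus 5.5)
There is a genuine gap in Step~0, and it is not limited to the top time level. A fixed-profile lift $G=g^B-\chi(z)\mu$ moves the Neumann datum into the interior source, where it is measured with no gain. At level $j$ and tangential order $\alpha\le m_0-j$, the source $\partial_t^j\tilde\varrho$ contains both $\chi\,\partial_t^{j+1}\mu$ and $\chi\,\partial_x\partial_t^{j}(\overline{u^0_1}\mu)$. You would therefore need
\begin{itemize}
\item $\partial_t^{j+1}\mu\in L^2_TH^{m_0-j}_x$, and
\item $\partial_t^{j}\mu\in L^2_TH^{m_0-j+1}_x$.
\end{itemize}
Already at $j=0$ this means $\partial_t\mu\in L^2_TH^{m_0}_x$ and $\mu\in L^2_TH^{m_0+1}_x$. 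But \eqref{varrho} gives only $\partial_t\mu\in L^2_TH^{m_0-1}_x$ and $\mu\in L^2_TH^{m_0}_x$. So your claim that $\partial_t^j\tilde\varrho$ inherits \eqref{varrho t} from \eqref{varrho} is false at every level.

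Your proposed repair, assuming time derivatives of $\mu$ up to $j=3$ with index $m_0-j$, is still one tangential derivative short at each level. It also changes the statement. The same missing time derivative is what you would need to return from $G$ to $g^B=G+\chi\mu$ in $L^\infty_T$. Needing $\mu(0,\cdot)=0$ is not the problem. It is a corner compatibility condition implicitly required by any $L^\infty_T$ bound on $\partial_tg^B$; the paper also uses it tacitly when it starts Gronwall for $\mathcal{E}_{k,1}(g^B)$. It holds in all the applications.

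The fix is your $j=2$ fallback, applied at \emph{every} level. This makes the lift unnecessary and is exactly the paper's proof. Keep $g^B$ itself. Each boundary integral produced by $\partial_z^2$ then has the form $\int_{\mathbb R}\partial_x^\alpha\partial_t^{i}\mu\,\partial_x^\alpha\partial_t^{j}g^B|_{z=0}\,dx$, where the $\partial_t^i$-equation is tested with $\partial_t^jg^B$ and $j\in\{i,i+1\}$.
\begin{itemize}
\item Bound this integral, via Lemma~\ref{lem trace theorem} in $z$, by $\|\partial_t^i\mu\|_{H^{m_0-i}_x}\|\partial_x^\alpha\partial_t^jg^B\|^{1/2}_{L^2_{xz}}\|\partial_z\partial_x^\alpha\partial_t^jg^B\|^{1/2}_{L^2_{xz}}$.
\item Use Young's inequality to absorb the last factor into the dissipation of the $\partial_t^j$-equation tested with $\partial_t^jg^B$.
\end{itemize}
This is why, in Steps 2 and 3, the paper adds the estimate obtained by testing with $\partial_t^{i+1}g^B$ (which yields $\sup_t\|\partial_z\partial_t^ig^B\|$) to the $\partial_t^{i+1}$-level energy estimate before applying Gronwall. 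Only $\|\partial_t^i\mu\|^2_{H^{m_0-i}_x}\in L^1(0,T)$ enters, which is exactly \eqref{varrho}. The rest of your plan agrees with the paper: the graded weights $\kappa_{\alpha,j}$, the $J_4$-type treatment of $z\overline{\partial_yu^0_2}\partial_z$ through the weight shift, and the recovery of normal derivatives from the equation.
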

    \noindent{\bf{Proof.}} In the present proof, we employ the same weighted convention as in the proof of Proposition \ref{regularity of f^B}. In particular, the terms containing a factor \(z\) are controlled by assigning a larger polynomial weight to a lower-order tangential derivative. Therefore, no uncontrolled higher weighted norm of \(g^B\) appears.\\
    \textbf{Step 1. }For $0\leq\alpha\leq m_0$.\\
    {Applying $\partial_x^\alpha$ to the equations  $(\ref{g})_1$, multiplying by} $(1+z^{2\kappa_{\alpha,0}})\partial_x^\alpha g^B$ and using the Leibniz formula, we obtain
	\begin{align*}
			&\frac{1}{2}\frac{d}{dt}\mathcal{E}_{k,0}(g^B)+\sum_{\alpha=0}^{m_0}\|\left(1+z^{2\kappa_{\alpha,0}}\right)^\frac{1}{2}\partial_x^\alpha\partial_zg^B\|^2_{L^2_{xz}}\notag\\
			&=-\sum_{\alpha=0}^{m_0}\int^{+\infty}_{0}\int^{+\infty}_{-\infty}2\kappa_{\alpha,0}z^{2\kappa_{\alpha,0}-1}\partial_x^\alpha\partial_z g^B\partial_x^\alpha g^Bdxdz-\sum_{\alpha=0}^{m_0}\int^{+\infty}_{0}\int^{+\infty}_{-\infty}(1+z^{2\kappa_{\alpha,0}})\partial_x^\alpha(\partial_x\overline{u^0_1}g^B)\partial_x^\alpha g^Bdxdz\notag\\
            &\quad-\sum_{\alpha=0}^{m_0}\int^{+\infty}_{0}\int^{+\infty}_{-\infty}(1+z^{2\kappa_{\alpha,0}})\partial_x^\alpha(\overline{u^0_1}\partial_xg^{B})\partial_x^\alpha g^Bdxdz\notag\\
            &\quad-\sum_{\alpha=0}^{m_0}\int^{+\infty}_{0}\int^{+\infty}_{-\infty}(1+z^{2\kappa_{\alpha,0}})\partial_x^\alpha(z\overline{\partial_yu^0_2}\partial_zg^B)\partial_x^\alpha g^Bdxdz\\
			&\quad+\sum_{\alpha=0}^{m_0}\int^{+\infty}_{0}\int^{+\infty}_{-\infty}(1+z^{2\kappa_{\alpha,0}})\partial_x^\alpha\varrho\partial_x^\alpha g^Bdxdz+\sum_{\alpha=0}^{m_0}\int^{+\infty}_{-\infty}\partial_x^\alpha\partial_zg^B(t,x,0)\partial_x^\alpha g^B(t,x,0)dx\notag\\
            &=K_1+...+K_5+\sum_{\alpha=0}^{m_0}\int^{+\infty}_{-\infty}\partial_x^\alpha\partial_zg^B(t,x,0)\partial_x^\alpha g^B(t,x,0)dx.\notag\notag
	\end{align*}
    We collect the estimates for \(K_1\)-\(K_4\) as follows:
    \begin{align*}
                K_1&\leq\epsilon_1\sum_{\alpha=0}^{m_0}\|\left(1+z^{2\kappa_{\alpha,0}}\right)^\frac{1}{2}\partial_x^\alpha\partial_z g^B\|_{L^2_{xz}}^2+C\mathcal{E}_{k,0}(g^B),\\
                K_2&\leq \sum_{\alpha=0}^{m_0}\sum_{0\leq\beta\leq\alpha}C_{\alpha}^{\beta}\|\overline{\partial_x^{\beta+1} u^0_1}\|_{L^\infty_{x}}\|\left(1+z^{2\kappa_{\alpha,0}}\right)^\frac{1}{2}\partial_x^{\alpha-\beta} g^B\|_{L^2_{xz}}\|\left(1+z^{2\kappa_{\alpha,0}}\right)^\frac{1}{2}\partial_x^\alpha g^B\|_{L^2_{xz}}\leq C\mathcal{E}_{k,0}(g^B),\\                K_3&\leq\sum_{\alpha=0}^{m_0}\|\overline{\partial_{x}u^0_1}\|_{L^\infty_{x}}\|\left(1+z^{2\kappa_{\alpha,0}}\right)^\frac{1}{2}\partial_x^{\alpha}g^B\|^2_{L^2_{xz}}\\            &\quad+\sum_{\alpha=0}^{m_0}\sum_{0<\beta\leq\alpha}C_{\alpha}^{\beta}\|\overline{\partial_x^{\beta}u^0_1}\|_{L^\infty_{x}}\|\left(1+z^{2\kappa_{\alpha,0}}\right)^\frac{1}{2}\partial_x^{\alpha-\beta+1}g^B\|_{L^2_{xz}}\|\left(1+z^{2\kappa_{\alpha,0}}\right)^\frac{1}{2}\partial_x^{\alpha}g^B\|_{L^2_{xz}}\\
                &\leq C\mathcal{E}_{k,0}(g^B),\\
                K_4&\leq\frac{1}{2}\sum_{\alpha=0}^{m_0}\int_0^{+\infty}\int^\infty_{-\infty}\overline{\partial_yu_2^0}\,
                \partial_z\left[z\left(1+z^{2\kappa_{\alpha,0}}\right)\right]\left|\partial_x^\alpha g^B\right|^2\,dx\,dz\\
                &\quad+\sum_{\alpha=0}^{m_0}\sum_{0<\beta\leq\alpha}C_{\alpha}^{\beta}\left|\int_0^{+\infty}\int^\infty_{-\infty}\left(1+z^{2\kappa_{\alpha,0}}\right)
                z\overline{\partial_x^\beta\partial_yu_2^0}\,\partial_z\partial_x^{\alpha-\beta}g^B\,\partial_x^\alpha g^B\,dx\,dz
                \right|\\
                &\leq C\sum_{\alpha=0}^{m_0}\|\overline{\partial_{y}u^0_2}\|_{L^\infty_{x}}\|\left(1+z^{2\kappa_{\alpha,0}}\right)\partial_x^{\alpha}g^B\|^2_{L^2_{xz}}\\            &\quad+\sum_{\alpha=0}^{m_0}\sum_{0<\beta\leq\alpha}C_{\alpha}^{\beta}\|\overline{\partial_x^\beta\partial_yu^0_2}\|_{L^\infty_{x}}\|((1+z^{2\kappa_{\alpha-\beta,0}}))^\frac{1}{2}\partial_x^{\alpha-\beta}\partial_z g^B\|_{L^2_{xz}}\|\left(1+z^{2\kappa_{\alpha,0}}\right)^\frac{1}{2}\partial_x^\alpha g^B\|_{L^2_{xz}}\\
                &\leq\epsilon_1\sum_{\alpha=0}^{m_0}\sum_{0<\beta\leq\alpha}\|\left(1+z^{2\kappa_{\alpha-\beta,0}})\right)^\frac{1}{2}\partial_x^{\alpha-\beta}\partial_z g^B\|_{L^2_{xz}}^2+C\mathcal{E}_{k,0}(g^B).
            \end{align*}
            Since the assumptions on \(\varrho\) hold for every polynomial weight, we have
            \begin{align*}
                K_5\leq C\sum_{\alpha=0}^{m_0}\|\left(1+z^{2\kappa_{\alpha,0}}\right)^\frac{1}{2}\partial_x^\alpha\varrho\|_{L^2_{xz}}^2+C\mathcal{E}_{k,0}(g^B).
            \end{align*}
        Applying the boundary condition $\partial_x^\alpha\partial_zg^B(t,x,0)=\partial_x^\alpha\mu$ and the trace theorem, we obtain the following estimate:
        \begin{equation}\label{m0 boundary}
				\begin{split}{}
				&\sum_{\alpha=0}^{m_0}\int^{+\infty}_{-\infty}\partial_x^\alpha\partial_zg^B(t,x,0)\partial_x^\alpha g^B(t,x,0)dx\\
                &\leq\sum_{\alpha=0}^{m_0}\int^{+\infty}_{0}\int^{+\infty}_{-\infty}|\partial_x^\alpha\mu|\|\left(1+z^{2\kappa_{\alpha,0}}\right)^\frac{1}{2}\partial_x^\alpha g^B(t,x,z)\|^\frac{1}{2}_{L^2_{xz}}\|\left(1+z^{2\kappa_{\alpha,0}}\right)^\frac{1}{2}\partial_x^\alpha\partial_zg^B(t,x,z)\|^\frac{1}{2}_{L^2_{xz}}dxdz\\
                &\leq \epsilon_1\sum_{\alpha=0}^{m_0}\|\left(1+z^{2\kappa_{\alpha,0}}\right)^\frac{1}{2}\partial_x^\alpha\partial_zg^B\|^2_{L^2_{xz}}+C\mathcal{E}_{k,0}(g^B)+C\|\mu\|^2_{H^{m_0}_{x}}.
                \end{split}
			\end{equation}
        Combining $K_1$-$K_5$ and $(\ref{m0 boundary})$, for sufficiently small $\epsilon_1$ and applying Gronwall's inequality, we deduce that
        \begin{align}\label{mathcal{E}_{k,0}(gB)}
			\sup_{0\leq t\leq T}\mathcal{E}_{k,0}(g^B)+\int^T_0\sum_{\alpha=0}^{m_0}\|\left(1+z^{2\kappa_{\alpha,0}}\right)^\frac{1}{2}\partial_x^\alpha\partial_zg^B\|^2_{L^2_{xz}}\leq C.
	\end{align}
    \textbf{Step 2.} For $0\leq\alpha\leq m_0-1$.\\
         Similar to the above procedure, apply $\partial_x^\alpha$ to {the equation $(\ref{g})_1$ and multiply by} $\left(1+z^{2\kappa_{\alpha,1}}\right)\partial_x^\alpha\partial_t g^B$, we have
         \begin{align}\label{gz1}
			&\frac{1}{2}\frac{d}{dt}\sum_{\alpha=0}^{m_0-1}\|\left(1+z^{2\kappa_{\alpha,1}}\right)^\frac{1}{2}\partial_x^\alpha\partial_zg^B\|^2_{L^2_{xz}}+\mathcal{E}_{k,1}(g^B)\notag\\
			&\leq \frac{1}{2}\mathcal{E}_{k,1}(g^B)+C\sum_{\alpha=0}^{m_0-1}\|\left(1+z^{2\kappa_{\alpha,1}}\right)^\frac{1}{2}\partial_x^\alpha\partial_zg^B\|^2_{L^2_{xz}}+C\mathcal{E}_{k,0}(g^B)\notag\\
            &\quad+C\left(\sum_{\alpha=0}^{m_0-1}\|\left(1+z^{2\kappa_{\alpha,0}}\right)^\frac{1}{2}\partial_x^\alpha\partial_zg^B\|^2_{L^2_{xz}}+\sum_{\alpha=0}^{m_0-1}\|\left(1+z^{2\kappa_{\alpha,1}}\right)^\frac{1}{2}\partial_x^\alpha\varrho\|_{L^2_{xz}}^2\right)\\
            &\quad+\sum_{\alpha=0}^{m_0-1}\int^{+\infty}_{-\infty}\partial_x^\alpha\partial_zg^B(t,x,0)\partial_x^\alpha\partial_t g^B(t,x,0)dx,\notag\notag
	\end{align}
        with
        \begin{equation*}
				\begin{split}{}
				&\sum_{\alpha=0}^{m_0-1}\int^{+\infty}_{-\infty}\partial_x^\alpha\partial_zg^B(t,x,0)\partial_x^\alpha \partial_tg^B(t,x,0)dx\\
                &\leq \epsilon_2\sum_{\alpha=0}^{m_0-1}\|\left(1+z^{2\kappa_{\alpha,1}}\right)^\frac{1}{2}\partial_x^\alpha\partial_z\partial_tg^B\|^2_{L^2_{xz}}+C\left(\mathcal{E}_{k,1}(g^B)+\|\mu\|^2_{H^{m_0-1}_{x}}\right).
                \end{split}
			\end{equation*}
        On the other hand, applying $\partial_x^\alpha\partial_t$ to the equation $(\ref{g})_1$, we obtain
        \begin{align}\label{eq:gBtt}            \partial^\alpha_x\partial^2_tg^B+\partial^\alpha_x\partial_t(\overline{\partial_xu^{0}_1}g^B+\overline{u^{0}_1}\partial_xg^B)+\partial^\alpha_x\partial_t(z\overline{\partial_yu^{0}_2}\partial_zg^B)
            =\partial^\alpha_x\partial_t\partial^2_zg^B+\partial^\alpha_x\partial_t\rho,
        \end{align}
        and multiplying by $\left(1+z^{2\kappa_{\alpha,1}}\right)\partial_x^\alpha\partial_t g^B$, we get
        \begin{align}\label{gt0}
			&\frac{1}{2}\frac{d}{dt}\mathcal{E}_{k,1}(g^B)+\sum_{\alpha=0}^{m_0-1}\|\left(1+z^{2\kappa_{\alpha,1}}\right)^\frac{1}{2}\partial_x^\alpha\partial_z\partial_tg^B\|^2_{L^2_{xz}}\notag\\
			&\leq \epsilon_2\sum_{\alpha=0}^{m_0-1}\|\left(1+z^{2\kappa_{\alpha,1}}\right)^\frac{1}{2}\partial_x^\alpha\partial_z\partial_tg^B\|^2_{L^2_{xz}}+C\left(\mathcal{E}_{k,1}(g^B)+\mathcal{E}_{k,0}(g^B)\right)\\
            &\quad+\epsilon_2\sum_{\alpha=0}^{m_0-1}\sum_{0<\beta\leq\alpha}\|\left(1+z^{2\kappa_{\alpha-\beta,1}}\right)^\frac{1}{2}\partial_x^{\alpha-\beta}\partial_z\partial_tg^B\|^2_{L^2_{xz}}+C\sum_{\alpha=0}^{m_0-1}\|\left(1+z^{2\kappa_{\alpha,0}}\right)^\frac{1}{2}\partial_x^\alpha\partial_zg^B\|^2_{L^2_{xz}}\notag\\
            &\quad+C\left(\sum_{\alpha=0}^{m_0-1}\|\left(1+z^{2\kappa_{\alpha,1}}\right)^\frac{1}{2}\partial_x^\alpha\partial_t\varrho\|^2_{L^2_{xz}}+\|\partial_t\mu\|^2_{H^{m_0-1}_{x}}\right),\notag\notag
	\end{align}
    due to $\kappa_{\alpha,1}+1=\kappa_{\alpha,0}$. Combining $(\ref{gz1})$ and $(\ref{gt0})$, substituting the conclusion of $(\ref{mathcal{E}_{k,0}(gB)})$, employing the assumption on (\ref{varrho}) and Gronwall's inequality, for sufficiently small $\epsilon_2$, we deduce that
        \begin{equation}\label{gz1 infty}
		\begin{split}{}
			&\sup_{0\leq t\leq T}\left(\sum_{\alpha=0}^{m_0-1}\|\left(1+z^{2\kappa_{\alpha,1}}\right)^\frac{1}{2}\partial_x^\alpha\partial_zg^B\|^2_{L^2_{xz}}+\mathcal{E}_{k,1}(g^B)\right)+\int^T_0\sum_{\alpha=0}^{m_0-1}\|\left(1+z^{2\kappa_{\alpha,1}}\right)^\frac{1}{2}\partial_x^\alpha\partial_z\partial_tg^B\|^2_{L^2_{xz}}\leq C.
		\end{split}
	\end{equation}

    \textbf{Step 3.}For $0\leq\alpha\leq m_0-2$.\\
    Multiplying equation (\ref{eq:gBtt}) by $(1+z^{2\kappa_{\alpha,2}})\partial_x^\alpha\partial^2_t g^B$ and integrating by parts, we obtain that
         \begin{align}\label{gzt}
			&\frac{1}{2}\frac{d}{dt}\sum_{\alpha=0}^{m_0-2}\|\left(1+z^{2\kappa_{\alpha,2}}\right)^\frac{1}{2}\partial_x^\alpha\partial_z\partial_tg^B\|^2_{L^2_{xz}}+\mathcal{E}_{k,2}(g^B)\notag\\
			&\leq \frac{1}{2}\mathcal{E}_{k,2}(g^B)+\epsilon_3\sum_{\alpha=0}^{m_0-2}\|\left(1+z^{2\kappa_{\alpha,2}}\right)^\frac{1}{2}\partial_x^\alpha\partial_z\partial^2_tg^B\|^2_{L^2_{xz}}+C\mathcal{E}_{k,1}(g^B)\\
            &\quad+C\left(\sum_{\alpha=0}^{m_0-1}\|\left(1+z^{2\kappa_{\alpha,2}}\right)^\frac{1}{2}\partial_x^\alpha g^B\|^2_{L^2_{xz}}+\sum_{\alpha=0}^{m_0-2}\|\left(1+z^{2\kappa_{\alpha,1}}\right)^\frac{1}{2}\partial_x^\alpha\partial_z\partial_tg^B\|^2_{L^2_{xz}}\right)\notag\\
            &\quad+C\left(\sum_{\alpha=0}^{m_0-2}\|\left(1+z^{2\kappa_{\alpha,1}}\right)^\frac{1}{2}\partial_x^\alpha\partial_zg^B\|^2_{L^2_{xz}}+\sum_{\alpha=0}^{m_0-2}\|\left(1+z^{2\kappa_{\alpha,2}}\right)^\frac{1}{2}\partial_x^\alpha\partial_t\varrho\|^2_{L^2_{xz}}+\|\partial_t\mu\|^2_{H^{m_0-2}_{x}}\right).\notag\notag
	\end{align}
        On the other hand, applying $\partial_x^\alpha\partial^2_t$ to the equation $(\ref{g})_1$
        $$\partial^\alpha_x\partial^3_tg^B+\partial^\alpha_x\partial^2_t(\overline{\partial_xu^{0}_1}g^B+\overline{u^{0}_1}\partial_xg^B)+\partial^\alpha_x\partial^2_t(z\overline{\partial_yu^{0}_2}\partial_zg^B)
    =\partial^\alpha_x\partial^2_t\partial^2_zg^B+\partial^\alpha_x\partial^2_t\rho,$$
        and multiplying by $(1+z^{2\kappa_{\alpha,2}})\partial_x^\alpha\partial^2_t g^B$, we get
        \begin{align}\label{gtt}
			&\frac{1}{2}\frac{d}{dt}\mathcal{E}_{k,2}(g^B)+\sum_{\alpha=0}^{m_0-2}\|\left(1+z^{2\kappa_{\alpha,2}}\right)^\frac{1}{2}\partial_x^\alpha\partial_z\partial^2_tg^B\|^2_{L^2_{xz}}\notag\\
			&\leq \epsilon_3\sum_{\alpha=0}^{m_0-2}\|\left(1+z^{2\kappa_{\alpha,2}}\right)^\frac{1}{2}\partial_x^\alpha\partial_z\partial^2_tg^B\|^2_{L^2_{xz}}+C\left(\mathcal{E}_{k,2}(g^B)+\mathcal{E}_{k,1}(g^B)+\sum_{\alpha=0}^{m_0-1}\|\left(1+z^{2\kappa_{\alpha,2}}\right)^\frac{1}{2}\partial_x^\alpha g^B\|^2_{L^2_{xz}}\right)\notag\\
            &\quad+\epsilon_3\sum_{\alpha=0}^{m_0-2}\sum_{0<\beta\leq\alpha}\|\left(1+z^{2\kappa_{\alpha-\beta,2}}\right)^\frac{1}{2}\partial_x^{\alpha-\beta}\partial_z\partial^2_tg^B\|^2_{L^2_{xz}}\\
            &\quad+C\sum_{\alpha=0}^{m_0-2}\|\left(1+z^{2\kappa_{\alpha,1}}\right)^\frac{1}{2}\partial_x^\alpha\partial_z\partial_tg^B\|^2_{L^2_{xz}}+C\sum_{\alpha=0}^{m_0-2}\|\left(1+z^{2\kappa_{\alpha,1}}\right)^\frac{1}{2}\partial_x^\alpha\partial_zg^B\|^2_{L^2_{xz}}\notag\\
            &\quad+\sum_{\alpha=0}^{m_0-2}\|\left(1+z^{2\kappa_{\alpha,2}}\right)^\frac{1}{2}\partial_x^\alpha\partial^2_t\varrho\|^2_{L^2_{xz}}+C\|\partial^2_t\mu\|^2_{H^{m_0-2}_{x}}.\notag\notag
	\end{align}
    Combining $(\ref{gzt})$ and $(\ref{gtt})$, substituting the conclusion of $(\ref{mathcal{E}_{k,0}(gB)})$ and (\ref{gz1 infty}), employing the assumption on (\ref{varrho})-(\ref{varrho t}) and Gronwall's inequality, for sufficiently small $\epsilon_3$, we have
        \begin{align}\label{gzt1 and mathcal{E}_{k,2}(g^B)}
			&\sup_{0\leq t\leq T}\left(\sum_{\alpha=0}^{m_0-2}\|\left(1+z^{2\kappa_{\alpha,2}}\right)^\frac{1}{2}\partial_x^\alpha\partial_z\partial_tg^B\|^2_{L^2_{xz}}+\mathcal{E}_{k,2}(g^B)\right)+\int^T_0\sum_{\alpha=0}^{m_0-2}\|\left(1+z^{2\kappa_{\alpha,2}}\right)^\frac{1}{2}\partial_x^\alpha\partial_z\partial^2_tg^B\|^2_{L^2_{xz}}\leq C.
	\end{align}

    Combining the high-order weighted estimates $(\ref{mathcal{E}_{k,0}(gB)})$, $(\ref{gz1 infty})$ and $(\ref{gzt1 and mathcal{E}_{k,2}(g^B)})$, we obtain that for any $\kappa_{\alpha,j}\geq k$, $j=0,1,2$,
        \begin{align}\label{gz0}	\|g^B\|^2_{L^\infty(0,T;\mathcal{H}^{k,m_0,0})}+\|\partial_zg^B\|^2_{L^2(0,T;\mathcal{H}^{k,m_0,0})}&\leq C,\notag\\
			\|\partial_zg^B\|^2_{L^\infty(0,T;\mathcal{H}^{k,m_0-1,0})}+\|\partial_t g^B\|^2_{L^\infty(0,T;\mathcal{H}^{k,m_0-1,0})}+\|\partial_t \partial_zg^B\|^2_{L^2(0,T;\mathcal{H}^{k,m_0-1,0})}&\leq C,\\
			\|\partial_z\partial_tg^B\|^2_{L^\infty(0,T;\mathcal{H}^{k,m_0-2,0})}+\|\partial^2_t g^B\|^2_{L^\infty(0,T;\mathcal{H}^{k,m_0-2,0})}+\|\partial_z\partial^2_t g^B\|^2_{L^2(0,T;\mathcal{H}^{k,m_0-2,0})}&\leq C.\notag\notag
	\end{align}
    Further application of equation $(\ref{g})_1$, $(\ref{gz1 infty})$, $(\ref{gzt1 and mathcal{E}_{k,2}(g^B)})$ and $(\ref{gz0})$ shows that
    \begin{align}\label{gz2t}
        &\|\partial_t g^B\|_{{k,m_0-3,2}}\notag\\
        &\leq C\left(\|\partial^2_tg^B\|_{{k,m_0-3,0}}+\|\partial_tg^B\|_{{k,m_0-3,0}}+\|\partial_tg^B\|_{{k,m_0-2,0}}+\|\partial_t g^B\|_{{k+1,m_0-3,1}}\right)\notag\\
        &\quad+C\left(\|g^B\|_{{k,m_0-2,0}}+\|g^B\|_{{k+1,m_0-3,1}}+\|\partial_t \varrho\|_{{k,m_0-3,0}}\right)\notag\\
        &\leq C.\notag\notag
    \end{align}
    From equation $(\ref{g})$, together with $(\ref{gz1 infty})$, $(\ref{gzt1 and mathcal{E}_{k,2}(g^B)})$, $(\ref{gz0})$ and the assumption on $\varrho$ in $(\ref{varrho t})$, we further derive for $k\in N$ that
    \begin{equation}\label{gBz234}
		\begin{split}{}
			\|g^B\|_{k,m_0-i,i}&\leq C\left(\|\partial_t g^B\|_{k,m_0-i,i-2}+\|g^B\|_{k,m_0-i+1,i-2}+\|g^B\|_{k+1,m_0-i,i-1}+\|\varrho\|_{k,m_0-i,i-2}\right)\leq C,
		\end{split}
	\end{equation}
    where $i=2,3,4$.
    Therefore, we get the desired estimates and complete the proof.\\

      Clearly, among the systems $(\ref{eq:vB02})$-$(\ref{eq:pB2})$, the function $u^{B,1}_1$ defined in (\ref{eq:uB11}) appears repeatedly in the other equations. To simplify the well-posedness analysis for solutions of the system, we rearrange the order of derivation and first establish the well-posedness of $u^{B,1}_1$.
	\begin{lemma}\label{lem5.1}
		{Let $(n^0,v^0,u^0)$ denote the solution on $[0,T]$ constructed in Theorem \ref{th2.1}. Then equation $(\ref{eq:uB11})$ possesses a unique solution $u^{B,1}_1$, and for any $7\leq m_1 \leq m-4$, the following holds:
			\begin{equation}\label{uB11t2}
                \begin{split}{}
				\partial^j_tu^{B,1}_1&\in L^\infty(0,T; \mathcal{H}^{k,m_1-j,0})\cap L^2(0,T; \mathcal{H}^{k,m_1-j,1}),j=0,1,2,\\
                \end{split}
			\end{equation}
            and
    \begin{equation}\label{uB11z2t}
           \begin{split}{}
            \partial_tu^{B,1}_1\in L^\infty(0,T; \mathcal{H}^{k,m_1-2,1}),\;\partial_tu^{B,1}_1\in L^\infty(0,T; \mathcal{H}^{k,m_1-3,2}).\\
                \end{split}
			\end{equation}
            for $k\in N$. Furthermore, we have the following results
            \begin{equation}\label{uB14z}
           \begin{split}{}
              u^{B,1}_1\in L^\infty(0,T; \mathcal{H}^{k,m_1-i,i}),i=1,2,3,4
            \end{split}
			\end{equation}
           and that \begin{equation}\label{uB22}
                \begin{split}{}
				\partial^j_tu^{B,2}_2&\in L^\infty(0,T; \mathcal{H}^{k,m_1-1-j,1})\cap L^2(0,T; \mathcal{H}^{k,m_1-1-j,2}),j=0,1,2,\\
                \partial_tu^{B,2}_2&\in L^\infty(0,T; \mathcal{H}^{k,m_1-3,2}),\;\partial_tu^{B,2}_2\in L^\infty(0,T; \mathcal{H}^{k,m_1-4,3}),\\
                u^{B,2}_2&\in L^\infty(0,T; \mathcal{H}^{k,m_1-1-i,i+1}),\;i=1,2,3,4.
                \end{split}
			\end{equation}
		}
	\end{lemma}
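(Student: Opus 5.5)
The plan is to recognize \eqref{eq:uB11} as a special case of the model problem \eqref{g} and then invoke Proposition \ref{regularity of g}. Since $\partial_x(\overline{u^0_1}g)=\overline{\partial_xu^0_1}g+\overline{u^0_1}\partial_xg$, equation \eqref{eq:uB11} is exactly \eqref{g} with
\begin{equation*}
g^B=u^{B,1}_1,\qquad \varrho\equiv0,\qquad \mu(t,x)=-\overline{\partial_yu^0_1}(t,x),
\end{equation*}
and with zero initial data. Existence then follows by a standard Galerkin or linear parabolic argument, since the coefficients depend only on $(t,x)$. Uniqueness comes from the $L^2$ energy identity for the difference of two solutions: the difference satisfies \eqref{g} with $\varrho=\mu=0$, so the Step 1 computation of Proposition \ref{regularity of g} with $k=0$ and Gronwall give that the difference vanishes, in the same way the paper showed $u^{B,0}_1=0$.

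The main step is to verify the hypotheses \eqref{varrho}--\eqref{varrho t} with $m_0=m_1$. Because $\varrho=0$, the weighted conditions \eqref{varrho t} hold trivially for every weight $k$. For the coefficients we need $\partial^j_tu^0\in L^\infty(0,T;H^{m_1-j+3}_{xy})$ for $j=0,1,2$. Theorem \ref{th2.1} gives $\partial^j_tu^0\in L^\infty_TH^{m+1-2j}_{xy}$, and the requirement $m+1-2j\ge m_1+3-j$ means $m_1\le m-2-j$. For $j\le2$ this is implied by $m_1\le m-4$, which explains the hypothesis. The boundary datum also needs $\partial^j_t\mu\in L^2(0,T;H^{m_1-j}_x)$, which follows from the trace theorem (Lemma \ref{lem trace theorem}) applied to $\partial^j_t\partial_yu^0_1$:
\begin{equation*}
\|\partial^j_t\overline{\partial_yu^0_1}\|_{H^{m_1-j}_x}\le C\|\partial^j_tu^0\|_{H^{m_1-j+2}_{xy}},
\end{equation*}
which is finite by the same count. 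The $L^\infty_x$ bounds on $\overline{\partial_x^\beta u^0}$ used inside the proposition come from Sobolev embedding. With these in hand, Proposition \ref{regularity of g} gives \eqref{uB11t2}, \eqref{uB11z2t} and \eqref{uB14z} directly.

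For $u^{B,2}_2$ I would use \eqref{eq:uB22}, $u^{B,2}_2=\int_z^\infty\partial_xu^{B,1}_1\,d\eta$. Then $\partial_zu^{B,2}_2=-\partial_xu^{B,1}_1$, so every estimate involving at least one $z$-derivative moves one tangential derivative and transfers directly: the index $m_1-j$ becomes $m_1-1-j$ and the normal index shifts up by one. The zeroth-order term needs a weighted Hardy-type inequality. By Cauchy--Schwarz,
\begin{equation*}
\Big|\int_z^\infty f\,d\eta\Big|^2\le C(1+z)^{-2k'+1}\int_z^\infty(1+\eta^{2k'})|f|^2\,d\eta ,
\end{equation*}
and integrating this against $(1+z^{2k})$ gives $\|u^{B,2}_2\|_{k,\cdot,0}\le C\|\partial_xu^{B,1}_1\|_{k+1,\cdot,0}$. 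This costs one extra power of the weight.

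I expect this weight loss to be the main obstacle. It is harmless here because the statement holds for every $k$, so one applies the $u^{B,1}_1$ result with $k+1$ (or $k+2$) in place of $k$. The remaining work is careful bookkeeping of indices, matching each claimed space for $u^{B,2}_2$ to the corresponding space for $u^{B,1}_1$ with one fewer tangential derivative.
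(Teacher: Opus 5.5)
Your proposal is correct and follows the paper's proof. Like the paper, you write \eqref{eq:uB11} as \eqref{g} with $\varrho=0$, $\mu=-\overline{\partial_yu^0_1}$ and $m_0=m_1$, check the hypotheses of Proposition \ref{regularity of g} via Theorem \ref{th2.1} and the trace bound (which is where $m_1\le m-4$ enters), and read off the $u^{B,2}_2$ bounds from \eqref{eq:uB22}. Your weighted Hardy step for the undifferentiated $u^{B,2}_2$ term (costing one power of the weight, which is harmless since $k$ is arbitrary) and your energy argument for uniqueness make explicit what the paper leaves under ``follows directly''.
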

	\noindent{\bf{Proof.}} Let $\varrho=0$ and $\mu=-\overline{\partial_yu^0_1}$.
    Applying Proposition \ref{regularity of g} to $(\ref{eq:uB11})$, we need prove $\mu$ satisfy the assumption of Proposition \ref{regularity of g} with $m_0=m_1$. By Theorem \ref{th2.1}, we have
        \begin{equation*}
				\begin{split}{}
                \|\mu\|_{L^2_TH^{m_1}_x}+\|\partial_t\mu\|_{L^2_TH^{m_1-1}_x}+\|\partial^2_t\mu\|_{L^2_TH^{m_1-2}_x}\leq C\left(\|u^0_1\|_{L^2_TH^{m_1+2}_{xy}}+\|\partial_tu^0_1\|_{L^2_TH^{m_1+1}_{xy}}+\|\partial^2_tu^0_1\|_{L^2_TH^{m_1}_{xy}}\right)\leq C,
                \end{split}
			\end{equation*}
          where we have used the Sobolev embedding inequality and the trace theorem for any $f(t,x,y)$, for $\iota\in N$ that is
         \begin{align}\label{boundary regularity}
          \|f(t,x,0)\|_{L^2_{T}H^\iota_{x}}\leq C\|f\|_{L^2_{T}H^{\iota+1}_{xy}},
         \end{align}
         which implies
          \begin{align*}
          \|\partial^j_t\overline{\partial_yu^{0}_1}\|_{L^2_TH^{m_1-j}_x}\leq C\|\partial^j_t{u^{0}_1}\|_{L^2_TH^{m_1+2-j}_{xy}},\;j=0,1,2.
        \end{align*}
        By $(\ref{gBt})$, we get the desired estimates for $(\ref{uB11t2})$ and $(\ref{uB11z2t})$. And (\ref{uB14z}) is valid by (\ref{gBz}). Then $(\ref{uB22})$ follows directly from $(\ref{eq:uB22})$, $(\ref{uB11t2})$-$(\ref{uB14z})$. The proof is finished.

	\begin{lemma}\label{lem5.2}
		{Suppose $(n^0,v^0,u^0)$ and $u^{B,1}_1$ be as obtained in Theorem \ref{th2.1} and Lemma $\ref{lem5.1}$ respectively. Then there exists a unique solution $v^{B,0}_2$ to the equations $(\ref{eq:vB02})$ such that
			\begin{equation}\label{vB02t}
				\begin{split}{}
				\partial^j_tv^{B,0}_2&\in L^\infty(0,T; \mathcal{H}^{k,m_1-j,0})\cap L^2(0,T; \mathcal{H}^{k,m_1-j,1}),\;j=0,1,2,\\
                \end{split}
			\end{equation}
            and
        \begin{equation}\label{vB0t2z}
				\begin{split}{}
                \partial_tv^{B,0}_2\in L^\infty(0,T; \mathcal{H}^{k,m_1-2,1}),\;\partial_tv^{B,0}_2\in L^\infty(0,T; \mathcal{H}^{k,m_1-3,2}).\\
                \end{split}
			\end{equation}
           Furthermore, we have the following results
            \begin{equation}\label{vB02z4}
				\begin{split}{}
				v^{B,0}_2\in L^\infty(0,T; \mathcal{H}^{k,m_1-i,i}),\;\;i=1,2,3,4,
                 \end{split}
			\end{equation}
            and that for $(\ref{eq:nB1})$ and $(\ref{eq:pB2})$, we have the following
            \begin{equation*}
				\begin{split}{}
             &\partial^j_tn^{B,1}\in L^\infty(0,T; \mathcal{H}^{k,m_1-j,1})\cap L^2(0,T; \mathcal{H}^{k,m_1-j,2}),\;j=0,1,2,\\
            &\partial_tn^{B,1}\in L^\infty(0,T; \mathcal{H}^{k,m_1-2,2}),\;\partial_tn^{B,1}\in L^\infty(0,T; \mathcal{H}^{k,m_1-3,3}),
                n^{B,1}\in L^\infty(0,T; \mathcal{H}^{k,m_1-i,i+1}),\\
            &\partial^j_tp^{B,2}\in L^\infty(0,T; \mathcal{H}^{k,m_1-j,2})\cap L^2(0,T; \mathcal{H}^{k,m_1-j,3}),\;j=0,1,2,\\
            &\partial_tp^{B,2}\in L^\infty(0,T; \mathcal{H}^{k,m_1-2,3}),\partial_tp^{B,2}\in L^\infty(0,T; \mathcal{H}^{k,m_1-3,4}),p^{B,2}\in L^\infty(0,T; \mathcal{H}^{k,m_1-i,i+2}),\;\;i=1,2,3,4.
                \end{split}
			\end{equation*}
		}
	\end{lemma}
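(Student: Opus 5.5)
The plan is to reduce \eqref{eq:vB02} to the auxiliary problem \eqref{f} and apply Proposition \ref{regularity of f^B} with $m_0=m_1$. Two structural identities remove the apparently coupled unknowns. First, by the curl relation $(\ref{eq:vB02})_2$ we have $\partial_zv^{B,1}_1=\partial_xv^{B,0}_2$, so $\partial_zv^{B,1}_1\overline{u^{I,0}_1}=\overline{u^0_1}\partial_xv^{B,0}_2$. Second, by \eqref{eq:nB1} we have $\partial_zn^{B,1}=-\overline{n^0}v^{B,0}_2$, so $-\partial_zn^{B,1}=\overline{n^0}v^{B,0}_2$. With these substitutions, \eqref{eq:vB02} becomes the closed linear equation
\begin{align*}
\partial_tv^{B,0}_2+\overline{\partial_yu^0_2}v^{B,0}_2+\overline{u^0_1}\partial_xv^{B,0}_2+z\overline{\partial_yu^0_2}\partial_zv^{B,0}_2+\overline{n^0}v^{B,0}_2=\partial_z^2v^{B,0}_2-\overline{v^0_1}\,\partial_zu^{B,1}_1 .
\end{align*}
This has exactly the operator of \eqref{f}. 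The only differences are the known source term, which involves $u^{B,1}_1$ from Lemma \ref{lem5.1}, and the Dirichlet datum $v^{B,0}_2|_{z=0}=-\overline{v^0_2}$.

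To homogenize the boundary condition, set $v^{B,0}_2=f^B+w$ with $w(t,x,z):=-\overline{v^0_2}(t,x)e^{-z}$. Then $w|_{z=0}=-\overline{v^0_2}$, and $w|_{t=0}=0$ because $v_{in2}(x,0)=0$ by $(M_2)$. The remainder $f^B$ solves \eqref{f} with
\begin{align*}
\rho=-\overline{v^0_1}\,\partial_zu^{B,1}_1-\big(\partial_t+\overline{\partial_yu^0_2}+\overline{u^0_1}\partial_x+z\overline{\partial_yu^0_2}\partial_z+\overline{n^0}-\partial_z^2\big)w .
\end{align*}
Next I verify \eqref{rho}--\eqref{rhotj} for this $\rho$. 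The $w$-part is a product of traces of $u^0,v^0,n^0$ and their $t$-derivatives with $(1+z)e^{-z}$. Since every polynomial weight is absorbed by $e^{-z}$, its membership follows from Theorem \ref{th2.1} via the trace bound \eqref{boundary regularity}; this uses $m\ge m_1+4$, which lets us take up to two time derivatives and $m_1$ tangential derivatives. The $u^{B,1}_1$-part is handled with Lemma \ref{lem5.1}, and the weights there are arbitrary since $k$ is arbitrary in that lemma. For the $L^2_T$ conditions, $\partial^j_t\partial_zu^{B,1}_1\in L^2(0,T;\mathcal H^{k',m_1-j,0})$ follows from \eqref{uB11t2}. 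For $\partial_t\rho\in L^\infty(0,T;\mathcal H^{k,m_1-3,0})$, I use $\partial_tu^{B,1}_1\in L^\infty\mathcal H^{k,m_1-2,1}$. For $\rho\in L^\infty(0,T;\mathcal H^{k,m_1-l,l-2})$ with $l=2,3,4$, I need $u^{B,1}_1\in L^\infty\mathcal H^{k,m_1-l,l-1}$, which is \eqref{uB14z} with $i=l-1$. The trace coefficients $\overline{v^0_1}$ and their $t$-derivatives lie in $W^{m_1,\infty}_x$ by Sobolev embedding. Proposition \ref{regularity of f^B} then yields \eqref{vB02t}, \eqref{vB0t2z} and \eqref{vB02z4} for $f^B$, and adding back $w$ preserves them.

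Uniqueness follows because the reduced problem is linear. The difference of two solutions solves \eqref{f} with $\rho=0$ and zero data, so the weighted $L^2$ energy estimate of Step 1 of Proposition \ref{regularity of f^B} together with Gronwall gives zero.

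For $n^{B,1}=\overline{n^0}\int_z^\infty v^{B,0}_2\,d\eta$ and $p^{B,2}=-\int_z^\infty n^{B,1}\,d\eta$, I use the weighted Hardy inequality
\begin{align*}
\int_0^\infty z^{2k}\Big|\int_z^\infty g\,d\eta\Big|^2dz\le C\int_0^\infty (1+z^{2k+2})|g|^2dz ,
\end{align*}
which costs only one extra power of weight and is harmless since $k$ is arbitrary. The normal derivatives are explicit: $\partial_zn^{B,1}=-\overline{n^0}v^{B,0}_2$ and $\partial_zp^{B,2}=n^{B,1}$. Each integration therefore raises the $z$-regularity index by one, and tangential and time derivatives fall on $\overline{n^0}$ (controlled by Theorem \ref{th2.1}) or on $v^{B,0}_2$. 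This gives the listed memberships with indices shifted by $+1$ and $+2$ in $z$.

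I expect the main obstacle to be the bookkeeping in verifying \eqref{rhotj}. The condition $\partial^2_t\rho\in L^2(0,T;\mathcal H^{k+m_1-2,m_1-2,0})$ requires $\partial_t^2\partial_zu^{B,1}_1$ with heavy weight; I would obtain this from the $L^2_T\mathcal H^{k',m_1-2,1}$ bound in \eqref{uB11t2} with $k'=k+m_1$. The condition at $l=4$ requires three $z$-derivatives of $u^{B,1}_1$, which is precisely the endpoint $i=3$ of \eqref{uB14z}, so there is no room to spare.
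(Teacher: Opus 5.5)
Your proposal is correct and follows essentially the same route as the paper. The paper also closes the equation via the curl relation (and, implicitly, $\partial_z n^{B,1}=-\overline{n^0}v^{B,0}_2$). It lifts the boundary datum by $\overline{v^0_2}\varphi(z)$, where $\varphi$ is a compactly supported cutoff with $\varphi(0)=1$ in place of your $e^{-z}$; this difference is inessential. It then uses $(M_2)$ for zero initial data, checks the hypotheses on $\rho$ via Theorem \ref{th2.1} and Lemma \ref{lem5.1}, and applies Proposition \ref{regularity of f^B} with $m_0=m_1$.

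Your uniqueness argument and the weighted Hardy inequality for $n^{B,1}$ and $p^{B,2}$ make explicit what the paper says follows directly. The only slip is cosmetic: \eqref{uB14z} extends to $i=4$ and carries one extra tangential derivative, so the $l=4$ condition is not at an endpoint.
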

	\noindent{\bf{Proof.}} We proceed to prove $(\ref{vB02t})$. Let $\varphi(z)$ be a smooth function defined on $[0,\infty)$ satisfying
    \begin{equation}\label{varphi}
				\begin{split}{}
				\varphi(0)=1,\;\;
                \varphi(z)=0\;\;for\;\;z>1.
                \end{split}
			\end{equation}
            Denote $\widetilde{v}^{B,0}_2(t,x,z)=v^{B,0}_2(t,x,z)+\varphi(z)\overline{v^0_2}$ and use the curl condition $\partial_z{v}^{B,1}_1=\partial_x{v}^{B,0}_2$, then one deduces from the equations $(\ref{eq:vB02})$ that
            \begin{eqnarray}\label{widetilde{v}^{B,0}2}
		\left\{
		\begin{split}{}			&\partial_t\widetilde{v}^{B,0}_2+\overline{\partial_yu^0_2}\widetilde{v}^{B,0}_2+\partial_x\widetilde{v}^{B,0}_2\overline{u^0_1}+\partial_z\widetilde{v}^{B,0}_2z\overline{\partial_yu^0_2}+\overline{n^0}\widetilde{v}^{B,0}_2=\partial^2_z\widetilde{v}^{B,0}_2+\rho\\
        &{\widetilde{v}^{B,0}_2|_{z=0}=0},\\
        &\widetilde{v}^{B,0}_2(0,x,z)=0.
		\end{split}
		\right.
	\end{eqnarray}
    where $$\rho(t,x,z)=\partial_t\overline{v^0_2}\varphi(z)+\overline{\partial_yu^0_2}\overline{v^0_2}\varphi(z)+\partial_x\overline{v^0_2}\overline{u^0_1}\varphi(z)+\overline{v^0_2}\partial_z\varphi(z)z\overline{\partial_yu^0_2}+\overline{n^0}\overline{v^0_2}\varphi(z)+\overline{v^0_2}\partial^2_z\varphi(z)-\overline{v^0_1}\partial_zu^{B,1}_1.$$ The compatibility condition $v_{02}(x,0)=0$ from conditions $(M_2)$ has been used to determine the initial data of $\widetilde{v}^{B,0}_2$ in $(\ref{widetilde{v}^{B,0}2})$.
         Next, we prove that $\rho$ satisfies the assumption in Proposition $\ref{regularity of f^B}$ with $m_0=m_1$. Applying Theorem \ref{th2.1} and Lemma $\ref{lem5.1}$, for $j=0,1,2$, we deduce that
         \begin{align*}
			&\|(1+z^{2(k+m_1-j)})^\frac{1}{2}\partial_t^j\rho\|_{L^2_TH^{m_1-j}_xL^2_z}\notag\\
            &\leq C\|\partial^{j+1}_t{v^0_2}\|^2_{L^2_TH^{m_1-j}_{xy}}\|(1+z^{2(k+m_1-j)})^\frac{1}{2}\varphi(z)\|_{L^2_z}\\
            &\quad+C\sum^j_{\tau=0}\left(\|\partial_t^\tau u^0_2\|_{L^\infty_TH^{m_1+3-\tau}_{xy}}\|\partial_t^{j-\tau}{v^0_2}\|_{L^2_TH^{m_1+1-(j-\tau)}_{xy}}\right)\|(1+z^{2(k+m_1-j)})^\frac{1}{2}\varphi(z)\|_{L^2_z}\notag\\
            &\quad+C\sum^j_{\tau=0}\left(\|\partial_t^\tau{u^0_1}\|_{L^\infty_TH^{m_1+2-\tau}_{xy}}\|\partial_t^{j-\tau}{v^0_2}\|_{L^2_TH^{m_1+2-(j-\tau)}_{xy}}\right)\|(1+z^{2(k+m_1-j)})^\frac{1}{2}\varphi(z)\|_{L^2_z}\\
            &\quad+C\sum^j_{\tau=0}\left(\|\partial_t^\tau u^0_2\|_{L^\infty_TH^{m_1+3-\tau}_{xy}}\|\partial_t^{j-\tau}{v^0_2}\|_{L^2_TH^{m_1+1-(j-\tau)}_{xy}}\right)\|(z^2+z^{2(k+m_1-j+1)})^\frac{1}{2}\partial_z\varphi(z)\|_{L^2_z}\notag\\
            &\quad+C\sum^j_{\tau=0}\left(\|\partial_t^\tau{n^0}\|_{L^\infty_TH^{m_1+2-\tau}_{xy}}\|\partial_t^{j-\tau}{v^0_2}\|_{L^2_TH^{m_1+1-(j-\tau)}_{xy}}\right)\|(1+z^{2(k+m_1-j)})^\frac{1}{2}\varphi(z)\|_{L^2_z}\\
            &\quad+C\|\partial_t^j{v^0_2}\|_{L^2_TH^{m_1+1-j}_{xy}}\|(1+z^{2(k+m_1-j)})^\frac{1}{2}\partial^2_z\varphi(z)\|_{L^2_z}\\
            &\quad+C\sum^j_{\tau=0}\left(\|\partial_t^\tau{v^0_1}\|_{L^\infty_TH^{m_1+2-\tau}_{xy}}\|\partial_t^{j-\tau} u^{B,1}_1\|_{L^2_TH^{(k+m_1-(j-\tau)),m_1-({j-\tau}),1}}\right)\\
            &\leq C.
		\end{align*}
        By (\ref{fBt}), it then allows us to deduce that
         \begin{equation*}
				\begin{split}{}
				\partial^j_t\widetilde{v}^{B,0}_2\in L^\infty(0,T; \mathcal{H}^{k,m_1-j,0})\cap L^2(0,T; \mathcal{H}^{k,m_1-j,1}),\;j=0,1,2,\\
                \end{split}
			\end{equation*}
        which implies (\ref{vB02t}) is valid by the definition of $\widetilde{v}^{B,0}_2$ and Theorem \ref{th2.1}.
         Moreover, we have the following estimates for $\|(1+z^{2k})^\frac{1}{2}\partial_t\rho\|_{L^\infty_TH^{m_1-3}_xL^2_z}$,
         \begin{align*}
          &\|(1+z^{2k})^\frac{1}{2}\partial_t\rho\|_{L^\infty_TH^{m_1-3}_xL^2_z}\notag\\
            &\leq C\|\partial^{2}_t{v^0_2}\|_{L^\infty_TH^{m_1-2}_{xy}}\|(1+z^{2k})^\frac{1}{2}\varphi(z)\|_{L^2_z}\\
            &\quad+C\left(\|\partial_t u^0_2\|_{L^\infty_TH^{m_1-1}_{xy}}\|{v^0_2}\|_{L^\infty_TH^{m_1-1}_{xy}}+\| u^0_2\|_{L^\infty_TH^{m_1}_{xy}}\|\partial_t {v^0_2}\|_{L^\infty_TH^{m_1-2}_{xy}}\right)\|(1+z^{2k})^\frac{1}{2}\varphi(z)\|_{L^2_z}\\
            &\quad+C\left(\|\partial_t{u^0_1}\|_{L^\infty_TH^{m_1-2}_{xy}}\| v^0_2\|_{L^\infty_TH^{m_1}_{xy}}+\|{u^0_1}\|_{L^\infty_TH^{m_1-1}_{xy}}\|\partial_t v^0_2\|_{L^\infty_TH^{m_1-1}_{xy}}\right)\|(1+z^{2k})^\frac{1}{2}\varphi(z)\|_{L^2_z}\notag\\
            &\quad+C\left(\|\partial_tu^0_2\|_{L^\infty_TH^{m_1-1}_{xy}}\|{v^0_2}\|_{L^\infty_TH^{m_1-1}_{xy}}+\|u^0_2\|_{L^\infty_TH^{m_1}_{xy}}\|\partial_t{v^0_2}\|_{L^\infty_TH^{m_1-2}_{xy}}\right)\|(z^2+z^{2(k+1)})^\frac{1}{2}\partial_z\varphi(z)\|_{L^2_z}\notag\\
            &\quad+C\left(\|\partial_t{n^0}\|_{L^\infty_TH^{m_1-2}_{xy}}\|{v^0_2}\|_{L^\infty_TH^{m_1-1}_{xy}}+\|{n^0}\|_{L^\infty_TH^{m_1-1}_{xy}}\|\partial_t{v^0_2}\|_{L^\infty_TH^{m_1-2}_{xy}}\right)\|(1+z^{2k})^\frac{1}{2}\varphi(z)\|_{L^2_z}\\
            &\quad+C\|\partial_t{v^0_2}\|_{L^\infty_TH^{m_1-2}_{xy}}\|(1+z^{2k})^\frac{1}{2}\partial^2_z\varphi(z)\|_{L^2_z}+C\|\partial_t{v^0_1}\|_{L^\infty_TH^{m_1-1}_{xy}}\|u^{B,1}_1\|_{L^\infty_T\mathcal{H}^{k,m_1-3,1}}\\
            &\quad+C\|{v^0_1}\|_{L^\infty_TH^{m_1-1}_{xy}}\|\partial_tu^{B,1}_1\|_{L^\infty_T\mathcal{H}^{k,m_1-3,1}}\\
            &\leq C.
         \end{align*}
         due to $(\ref{boundary regularity})$. Then similar to (\ref{fz2t}), we can get
        \begin{align}\label{widetilde{v}^{B,0}_2}
			&\|\partial_t \widetilde{v}^{B,0}_2\|_{k,m_1-3,2}\leq C,
	\end{align}
    which implies (\ref{vB0t2z}) is valid.
    Moreover, from (\ref{vB0t2z}), we have
    \begin{align}\label{rho Linfty}
        &\|\rho\|_{L^\infty_T\mathcal{H}^{k,m_1-2,0}}\notag\\
        &\leq C\left(\|{\partial_tv^0_2}\|_{L^\infty_TH^{m_1-1}_{xy}}+\|u^0_2\|_{L^\infty_TH^{m_1+1}_{xy}}\|{v^0_2}\|_{L^\infty_TH^{m_1-1}_{xy}}+\| v^0_2\|_{L^\infty_TH^{m_1+1}_{xy}}\|{u^0_1}\|_{L^\infty_TH^{m_1-1}_{xy}}\right)\|(1+z^{2k})^\frac{1}{2}\varphi(z)\|_{L^2_z}\notag\\
            &\quad+C\|{v^0_2}\|_{L^\infty_TH^{m_1-1}_{xy}}\|u^0_2\|_{L^\infty_TH^{m_1+1}_{xy}}\|(z^2+z^{2(k+1)})^\frac{1}{2}\partial_z\varphi(z)\|_{L^2_z}\\
            &\quad+C\|{n^0}\|_{L^\infty_TH^{m_1}_{xy}}\|{v^0_2}\|_{L^\infty_TH^{m_1-1}_{xy}}\|(1+z^{2k})^\frac{1}{2}\varphi(z)\|_{L^2_z}\notag\\
            &\quad+C\|{v^0_2}\|_{L^\infty_TH^{m_1-1}_{xy}}\|(1+z^{2k})^\frac{1}{2}\partial^2_z\varphi(z)\|_{L^2_z}+C\|{v^0_1}\|_{L^\infty_TH^{m_1+1}_{xy}}\|u^{B,1}_1\|_{L^\infty_T\mathcal{H}^{k,m_1-2,1}}\notag\\
            &\leq C.\notag\notag
    \end{align}
    A similar argument gives $\|\rho\|_{L^\infty_T\mathcal{H}^{k,m_1-3,1}}\leq C$ and $\|\rho\|_{L^\infty_T\mathcal{H}^{k,m_1-4,2}}\leq C$. Therefore, according to (\ref{fBz}), for the equation $(\ref{eq:vB02})_1$, we deduce for any $k\in N$ that (\ref{vB02z4}) is proved. The estimates for
       $n^{B,1}$ and $p^{B,2}$ follows directly from $(\ref{eq:nB1})$, $(\ref{eq:pB2})$ and the conclusion on $\overline{n^0}$ in Theorem \ref{th2.1}. The proof of Lemma $\ref{lem5.2}$ is complete.

      \begin{lemma}\label{lem5.3}
	 	{Let $(v^0,u^0)$, $u^{B,1}_1$ and $v^{B,0}_2$ be as obtained in Theorem \ref{th2.1}, Lemma $\ref{lem5.1}$ and Lemma $\ref{lem5.2}$
        respectively. Then the system $(\ref{eq:vB11})$ admits a unique solution $v^{B,1}_1$ satisfying the following regularity properties for
	 	\begin{equation}\label{vB11t}
                \begin{split}{}
				\partial^j_tv^{B,1}_1&\in L^\infty(0,T; \mathcal{H}^{k,m_1-1-j,0})\cap L^2(0,T; \mathcal{H}^{k,m_1-1-j,1}),\;j=0,1,2.\\
                \end{split}
		  \end{equation}
            Furthermore, it follows from $(\ref{eq:vB11})$ that
           \begin{equation}\label{vB11z1t z2t}
                \begin{split}{}
                \partial_tv^{B,1}_1\in L^\infty(0,T; \mathcal{H}^{k,m_1-3,1}),\;\partial_tv^{B,1}_1\in L^\infty(0,T; \mathcal{H}^{k,m_1-4,2}),\\
                \end{split}
			\end{equation}
            and
             \begin{equation}\label{vB11z4}
                \begin{split}{}
                v^{B,1}_1\in L^\infty(0,T; \mathcal{H}^{k,m_1-1-i,i}),\;i=1,2,3,4.\\
                \end{split}
			\end{equation}
	 	}
	 \end{lemma}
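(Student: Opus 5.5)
We apply Proposition \ref{regularity of g} to $g^B=v^{B,1}_1$ with $m_0=m_1-1$. The boundary datum is $\mu=-\overline{\partial_yv^0_1}$. The source $\varrho$ collects every term of $(\ref{eq:vB11})_1$ that is not part of the model operator in (\ref{g}). The model operator is $\partial_t+\partial_x(\overline{u^0_1}\,\cdot)+z\overline{\partial_yu^0_2}\partial_z-\partial_z^2$. Since $\partial_x(\overline{u^0_1}v^{B,1}_1)=\overline{\partial_xu^0_1}v^{B,1}_1+\overline{u^0_1}\partial_xv^{B,1}_1$, the two transport terms $\overline{\partial_xu^{I,0}_1}v^{B,1}_1$ and $\partial_xv^{B,1}_1\overline{u^{I,0}_1}$ are exactly reproduced by the model operator. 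The one term $\partial_xv^{B,0}_2\,z\overline{\partial_yu^{I,0}_2}$ is handled as follows. By the curl relation $\partial_xv^{B,0}_2=\partial_zv^{B,1}_1$ it equals $z\overline{\partial_yu^{0}_2}\partial_zv^{B,1}_1$, so it is the normal transport term of the model. The remaining terms form the source:
\[
\varrho=-\partial_x(z\overline{\partial_yu^{0}_2})v^{B,0}_2-\partial_xu^{B,1}_1\overline{v^{0}_1}-\overline{\partial_xv^{0}_1}u^{B,1}_1+\partial_xn^{B,1},
\]
and $\varrho$ depends only on already constructed quantities. Uniqueness follows from the linearity of (\ref{g}) together with the weighted $L^2$ energy estimate of Step 1 of Proposition \ref{regularity of g} applied to the difference of two solutions, with zero data.

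First, we verify (\ref{varrho}). By the trace bound (\ref{boundary regularity}) and Theorem \ref{th2.1} (with $m_1\le m-4$),
\[
\|\partial_t^j\overline{\partial_yv^0_1}\|_{L^2_TH^{m_1-1-j}_x}\le C\|\partial^j_tv^0_1\|_{L^2_TH^{m_1+1-j}_{xy}}\le C,
\]
and the $u^0$ hypotheses hold in the same way as in Lemma \ref{lem5.1}.

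Second, we verify (\ref{varrho t}) with $m_0=m_1-1$, term by term, using the product rule in $x$ and $L^\infty_x$ bounds on the traces. The term $\partial_xn^{B,1}$ costs one tangential derivative, so Lemma \ref{lem5.2} gives $\partial_t^j\partial_xn^{B,1}\in L^\infty_T\mathcal{H}^{k,m_1-1-j,1}$. The terms $\partial_xu^{B,1}_1$ and $u^{B,1}_1$ are controlled by Lemma \ref{lem5.1} at level $m_1-1-j$.

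The term $z\,\overline{\partial_x\partial_yu^0_2}\,v^{B,0}_2$ carries an extra factor $z$. It is absorbed because Lemmas \ref{lem5.1}--\ref{lem5.2} hold for every $k\in N$: one simply invokes them with weight index $k+m_1-j+1$. This yields the weighted $L^2_T$ requirement $\partial_t^j\varrho\in L^2_T\mathcal{H}^{k+m_0-j,m_0-j,0}$. The $L^\infty_T$ requirements $\partial_t\varrho\in L^\infty_T\mathcal{H}^{k,m_0-3,0}$ and $\varrho\in L^\infty_T\mathcal{H}^{k,m_0-l,l-2}$ for $l=2,3,4$ follow from (\ref{uB11z2t}), (\ref{uB14z}), (\ref{vB0t2z}), (\ref{vB02z4}) and the $n^{B,1}$ bounds. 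In each case the index loss is at most one tangential derivative, which the shift $m_0=m_1-1$ accommodates.

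With the hypotheses in place, (\ref{gBt}) gives (\ref{vB11t}) and the $L^\infty_T$ normal-derivative estimate (\ref{vB11z1t z2t}), and (\ref{gBz}) gives (\ref{vB11z4}).

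The main obstacle is the bookkeeping of weights and normal derivatives in $\varrho$, in particular for $\varrho\in L^\infty_T\mathcal{H}^{k,m_1-5,2}$. There the term $\partial_xn^{B,1}$ needs $n^{B,1}\in\mathcal{H}^{k,m_1-4,2}$, which is covered by $n^{B,1}\in L^\infty_T\mathcal{H}^{k,m_1-i,i+1}$. The $z$-weighted product requires $v^{B,0}_2\in\mathcal{H}^{k+1,m_1-4,2}$, which is available because $k$ is arbitrary. If an index fails by one, I would fall back on the cruder bound $\mathcal{H}^{k,m_1-1-i,i}$, which the statement permits.
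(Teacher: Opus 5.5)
Your proposal is correct and is essentially the paper's own proof. The paper also applies Proposition~\ref{regularity of g} with $m_0=m_1-1$, the same source $\varrho=-z\overline{\partial_x\partial_yu^0_2}\,v^{B,0}_2-\partial_xu^{B,1}_1\overline{v^0_1}-\overline{\partial_xv^0_1}\,u^{B,1}_1+\partial_xn^{B,1}$ and the same boundary datum $\mu=-\overline{\partial_yv^0_1}$; it checks the hypotheses via Theorem~\ref{th2.1}, Lemmas~\ref{lem5.1}--\ref{lem5.2} and the trace bound (\ref{boundary regularity}), and reads off the normal-derivative bounds and (\ref{vB11z4}) from (\ref{gBt})--(\ref{gBz}). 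Your absorption of $\partial_xv^{B,0}_2\,z\overline{\partial_yu^0_2}$ into the transport term via the curl relation, and your uniqueness remark, just make explicit what the paper leaves implicit. Strictly, one should also note that $w=\partial_zv^{B,1}_1-\partial_xv^{B,0}_2$ solves $\partial_tw+\overline{u^0_1}\partial_xw+z\overline{\partial_yu^0_2}\partial_zw=\partial_z^2w$ with zero initial and boundary data, so the constructed solution does satisfy the curl constraint; the paper omits this check as well.
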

     \noindent{\bf{Proof.}}
     For the system (\ref{eq:vB11}), we suppose
     \begin{align*}
			\varrho&=-z\overline{\partial_x\partial_yu^0_2}v^{B,0}_2 -{\partial_xu^{B,1}_1\overline{v^0_1}}-\overline{\partial_xv^0_1}u^{B,1}_1+\partial_xn^{B,1},
		\end{align*}
          and $\mu=-\overline{\partial_yv^0_1}$. Using Theorem \ref{th2.1}, Lemma $\ref{lem5.1}$-Lemma $\ref{lem5.2}$ and Sobolev embedding inequality, we prove that $\varrho$ and $\mu$ satisfy assumptions in Proposition $\ref{regularity of g}$ with $m_0=m_1-1$. One deduces as follows
        \begin{align*}
			&\|\partial^j_t\varrho\|^2_{L^2_T\mathcal{H}^{k+m_1-1-j,m_1-1-j,0}}\\
            &\leq C\sum^j_{\tau=0}\Big(\|\partial^{j-\tau}_tu^0_2\|^2_{L^\infty_TH^{m_1+3-(j-\tau)}_{xy}}\|\partial^\tau_tv^{B,0}_2\|^2_{L^2_T\mathcal{H}^{k+m_1-\tau,m_1-1-\tau,0}}\Big)\\
            &\quad+C\sum^j_{\tau=0}\Big(\|\partial^{j-\tau}_tv^{0}_1\|^2_{L^\infty_TH^{m_1+1-(j-\tau)}_{xy}}\|\partial^\tau_tu^{B,1}_1\|^2_{L^2_T\mathcal{H}^{k+m_1-1-\tau,m_1-\tau,0}}\Big)\notag\\
            &\quad+C\sum^j_{\tau=0}\Big(\|\partial^{j-\tau}_tv^{0}_1\|^2_{L^\infty_TH^{m_1+2-(j-\tau)}_{xy}}\|\partial^\tau_tu^{B,1}_1\|^2_{L^2_T\mathcal{H}^{k+m_1-1-\tau,m_1-1-\tau,0}}+\|\partial^{j}_tn^{B,1}\|^2_{L^2_T\mathcal{H}^{k+m_1-1-j,m_1-j,0}}\Big)\\
            &\leq C
		\end{align*}
        and
        \begin{align*}
            \|\mu\|_{L^2_TH^{m_1-1}_x}+\|\partial_t\mu\|_{L^2_TH^{m_1-2}_x}+\|\partial^2_t\mu\|_{L^2_TH^{m_1-3}_x}\leq C\left(\|v^0_1\|^2_{L^2_TH^{m_1+1}_{xy}}+\|\partial_tv^0_1\|^2_{L^2_TH^{m_1}_{xy}}+\|\partial^2_tv^0_1\|^2_{L^2_TH^{m_1-1}_{xy}}\right)\leq C.
        \end{align*}
        due to $(\ref{boundary regularity})$. Therefore $(\ref{vB11t})$ is immediately available. Moreover, we estimate $\|(1+z^{2k})^\frac{1}{2}\partial_t\varrho\|^2_{L^\infty_TH^{m_1-4}L^2_{z}}$ as follows
        \begin{align*}
			&\|\partial_t\varrho\|^2_{L^\infty_T\mathcal{H}^{k,m_1-4,0}}\\
            &\leq C\left(\|\partial_tu^0_2\|^2_{L^\infty_TH^{m_1}_{xy}}\|v^{B,0}_2\|^2_{L^\infty_T\mathcal{H}^{k+m_1-1,m_1-4,0}}+\|u^0_2\|^2_{L^\infty_TH^{m_1}_{xy}}\|\partial_tv^{B,0}_2\|^2_{L^\infty_T\mathcal{H}^{k+1,m_1-4,0}}\right)\\
            &\quad+C\left(\|\partial_tv^{0}_1\|^2_{L^\infty_TH^{m_1-1}_{xy}}\|u^{B,1}_1\|^2_{L^\infty_T\mathcal{H}^{k,m_1-3,0}}+\|v^{0}_1\|^2_{L^\infty_TH^{m_1-1}_{xy}}\|\partial_tu^{B,1}_1\|^2_{L^\infty_T\mathcal{H}^{k,m_1-4,0}}\right)\\
            &\quad+C\left(\|\partial_tv^{0}_1\|^2_{L^\infty_TH^{m_1-1}_{xy}}\|u^{B,1}_1\|^2_{L^\infty_T\mathcal{H}^{k,m_1-4,0}}+\|v^{0}_1\|^2_{L^\infty_TH^{m_1}_{xy}}\|\partial_tu^{B,1}_1\|^2_{L^\infty_T\mathcal{H}^{k,m_1-4,0}}+\|\partial_tn^{B,1}\|^2_{L^\infty_T\mathcal{H}^{k,m_1-3,0}}\right)\\
            &\leq C.
		\end{align*}
        Then by $(\ref{gBt})$, we obtain
        \begin{align*}
            \|\partial_t {v}^{B,1}_1\|_{k,m_1-4,2}\leq C.
        \end{align*}
         From the argument similar to the derivation $(\ref{rho Linfty})$, we obtain $\|\varrho\|_{L^\infty H^{k,m_1-1-l,l-2}}\leq C$ for $l=2,3,4$. Combining $(\ref{gBz})$, it can be seen that $(\ref{vB11z4})$ holds. The proof of Lemma $\ref{lem5.3}$ is complete.

        By Lemma $\ref{lem5.1}$-Lemma $\ref{lem5.3}$, next we can estimate the well-posedness of the solutions $v^{B,1}_2$, $v^{B,2}_1$ and $u^{B,2}_1$ for equations $(\ref{eq:vB12})$, $(\ref{eq:vB21})$ and $(\ref{eq:uB21})$, respectively.
        Before calculating the well-posedness of $v^{B,1}_2$ and $v^{B,2}_1$, we first need the regularity {estimates} of $u^{B,2}_1$.
         \begin{lemma}\label{lem5.4}
	    {Let $(n^0,v^0,u^0)$ be the solution obtained in Theorem \ref{th2.1},  $n^{B,1}, v^{B,0}_2$ and $u^{B,1}_1$ be as derived in Lemma $\ref{lem5.1}$. Then there exists a unique solution $u^{B,2}_1$ to the equations $(\ref{eq:uB21})$ on $[0,T]$ such that
	 		\begin{equation}\label{uB21t}
                \begin{split}{}
				\partial^j_tu^{B,2}_1&\in L^\infty(0,T; \mathcal{H}^{k,m_1-2-j,0})\cap L^2(0,T; \mathcal{H}^{k,m_1-2-j,1}),\;j=0,1,2.\\
                \end{split}
		  \end{equation}
            Furthermore, we also have that
           \begin{equation}\label{uB21z1t z2t}
                \begin{split}{}
                \partial_tu^{B,2}_1\in L^\infty(0,T; \mathcal{H}^{k,m_1-4,1}),\;\partial_tu^{B,2}_1\in L^\infty(0,T; \mathcal{H}^{k,m_1-5,2}),\\
                \end{split}
			\end{equation}
            and that
            \begin{equation}\label{uB21z4}
                \begin{split}{}
               u^{B,2}_1\in L^\infty(0,T; \mathcal{H}^{k,m_1-2-i,i}),\;i=1,2,3,4,\\
                \end{split}
			\end{equation}
            as well as
            \begin{equation}\label{uB32}
                     \begin{split}{}
				\partial^j_tu^{B,3}_2&\in L^\infty(0,T; \mathcal{H}^{k,m_1-3-j,1})\cap L^2(0,T; \mathcal{H}^{k,m_1-3-j,2}),j=0,1,2,\\
                \partial_tu^{B,3}_2&\in L^\infty(0,T; \mathcal{H}^{k,m_1-5,2}),\;\partial_tu^{B,3}_2\in L^\infty(0,T; \mathcal{H}^{k,m_1-6,3}),\\
                u^{B,3}_2&\in L^\infty(0,T; \mathcal{H}^{k,m_1-3-i,i+1}),\;i=1,2,3,4.
                    \end{split}
	 		\end{equation}
	 	}
	 \end{lemma}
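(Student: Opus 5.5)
We will cast (\ref{eq:uB21}) in the form (\ref{g}) and apply Proposition \ref{regularity of g} with $m_0=m_1-2$, exactly as in Lemmas \ref{lem5.1} and \ref{lem5.3}. We first note that $\partial_zu^{B,2}_1|_{z=0}=0$, so $\mu=0$. Since $\nabla\cdot u^0=0$ and $u^0_2|_{y=0}=0$, we have $\overline{\partial_xu^0_1}=-\overline{\partial_yu^0_2}$. Hence the left side $u^{B,2}_1\overline{\partial_xu^{I,0}_1}+\overline{u^{I,0}_1}\partial_xu^{B,2}_1$ is exactly $\partial_x(\overline{u^0_1}u^{B,2}_1)$, and the linear operator matches (\ref{g}). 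All remaining terms go into the source:
\begin{align*}
\varrho=&-z\overline{\partial_x\partial_yu^0_1}u^{B,1}_1-\overline{\partial_yu^0_1}u^{B,2}_2-z\overline{\partial_yu^0_1}\partial_xu^{B,1}_1-u^{B,1}_1\partial_xu^{B,1}_1\\
&-\Big(\tfrac12z^2\overline{\partial^2_yu^0_2}+\overline{u^{I,2}_2}\Big)\partial_zu^{B,1}_1-u^{B,2}_2\partial_zu^{B,1}_1-\partial_xp^{B,2}.
\end{align*}
Here $\overline{u^{I,2}_2}$ is fixed by the boundary matching $(\ref{layer boundary})_3$ at order $\varepsilon$, namely $\overline{u^{I,2}_2}=-u^{B,2}_2|_{z=0}=-\int_0^\infty\partial_xu^{B,1}_1d\eta$. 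Its $H^s_x$ norms are therefore bounded by weighted norms of $u^{B,1}_1$ with weight $k>1$, through Cauchy--Schwarz in $z$.

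The main work is to verify (\ref{varrho t}) for this $\varrho$ with $m_0=m_1-2$. That means controlling $\partial^j_t\varrho$ in $L^2_T\mathcal{H}^{k+m_1-2-j,m_1-2-j,0}$ for $j=0,1,2$, $\partial_t\varrho$ in $L^\infty_T\mathcal{H}^{k,m_1-5,0}$, and $\varrho$ in $L^\infty_T\mathcal{H}^{k,m_1-2-l,l-2}$. The treatment is term by term, using the Leibniz rule in $t$ and $x$.

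The outer coefficients $\overline{\partial^\beta u^0}$ are placed in $L^\infty_x$ via the trace bound (\ref{boundary regularity}) and Theorem \ref{th2.1}; $m\ge m_1+4$ leaves enough room for this. The powers $z$ and $z^2$ are absorbed into the weight, which is harmless because Lemma \ref{lem5.1} holds for every $k\in N$. The profiles $u^{B,1}_1$, $\partial_zu^{B,1}_1$ and $u^{B,2}_2$ are controlled by (\ref{uB11t2})--(\ref{uB22}). The pressure term $\partial_xp^{B,2}$ is controlled by Lemma \ref{lem5.2}; it costs only one tangential derivative, which fits since $p^{B,2}$ carries $m_1-j$ tangential derivatives.

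The quadratic terms $u^{B,1}_1\partial_xu^{B,1}_1$ and $u^{B,2}_2\partial_zu^{B,1}_1$ need one factor in $L^\infty_{xz}$. We bound it by an anisotropic Sobolev inequality, $\|g\|_{L^\infty_{xz}}\lesssim\|g\|_{0,1,1}$ (or Lemma \ref{lem trace theorem} in $z$ combined with $H^1_x\subset L^\infty_x$). The other factor carries the weight and the top derivatives. The requirement $m_1\ge7$ guarantees that the lower-order factor always has at least two spare derivatives.

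We expect the most delicate part to be the time-differentiated and normal-regularity requirements. Specifically, these are $\partial^2_t\varrho\in L^2_T\mathcal{H}^{k+m_1-4,m_1-4,0}$ and $\varrho\in L^\infty_T\mathcal{H}^{k,m_1-6,2}$. They involve $\partial^2_t\partial_zu^{B,1}_1$ and $\partial^2_zu^{B,1}_1$, which are available only through the $L^2_T\mathcal{H}^{\cdot,\cdot,1}$ bounds and (\ref{uB14z}). We will match these carefully, trading tangential for normal derivatives as in (\ref{gBz234}), and lose one derivative relative to $u^{B,1}_1$ at each stage; this accounts for the shift $m_1-2$.

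With (\ref{varrho})--(\ref{varrho t}) verified, Proposition \ref{regularity of g} gives (\ref{uB21t}), (\ref{uB21z1t z2t}) and (\ref{uB21z4}). Uniqueness follows from the linear energy estimate applied to the difference of two solutions, as in the proof of $u^{B,0}_1\equiv0$. Finally, (\ref{uB32}) follows from the formula (\ref{eq:uB32}). For $k>1$, $\|(1+z^{2k})^{1/2}\int_z^\infty h\,d\eta\|_{L^2_z}\lesssim\|h\|_{k+1,0,0}$, and because every estimate holds for all $k$, one tangential derivative is lost while one normal derivative is gained.
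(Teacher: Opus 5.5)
Your proposal is correct and follows essentially the paper's route. You recast (\ref{eq:uB21}) in the form (\ref{g}) with $\mu=0$ and $m_0=m_1-2$, and put the lower-order profiles, $\partial_xp^{B,2}$ and $\overline{u^{I,2}_2}=-u^{B,2}_2|_{z=0}$ into $\varrho$. You then verify (\ref{varrho t}) term by term with trace and product/embedding inequalities, and read off (\ref{uB32}) from (\ref{eq:uB32}); one small correction: the identity $u^{B,2}_1\overline{\partial_xu^0_1}+\overline{u^0_1}\partial_xu^{B,2}_1=\partial_x(\overline{u^0_1}u^{B,2}_1)$ is just the product rule and does not need $\nabla\cdot u^0=0$.
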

	 \noindent{\bf{Proof.}} Let
             \begin{equation*}
		\begin{split}{}			{\varrho}&=\Big(u^{B,1}_1z\overline{\partial_x\partial_yu^0_1}+u^{B,2}_2\overline{\partial_yu^0_1}+z\overline{\partial_yu^0_1}\partial_xu^{B,1}_1+\frac{1}{2}z^2\overline{\partial^2_yu^0_2}\partial_zu^{B,1}_1\\
        &\quad+\Big({u^{B,1}_1\partial_xu^{B,1}_1+\overline{u^{I,2}_2}\partial_zu^{B,1}_1+u^{B,2}_2\partial_zu^{B,1}_1}\Big)
        =\varrho_1+\varrho_2
		\end{split}
	\end{equation*}
    and $\mu=0$.
      To apply Proposition \ref{regularity of g} to the equations $(\ref{eq:uB21})$, we prove that $\varrho$ satisfies the assumptions of Proposition \ref{regularity of g} at $m_0=m_1-2$.
    Before obtaining the estimate of $\varrho$, we first need the following conclusion of $\overline{u^{I,2}_2}$. By the boundary expansion $(\ref{layer boundary})$ in Section 3, it is evident that
            \begin{equation*}
                     \begin{split}{}
               \overline{u^{I,2}_2}=-u^{B,2}_2(t,x,0).
                    \end{split}
	 	\end{equation*}
          First note that for $f(t,x,z)\in H^{\iota+1}_xH^1_z$ with fixed $t>0$ the following holds
          \begin{align}\label{boundary L4 regularity}
          \|\partial^\iota_xf(t,x,0)\|_{L^4_x}&\leq C\|f(t,x,z)\|_{H^{\iota+1}_xH^1_z},
        \end{align}
        and
        if $f(t,x,z)\in H^{\iota+2}_{x}H^{2}_{z}$, we have
        \begin{align}\label{boundary L infty regularity}
          \|\partial^\iota_xf(t,x,0)\|_{L^\infty_x}&\leq C\|f(t,x,z)\|_{H^{\iota+2}_xH^2_z}.
        \end{align}
          Then by Lemma $\ref{lem5.1}$ and the Cauchy-Schwarz inequality, for $j=0,1,2$ we can obtain
           \begin{equation}\label{u2I2 regularity}
                     \begin{split}{}  \|\partial^{m_1-2}_x\overline{u^{I,2}_2}\|_{L^4_x}&\leq\|\partial^{m_1-2}_xu^{B,2}_2(t,x,0)\|_{H^1_x}\leq C\|u^{B,2}_2\|_{H^{m_1-1}_xH^1_z},\\
                     \|\partial^{m_1-3}_x\partial_t\overline{u^{I,2}_2}\|_{L^4_x}&\leq\|\partial^{m_1-3}_x\partial_tu^{B,2}_2(t,x,0)\|_{H^1_x}\leq C\|\partial_tu^{B,2}_2\|_{H^{m_1-2}_xH^1_z},\\
                     \|\partial^{m_1-4}_x\partial_t\overline{u^{I,2}_2}\|_{L^\infty_x}&\leq C\|\partial_tu^{B,2}_2\|_{H^{m_1-2}_xH^2_z},\quad
                     \|\partial^{m_1-4}_x\partial^2_t\overline{u^{I,2}_2}\|_{L^2_x}\leq C\|\partial^2_tu^{B,2}_2\|_{H^{m_1-4}_xH^1_z}.\\
                    \end{split}
	 	\end{equation}
        Next, we can infer that the estimate of $\partial^j_t\varrho_1$ is as follows
    \begin{align*}
			&\|\partial^j_t\varrho_1\|_{L^2_T\mathcal{H}^{k+m_1-2-j,m_1-2-j,0}}\\
            &\leq C\sum^j_{\tau=0}\|\partial^\tau_tu^0_1\|_{L^\infty_TH^{m_1+2-\tau}_{xy}}\|\partial^{j-\tau}_tu^{B,1}_1\|_{L^2_T\mathcal{H}^{k+m_1-1-(j-\tau),m_1-2-{(j-\tau)},0}}\\
            &\quad+C\sum^j_{\tau=0}\|\partial^\tau_tu^0_1\|_{L^\infty_TH^{m_1+1-\tau}_{xy}}\|\partial^{j-\tau}_tu^{B,2}_2\|_{L^2_T\mathcal{H}^{k+m_1-2-(j-\tau),m_1-2-(j-\tau),0}}\\
            &\quad+C\sum^j_{\tau=0}\|\partial^\tau_tu^0_1\|_{L^\infty_TH^{m_1+1-\tau}_{xy}}\|\partial^{j-\tau}_tu^{B,1}_1\|_{L^2_T\mathcal{H}^{k+m_1-1-(j-\tau),m_1-1-(j-\tau),0}}\\
            &\quad+C\sum^j_{\tau=0}\Big(\frac12\|\partial^\tau_tu^0_2\|_{L^\infty_TH^{m_1+2-\tau}_{xy}}\|\partial^{j-\tau}_tu^{B,1}_1\|_{L^2_T\mathcal{H}^{k+m_1-(j-\tau),m_1-2-(j-\tau),1}}+\|\partial^j_tp^{B,2}\|_{k+m_1-2-j,m_1-1-j,0}\Big)\\
            &\leq C,
	\end{align*}
    due to $(\ref{boundary regularity})$. Moreover, $\partial^j_t\varrho_2$ is estimated as follows
    \begin{align*}
			&\|\varrho_2\|_{L^2_T\mathcal{H}^{k+m_1-2,m_1-2,0}}\\
            &\leq C\left(\|u^{B,1}_1\|_{L^\infty_TH^{m_1-1}_xH^1_z}\|u^{B,1}_1\|_{L^2_T\mathcal{H}^{k+m_1-2,m_1,1}}+2\|u^{B,2}_2\|_{L^\infty_TH^{m_1-1}_xH^1_z}\|u^{B,1}_1\|_{L^2_T\mathcal{H}^{k+m_1-2,m_1-1,2}}\right)\\
            &\leq C,\\
			&\|\partial_t\varrho_2\|_{L^2_T\mathcal{H}^{k+m_1-3,m_1-3,0}}\\
            &\leq C\left(\|\partial_tu^{B,1}_1\|_{L^\infty_TH^{m_1-2}_xH^1_z}\|u^{B,1}_1\|_{L^2_T\mathcal{H}^{k+m_1-3,m_1-1,1}}+\|u^{B,1}_1\|_{L^\infty_TH^{m_1-2}_xH^1_z}\|\partial_tu^{B,1}_1\|_{L^2_T\mathcal{H}^{k+m_1-3,m_1-1,1}}\right)\\
           &\quad+C\left(\|\partial_tu^{B,2}_2\|_{L^\infty_TH^{m_1-2}_xH^1_z}\|u^{B,1}_1\|_{L^2_T\mathcal{H}^{k+m_1-3,m_1-2,2}}+2\|u^{B,2}_2\|_{L^\infty_TH^{m_1-2}_xH^1_z}\|\partial_tu^{B,1}_1\|_{L^2_T\mathcal{H}^{k+m_1-3,m_1-2,2}}\right)\\
            &\leq C,\\
			&\|\partial^2_t\varrho_2\|_{L^2_T\mathcal{H}^{k+m_1-4,m_1-4,0}}\\
            &\leq C\left(\|\partial^2_tu^{B,1}_1\|_{L^\infty_TH^{m_1-4}_xL^2_z}\|u^{B,1}_1\|_{L^2_T\mathcal{H}^{k+m_1-4,m_1-1,2}}+2\|\partial_tu^{B,1}_1\|_{L^\infty_TH^{m_1-3}_xH^1_z}\|\partial_tu^{B,1}_1\|_{L^2_T\mathcal{H}^{k+m_1-4,m_1-2,1}}\right)\\  &\quad+C\left(\|u^{B,1}_1\|_{L^\infty_TH^{m_1-2}_xH^2_z}\|\partial^2_tu^{B,1}_1\|_{L^2_T\mathcal{H}^{k+m_1-4,m_1-3,0}}+\|\partial^2_tu^{B,2}_2\|_{L^\infty_TH^{m_1-4}_{x}H^1_z}\|u^{B,1}_1\|_{L^2_T\mathcal{H}^{k+m_1-4,m_1-2,3}}\right)\\
            &\quad+C\left(\|\partial_tu^{B,2}_2\|_{L^2_TH^{m_1-2}_{x}H^2_z}\|\partial_tu^{B,1}_1\|_{L^\infty_T\mathcal{H}^{k+m_1-4,m_1-4,1}}+\|u^{B,2}_2\|_{L^\infty_TH^{m_1-2}_{x}H^2_z}\|\partial^2_tu^{B,1}_1\|_{L^2_T\mathcal{H}^{k+m_1-4,m_1-4,1}}\right)\\
            &\quad+C\|\partial^2_tu^{B,2}_2\|_{L^\infty_TH^{m_1-3}_xH^1_z}\|u^{B,1}_1\|_{L^2_T\mathcal{H}^{k+m_1-4,m_1-3,2}}\\           &\quad+C\left(\|\partial_tu^{B,2}_2\|_{L^\infty_TH^{m_1-3}_xH^1_z}\|\partial_tu^{B,1}_1\|_{L^2_T\mathcal{H}^{k+m_1-4,m_1-3,2}}+\|u^{B,2}_2\|_{L^\infty_TH^{m_1-2}_xH^2_z}\|\partial^2_tu^{B,1}_1\|_{L^2_T\mathcal{H}^{k+m_1-4,m_1-4,1}}\right)\\
            &\leq C,
	\end{align*}
    where we have used (\ref{u2I2 regularity}) and the following embedding inequalities
        \begin{align}\label{fgHm}
            &\|f(t,x,z)g(t,x,z)\|_{H^\iota_xL^2_z}\notag\\
            &\leq\sum_{\alpha\leq \iota}\Big(\|\partial_x^\alpha f(t,x,z)\|_{L^4_{xz}}\|g(t,x,z)\|_{L^4_{xz}}+\sum_{\beta\leq \alpha}C^\beta_\alpha\|\partial_x^\beta f(t,x,z)\|_{L^4_{xz}}\|\partial_x^{\alpha-\beta} g(t,x,z)\|_{L^4_{xz}}\notag\\
            &\quad+\|f(t,x,z)\|_{L^4_{xz}}\|\partial_x^\alpha g(t,x,z)\|_{L^4_{xz}}\Big)\\
            &\leq C\|f(t,x,z)\|_{H^{\iota+1}_xH^1_z}\|g(t,x,z)\|_{H^{\iota+1}_xH^1_z},\notag\notag
        \end{align}
        and
        \begin{align}\label{fgHm1}
            &\|f(t,x,z)g(t,x,z)\|_{H^\iota_xL^2_z}\notag\\
            &\leq\sum_{\alpha\leq \iota}\Big(\|\partial_x^\alpha f(t,x,z)\|_{L^2_{xz}}\|g(t,x,z)\|_{L^\infty_{xz}}+\sum_{\beta\leq \alpha}C^\beta_\alpha\|\partial_x^\beta f(t,x,z)\|_{L^2_{xz}}\|\partial_x^{\alpha-\beta} g(t,x,z)\|_{L^\infty_{xz}}\notag\\
            &\quad+\|f(t,x,z)\|_{L^2_{xz}}\|\partial_x^\alpha g(t,x,z)\|_{L^\infty_{xz}}\Big)\\
            &\leq C\|f(t,x,z)\|_{H^{\iota}_xL^2_z}\|g(t,x,z)\|_{H^{\iota+2}_xH^2_z}.\notag\notag
        \end{align}
       Collecting the above estimates for $\varrho_1$ and $\varrho_2$, we deduce that $\|\partial^j_t {\varrho}\|_{L^2_{T}\mathcal{H}^{k+m_1-2-j,m_1-2-j,0}}\leq C$ is valid for $j=0,1,2$.
       By Theorem \ref{th2.1}, Lemma $\ref{lem5.1}$ and (\ref{u2I2 regularity}), we obtain
         \begin{align*}
             \|\partial_t {\varrho}\|_{L^2_{T}\mathcal{H}^{k,m_1-5,0}}\leq C\left(\|\partial_t {\varrho}_1\|_{L^2_{T}\mathcal{H}^{k,m_1-5,0}}+\|\partial_t {\varrho}_2\|_{L^2_{T}\mathcal{H}^{k,m_1-5,0}}\right)\leq C.
         \end{align*}

         By a similar argument as deriving $(\ref{gBt})$-$(\ref{gBz})$, we deduce for all $k\in N$ that $(\ref{uB21t})$ valid. From the argument similar to the derivation $(\ref{rho Linfty})$, we obtain $\|\varrho\|_{L^\infty H^{k,m_1-2-l,l-2}}\leq C$ for $l=2,3,4$. Combining $(\ref{gBz})$, we know that $(\ref{uB21z4})$ holds. Moreover, $(\ref{uB32})$ follows directly from $(\ref{eq:uB32})$, $(\ref{uB21t})$ and $(\ref{uB21z4})$. Thus, the proof of Lemma $\ref{lem5.4}$ is complete.

       \begin{lemma}\label{lem5.5}
		{Let $(n^0,v^0,u^0)$ be the solution obtained in Theorem \ref{th2.1}, $(u^{B,1}_1, u^{B,2}_2)$, $(v^{B,0}_2, n^{B,1})$, $v^{B,1}_1$ and $(u^{B,2}_1,u^{B,3}_2)$ be as obtained in Lemma $\ref{lem5.1}$-Lemma $\ref{lem5.4}$, respectively. Then there exists a unique solution $v^{B,1}_2$ to the equations $(\ref{eq:vB12})$, such that
			\begin{equation}\label{vB12t}
				\begin{split}{}
				\partial^j_tv^{B,1}_2&\in L^\infty(0,T; \mathcal{H}^{k,m_1-2-j,0})\cap L^2(0,T; \mathcal{H}^{k,m_1-2-j,1}),\;j=0,1,2.\\
                \end{split}
			\end{equation}
            Furthermore, it follows from the equations $(\ref{eq:vB12})$
            \begin{equation}\label{vB12z1t z2t}
				\begin{split}{}
                \partial_tv^{B,1}_2\in L^\infty(0,T; \mathcal{H}^{k,m_1-4,1}),\;\partial_tv^{B,1}_2\in L^\infty(0,T; \mathcal{H}^{k,m_1-5,2}),\\
                \end{split}
			\end{equation}
            and that
            \begin{equation}\label{vB12z4}
				\begin{split}{}
               v^{B,1}_2\in L^\infty(0,T; \mathcal{H}^{k,m_1-2-i,i}),\;\;i=1,2,3,4,\\
                \end{split}
			\end{equation}
            For the equations $(\ref{eq:nB2})$, we have the following
            \begin{align}\label{n^B2tz}
                \partial^j_tn^{B,2}&\in L^\infty(0,T; \mathcal{H}^{k,m_1-2-j,1})\cap L^2(0,T; \mathcal{H}^{k,m_1-2-j,2}),\;j=0,1,2,\notag\\
                \partial_tn^{B,2}&\in L^\infty(0,T; \mathcal{H}^{k,m_1-4,2}),\;\partial_tn^{B,2}\in L^\infty(0,T; \mathcal{H}^{k,m_1-5,3}),\\
                n^{B,2}&\in L^\infty(0,T; \mathcal{H}^{k,m_1-2-i,i+1}),\;i=1,2,3,4.\notag\notag
			\end{align}
		}
	\end{lemma}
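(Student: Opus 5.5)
The plan follows the template of Lemma \ref{lem5.2}, because \eqref{eq:vB12} has the same structure as \eqref{eq:vB02}. It is a parabolic equation with transport term $z\overline{\partial_yu^0_2}\partial_z v^{B,1}_2$, zeroth-order coefficient $\overline{\partial_yu^0_2}$, and Dirichlet datum $v^{B,1}_2|_{z=0}=-\overline{v^{I,1}_2}=0$, using \eqref{nvu I1=0}. First I will eliminate the coupling terms. The curl relation $\partial_z v^{B,2}_1=\partial_x v^{B,1}_2$ turns $\partial_z v^{B,2}_1\overline{u^{I,0}_1}$ into the tangential transport $\overline{u^0_1}\partial_x v^{B,1}_2$. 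For $\partial_z n^{B,2}$, I differentiate \eqref{eq:nB2} to get
\[
-\partial_z n^{B,2}=\overline{n^0}v^{B,1}_2+\overline{v^0_2}\,n^{B,1}+v^{B,0}_2n^{B,1}-\overline{\partial_yn^0}\int_z^\infty(\xi\partial_\xi v^{B,0}_2+v^{B,0}_2)d\xi .
\]
This produces the absorption term $\overline{n^0}v^{B,1}_2$ plus known sources. With these substitutions the equation for $v^{B,1}_2$ takes exactly the form \eqref{f}, with $f^B=v^{B,1}_2$ and homogeneous boundary and initial data, so no lifting by $\varphi$ is needed. The remaining terms go into $\rho$: $z\overline{\partial^2_yu^0_2}v^{B,0}_2$, $\overline{\partial_yu^0_1}v^{B,1}_1$, $\partial_zu^{B,1}_1(z\overline{\partial_yv^0_1}+v^{B,1}_1)$, $\partial_zu^{B,2}_1\overline{v^0_1}$, $\partial_zu^{B,2}_2(\overline{v^0_2}+v^{B,0}_2)$, $\overline{\partial_yv^0_1}u^{B,1}_1$, $\partial_zv^{B,0}_2(\tfrac12z^2\overline{\partial^2_yu^0_2}+\overline{u^{I,2}_2}+u^{B,2}_2)$, $\partial_zv^{B,1}_1(z\overline{\partial_yu^0_1}+u^{B,1}_1)$, $2\partial_zv^{B,0}_2(\overline{v^0_2}+v^{B,0}_2)$, and the $n^{B,1}$- and $v^{B,0}_2$-terms coming from $\partial_z n^{B,2}$, all with a minus sign.

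Next I will verify that $\rho$ satisfies \eqref{rhotj} with $m_0=m_1-2$. Each linear term (outer trace times inner profile) is handled as in Lemma \ref{lem5.3}, using \eqref{boundary regularity} for the traces and the bounds of Lemmas \ref{lem5.1}--\ref{lem5.4}. The losses in tangential regularity are consistent. $\partial_z u^{B,2}_1$ requires $u^{B,2}_1\in\mathcal H^{k,m_1-2,1}$, which holds. $\partial_zu^{B,2}_2=-\partial_xu^{B,1}_1$ costs one $x$-derivative from $m_1$. $\partial_z v^{B,1}_1=\partial_xv^{B,0}_2$ costs one from $m_1$. The polynomial factors $z,z^2$ are absorbed because the assumptions hold for every weight $k$; this is why $L^2_T\mathcal H^{k+m_0-j,\cdot,\cdot}$ is available. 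The nonlocal term $\int_z^\infty(\xi\partial_\xi v^{B,0}_2+v^{B,0}_2)d\xi=-zv^{B,0}_2$ simplifies explicitly, which avoids a Hardy inequality. The quadratic terms $\partial_zv^{B,0}_2 v^{B,0}_2$, $\partial_zv^{B,1}_1u^{B,1}_1$, $u^{B,2}_2\partial_zv^{B,0}_2$, $v^{B,0}_2n^{B,1}$ and $\partial_zu^{B,1}_1v^{B,1}_1$ I will treat with \eqref{fgHm}--\eqref{fgHm1}. The trace $\overline{u^{I,2}_2}=-u^{B,2}_2|_{z=0}$ is controlled by \eqref{u2I2 regularity}. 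The $L^\infty_T$ bounds $\partial_t\rho\in\mathcal H^{k,m_1-5,0}$ and $\rho\in\mathcal H^{k,m_1-2-l,l-2}$ follow from the $\mathcal H^{k,\cdot,i}$ bounds on the inputs, as in \eqref{rho Linfty}.

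The main obstacle is the time-derivative bookkeeping, specifically $\partial^2_t\rho\in L^2_T\mathcal H^{k+m_1-4,m_1-4,0}$ for the quadratic terms containing $z$-derivatives. One example is $\partial_t^2(\partial_zv^{B,0}_2\,u^{B,2}_2)$, where one factor must carry an $L^\infty_T$ bound with $H^2_z$ normal regularity. Here I will pair $\partial_t^2 v^{B,0}_2\in L^2_T\mathcal H^{\cdot,\cdot,1}$ with $u^{B,2}_2\in L^\infty_T\mathcal H^{k,m_1-3,2}$, and $\partial_tv^{B,0}_2\in L^\infty\mathcal H^{k,m_1-3,2}$ with $\partial_tu^{B,2}_2$. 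This is where $m_1\ge7$ is needed, so that after losing roughly five tangential derivatives the embedding \eqref{fgHm1} still has $H^2_x$ left over.

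Once the estimates on $\rho$ are in place, Proposition \ref{regularity of f^B} gives \eqref{vB12t}--\eqref{vB12z4}; uniqueness follows by the linear energy estimate applied to the difference of two solutions. Finally, \eqref{n^B2tz} follows directly from formula \eqref{eq:nB2}: each term gains one $z$-derivative over its integrand and keeps the weight, since $\int_z^\infty$ of a rapidly decaying function is bounded in weighted $L^2$ by a higher weight, which is available for all $k$.
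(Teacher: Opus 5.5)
Your proposal is correct and is essentially the paper's own proof. The paper likewise uses the curl relation and the $z$-derivative of \eqref{eq:nB2} to put \eqref{eq:vB12} in the form \eqref{f} with homogeneous data. It then splits $\rho$ into linear, quadratic and nonlocal parts, with the nonlocal part collapsing to $-\overline{\partial_yn^0}\,zv^{B,0}_2$, and checks \eqref{rhotj} with $m_0=m_1-2$ via \eqref{boundary regularity}, \eqref{u2I2 regularity} and \eqref{fgHm}--\eqref{fgHm1}. Finally it applies Proposition~\ref{regularity of f^B} and reads off $n^{B,2}$ from \eqref{eq:nB2}.

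One side remark needs correcting: within this lemma, applying Proposition~\ref{regularity of f^B} with $m_0=m_1-2\ge 4$ only requires $m_1\ge 6$. The restriction $m_1\ge 7$ is actually forced later, in Lemma~\ref{lem5.6}, where $m_0=m_1-3$.
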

	\noindent{\bf{Proof.}} Let
    \begin{align*}
        {\rho}&=\Big(-z\overline{\partial^2_yu^{I,0}_2}v^{B,0}_2-\overline{\partial_yu^0_1}v^{B,1}_1-\partial_zu^{B,1}_1z\overline{\partial_yv^0_1}-{\partial_zu^{B,2}_1\overline{v^0_1}}-\partial_zu^{B,2}_2\overline{v^0_2}-\overline{\partial_yv^0_1}u^{B,1}_1\\
        &\quad-\frac{1}{2}\partial_zv^{B,0}_2z^2\overline{\partial_yu^0_2}-\partial_zv^{B,1}_1z\overline{\partial_yu^0_1}-2\partial_zv^{B,0}_2\overline{v^0_2}-\overline{v^0_2}n^{B,1}\Big)\\
        &\quad-\Big(\partial_zu^{B,1}_1v^{B,1}_1+\partial_zu^{B,2}_2v^{B,0}_2+\partial_zv^{B,0}_2\overline{u^{I,2}_2}+\partial_zv^{B,0}_2u^{B,2}_2+\partial_zv^{B,1}_1u^{B,1}_1+2\partial_zv^{B,0}_2v^{B,0}_2+v^{B,0}_2n^{B,1}\Big)\\
        &\quad+\Big(\int^\infty_z\overline{\partial_yn^0}(\eta\partial_\eta v^{B,0}_2+v^{B,0}_2)d\eta\Big)\\
        &=\rho_1+\rho_2+\rho_3.
		\end{align*}
        Applying Lemma \ref{lem5.1}-Lemma \ref{lem5.4} and Proposition \ref{regularity of f^B} for $m_0=m_1-2$, $\|\partial^j_t {\rho}\|^2_{L^2_{T}\mathcal{H}^{k+m_1-2-j,m_1-2-j,0}}$ was estimated as follows
		\begin{align*}
			&\|\partial^j_t {\rho_1}\|^2_{L^2_{T}\mathcal{H}^{k+m_1-2-j,m_1-2-j,0}}\notag\\
            &\leq C\sum^j_{\tau=0}\|\partial_t^\tau u^0_2\|_{L^\infty_TH^{m_1+2-\tau}_{xy}}\|\partial_t^{j-\tau}v^{B,0}_2\|^2_{L^2_{T}\mathcal{H}^{k+m_1-1-(j-\tau),m_1-2-(j-\tau),0}}\notag\\
            &\quad+C\sum^j_{\tau=0}\|\partial_t^\tau u^0_1\|_{L^\infty_TH^{m_1+1-\tau}_{xy}}\|\partial_t^{j-\tau}v^{B,1}_1\|_{L^2_{T}\mathcal{H}^{k+m_1-2-(j-\tau),m_1-2-(j-\tau),0}}\\
            &\quad+C\sum^j_{\tau=0}\|\partial_t^\tau v^0_1\|_{L^\infty_TH^{m_1+1-\tau}_{xy}}\|\partial_t^{j-\tau}u^{B,1}_1\|_{L^2_{T}\mathcal{H}^{k+m_1-1-(j-\tau),m_1-2-(j-\tau),1}}\\
            &\quad+C\sum^j_{\tau=0}\|\partial_t^\tau{v^{0}_1}\|_{L^\infty_TH^{m_1-\tau}_{xy}}\|\partial_t^{j-\tau}u^{B,2}_1\|^2_{L^2_{T}\mathcal{H}^{k+m_1-2-(j-\tau),m_1-2-(j-\tau),1}}\\
            &\quad+C\sum^j_{\tau=0}\|\partial_t^\tau{v^{0}_2}\|_{L^\infty_TH^{m_1-\tau}_{xy}}\|\partial_t^{j-\tau}u^{B,2}_2\|_{L^2_{T}\mathcal{H}^{k+m_1-2-(j-\tau),m_1-2-(j-\tau),1}}\\
            &\quad+C\sum^j_{\tau=0}\|\partial_t^\tau v^{0}_1\|_{L^\infty_TH^{m_1+1-\tau}_{xy}}\|\partial_t^{j-\tau}u^{B,1}_1\|_{L^2_{T}\mathcal{H}^{k+m_1-2-(j-\tau),m_1-2-(j-\tau),0}}\\
            &\quad
            +C\sum^j_{\tau=0}\|\partial_t^\tau u^0_2\|_{L^\infty_TH^{m_1+2-\tau}_{xy}}\|\partial_t^{j-\tau}v^{B,0}_2\|_{L^2_{T}\mathcal{H}^{k+m_1-(j-\tau),m_1-2-(j-\tau),1}}\\
            &\quad+C\sum^j_{\tau=0}\|\partial_t^\tau u^{0}_1\|_{L^\infty_TH^{m_1+1-\tau}_{xy}}\|\partial_t^{j-\tau}v^{B,1}_1\|_{L^2_{T}\mathcal{H}^{k+m_1-1-(j-\tau),m_1-2-(j-\tau),1}}\\
            &\quad+C\sum^j_{\tau=0}\|\partial_t^\tau{v^{0}_2}\|_{L^\infty_TH^{m_1-\tau}_{xy}}\|\partial_t^{j-\tau}v^{B,0}_2\|_{L^2_{T}\mathcal{H}^{k+m_1-2-(j-\tau),m_1-2-(j-\tau),1}}\\
            &\quad+C\sum^j_{\tau=0}\|\partial_t^\tau{v^{0}_2}\|_{L^\infty_TH^{m_1-\tau}_{xy}}\|\partial_t^{j-\tau}n^{B,1}\|_{L^2_{T}\mathcal{H}^{k+m_1-2-(j-\tau),m_1-2-(j-\tau),0}}\\
            &\leq C.
		\end{align*}
        due to $(\ref{boundary regularity})$. Moreover, for $\partial^j_t {\rho_2}$, we first estimate $\partial_t {\rho_2}$ as follows
        \begin{align*}
			&\|\partial_t {\rho_2}\|_{L^2_{T}\mathcal{H}^{k+m_1-3,m_1-3,0}}\notag\\
            &\leq C\left(\|\partial_t u^{B,1}_1\|_{L^\infty_{T}H^{m_1-2}_xH^2_z}\|v^{B,1}_1\|_{L^2_{T}\mathcal{H}^{k+m_1-3,m_1-2,1}}+\|u^{B,1}_1\|_{L^\infty_{T}H^{m_1-2}_xH^2_z}\|\partial_tv^{B,1}_1\|_{L^2_{T}\mathcal{H}^{k+m_1-3,m_1-2,1}}\right)\\
            &\quad+C\left(\|\partial_tu^{B,2}_2\|_{L^2_{T}H^{m_1-2}_xH^2_z}\|v^{B,0}_2\|_{L^\infty_{T}\mathcal{H}^{k+m_1-3,m_1-2,1}}+\|u^{B,2}_2\|_{L^\infty_{T}H^{m_1-2}_xH^2_z}\|\partial_tv^{B,0}_2\|_{L^2_{T}\mathcal{H}^{k+m_1-3,m_1-2,1}}\right)\\
            &\quad+C\left(\|\partial_t{u^{B,2}_2}\|_{L^\infty_TH^{m_1-2}_xH^1_z}\|v^{B,0}_2\|_{L^2_{T}\mathcal{H}^{k+m_1-3,m_1-2,2}}+\|{u^{B,2}_2}\|_{L^\infty_TH^{m_1-2}_xH^1_z}\|\partial_tv^{B,0}_2\|_{L^2_{T}\mathcal{H}^{k+m_1-3,m_1-2,2}}\right)\\
            &\quad+C\left(\|\partial_tv^{B,0}_2\|_{L^2_{T}\mathcal{H}^{k+m_1-3,m_1-2,2}}\|{u^{B,2}_2}\|_{L^\infty_TH^{m_1-2}_xH^1_z}+\|v^{B,0}_2\|_{L^\infty_{T}\mathcal{H}^{k+m_1-3,m_1-2,2}}\|\partial_t{u^{B,2}_2}\|_{L^2_TH^{m_1-2}_xH^1_z}\right)\\
            &\quad+C\left(\|\partial_tv^{B,1}_1\|_{L^2_{T}\mathcal{H}^{k+m_1-3,m_1-3,1}}\| u^{B,1}_1\|_{L^\infty_TH^{m_1-1}_xH^2_z}+\|v^{B,1}_1\|_{L^\infty_{T}\mathcal{H}^{k+m_1-3,m_1-2,2}}\|\partial_tu^{B,1}_1\|_{L^2_TH^{m_1-2}_xH^1_z}\right)\\
            &\quad+C\left(\|\partial_tv^{B,0}_2\|_{L^2_TH^{m_1-2}_xH^2_z}\|v^{B,0}_2\|_{L^\infty_{T}\mathcal{H}^{k+m_1-3,m_1-2,1}}+2\|v^{B,0}_2\|_{L^\infty_TH^{m_1-2}_xH^2_z}\|\partial_tv^{B,0}_2\|_{L^2_{T}\mathcal{H}^{k+m_1-3,m_1-2,1}}\right)\\
            &\quad+C\left(\|\partial_tn^{B,1}\|_{L^2_TH^{m_1-2}_xH^1_z}\|v^{B,0}_2\|_{L^\infty_{T}\mathcal{H}^{k+m_1-3,m_1-2,1}}+\|n^{B,1}\|_{L^\infty_TH^{m_1-2}_xH^1_z}\|\partial_tv^{B,0}_2\|_{L^2_{T}\mathcal{H}^{k+m_1-3,m_1-2,1}}\right)\\
            &\leq C
		\end{align*}
        due to (\ref{u2I2 regularity}), the inequalities (\ref{fgHm}) and (\ref{fgHm1}).
        Similarly, one derives $$\|\rho_2\|_{L^2_T\mathcal{H}^{k+m_1-2,m_1-2,0}}+\|\partial^2_t\rho_2\|_{L^2_T\mathcal{H}^{k+m_1-4,m_1-4,0}}\leq C.$$ Further, it is obvious that using the Cauchy-Schwarz inequality
        \begin{align*}
			\|\partial^j_t {\rho_3}\|_{L^2_{T}\mathcal{H}^{k+m_1-2-j,m_1-2-j,0}}&=\|(1+z^{2(k+m_1-2-j)})^\frac{1}{2}\partial^j_t\Big{(}\overline{\partial_yn^0}\int^\infty_z\partial_z(zv^{B,0}_2)d\eta\Big{)}\|_{L^2_{T}H^{m_1-2-j}_xL^2_{z}}\\
            &=\|(1+z^{2(k+m_1-2-j)})^\frac{1}{2}\partial^j_t(-\overline{\partial_yn^0}zv^{B,0}_2)\|_{L^2_{T}H^{m_1-2-j}_xL^2_{z}}\\
            &\leq C\sum^j_{\tau=0}\|\partial^\tau_t n^{0}\|^2_{L^\infty_{T}H^{m_1+1-\tau}_{xy}}\|(1+z^{2(k+m_1-1-(j-\tau))})\partial^{j-\tau}_tv^{B,0}_2\|^2_{L^2_{T}H^{m_1-2-(j-\tau)}_xL^2_z}\\
            &\leq C.
		\end{align*}
        Collecting the above estimates for $\rho_1$, $\rho_2$, $\rho_3$, we deduce that $\|\partial^j_t {\rho}\|_{L^2_{T}\mathcal{H}^{k+m_1-2-j,m_1-2-j,0}}\leq C$ for $j=0,1,2$. Then by (\ref{fBt}), we conclude that
        \begin{equation*}
				\begin{split}{}
				\partial^j_t{v}^{B,1}_2\in L^\infty(0,T; \mathcal{H}^{k,m_1-2-j,0})\cap L^2(0,T; \mathcal{H}^{k,m_1-2-j,1}),\;j=0,1,2.\\
                \end{split}
			\end{equation*}
         Similar estimates can be $\|\partial_t\rho\|_{L^\infty_T\mathcal{H}^{k,m_1-5,0}}\leq C$, it follows from $(\ref{eq:vB12})$ that $\|\partial_t{v}^{B,1}_2\|_{L^\infty_T\mathcal{H}^{k,m_1-5,2}}\leq C$. Combining the above estimates of ${\rho}$, it implies $(\ref{vB12t})$. From the argument similar to the derivation $(\ref{rho Linfty})$, we obtain $\|\rho\|_{L^\infty H^{k,m_1-2-l,l-2}}\leq C$ for $l=2,3,4$. By $(\ref{fBz})$, we get $(\ref{vB12z4})$ from $(\ref{eq:vB12})$ and $(\ref{vB12t})$. Finally, $(\ref{n^B2tz})$ follows directly from $(\ref{eq:nB2})$, $(\ref{vB12t})$ and $(\ref{vB12z4})$. The proof of Lemma $\ref{lem5.5}$ is complete.

     \begin{lemma}\label{lem5.6}
	 	{Let $(n^0,v^0,u^0)$, $(u^{B,1}_1, u^{B,2}_2)$, $(v^{B,0}_2, n^{B,1})$, $v^{B,1}_1$,  $(u^{B,2}_1,u^{B,3}_2)$ and $(v^{B,1}_2, n^{B,2})$ be as obtained in Theorem \ref{th2.1}, Lemma $\ref{lem5.1}$-Lemma $\ref{lem5.5}$, respectively. Then there exists a unique solution $v^{B,2}_1$ of $(\ref{eq:vB21})$ on $[0,T]$ such that
	 		\begin{equation}\label{vB21t}
                \begin{split}{}
				\partial^j_tv^{B,2}_1&\in L^\infty(0,T; \mathcal{H}^{k,m_1-3-j,0})\cap L^2(0,T; \mathcal{H}^{k,m_1-3-j,1}),\;j=0,1,2.\\
                \end{split}
		  \end{equation}
            Furthermore,it follows from $(\ref{eq:vB21})$ that
           \begin{equation}\label{vB21z4}
                \begin{split}{}
                \partial_tv^{B,2}_1\in L^\infty(0,T; \mathcal{H}^{k,m_1-5,1}),\;\partial_tv^{B,2}_1\in L^\infty(0,T; \mathcal{H}^{k,m_1-6,2}),\\
                \end{split}
			\end{equation}
            and
            \begin{equation}\label{vB21z4}
                \begin{split}{}
                v^{B,2}_1\in L^\infty(0,T; \mathcal{H}^{k,m_1-3-i,i}),\;i=1,2,3,4.\\
                \end{split}
			\end{equation}
	 	}
	 \end{lemma}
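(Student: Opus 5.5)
The plan is to follow the pattern of Lemma \ref{lem5.3} and Lemma \ref{lem5.4}: regard (\ref{eq:vB21}) as an instance of the auxiliary system (\ref{g}) and apply Proposition \ref{regularity of g} with $m_0=m_1-3$. Since $\partial_zv^{B,2}_1|_{z=0}=0$, we take $\mu=0$. The terms $\overline{\partial_xu^0_1}v^{B,2}_1+\partial_xv^{B,2}_1\overline{u^0_1}=\partial_x(\overline{u^0_1}v^{B,2}_1)$ form the transport part of (\ref{g}). Equation (\ref{eq:vB21}) has no normal transport term acting on $v^{B,2}_1$, so the $z\overline{\partial_yu^0_2}\partial_z$ term of (\ref{g}) will be added and subtracted; the subtracted piece $z\overline{\partial_yu^0_2}\partial_zv^{B,2}_1$ is then handled by the weighted estimate in the same way as $J_4$, using (\ref{weighted-shift-f}). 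Everything else is moved to the right-hand side as $\varrho$. This gives $\varrho=\varrho_1+\varrho_2$. Here $\varrho_1$ collects the terms that are linear in the inner profiles, with coefficients given by traces of $(u^0,v^0)$ and their derivatives multiplied by powers $z,\tfrac12 z^2$; this includes $\partial_xn^{B,2}$ and $\overline{u^{I,2}_2}\partial_xv^{B,0}_2$. $\varrho_2$ collects the quadratic inner–inner products $\partial_xu^{B,1}_1v^{B,1}_1$, $\partial_xu^{B,2}_2v^{B,0}_2$, $\partial_xv^{B,0}_2u^{B,2}_2$, $\partial_xv^{B,1}_1u^{B,1}_1$, and $2\partial_xv^{B,0}_2v^{B,0}_2$.

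Next I verify (\ref{varrho t}) for $m_0=m_1-3$, that is, $\partial_t^j\varrho\in L^2_T\mathcal H^{k+m_1-3-j,m_1-3-j,0}$ for $j=0,1,2$. For $\varrho_1$ I use the Leibniz rule in $t$ and $x$, the trace bound (\ref{boundary regularity}) on the outer coefficients from Theorem \ref{th2.1}, and the fact that the inner profiles are controlled in $\mathcal H^{k',\cdot,\cdot}$ for every $k'$. The extra powers of $z$ are then simply absorbed into the weight. The required tangential orders are available from the earlier lemmas:
\begin{itemize}
\item $v^{B,0}_2$, $u^{B,1}_1$, $n^{B,1}$ have order $m_1$;
\item $v^{B,1}_1$, $u^{B,2}_2$ have order $m_1-1$;
\item $v^{B,1}_2$, $u^{B,2}_1$, $n^{B,2}$ have order $m_1-2$, by Lemmas \ref{lem5.4}--\ref{lem5.5}.
\end{itemize}
Each term carries at most one $\partial_x$, so order $m_1-3$ in $x$ suffices. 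For $\overline{u^{I,2}_2}=-u^{B,2}_2(t,x,0)$ I use (\ref{u2I2 regularity}). For $\varrho_2$ I use the product inequalities (\ref{fgHm})--(\ref{fgHm1}): one factor goes in $L^\infty_TH^{m_1-2}_xH^{2}_z$, which is available from the $\mathcal H^{k,m_1-1-i,i}$ bounds (\ref{uB14z}), (\ref{vB02z4}), (\ref{vB11z4}), and the other goes in the weighted $L^2_T$ space. The time derivatives $j=1,2$ are distributed as in Lemma \ref{lem5.4}.

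Proposition \ref{regularity of g} then yields (\ref{vB21t}), and uniqueness follows from linearity of the problem in $v^{B,2}_1$ together with the same energy estimate applied to a difference. For the remaining conclusions I check $\partial_t\varrho\in L^\infty_T\mathcal H^{k,m_1-6,0}$ and $\varrho\in L^\infty_T\mathcal H^{k,m_1-3-l,l-2}$ for $l=2,3,4$, arguing as in (\ref{rho Linfty}); these need $z$-derivatives up to order $2$ of the inner data, which is supplied by the $i\le4$ statements of the previous lemmas. The $z$-regularity statements then follow from (\ref{gBt}) and (\ref{gBz}) as in (\ref{gz2t}) and (\ref{gBz234}).

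The main obstacle is the derivative count in the most demanding terms, chiefly $\partial_xn^{B,2}$ and $\partial_xv^{B,1}_2z\overline{\partial_yu^0_2}$ when $j=2$. Both require $\partial_t^2$ of order-$(m_1-2)$ profiles at tangential order $m_1-4$, which sits exactly at the edge of Lemma \ref{lem5.5}. The same issue arises for $\partial_t^2\overline{u^{I,2}_2}$ in $\varrho_1$, which (\ref{u2I2 regularity}) controls only in $L^2_x$. I would first try to place that factor in $L^2_x$ and the partner factor in $L^\infty$ through (\ref{boundary L infty regularity}). If this fails, I would use the restriction $m_1\ge7$ to accept one fewer tangential derivative in the $j=2$ estimate.
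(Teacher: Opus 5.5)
Your proposal is correct and follows essentially the paper's proof: the same splitting $\varrho=\varrho_1+\varrho_2$ with $\mu=0$, Proposition~\ref{regularity of g} applied with $m_0=m_1-3$, and verification of its hypotheses from the preceding lemmas, the trace bounds and the product inequalities. Your handling of the missing normal-transport term is a detail the paper passes over; the proof of Proposition~\ref{regularity of g} runs verbatim with that coefficient set to zero. Your fallback is unnecessary, because the $j=2$ count closes exactly with Lemma~\ref{lem5.4}, Lemma~\ref{lem5.5} and the $H^1_z$ trace of $\partial_t^2u^{B,2}_2$, and this matters since dropping a tangential derivative would not give the stated orders.
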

	 \noindent{\bf{Proof.}}
             Let
             \begin{align*}
			{\varrho}&=\Big(z\overline{\partial_x\partial_yu^{I,0}_1}v^{B,1}_1+z\overline{\partial_x\partial_{y}u^{I,0}_2}v^{B,1}_2+\partial_x(\frac{1}{2}z^2\overline{\partial^2_yu^{I,0}_2}+\overline{u^{I,2}_2})v^{B,0}_2+\partial_xu^{B,1}_1z\overline{\partial_yv^{I,0}_1}+\partial_xu^{B,2}_1\overline{v^{I,0}_1}\\
            &\quad+\partial_xu^{B,2}_2\overline{v^{I,0}_2}+\overline{\partial_xv^{I,0}_1}u^{B,2}_1+\overline{\partial_xv^{I,0}_2}u^{B,2}_2+z\overline{\partial_x\partial_yv^{I,0}_1}u^{B,1}_1+\partial_xv^{B,0}_2(\frac{1}{2}z^2\overline{\partial^2_yu^{I,0}_2}+\overline{u^{I,2}_2})\\    &\quad+\partial_xv^{B,1}_1z\overline{\partial_yu^{I,0}_1}+\overline{\partial_xv^{I,0}_2}v^{B,0}_2+2\partial_xv^{B,0}_2\overline{v^{I,0}_2}-\partial_xn^{B,2}\Big)\\
            &\quad+\Big(\partial_xu^{B,1}_1v^{B,1}_1+\partial_xu^{B,2}_2v^{B,0}_2+\partial_xv^{B,0}_2u^{B,2}_2+\partial_xv^{B,1}_1u^{B,1}_1+2\partial_xv^{B,0}_2v^{B,0}_2\Big)\\
            &=\varrho_1+\varrho_2,
		\end{align*}
        and $\mu=0$. We will prove that $\varrho$ satisfy the assumptions of Proposition \ref{regularity of g} to (\ref{eq:vB21}) with $m_0=m_1-3$.
        First we divide $\varrho$ into $\varrho_1$ and $\varrho_2$, and each of them be estimated as follows
            \begin{align*}
			&\|\partial_t^j \varrho_1\|^2_{L^2_{T}\mathcal{H}^{k+m_1-3-j,m_1-3-j,0}}\\
            &\leq C\sum^j_{\tau=0}\|\partial_t^\tau u^0_1\|_{L^\infty_TH^{m_1+1-\tau}_{xy}}\|\partial_t^{j-\tau}v^{B,1}_1\|^2_{L^2_{T}\mathcal{H}^{k+m_1-2-(j-\tau),m_1-3-(j-\tau),0}}\\
            &\quad+C\sum^j_{\tau=0}\|\partial_t^\tau u^0_2\|_{L^\infty_TH^{m_1+1-\tau}_{xy}}\|\partial_t^{j-\tau}v^{B,1}_2\|_{L^2_{T}\mathcal{H}^{k+m_1-2-(j-\tau),m_1-3-(j-\tau),0}}\\
            &\quad
            +C\sum^j_{\tau=0}\|\partial_t^\tau u^0_2\|_{L^\infty_TH^{m_1+2-\tau}_{xy}}\|\partial_t^{j-\tau}v^{B,0}_2\|_{L^2_{T}\mathcal{H}^{k+m_1-1-(j-\tau),m_1-3-(j-\tau),0}}\\
            &\quad+C\sum^j_{\tau=0}\|\partial_t^\tau{u^{B,2}_2}\|_{L^\infty_TH^{m_1-1-\tau}_xH^1_z}\|\partial_t^{j-\tau}v^{B,0}_2\|_{L^2_{T}\mathcal{H}^{k+m_1-3-(j-\tau),m_1-2-(j-\tau),1}}\\
            &\quad+C\sum^j_{\tau=0}\|\partial_t^\tau v^0_1\|_{L^\infty_TH^{m_1-\tau}_{xy}}\|\partial_t^{j-\tau}u^{B,1}_1\|^2_{L^2_{T}\mathcal{H}^{k+m_1-2-(j-\tau),m_1-2-(j-\tau),0}}\\
            &\quad+C\sum^j_{\tau=0}\|\partial_t^\tau{v^{0}_1}\|_{L^\infty_TH^{m_1-1-\tau}_{xy}}\|\partial_t^{j-\tau}u^{B,2}_1\|^2_{L^2_{T}\mathcal{H}^{k+m_1-3-(j-\tau),m_1-2-(j-\tau),0}}
            \\
            &\quad+C\sum^j_{\tau=0}\|\partial_t^\tau{v^{0}_2}\|_{L^\infty_TH^{m_1-1-\tau}_{xy}}\|\partial_t^{j-\tau}u^{B,2}_2\|_{L^2_{T}\mathcal{H}^{k+m_1-3-(j-\tau),m_1-2-(j-\tau),0}}\\
            &\quad+C\sum^j_{\tau=0}\|\partial_t^\tau{v^{0}_1}\|_{L^\infty_TH^{m_1-\tau}_{xy}}\|\partial_t^{j-\tau}u^{B,2}_1\|_{L^2_{T}\mathcal{H}^{k+m_1-3-(j-\tau),m_1-3-(j-\tau),0}}
            \\
            &\quad+C\sum^j_{\tau=0}\|\partial_t^\tau{{v^0_2}}\|_{L^\infty_TH^{m_1-\tau}_{xy}}\|\partial_t^{j-\tau}u^{B,2}_2\|_{L^2_{T}\mathcal{H}^{k+m_1-3-(j-\tau),m_1-3-(j-\tau),0}}\\
            &\quad
            +C\sum^j_{\tau=0}\|\partial_t^\tau v^0_1\|_{L^\infty_TH^{m_1+1-\tau}_{xy}}\|\partial_t^{j-\tau}u^{B,1}_1\|_{L^2_{T}\mathcal{H}^{k+m_1-2-(j-\tau),m_1-3-(j-\tau),0}}\\
            &\quad+C\sum^j_{\tau=0}\|\partial_t^\tau u^0_2\|_{L^\infty_TH^{m_1+1-\tau}_{xy}}\|\partial_t^{j-\tau}v^{B,0}_2\|_{L^2_{T}\mathcal{H}^{k+m_1-1-(j-\tau),m_1-2-(j-\tau),0}}\\
            &\quad+C\sum^j_{\tau=0}\|\partial_t^\tau{u^{B,2}_2}\|_{L^\infty_TH^{m_1-2-\tau}_xH^1_z}\|\partial_t^{j-\tau}v^{B,0}_2\|_{L^2_{T}\mathcal{H}^{k+m_1-3-(j-\tau),m_1-1-(j-\tau),1}}\\
            &\quad+C\sum^j_{\tau=0}\|\partial_t^\tau u^{0}_1\|_{L^\infty_TH^{m_1-\tau}_{xy}}\|\partial_t^{j-\tau}v^{B,1}_1\|_{L^2_{T}\mathcal{H}^{k+m_1-2-(j-\tau),m_1-2-(j-\tau),0}}\\
            &\quad+C\sum^j_{\tau=0}\Big(\|\partial_t^\tau{v^{0}_2}\|_{L^\infty_TH^{m_1-\tau}_{xy}}\|\partial_t^{j-\tau}v^{B,0}_2\|_{L^2_{T}\mathcal{H}^{k+m_1-3-(j-\tau),m_1-2-(j-\tau),0}}+\|\partial_t^j n^{B,2}\|_{L^\infty_T\mathcal{H}^{k+m_1-3-j,m_1-2-j,0}}\Big).\\
            &\leq C,
		\end{align*}
        due to $(\ref{boundary regularity})$ and (\ref{u2I2 regularity}). For $\partial_t^j \varrho_2(j=0,1,2)$, we estimate $\partial_t \varrho_2$ as follows
        \begin{align*}
			&\|\partial_t \varrho_2\|^2_{L^2_{T}\mathcal{H}^{k+m_1-4,m_1-4,0}}\\
            &\leq C\left(\|\partial_t u^{B,1}_1\|_{L^2_TH^{m_1-2}_xH^1_z}\|v^{B,1}_1\|_{L^\infty_{T}\mathcal{H}^{k+m_1-4,m_1-3,1}}+\|u^{B,1}_1\|_{L^\infty_TH^{m_1-2}_xH^1_z}\|\partial_tv^{B,1}_1\|_{L^2_{T}\mathcal{H}^{k+m_1-4,m_1-3,1}}\right)\\
            &\quad+C\left(\|\partial_t{u^{B,2}_2}\|_{L^2_TH^{m_1-3}_xL^2_z}\|v^{B,0}_2\|_{L^\infty_{T}\mathcal{H}^{k+m_1-4,m_1-2,2}}+\|{u^{B,2}_2}\|_{L^\infty_TH^{m_1-2}_xH^1_z}\|\partial_tv^{B,0}_2\|_{L^2_{T}\mathcal{H}^{k+m_1-4,m_1-3,1}}\right)\\
            &\quad+C\left(\|\partial_tv^{B,0}_2\|_{L^\infty_{T}\mathcal{H}^{k+m_1-4,m_1-3,0}}\|{u^{B,2}_2}\|_{L^2_TH^{m_1-2}_xH^2_z}+\|v^{B,0}_2\|_{L^\infty_{T}\mathcal{H}^{k+m_1-4,m_1-2,1}}\|\partial_t{u^{B,2}_2}\|_{L^2_TH^{m_1-3}_xH^1_z}\right)\\
            &\quad+C\left(\|\partial_tv^{B,1}_1\|_{L^2_{T}\mathcal{H}^{k+m_1-4,m_1-3,0}}\|u^{B,1}_1\|_{L^\infty_TH^{m_1-2}_xH^2_z}+\|v^{B,1}_1\|_{L^2_{T}\mathcal{H}^{k+m_1-4,m_1-2,1}}\|\partial_tu^{B,1}_1\|_{L^\infty_TH^{m_1-3}_xH^1_z}\right)\\
            &\quad+C\left(\|\partial_tv^{B,0}_2\|_{L^\infty_TH^{m_1-3}_xL^2_z}\|v^{B,0}_2\|_{L^2_{T}\mathcal{H}^{k+m_1-4,m_1-2,2}}+2\|v^{B,0}_2\|_{L^\infty_TH^{m_1-2}_xH^1_z}\|\partial_tv^{B,0}_2\|_{L^2_{T}\mathcal{H}^{k+m_1-4,m_1-3,1}}\right)\\
            &\leq C,
		\end{align*}
        due to $(\ref{fgHm})$ and $(\ref{fgHm1})$, a similar argument can be deduced from $$\|\varrho_2\|_{L^2_T\mathcal{H}^{k+m_1-3,m_1-3,0}}+\|\partial^2_t\varrho_2\|_{L^2_T\mathcal{H}^{k+m_1-5,m_1-5,0}}\leq C.$$
        Similarly, one derives $\|\partial_t\varrho\|_{L^\infty_T\mathcal{H}^{k,m_1-6,0}}\leq C$, it follows from $(\ref{eq:vB21})$ that $\|\partial_tv^{B,2}_1\|_{L^\infty_T\mathcal{H}^{k,m_1-6,2}}\leq C$.
        Moreover, from the argument similar to the derivation $(\ref{rho Linfty})$, we obtain $\|\varrho\|_{L^\infty H^{k,m_1-3-l,l-2}}\leq C$ for $l=2,3,4$.
         Therefore, by a similar argument as deriving $(\ref{gBt})$-$(\ref{gBz})$, we deduce for all $k\in N$ that $(\ref{vB21t})$-$(\ref{vB21z4})$ are valid via $(\ref{eq:vB21})$. Thus, the proof of Lemma $\ref{lem5.6}$ is complete.\\

         Up to now, we have proved the well-posedness of the boundary layer systems given in (\ref{eq:vB02})-(\ref{eq:pB2}). This provides the basis for the proof of Theorem \ref{th inner layer}.\\

        \noindent \textbf{Proof of Theorem \ref{th inner layer}.}
         Combining with the estimates derived in Lemma $\ref{lem5.1}$-Lemma $\ref{lem5.6}$, we immediately obtain Theorem \ref{th inner layer}.  \;\;$\Box$\\

\section*{Acknowledgements}
W. Wang was supported by National Key R\&D Program of China (No. 2023YFA1009200) and NSFC under grant 12471219.
L. Zhao was supported by the National Natural Science Foundation of China(NSFC)[grant number 12501305].

\medskip
\noindent\textbf{Data Availability Statement:}
Data sharing is not applicable to this article as no data sets were generated or analyzed during the current study.

\noindent\textbf{Conflict of Interest:}
The authors declare that they have no conflict of interest.
	\hspace*{\parindent}

\end{document}